\newtheorem*{rep@theorem}{\rep@title}
\newcommand{\newreptheorem}[2]{%
	\newenvironment{rep#1}[1]{%
		\def\rep@title{#2 \ref{##1}}%
		\begin{rep@theorem}}%
		{\end{rep@theorem}}}
\numberwithin{equation}{section}
\newtheorem{theorem}{Theorem}[section]
\newtheorem{lemma}[theorem]{Lemma}
\newtheorem{proposition}[theorem]{Proposition}
\newtheorem{corollary}[theorem]{Corollary}
\theoremstyle{definition}
\newtheorem{definition}[theorem]{Definition}
\newtheorem{example}[theorem]{Example}
\newtheorem{question}[theorem]{Question}
\newtheorem{remark}[theorem]{Remark}
\newtheorem{remarks}[theorem]{Remarks}
\newtheorem{notation}[theorem]{Notation}
\newcommand{\ot}{\otimes}
\newcommand{\ben}{\begin{enumerate}}
\newcommand{\een}{\end{enumerate}}
\begin{document}

\title[Twisted unipotent groups]{Twisted unipotent groups}

\author{Ken A. Brown}
\address{School of Mathematics and Statistics,
University of Gasgow,
Gasgow G12 8QQ, United Kingdom}
\email{Ken.Brown@glasgow.ac.uk}

\author{Shlomo Gelaki}
\address{Department of Mathematics,
Iowa State University, Ames, IA 50100, USA.}
\email{gelaki@iastate.edu}

\date{\today}

\begin{abstract}
We study the algebraic structure and representation theory of the Hopf algebras ${}_J\mathcal{O}(G)_J$ when $G$ is an affine algebraic unipotent group over $\mathbb{C}$ with $\mathrm{dim}(G) = n$ and $J$ is a Hopf $2$-cocycle for $G$. The cotriangular Hopf algebras ${}_J\mathcal{O}(G)_J$ have the same coalgebra structure as $\mathcal{O}(G)$ but a deformed multiplication. We show that they are involutive $n$-step iterated Hopf Ore extensions of derivation type. The 2-cocycle $J$ has as support a closed subgroup $T$ of $G$, and ${}_J\mathcal{O}(G)_J$ is a crossed product $S \#_{\sigma}U(\mathfrak{t})$, where $\mathfrak{t}$ is the Lie algebra of $T$ and $S$ is a deformed coideal subalgebra. The simple ${}_J\mathcal{O}(G)_J$-modules are stratified by a family of factor algebras ${}_J\mathcal{O}(Z_g)_J$, parametrised by the double cosets $TgT$ of $T$ in $G$. The finite dimensional simple ${}_J\mathcal{O}(G)_J$-modules are all 1-dimensional, so form a group $\Gamma$, which we prove to be an explicitly determined closed subgroup of $G$. A selection of examples illustrate our results.
\end{abstract}

\maketitle

\tableofcontents

\section{Introduction}\label{intro} 
 
Let $G$ be an affine algebraic group over $\mathbb{C}$, and let $\mathcal{O}(G)$ be its algebra of polynomial functions, an affine commutative Hopf algebra. Let $J \in \left(\mathcal{O}(G) \otimes \mathcal{O}(G)\right)^*$ be a Hopf $2$-cocycle for $G$ (see Definition \ref{twococyc}). The twisting procedure of Drinfeld \cite{Dr2} yields a new cotriangular Hopf algebra structure on the underlying coalgebra of $\mathcal{O}(G)$ by using $J$ to deform the multiplication, and change the $R$-form $\epsilon \otimes \epsilon$ of $\mathcal{O}(G)$ to  
$J_{21}^{-1}\ast J$. See  $\S$\ref{cocycles} for the twisting procedure, and $\S$\ref{cotri1} for background on $R$-forms and cotriangular structures. Let $\mathrm{Rep}(G)$ denote the category of finite dimensional complex rational representations of $G$. Then Hopf $2$-cocycles for $G$ yield tensor structures on the forgetful functor $\mathrm{Rep}(G) \rightarrow \mathrm{Vec}$, as explained for example in \cite[Chapter 5]{EGNO}. 

The purpose of this paper is to study the algebraic structure and representation theory of the Hopf algebras ${}_J\mathcal{O}(G)_J$ when the group $G$ is unipotent. In particular, this paper extends (and corrects some of) the results of \cite{G2}. It thus forms part of a sequence of papers which began with Movshev's work \cite{Mo} on twisting for finite groups, and continued with  \cite{EG1,EG2,EG3,EG4,EG5,EG6} of Etingof and the second author, and \cite{G0,G1,G2} of the second author. As is recalled in detail in 
$\S$\ref{2cocycunip}, by results due to Drinfeld, Etingof and the second author, the equivalence classes of Hopf $2$-cocycles for a unipotent group $G$ with Lie algebra $\mathfrak{g}$ are in bijection with equivalence classes of solutions to the classical Yang-Baxter equation in $\wedge^2 \mathfrak{g}$, and to equivalence classes of pairs $(\mathfrak{t}, \omega)$, where $\mathfrak{t}\subseteq \mathfrak{g}$ is a quasi-Frobenius Lie subalgebra with nondegenerate $2$-cocycle $\omega\in \wedge^2 \mathfrak{t}^*$.

\subsection{Hopf algebraic structure}\label{subsec1.2} The following result summarises the most basic structural features of the deformations ${}_J\mathcal{O}(G)_J$ for $G$ unipotent. Early versions of parts (1), (2)  and (4) were obtained in \cite{G2}. For (6) recall that an ideal $I$ of a ring $A$ is \emph{polycentral} if it contains elements $y_1, \ldots , y_t$ such that $I = \sum_j y_j A$ with $y_1$ central in $A$ and $y_j$ central \emph{modulo} $\sum_{i < j} y_i A$ for $j = 2, \ldots , t$. 

\begin{theorem}\label{structureintro}{\rm (Theorems \ref{Ore}, \ref{augcent}, Corollaries \ref{nIHOE}, \ref{IHOEcor})} Let $G$ be a unipotent algebraic group over $\mathbb{C}$ of dimension $n$, and let $J$ be a Hopf $2$-cocycle for $G$.
\begin{enumerate}
\item $_J\mathcal{O}(G)_J$ is an $n$-step iterated Hopf Ore extension of derivation type. That is, there is a sequence 
$$ \mathbb{C} = H_0 \subset \cdots \subset {}_J(H_i)_J \subset \cdots \subset {}_J(H_n)_J ={}_J\mathcal{O}(G)_J $$
of Hopf subalgebras ${}_J(H_i)_J\subseteq {}_J\mathcal{O}(G)_J$, and derivations $\partial_i$ of ${}_J(H_i)_J$, such that ${}_J(H_{i+1})_J = {}_J(H_{i})_J[X_{i+1} ; \partial_i]$ is an Ore extension for all $0\le i \le n-1$. 
\item The formulae for the derivations $\partial_i$ can be explicitly stated in terms of the Hopf $2$-cocycle $J$ and the comultiplication $\Delta$ of $\mathcal{O}(G)$.
\item The associated graded algebra of ${}_J\mathcal{O}(G)_J$ with respect to its coradical filtration is the coordinate ring $\mathcal{O}(\widehat{G})$, the associated graded ring of $\mathcal{O}(G)$, $\widehat{G}$ being the associated graded unipotent group of $G$.
\item ${}_J\mathcal{O}(G)_J$ is a noetherian domain of Gelfand-Kirillov dimension $n$ with excellent homological properties.
\item Every prime ideal $P$ of ${}_J\mathcal{O}(G)_J$ is completely prime, i.e., ${}_J\mathcal{O}(G)_J/P$ is a domain.
\item The augmentation ideal $({}_J\mathcal{O}(G)_J)^+$ of ${}_J\mathcal{O}(G)_J$ is polycentral. \qed
\end{enumerate}
\end{theorem}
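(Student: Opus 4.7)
The plan is to build an iterated Ore extension structure by exploiting the normal series of $G$, and then to read off the remaining properties as standard consequences. First I would invoke the structure theory of unipotent algebraic groups to fix a chief series
$$\{e\} = G_n \triangleleft G_{n-1} \triangleleft \cdots \triangleleft G_0 = G$$
of closed normal subgroups with $G_i/G_{i+1} \cong \mathbb{G}_a$. Setting $H_i := \mathcal{O}(G/G_i)$ produces an ascending chain $\mathbb{C} = H_0 \subset H_1 \subset \cdots \subset H_n = \mathcal{O}(G)$ of commutative Hopf subalgebras, and because $\mathbb{G}_a$-torsors over affine varieties are trivial, each extension is a one-variable polynomial extension $H_{i+1} = H_i[x_{i+1}]$, with $x_{i+1}$ chosen primitive modulo $H_i$, i.e., $\Delta(x_{i+1}) - x_{i+1} \otimes 1 - 1 \otimes x_{i+1} \in H_i \otimes H_i$. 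Since cocycle twisting preserves the coalgebra, each ${}_J(H_i)_J$ is automatically a Hopf subalgebra of ${}_J\mathcal{O}(G)_J$.

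For (1) and (2), I would compute the twisted commutator of $x_{i+1}$ against an arbitrary $a \in {}_J(H_i)_J$ using the standard two-sided twist formula
$$u \cdot_J v \;=\; \sum J^{-1}(u_{(1)}, v_{(1)})\, u_{(2)} v_{(2)}\, J(u_{(3)}, v_{(3)}).$$
The leading term $x_{i+1} a - a x_{i+1}$ vanishes by commutativity of $\mathcal{O}(G)$, and what survives is an explicit expression $\partial_i(a)$ involving $J$, $J^{-1}$, and the coproducts of $x_{i+1}$ and $a$; this supplies the formula required for (2). Because $x_{i+1}$ is primitive modulo $H_i$, the residual terms already lie in $H_i$, and the map $\partial_i : {}_J(H_i)_J \to {}_J(H_i)_J$ is a derivation as the restriction of the inner derivation $[x_{i+1}, -]_J$ of ${}_J\mathcal{O}(G)_J$ to a subalgebra it preserves. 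The PBW-type basis $\{x_{i+1}^k\}_{k \ge 0}$ of $H_{i+1}$ over $H_i$ is coalgebra-intrinsic and hence remains a basis for ${}_J(H_{i+1})_J$ over ${}_J(H_i)_J$, yielding ${}_J(H_{i+1})_J = {}_J(H_i)_J[X_{i+1}; \partial_i]$ as claimed.

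Part (3) follows because the coradical filtration depends only on the coalgebra and is therefore common to $\mathcal{O}(G)$ and ${}_J\mathcal{O}(G)_J$; the cocycle twist alters $ab$ only by terms involving counits of inner coproduct factors, which strictly drop filtration degree, so $\gr({}_J\mathcal{O}(G)_J) = \gr(\mathcal{O}(G)) = \mathcal{O}(\widehat{G})$. The remaining parts follow from (1) and (3): iterated Ore extensions of derivation type over $\mathbb{C}$ are noetherian domains of GK dimension $n$ satisfying the stated Auslander-regular and Cohen-Macaulay conditions, giving (4); complete primeness of every prime follows from the commutativity of the associated graded via a standard filtration argument, giving (5); and for (6), $X_1$ is central in ${}_J(H_1)_J$, while for each $j \ge 1$ one has $\eps \circ \partial_j = 0$, so $\partial_j$ takes values in the augmentation ideal of ${}_J(H_j)_J$, which by induction equals $(X_1, \ldots, X_j)$, and hence $X_{j+1}$ is central modulo $(X_1, \ldots, X_j)$. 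The principal technical hurdle is the derivation computation in the middle paragraph: tracking the cocycle terms carefully enough to extract an explicit formula for $\partial_i$ and to verify that the surviving expression really lies in $H_i$ rather than $H_{i+1}$.
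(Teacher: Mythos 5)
Your treatment of parts (1)--(4) follows essentially the paper's own route (Theorem \ref{Ore} plus Corollaries \ref{nIHOE} and \ref{IHOEcor}): twist the chain (\ref{poly}), compute the twisted commutators, note the classical term cancels, and read off the IHOE and its standard consequences. The gaps are in (5) and (6). For (5), ``commutativity of the associated graded via a standard filtration argument'' does not give complete primeness of \emph{every} prime: $U(\mathfrak{sl}_2)$ has commutative polynomial associated graded (for its coradical/PBW filtration), yet the annihilator of a finite dimensional simple module of dimension $>1$ is a prime ideal whose quotient is a matrix algebra, not a domain. What is actually needed, and what the paper invokes, is Lorenz's theorem \cite{L} that all primes of an iterated Ore extension of derivation type over $\mathbb{C}$ are completely prime; your IHOE structure from (1) supplies its hypothesis, so the repair is to cite that result rather than argue from the associated graded.

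The more serious gap is (6). Knowing that $\partial_j$ lands in $({}_J(H_j)_J)^+$ only controls commutators of $X_{j+1}$ with elements of the \emph{smaller} subalgebra ${}_J(H_j)_J$; polycentrality of $({}_J\mathcal{O}(G)_J)^+$ requires $X_{j+1}$ to be central modulo $\sum_{i\le j}X_i\,{}_J\mathcal{O}(G)_J$ in the \emph{whole} algebra, i.e.\ you must also handle $[X_k,X_{j+1}]$ for $k>j+1$, and Theorem \ref{Ore}(4) only places these in $H_{k-1}^+$, not in the ideal generated by $X_1,\dots,X_j$. Already your base case fails: $X_1$ is central in ${}_J(H_1)_J$ trivially, but it need not be central in ${}_J\mathcal{O}(G)_J$ --- this is precisely the error in \cite[Lemma 3.1(6)]{G2} corrected in Remarks \ref{compwg2}(2), and Example \ref{ex6} gives a concrete instance, where $F_{12}$ is a legitimate first IHOE variable yet $[F_{14},F_{12}]=F_{34}\neq 0$ (there a polycentral ordering exists, but it is not the IHOE ordering). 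The paper's proof of Theorem \ref{augcent} is of a genuinely different nature: it brings in the support $T$ of $J$, uses Proposition \ref{centre} (functions constant on double $T$-cosets are central) together with the crossed product decomposition of Theorem \ref{crossed}, and chooses the chain of normal subgroups adapted to $T$ --- splitting off at each step a primitive generator coming from a quotient $G/N$ with $T\subseteq N$, hence constant on $TgT$ and genuinely central --- with base case ${}_J\mathcal{O}(T)_J\cong U(\mathfrak{t})$ handled via the upper central series of $\mathfrak{t}$. Some mechanism of this kind, tying the choice of generators to the support of $J$, is needed; the unadapted chief series you use cannot deliver (6).
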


Given a cotriangular Hopf algebra $(H,R)$, we say that the $R$-form $R$ is \emph{nondegenerate}, and that $(H,R)$ is \emph{minimal}, if the \emph{radical} $$I_R := \{a \in H \mid R(a,b) = 0, \, \forall b \in H \}$$ is $\{0\}$. The radical is a Hopf ideal of $H$, and there is an induced structure of cotriangular Hopf algebra $(H/I,R)$, on which $R$ is nondegenerate, so that $(H/I,R)$ is minimal \cite[\S 2.2]{G1}; see $\S$\ref{cotri1} and Lemma \ref{radical} for details.
 
Specialise now to the case where $H$ is the coordinate ring $\mathcal{O}(G)$ of an affine algebraic group $G$, so that, as already mentioned above, $(\mathcal{O}(G),\epsilon \otimes \epsilon)$ is cotriangular. Let $J$ be a Hopf $2$-cocycle for $G$, so that, as also noted above, $({}_J\mathcal{O}(G)_J,R^J)$ is cotriangular with $R^J := J_{21}^{-1}\ast J$. Extending the terminology introduced above, we say that $J$ is \emph{minimal} if $({}_J\mathcal{O}(G)_J, R^J)$ is minimal. We can now appeal to the following result (where, for \emph{gauge equivalence}, see Remark \ref{Jrems}(3)): 

\begin{proposition}{\rm (\cite[Theorem 3.1]{G1})}
Let $G$ be an affine algebraic group, and let  
$J$ be a Hopf $2$-cocycle for $G$. Then there is a closed subgroup $T\subseteq G$ and a minimal Hopf $2$-cocycle $\widehat{J}$ for $T$, such that $J$ is gauge equivalent to $\widehat{J}$ (where $\widehat{J}$ is viewed in the obvious way as a Hopf $2$-cocycle for $G$). Hence, there is an epimorphism of cotriangular Hopf algebras from $({}_J\mathcal{O}(G)_J,R^J)$ to $({}_{\widehat{J}}\mathcal{O}(T)_{\widehat{J}},R^{\widehat{J}})$ with kernel $I_{R^{J}}$, the radical of $R^J$. \qed
\end{proposition}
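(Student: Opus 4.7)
Plan: The idea is to use a gauge transformation to reduce $J$ to a cocycle supported on a closed subgroup $T\subseteq G$, restrict it to obtain $\widehat J$, and then show the natural quotient map $\mathcal{O}(G)\twoheadrightarrow \mathcal{O}(T)$ induces the desired cotriangular Hopf algebra epimorphism. Since $I_{R^J}$ is a Hopf ideal of ${}_J\mathcal{O}(G)_J$, the quotient $Q := {}_J\mathcal{O}(G)_J/I_{R^J}$ is automatically a minimal cotriangular Hopf algebra with induced $R$-form; the content of the proposition is therefore to identify $Q$ with the twist of a group coordinate ring.

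First, I exploit the fact that a gauge transformation by a convolution-invertible, counit-preserving $\lambda\in\mathcal{O}(G)^*$ replaces $J$ by a gauge-equivalent cocycle $J^\lambda$ and produces an isomorphism of cotriangular Hopf algebras $({}_J\mathcal{O}(G)_J,R^J)\cong ({}_{J^\lambda}\mathcal{O}(G)_{J^\lambda},R^{J^\lambda})$. Next, I choose $\lambda$ so that $J^\lambda$ is supported on a subgroup $T\subseteq G$, in the sense that $J^\lambda$ annihilates $I(T)\otimes \mathcal{O}(G) + \mathcal{O}(G)\otimes I(T)$, where $I(T)\subseteq \mathcal{O}(G)$ is the vanishing ideal of $T$; I take $T$ to be the smallest closed subgroup with this property. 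Setting $\widehat J := J^\lambda$ viewed inside $(\mathcal{O}(T)\otimes\mathcal{O}(T))^*$, the minimality of $T$ forces $\widehat J$ to be a minimal Hopf $2$-cocycle for $T$, since otherwise one could restrict $\widehat J$ further to a still smaller subgroup, contradicting the choice of $T$. Finally, the surjection $\pi:\mathcal{O}(G)\twoheadrightarrow \mathcal{O}(T)$ lifts to a cotriangular Hopf algebra epimorphism ${}_{J^\lambda}\mathcal{O}(G)_{J^\lambda}\twoheadrightarrow {}_{\widehat J}\mathcal{O}(T)_{\widehat J}$ with kernel $I(T)$, because the deformations on both sides are defined by the same cocycle (viewed on $G$ via $\pi\otimes \pi$); composing with the gauge isomorphism yields the desired epimorphism, and its kernel then pulls back to $I_{R^J}$ by minimality of $Q$.

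The main obstacle is producing the gauge $\lambda$ that concentrates $J^\lambda$ on a closed subgroup. The natural inductive approach is that if $I_{R^J}\neq 0$ then it is a nonzero Hopf ideal of ${}_J\mathcal{O}(G)_J$, and, because ${}_J\mathcal{O}(G)_J$ shares its coalgebra with $\mathcal{O}(G)$, it is automatically a coideal of the commutative $\mathcal{O}(G)$; one must then use the cocycle identity for $J$, together with the explicit formula $R^J = J_{21}^{-1}*J$, to extract a gauge $\lambda$ turning $I_{R^J}$ into an honest ideal of the commutative $\mathcal{O}(G)$, cutting out a proper closed subgroup $T\subsetneq G$. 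Iterating on $\dim G$ terminates at the minimal case. Reconciling the deformed and commutative algebra structures on the shared coalgebra via such a choice of gauge is the heart of the argument, and in the paper it is covered by the explicit appeal to \cite[Theorem 3.1]{G1}.
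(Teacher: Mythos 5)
You should note first that the paper contains no internal proof of this proposition: it is quoted verbatim from \cite[Theorem 3.1]{G1} and stamped with a \qed, so your sketch has to stand on its own as a proof of that cited theorem. It does not. The entire content of the statement is the existence of a gauge $\lambda\in(\mathcal{O}(G)^*)^{\times}_1$ such that $J^{\lambda}$ is concentrated on a closed subgroup $T$ (equivalently, that the radical $I_{R^J}$ — a priori only a Hopf ideal of the \emph{twisted} algebra ${}_J\mathcal{O}(G)_J$, hence a coideal but not an ideal of the commutative $\mathcal{O}(G)$ — can be gauged into the vanishing ideal of a closed subgroup). In your second paragraph you simply ``choose $\lambda$'' with this property, and in your final paragraph you acknowledge that producing such a $\lambda$ is the heart of the matter and that ``in the paper it is covered by the explicit appeal to \cite[Theorem 3.1]{G1}.'' That makes the argument circular: the step you defer is precisely the theorem being proved. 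No mechanism is offered for extracting the gauge from the cocycle identity, and in fact this step is where the real proof needs substantial input (in \cite{G1} it rests on the classification/support theory for cotriangular structures and twists, not on a formal manipulation of $R^J=J_{21}^{-1}\ast J$).

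Two secondary points would also need repair even if the concentration step were granted. First, ``the smallest closed subgroup $T$ with this property'' need not exist: the family of such subgroups is not obviously closed under intersection (annihilating $\mathcal{I}(T_1)\otimes\mathcal{O}(G)$ and $\mathcal{I}(T_2)\otimes\mathcal{O}(G)$ controls $\mathcal{I}(T_1)+\mathcal{I}(T_2)$, not its radical $\mathcal{I}(T_1\cap T_2)$); you should instead take a minimal element, which exists by the descending chain condition on closed subgroups. Second, your claim that minimality of $T$ forces $\widehat{J}$ to be minimal is a leap: non-minimality of $\widehat{J}$ gives a nonzero radical inside ${}_{\widehat{J}}\mathcal{O}(T)_{\widehat{J}}$, and turning that into a strictly smaller subgroup supporting a gauge-equivalent cocycle again requires the same unproven concentration step, now over $T$; so the intended induction on $\dim G$ does not close. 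The parts of your argument that are sound are the routine ones: gauge equivalence gives an isomorphism of cotriangular Hopf algebras, $\mathcal{I}(T)$ is an ideal of ${}_{J^{\lambda}}\mathcal{O}(G)_{J^{\lambda}}$ once $J^{\lambda}$ (and its convolution inverse) kill $\mathcal{I}(T)\otimes\mathcal{O}(G)+\mathcal{O}(G)\otimes\mathcal{I}(T)$, and the kernel of any surjection of cotriangular Hopf algebras onto a minimal one is exactly the radical, which identifies the kernel with $I_{R^J}$.
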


Here, the closed subgroup $T\subseteq G$, which is uniquely determined by $J$ up to conjugation by $G$, is called the \emph{support} of $J$.

Assume once again that $G$ is unipotent, and let $J$ be a Hopf $2$-cocycle for $G$ with support $T$, with $R^J = J_{21}^{-1}\ast J$ the corresponding $R$-form for ${}_J\mathcal{O}(G)_J$. There is an algebra homomorphism $\Psi$  from ${}_J\mathcal{O}(G)_J$ to the vector space dual $\mathcal{O}(G)^{\ast}$, given by 
$$ \Psi (h):= R^J(-,\, h), $$
as shown in Lemma \ref{into1}. The kernel of $\Psi$ is the radical $I_{R^J}$ of $R^J$, so one sees that $\mathrm{im}(\Psi)$ is a Hopf subalgebra of $({}_J\mathcal{O}(T)_J)^{\circ,\mathrm{op}}$. In fact, $\mathrm{im}(\Psi)$ is also contained in the finite dual $\mathcal{O}(T)^{\circ}$ of $\mathcal{O}(T)$, which by the Cartier-Gabriel-Kostant theorem is isomorphic to $U(\mathfrak{t})\# \mathbb{C}[T]$, where $\mathfrak{t}$ denotes the Lie algebra of $T$, and we prove that $\mathrm{im}(\Psi) = U(\mathfrak{t})$ 
(as predicted in \cite[Theorem 3.4]{G2}; see Remark \ref{compwg22}). These results are Proposition  \ref{include} and Theorem \ref{noethdomminunip2}, which together yield Theorem \ref{minLie} and \cite[Lemma 3.7, Theorems 3.5, 3.8]{G2}, summarised as follows. Here, and throughout, for a subspace $V$ of a Hopf algebra, $V^+$ denotes the kernel of $\epsilon_{\mid V}$.

\begin{theorem}\label{structurethm} 
Let $J$ be a Hopf $2$-cocycle for a unipotent affine algebraic group $G$, let $T$ be (a representative of) the support of $J$, and let $\mathfrak{t}$ be the Lie algebra of $T$. Denote the antipode of ${}_J\mathcal{O}(G)_J$ by $S^J$.
\begin{enumerate}
\item {\rm (Theorem \ref{crossed})} ${}_J\mathcal{O}(G)_J$ decomposes as a crossed product algebra,
$$ {}_J\mathcal{O}(G)_J \cong ({}_J\mathcal{O}(G/T)_J)\#_{\sigma}{}_J\mathcal{O}(T)_J \cong ({}_J\mathcal{O}(G/T)_J)\#_{\sigma} U(\mathfrak{t})^J,$$
where ${}_J\mathcal{O}(G/T)_J={}_J\mathcal{O}(G/T)$ is the twist of the left coideal subalgebra $\mathcal{O}(G/T)$ consisiting of the functions in $\mathcal{O}(G)$ which are constant on each left coset of $T$ in $G$, where $\sigma$ is an invertible $2$-cocycle from ${}_J\mathcal{O}(T)_J$ to ${}_J\mathcal{O}(G/T)_J$, and where $U(\mathfrak{t})^J$ is the enveloping algebra $U(\mathfrak{t})$ with a twisted comultiplication.
\item {\rm (Lemma \ref{basics})} $ I_{R^J}=({}_J\mathcal{O}(G/T)_J)^{+}{}_J\mathcal{O}(G)_J= ({}_J\mathcal{O}(T\backslash G)_J)^{+}{}_J\mathcal{O}(G)_J.$
\item {\rm (Proposition \ref{centre})} The subalgebra $\mathcal{O}(T\backslash G / T)$ of $\mathcal{O}(G)$  consisting of functions which are constant on double cosets of $T$ in $G$ is contained in the centre of ${}_J\mathcal{O}(G)_J$. 
\item {\rm (Corollary \ref{payoff})} $(S^J)^2 = \mathrm{id}$, and ${}_J\mathcal{O}(G)_J$ is Calabi-Yau. \qed
\end{enumerate} 
\end{theorem}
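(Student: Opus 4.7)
The plan is to establish the four claims in sequence, leveraging the iterated Hopf Ore extension structure of Theorem \ref{structureintro} together with the algebra map $\Psi\colon {}_J\mathcal{O}(G)_J\to U(\mathfrak{t})$, $\Psi(h)=R^J(-,h)$, whose kernel is $I_{R^J}$. For (1), I would first note that since $G$ is unipotent and $T\subseteq G$ is closed, the quotient $G\twoheadrightarrow G/T$ admits a section as algebraic varieties (via an iterated splitting of the normal unipotent filtration into affine spaces). By the Doi--Takeuchi theory this makes $\mathcal{O}(G)$ a cleft right $\mathcal{O}(T)$-comodule algebra with coinvariants $\mathcal{O}(G/T)$, giving the untwisted decomposition $\mathcal{O}(G)\cong \mathcal{O}(G/T)\#_{\sigma_0}\mathcal{O}(T)$ for some $2$-cocycle $\sigma_0$. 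Twisting by $J$ preserves the $\mathcal{O}(T)$-comodule algebra structure; since $J$ is supported on $T$ it vanishes on $\mathcal{O}(G/T)^{+}\otimes \mathcal{O}(G)$ and on $\mathcal{O}(G)\otimes \mathcal{O}(G/T)^{+}$, so ${}_J\mathcal{O}(G/T)_J=\mathcal{O}(G/T)$, and the twisted multiplication again factors as a crossed product ${}_J\mathcal{O}(G)_J\cong \mathcal{O}(G/T)\#_{\sigma}{}_J\mathcal{O}(T)_J$ with a new $\sigma$ combining $\sigma_0$ and $J$. The second isomorphism then follows by identifying ${}_J\mathcal{O}(T)_J\cong U(\mathfrak{t})^J$ via the restriction of $\Psi$, which is an isomorphism because $J$ restricts to a minimal cocycle on $T$ (cf.\ Theorem \ref{noethdomminunip2}).

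For (2), I use $I_{R^J}=\Ker\Psi$. If $h\in \mathcal{O}(G/T)^{+}$, every Sweedler component of $h$ restricts on $T$ to a scalar multiple of the counit, since $h$ is constant on left $T$-cosets and vanishes at $1\in G$; because $R^J=J_{21}^{-1}\ast J$ depends on its arguments only through their $T$-restrictions, $R^J(h,a)=0$ for all $a$, so $({}_J\mathcal{O}(G/T)_J)^{+}{}_J\mathcal{O}(G)_J\subseteq I_{R^J}$. Equality follows by a codimension count against the crossed-product decomposition of (1): the quotient is $U(\mathfrak{t})^J$, matching $\mathrm{im}(\Psi)=U(\mathfrak{t})$; the right-handed statement is symmetric. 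For (3), expand $h\cdot_J a=J(h_{(1)},a_{(1)})\,h_{(2)}a_{(2)}\,J^{-1}(h_{(3)},a_{(3)})$: when $h\in \mathcal{O}(T\backslash G/T)$ is bi-$T$-invariant, the components $h_{(1)}$ and $h_{(3)}$ restrict on $T$ to multiples of $\epsilon$, so the $J$ and $J^{-1}$ factors collapse to $\epsilon\otimes\epsilon$. The deformed product then reduces to the original commutative product in $\mathcal{O}(G)$, and the same collapse for $a\cdot_J h$ gives $h\cdot_J a=a\cdot_J h$.

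For (4), the explicit formula for the twisted antipode gives $S^J(a)=v\ast S(a)\ast v^{-1}$ for a specific convolution-invertible $v$ built from $J$; since $\mathcal{O}(G)$ is commutative one has $S^2=\id$, and a direct calculation reduces $(S^J)^2$ to conjugation by $v^{-1}\ast S(v)$, which the cotriangular identity $R^J=J_{21}^{-1}\ast J$ forces to be trivial (this is the cotriangular analogue of the standard triangular Drinfeld-element computation), yielding $(S^J)^2=\id$. The Calabi--Yau property then follows by combining this involutivity with Theorem \ref{structureintro}(1),(4): a noetherian Auslander-regular Hopf iterated Ore extension of derivation type starting from $\mathbb{C}$ is unimodular and skew Calabi--Yau with Nakayama automorphism tied to $S^2$, and involutivity collapses this to genuine Calabi--Yau via the Brown--Zhang criterion. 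The principal obstacle I anticipate is the bookkeeping in part (1): one must verify that the $\mathcal{O}(T)$-cleft decomposition genuinely survives the twist, that the new cocycle $\sigma$ is valued in the coinvariants, and that it matches the crossed-product structure claimed; the interaction of $J$ with $\sigma_0$ across the Sweedler components is the delicate step, after which (2), (3), and (4) follow largely formally.
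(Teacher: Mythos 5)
Your parts (2) and (3) are essentially workable: (3) is a correct direct Sweedler computation (using that $\Delta^{(2)}(h)\in\mathcal{O}(T\backslash G)\otimes\mathcal{O}(G)\otimes\mathcal{O}(G/T)$ and that $J$ factors through restriction to $T$), which is a more elementary route than the paper's proof of Proposition \ref{centre} via \cite[Theorem 3.5]{G1} and the embedding $\Delta\colon{}_J\mathcal{O}(G)_J\hookrightarrow{}_J\mathcal{O}(G)\otimes\mathcal{O}(G)_J$; and (2) is recoverable, although the paper obtains the equality cleanly from Masuoka's correspondence between left coideal subalgebras and Hopf ideals of connected Hopf algebras rather than from a ``codimension count''. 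However, part (1) of your argument contains a genuine error: from the (true) fact that $J$ collapses to $\epsilon\otimes\epsilon$ whenever one argument lies in $\mathcal{O}(G/T)^{+}$ you conclude that ${}_J\mathcal{O}(G/T)_J=\mathcal{O}(G/T)$, i.e.\ that the coefficient subalgebra is untwisted. This does not follow: $\mathcal{O}(G/T)$ is only a \emph{left} coideal, so for $a,b\in\mathcal{O}(G/T)$ the rightmost Sweedler legs $a_3,b_3$ stay in $\mathcal{O}(G/T)$ (killing the $J$-factor), but the leftmost legs $a_1,b_1$ are arbitrary elements of $\mathcal{O}(G)$, so the $J^{-1}$-factor survives. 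The correct statement, and the one in the theorem, is ${}_J\mathcal{O}(G/T)_J={}_J\mathcal{O}(G/T)$, a one-sided twist which is in general noncommutative (see Example \ref{ex5}, relations (\ref{once})); so the coefficient algebra of your crossed product is misidentified, even though the cleft/crossed-product mechanism itself (Schneider cleftness of connected Hopf algebra quotients in the paper, or your variety-section argument) is the right tool.

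Part (4) is where the proposal genuinely fails. Your involutivity argument uses only commutativity of $\mathcal{O}(G)$ and the identity $R^J=J_{21}^{-1}\ast J$, with no input from unipotence, so it would prove $(S^J)^2=\mathrm{id}$ for \emph{every} affine algebraic group, contradicting Remark \ref{ssqnot1} (\cite[Example 5.2]{EG1}). Concretely, the convolution-conjugating functional you produce is an algebra character of ${}_J\mathcal{O}(G)_J$ (the cotriangular analogue of the Drinfeld element is grouplike in the dual), and conjugation by a nontrivial character is a nontrivial winding-type automorphism; cotriangularity alone does not force this character to be $\epsilon$. Likewise, your Calabi--Yau step asserts that a noetherian Auslander-regular Hopf IHOE of derivation type over $\mathbb{C}$ is automatically unimodular; this is false --- $U(\mathfrak{b})$ for the two-dimensional nonabelian Lie algebra $\mathfrak{b}$ is such an IHOE and is neither unimodular nor Calabi--Yau. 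The paper must work for both facts: it first shows the augmentation ideal has a regular polycentral generating set (Theorem \ref{augcent}, which uses the crossed product, Proposition \ref{centre} and the normaliser tower from $T$ up to $G$), deduces $\mathrm{Ext}^i_{H}(\mathbb{C},H)\cong\delta_{in}\mathbb{C}$ as bimodules by the change-of-rings lemma of \cite{BZ}, identifies the Nakayama automorphism with $(S^J)^2$ and gets $(S^J)^4=\mathrm{id}$ from \cite{BZ}, and only then concludes $(S^J)^2=\mathrm{id}$ from the order dichotomy for antipodes of IHOEs in \cite{BOZZ}, whence Calabi--Yau. Without this unipotence-specific homological input, (4) does not follow formally from cotriangularity plus the IHOE structure.
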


\subsection{Representation theory}\label{reptheory} 
With regard to representation theory, our focus in this paper is on the simple ${}_J\mathcal{O}(G)_J$-modules. In view of  Theorem \ref{structurethm}(3), Schur's lemma shows that each simple ${}_J\mathcal{O}(G)_J$-module is annihilated by $\mathfrak{m}{}_J\mathcal{O}(G)_J$ for a unique maximal ideal $\mathfrak{m}$ of $\mathcal{O}(T\backslash G / T)$. Building on this observation we have the following stratification result, which one can view as generalising the fact that the simple $\mathcal{O}(G)$-modules are parametrised by the elements of $G$.

\begin{theorem}\label{stratintro} {\rm (\cite[\S 4, Proposition 4.1, Corollary 4.2, Theorem 5.1]{G2})} 
Let $G$ be a unipotent affine algebraic group, and let $J$ be a Hopf $2$-cocycle for $G$ with support $T$. Let $g \in G$, and let $Z_g:= TgT$ be the double coset of $T$ in $G$.
\begin{enumerate}
\item{\rm (Lemma \ref{doubles}(1))} 
$Z_g$ is a closed irreducible subset of $G$ with
$$\mathrm{dim}(Z_g) =  2\mathrm{dim}(T) - \mathrm{dim}(T \cap gTg^{-1}).$$
\item{\rm (Proposition \ref{homo})} 
The defining ideal $\mathcal{I}(Z_g)$ of $Z_g$ in $\mathcal{O}(G)$ is also an ideal of ${}_J\mathcal{O}(G)_J$.
\item{\rm (Corollary \ref{strat2})} Define ${}_J\mathcal{O}(Z_g)_J := {}_J\mathcal{O}(G)_J/\mathcal{I}(Z_g)$. Then ${}_J\mathcal{O}(Z_g)_J$ is an affine noetherian domain with
$$\mathrm{GKdim}({}_J\mathcal{O}(Z_g)_J )= 2 \mathrm{dim}(T) - \mathrm{dim}(T \cap gTg^{-1}).$$
\item{\rm (Theorem \ref{strata})} Each simple ${}_J\mathcal{O}(G)_J$-module factors through a unique quotient ${}_J\mathcal{O}(Z_g)_J$. \qed
\end{enumerate}
\end{theorem}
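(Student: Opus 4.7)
For part (1), I plan to realize $Z_g = TgT$ as the orbit of $g$ under the action of the connected unipotent group $T \times T$ on $G$ via $(s,t)\cdot x = sxt^{-1}$. By the classical theorem of Rosenlicht, orbits of unipotent algebraic groups on affine varieties are closed, so $Z_g$ is closed in $G$; it is irreducible as the image of the irreducible variety $T\times T$ under a morphism. The stabilizer of $g$ consists of pairs $(s,t)$ with $s = gtg^{-1}$, hence is isomorphic to $T \cap g^{-1}Tg$, which is conjugate to $T \cap gTg^{-1}$ and has the same dimension. The orbit-stabilizer theorem then yields the stated dimension formula.

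For part (2), the main obstacle, my plan is to exploit the bicomodule structure. Let $\alpha \colon \mathcal{O}(G) \twoheadrightarrow \mathcal{O}(T)$ be the Hopf algebra surjection dual to $T \hookrightarrow G$. Because $Z_g$ is left and right $T$-invariant, $\mathcal{I}(Z_g)$ is a sub-$\mathcal{O}(T)$-bicomodule of $\mathcal{O}(G)$; equivalently, for $h \in \mathcal{I}(Z_g)$ the element $(\alpha\otimes\mathrm{id}\otimes\alpha)\Delta^{(2)}(h) = \sum \alpha(h_1)\otimes h_2 \otimes \alpha(h_3)$ lies in $\mathcal{O}(T) \otimes \mathcal{I}(Z_g) \otimes \mathcal{O}(T)$. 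Since $J$ has support $T$, the twisted product formula gives
\[ \iota_g(a\cdot_J h) \;=\; \sum J^{-1}(\alpha(a_1), \alpha(h_1))\, \iota_g(a_2)\,\iota_g(h_2)\, J(\alpha(a_3), \alpha(h_3)), \]
where $\iota_g\colon \mathcal{O}(G)\to \mathcal{O}(T)\otimes \mathcal{O}(T)$, $\iota_g(f)(s,t) = f(sgt)$, is the evaluation algebra homomorphism (for ordinary multiplication) whose kernel is $\mathcal{I}(Z_g)$. Rewriting $\sum \alpha(h_1)\otimes h_2 \otimes \alpha(h_3)$ in a basis adapted to the subspace $\mathcal{O}(T)\otimes \mathcal{I}(Z_g)\otimes \mathcal{O}(T)$ forces each middle tensorand to lie in $\ker\iota_g$, so $\iota_g(a\cdot_J h) = 0$ and $a\cdot_J h \in \mathcal{I}(Z_g)$; a symmetric argument gives right-ideal stability.

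For part (3), noetherianness and affineness descend from ${}_J\mathcal{O}(G)_J$ by Theorem \ref{structureintro}(4). I would transfer the coradical filtration on ${}_J\mathcal{O}(G)_J$, whose associated graded is the commutative domain $\mathcal{O}(\widehat{G})$ by Theorem \ref{structureintro}(3), to the quotient ${}_J\mathcal{O}(Z_g)_J$; the associated graded then becomes $\mathcal{O}(\widehat{Z_g})$, a commutative affine domain of dimension $\dim Z_g$, giving $\mathrm{GKdim}({}_J\mathcal{O}(Z_g)_J) = \dim Z_g$. Primeness of $\mathcal{I}(Z_g)$ in $\mathcal{O}(G)$ (from irreducibility of $Z_g$) combined with Theorem \ref{structureintro}(5) upgrades to complete primeness in ${}_J\mathcal{O}(G)_J$, giving the domain property. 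For part (4), Dixmier's form of Schur's lemma (valid since ${}_J\mathcal{O}(G)_J$ has countable dimension over $\mathbb{C}$) produces a character $\chi_M$ of the centre on any simple $M$; restricting to $\mathcal{O}(T\backslash G/T) \subseteq Z({}_J\mathcal{O}(G)_J)$ (Theorem \ref{structurethm}(3)) and using the Nullstellensatz picks out a unique double coset $Z_g$ with $\mathfrak{m}_g M = 0$. Combining this with the reducedness of the fibers of $G\to\mathrm{Spec}\,\mathcal{O}(T\backslash G/T)$ in characteristic zero --- which forces $\mathfrak{m}_g\mathcal{O}(G) = \mathcal{I}(Z_g)$ --- together with part (2), yields $\mathcal{I}(Z_g) \subseteq \mathrm{Ann}(M)$, so $M$ factors uniquely through ${}_J\mathcal{O}(Z_g)_J$.
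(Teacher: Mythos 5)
Parts (1) and (2) of your proposal are fine. Part (1) is the paper's own argument (Lemma \ref{doubles}(1)): closedness of unipotent orbits plus orbit–stabilizer. Your bicomodule argument for (2) is a correct alternative to the paper's Proposition \ref{homo} (granting, as the paper also effectively does via Proposition \ref{factor}, that $J$ is identified with the pullback of a cocycle for $T$ along $\mathcal{O}(G)\twoheadrightarrow\mathcal{O}(T)$, so that $J$-values can be computed through $\alpha\otimes\alpha$). But note what your weaker version costs you: the paper proves more, namely that $m_g^{\ast}$ is an algebra homomorphism from ${}_J\mathcal{O}(G)_J$ into the one-sidedly twisted algebra ${}_J\mathcal{O}(T)\otimes\mathcal{O}(T)_J$, so that ${}_J\mathcal{O}(Z_g)_J$ embeds into a Weyl algebra (Theorem \ref{Weyl}); this embedding is exactly how Corollary \ref{strat2} obtains the domain property. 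Your substitute in part (3) --- ``$\mathcal{I}(Z_g)$ is prime in $\mathcal{O}(G)$, and every prime of ${}_J\mathcal{O}(G)_J$ is completely prime'' --- is a non sequitur: primeness of the subspace $\mathcal{I}(Z_g)$ for the commutative multiplication says nothing about primeness of the same subspace as an ideal of ${}_J\mathcal{O}(G)_J$, and Theorem \ref{structureintro}(5) only applies after you know the ideal is prime for the twisted multiplication, which is essentially what has to be proved. (Similarly, the associated graded of ${}_J\mathcal{O}(Z_g)_J$ with respect to the induced coradical filtration need not be a domain, since the associated graded of a prime ideal need not be prime; this does not affect your GK-dimension computation, which is the paper's own argument via equality of the associated graded algebras.)

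The more serious gap is in part (4). You assume that a character of $\mathcal{O}(T\backslash G/T)$ singles out one double coset, i.e.\ that each fibre of $G\to\mathrm{Spec}\,\mathcal{O}(T\backslash G/T)$ is a single reduced double coset, ``forcing $\mathfrak{m}_g\mathcal{O}(G)=\mathcal{I}(Z_g)$''. For unipotent (non-reductive) groups, invariant functions do not separate closed orbits: for instance, for $\mathbb{G}_a$ acting on $\mathbb{A}^2$ by $t\cdot(x,y)=(x,y+tx)$ the invariants are $\mathbb{C}[x]$, and all the point-orbits on the line $x=0$ lie in one fibre. So the zero set of $\sqrt{K\mathcal{O}(G)}$ is in general a union of several double cosets (and one cannot assert that $K\mathcal{O}(G)$ is itself radical, only pass to its radical). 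Showing that a simple module is nevertheless annihilated by one specific $\mathcal{I}(Z_g)$ is precisely the content of the paper's proof of Theorem \ref{strata}: it shows the $(T\times T)$-invariants of each stratum reduce to $\mathbb{C}$, deduces that invariants of ${}_J\mathcal{O}(G)_J/\sqrt{K\mathcal{O}(G)}$ are central, extracts (using unipotence of $T\times T$) a nonzero invariant element of $\mathcal{I}(Z_x)/\sqrt{K\mathcal{O}(G)}$ when the fibre contains more than one coset, applies the noncommutative Nullstellensatz to cut the fibre down, and finishes by noetherian induction. None of this multi-coset analysis appears in your outline, so part (4) as proposed does not go through.
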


In view of Theorem \ref{stratintro}(4) it is natural to explore the representation theory and algebra structure of the algebras ${}_J\mathcal{O}(Z_g)_J$ for $g \in G$. There is a dichotomy according to whether or not $g$ belongs to the normaliser $N := N_G(T)$ of $T$. We first consider the strata $Z_g$ with $g \notin N$ - here there is incomplete information at present except when $g$ is ``very far from normalising'', (see (2)):

\begin{theorem}\label{Zstructintro} Keep the notation of Theorem \ref{stratintro}, let $N := N_G(T)$, and let $g \in G$.
\begin{enumerate}
\item {\rm (Theorem \ref{findims})} 
If $g \notin N$ then ${}_J\mathcal{O}(Z_g)_J$ has no finite dimensional modules.
\item{\rm (\cite[Remark 4.4]{G2}, Theorem \ref{caseIstruc})} If $T \cap gTg^{-1} = \{1\}$ then ${}_J\mathcal{O}(Z_g)_J$ is isomorphic to the Weyl algebra $A_{\mathrm{dim}(T)}(\mathbb{C})$. \qed
\end{enumerate}
\end{theorem}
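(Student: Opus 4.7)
Both parts rest on the crossed product decomposition of Theorem~\ref{structurethm}(1). For part (2), the hypothesis $T\cap gTg^{-1}=\{1\}$ forces the multiplication map $\mu_g:T\times T\to Z_g$, $(s,t)\mapsto sgt$, to be a bijective morphism of irreducible affine varieties; since $T$ is connected unipotent (hence isomorphic to $\mathbb{A}^{\dim T}$) and $\dim Z_g=2\dim T$ by Theorem~\ref{stratintro}(1), $\mu_g$ is in fact a variety isomorphism, so $\mathcal{O}(Z_g)\cong\mathcal{O}(T)\otimes\mathcal{O}(T)$. The plan is then to quotient
\[
{}_J\mathcal{O}(G)_J\;\cong\;{}_J\mathcal{O}(G/T)_J\,\#_\sigma\, U(\mathfrak{t})^J
\]
by $\mathcal{I}(Z_g)$. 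Projecting $Z_g=TgT$ to $G/T$ yields the left $T$-orbit $T\cdot gT$, which has trivial stabilizer $T\cap gTg^{-1}$ and so is isomorphic to $T$. Hence ${}_J\mathcal{O}(G/T)_J$ passes to $\mathcal{O}(T)$ in the quotient; after verifying that on this single free $T$-orbit the cocycle $\sigma$ becomes cohomologically trivial and the $J$-twisted coproduct on $U(\mathfrak{t})^J$ collapses to the standard one, we arrive at ${}_J\mathcal{O}(Z_g)_J\cong\mathcal{O}(T)\#U(\mathfrak{t})$, with $\mathfrak{t}$ acting by left-invariant vector fields induced from the left-translation action of $T$ on itself. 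This is the algebra of global differential operators on the affine space $T$, i.e.\ the Weyl algebra $A_{\dim T}(\mathbb{C})$.

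For part (1), Theorem~\ref{stratintro}(3) says ${}_J\mathcal{O}(Z_g)_J$ is a noetherian domain, and the structure theory recalled in the abstract tells us that every finite-dimensional simple ${}_J\mathcal{O}(G)_J$-module is 1-dimensional. Hence by Jordan--H\"older any nonzero finite-dimensional ${}_J\mathcal{O}(Z_g)_J$-module has a 1-dimensional simple subquotient, giving rise to a character $\chi$ of ${}_J\mathcal{O}(G)_J$ vanishing on $\mathcal{I}(Z_g)$, i.e.\ to an element of $\Gamma\cap Z_g$, where $\Gamma\subseteq G$ is the character group of ${}_J\mathcal{O}(G)_J$. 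Since $T\subseteq N$ and $Z_g=TgT$, the implication ``$Z_g\cap N=\emptyset$ whenever $g\notin N$'' reduces the whole part to establishing the containment $\Gamma\subseteq N$. This containment is proved by unpacking the condition that evaluation at $h\in G$ be an algebra map for the $J$-deformed multiplication: via the Drinfeld twist formula, and using that $J$ has support precisely $T$, the compatibility translates into the normalizer constraint $\mathrm{Ad}(h)\mathfrak{t}=\mathfrak{t}$, i.e.\ $h\in N$.

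The principal obstacle lies in part (2): one must verify that the cocycle $\sigma$ and the twisted coproduct on $U(\mathfrak{t})^J$ both simplify in the expected way after restriction to a single free $T$-orbit, so that the crossed product collapses to the standard smash product $\mathcal{O}(T)\#U(\mathfrak{t})$. Once this is granted, the Weyl algebra identification is immediate, and part (1) follows from the comparatively clean cocycle-level analysis yielding $\Gamma\subseteq N$.
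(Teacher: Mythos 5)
Your part (2) does not go through as described, and the step you defer as a ``verification'' is in fact the whole difficulty. Two concrete problems: first, outside the case $g\in N$ there is no reason (and the paper proves none) that $\mathcal{I}(Z_g)$ is generated by its intersection with ${}_J\mathcal{O}(G/T)_J$ -- that statement is Theorem \ref{caseIIstruc}(3) and is only established for $g\in N$ -- so the quotient by $\mathcal{I}(Z_g)$ need not inherit any crossed-product shape with base the image of ${}_J\mathcal{O}(G/T)_J$. Second, your assertion that ${}_J\mathcal{O}(G/T)_J$ ``passes to $\mathcal{O}(T)$'', i.e.\ to a commutative polynomial base, is false in general: in Example \ref{ex5}, Case I(2), where $T_g=\{1\}$, the stratum for $g(a)=I+aE_{23}$ sends $F_{23}\mapsto a\neq 0$, so the relation $[Y,F_{13}]=F_{23}^2$ from (\ref{once}) shows that the image of ${}_J\mathcal{O}(G/T)_J$ in ${}_J\mathcal{O}(Z_{g(a)})_J$ is noncommutative. (Your isomorphism $\mathcal{O}(Z_g)\cong\mathcal{O}(T)\otimes\mathcal{O}(T)$ is correct -- it is Lemma \ref{doubles}(2) -- but it concerns only the undeformed algebra.) The paper's route is different and avoids the crossed product entirely: Proposition \ref{homo} provides the algebra surjection $m_g^{\ast}$ from ${}_J\mathcal{O}(G)_J$ onto $({}_J\mathcal{O}(T)\otimes\mathcal{O}(T)_J)^{\theta(T_g)}$ with kernel exactly $\mathcal{I}(Z_g)$, so $T_g=\{1\}$ gives ${}_J\mathcal{O}(Z_g)_J\cong{}_J\mathcal{O}(T)\otimes\mathcal{O}(T)_J$, and then Theorem \ref{Weyl}(2),(4) (each one-sided twist of $\mathcal{O}(T)$ is $A_{\dim(T)/2}(\mathbb{C})$, and ${}_J\mathcal{O}(T)\cong(\mathcal{O}(T)_J)^{\rm op}$) yields $A_{\dim(T)}(\mathbb{C})$.

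For part (1) your reduction (finite dimensional simples are $1$-dimensional, so a nonzero finite dimensional ${}_J\mathcal{O}(Z_g)_J$-module produces a character killing $\mathcal{I}(Z_g)$) matches the paper, but the two containments you then invoke are exactly what has to be proved and are not proved in your sketch. (i) That a character of ${}_J\mathcal{O}(G)_J$ is evaluation at a point of $G$ -- needed both for ``$\Gamma\subseteq G$'' and for ``$\chi$ kills $\mathcal{I}(Z_g)$ iff $\chi\in TgT$'' -- is Theorem \ref{abel}, proved via the common presentation of $\mathcal{O}(G)$ and ${}_J\mathcal{O}(G)_J$ as quotients of the free algebra and a comparison of commutator ideals; you use it silently. (ii) The containment $\Gamma\subseteq N$ is the real content, and your claim that the algebra-map condition for evaluation at $h$ ``translates into $\mathrm{Ad}(h)\mathfrak{t}=\mathfrak{t}$'' is not a correct translation: by Lemma \ref{alghomIJ}(2) the condition is $J^{h}=J$, which is strictly stronger than normalising $T$ (elements of $N\setminus C_0$ normalise $T$ but are not characters). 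The implication you actually need, $J^{h}=J\Rightarrow h\in N_G(T)$, requires an argument; in the paper it comes from the fact that conjugation by $h$ is then a Hopf algebra automorphism of ${}_J\mathcal{O}(G)_J$ preserving $R^J$, hence preserving the radical $I_{R^J}$ and the minimal cotriangular quotient ${}_J\mathcal{O}(T)_J$ (Lemma \ref{radical} and the displayed identity (\ref{addon}) in the proof of Theorem \ref{findims}(1)), which forces $h$ to normalise $T$. Without (i) and (ii), your part (1) is a reduction to the key claims rather than a proof of them.
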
 
 
Thus finite dimensional simple modules can only occur in the strata $Z_g$ where $g \in N$. We consider now these cases. As an immediate corollary of Theorem \ref{structureintro}(5), when $G$ is unipotent every finite dimensional simple ${}_J\mathcal{O}(G)_J$-module is $1$-dimensional. The set of such modules therefore forms an algebraic group $\Gamma$, whose algebra of regular functions is the abelianisation ${}_J\mathcal{O}(G)_J/\langle [{}_J\mathcal{O}(G)_J, {}_J\mathcal{O}(G)_J]\rangle$ of ${}_J\mathcal{O}(G)_J$. The following theorem summarises what we know about the group $\Gamma$. (In particular, it corrects \cite[Theorem 5.2]{G2}.) Note first, as explained in Remark \ref{Jrems}(3), that the group $(\mathcal{O}(G)^*)^{\times}_1$ of convolution invertible elements of $\mathcal{O}(G)^*$ which map $1$ to $1$ acts on the set of Hopf $2$-cocycles for $G$. Denote this action by $J \mapsto J^{\chi}$ for $\chi \in (\mathcal{O}(G)^*)^{\times}_1$. Of course, $G$ itself, and also the group $\Gamma$, are subgroups of  $(\mathcal{O}(G)^*)^{\times}_1$.

\begin{theorem}\label{findimintro}{\rm (Theorem \ref{findims})} 
Keep the notation $G$, $J$, $T$ and $N$ of Theorem \ref{Zstructintro}, and let $F$ and $\Gamma$ be the groups of finite dimensional simple ${}_J\mathcal{O}(T)_J$ and ${}_J\mathcal{O}(G)_J$-modules, respectively. Let $[J]$ denote the orbit of $J$ under the action of the subgroup $(\mathcal{O}(T)^*)^{\times}_1$ of $(\mathcal{O}(G)^*)^{\times}_1$. Define  $$C_0 := \{ g \in N\mid J^g = J \}\,\,\, {\rm and} \,\,\,C := \{ g \in N \mid [J^g] = [J]\},$$ so that $C_0 \subseteq C$, $T \triangleleft C$, and hence $C_0 T \subseteq C$.
\begin{enumerate}
\item $\Gamma = C_0,$ and $C_0 = \{g \in G \mid J^g = J\}$.
\item $C = C_0T.$
\item $F=C_0 \cap T\subseteq T$ is a closed abelian subgroup of $T$, normal in $C_0$, with 
 $$ \mathrm{dim}(F)= \mathrm{dim}(T/[T,T]).$$
\item $C_0/F \cong C/T$, so that there is an exact sequence of unipotent groups
$$ 1 \rightarrow F \rightarrow \Gamma \rightarrow C/T \rightarrow 1.$$
\item $\mathrm{dim}(\Gamma)=\mathrm{dim}(C) - \mathrm{dim}([T,T]).$
\item The algebra ${}_J\mathcal{O}(Z_g)_J$ has nonzero finite dimensional representations if and only if $g\in C$. The distinct strata admitting nonzero finite dimensional modules are $\{{}_J\mathcal{O}(Z_g)_J \mid g \in \mathcal{C}\},$ where $\mathcal{C}$ is a set of coset representatives for $F=C_0 \cap T$ in $C_0$, or equivalently for $T$ in $C$.
\item For any $g \in \mathcal{C}$, the set of finite dimensional simple ${}_J\mathcal{O}(Z_g)_J$-modules is  
parametrized by $gF=g(C_0 \cap T)$. Specifically, the corresponding functionals have kernels the maximal 
ideals $\{\mathfrak{m}_{gx} \mid x\in C_0 \cap T \}$ of $\mathcal{O}(G)$. \qed
\end{enumerate}
\end{theorem}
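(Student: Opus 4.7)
The strategy is to reduce every assertion to two pivotal facts: (A) the characters $\chi : {}_J\mathcal{O}(G)_J\to\mathbb{C}$ are exactly the elements $\chi \in (\mathcal{O}(G)^*)^{\times}_1$ satisfying $J^{\chi} = J$; and (B) for each $g \in N$, there is an algebra isomorphism ${}_J\mathcal{O}(Z_g)_J \cong {}_{J^g}\mathcal{O}(T)_{J^g}$. For (A), I would carry out a direct Sweedler-index calculation: using the explicit formula $a\cdot_J b = \sum J(a_{(1)},b_{(1)})\,a_{(2)} b_{(2)}\,J^{-1}(a_{(3)}, b_{(3)})$, the multiplicativity condition $\chi(a\cdot_J b) = \chi(a)\chi(b)$ unpacks to the gauge-fixing identity $J^{\chi} = J$ in the sense of Remark \ref{Jrems}(3). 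For (B), I would use that left translation $L^*_{g^{-1}}$ on $\mathcal{O}(G)$ carries $\mathcal{I}(gT)$ onto $\mathcal{I}(T)$; since $g\in N$, conjugation by $g$ stabilises $T$, so $J^g$ still has support $T$ and is again a minimal Hopf cocycle for $T$, and one verifies that the induced multiplication on the quotient $\mathcal{O}(G)/\mathcal{I}(gT)\cong\mathcal{O}(T)$ is precisely that of ${}_{J^g}\mathcal{O}(T)_{J^g}$. Combining (A) and (B) with Theorems \ref{stratintro}(4) and \ref{Zstructintro}(1), which confine every finite-dimensional simple ${}_J\mathcal{O}(G)_J$-module to a stratum ${}_J\mathcal{O}(Z_g)_J$ with $g\in N$, will suffice to locate all of $\Gamma$.

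\noindent\textbf{Deductions of (1)--(7).} Applying (A) in $(G,J)$ gives $\Gamma = \{g\in G : J^g = J\} = C_0$, yielding (1); the same argument applied inside $T$ yields $F = \{t\in T : J^t = J\} = C_0 \cap T$, which is the first clause of (3). Combining (B) with (A) in $T$ shows that ${}_J\mathcal{O}(Z_g)_J$ admits a character iff some $\chi\in(\mathcal{O}(T)^*)^{\times}_1$ satisfies $(J^g)^{\chi} = J^g$, i.e.\ iff $[J^g] = [J]$, which is (6); the finite-dimensional simples of ${}_J\mathcal{O}(Z_g)_J$ then correspond to points $gh$ with $h\in T$ satisfying $(J^g)^h = J^g$, equivalently $gh\in C_0$, giving (7). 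For (2), $C = C_0 T$: if $g\in C$ and $(J^g)^{\chi} = J^g$ for some $\chi\in(\mathcal{O}(T)^*)^{\times}_1$, then factoring $\chi = t\cdot\chi'$ with $t\in T$ and $\chi'$ fixing $J^g$ forces $gt\in C_0$. The abelianity and normality of $F$ in $C_0$ in (3), the exact sequence (4), and the dimension formula (5) then follow from the natural surjection $\Gamma = C_0 \twoheadrightarrow C_0/F \cong C/T$, whose fibres are cosets of $F$.

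\noindent\textbf{Main obstacle.} I expect the most technically demanding step to be the dimension formula $\dim F = \dim(T/[T,T])$ in (3), which requires explicit control of the abelianisation of the minimal cotriangular Hopf algebra ${}_J\mathcal{O}(T)_J$ associated to the nondegenerate quasi-Frobenius pair $(\mathfrak{t},\omega)$. The plan is to exploit the iterated Hopf Ore extension of Theorem \ref{structureintro}(1), choosing a chain of closed subgroups of $T$ adapted to the derived series $T\supseteq[T,T]\supseteq[[T,T],T]\supseteq\cdots$, and to compute the successive derivations $\partial_i$ using the explicit formula of Theorem \ref{structureintro}(2). Non-degeneracy of $\omega$ should force these derivations to couple precisely the generators dual to $[\mathfrak{t},\mathfrak{t}]$, so that the commutator ideal of ${}_J\mathcal{O}(T)_J$ contains representatives of a basis of $[\mathfrak{t},\mathfrak{t}]$; the abelianisation is then a polynomial algebra on $\dim(T/[T,T])$ generators, whose spectrum has the claimed dimension.
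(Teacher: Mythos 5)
Your pivotal fact (B) is where the argument breaks. Left translation by $g^{-1}$ (the winding automorphism $\tau^{\ell}_g$) twists only the \emph{left} leg of the multiplication: it is an algebra isomorphism ${}_J\mathcal{O}(G)_J \to {}_{J^g}\mathcal{O}(G)_J$, with cocycle $J^g$ on the left but still $J$ on the right in the sense of (\ref{mult0}), and hence for $g\in N$ it induces ${}_J\mathcal{O}(Z_g)_J \cong {}_{J^g}\mathcal{O}(T)_J$, \emph{not} ${}_{J^g}\mathcal{O}(T)_{J^g}$ (this is Lemma \ref{stratIIfinish}). The distinction is not cosmetic: ${}_{J^g}\mathcal{O}(T)_{J^g}$ is a Hopf algebra (as an algebra it is $U(\mathfrak{t})$ by Theorem \ref{minLie}) and therefore always has the counit as a $1$-dimensional module, so if your (B) held, every stratum with $g\in N$ would admit finite dimensional modules and parts (2), (6), (7) would collapse to statements about $N$ rather than about $C$. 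Your own deductions show the defect internally: in (6), ``admits a character iff some $\chi$ satisfies $(J^g)^{\chi}=J^g$'' is vacuously true (take $\chi=\epsilon$) and is not equivalent to $[J^g]=[J]$; in (7), ``$(J^g)^h=J^g$'' says that $h$ stabilises $J^g$ and is not equivalent to $gh\in C_0$ (take $h=1$ and $g\notin C_0$). What is actually needed is the character criterion for the \emph{mixed} twist: $\chi$ is a character of ${}_L\mathcal{O}(T)_J$ if and only if $L=J^{\chi}$ (Lemma \ref{alghomIJ}), applied with $L=J^g$ through the isomorphism ${}_J\mathcal{O}(Z_g)_J\cong{}_{J^g}\mathcal{O}(T)_J$; that is precisely where the condition $[J^g]=[J]$, i.e.\ $g\in C=C_0T$, enters, and it is the mechanism behind (2), (6) and (7) in the paper's proof of Theorem \ref{findims}.

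There are also two quieter gaps. For (1), the identity ``(A) gives $\Gamma=\{g\in G\mid J^g=J\}=C_0$'' silently assumes both that every character of ${}_J\mathcal{O}(G)_J$ is a point of $G$ (this needs the fact that the commutator ideal of ${}_J\mathcal{O}(G)_J$ is also a Hopf ideal of $\mathcal{O}(G)$ — the free-algebra/PBW-basis argument of Theorem \ref{abel}) and that such a point normalises $T$ (this needs the minimality of the cotriangular quotient $({}_J\mathcal{O}(T)_J,R^J)$ and the conjugation-invariance argument (\ref{addon}) in the proof of Theorem \ref{findims}(1)); neither is addressed by your Sweedler computation, which only identifies which convolution-invertible functionals are multiplicative. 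Finally, your route to $\mathrm{dim}(F)=\mathrm{dim}(T/[T,T])$ via derivations adapted to the derived series is a heuristic rather than a proof; the clean argument is that in the minimal case ${}_J\mathcal{O}(T)_J\cong U(\mathfrak{t})$ as algebras (Theorem \ref{minLie}, Corollary \ref{teneqvrepcat}), so its character group is $\left(\mathfrak{t}/[\mathfrak{t},\mathfrak{t}]\right)^*$, while realising $F$ as a concrete closed subgroup of $T$, with $\mathrm{Lie}(F)=\mathrm{ker}(\delta)$ and $F=\{t\in T\mid J^t=J\}$, uses the triangular Lie bialgebra isomorphism $F_r:\mathfrak{t}^*\xrightarrow{\cong}\mathfrak{t}$ as in Theorem \ref{strfindimsF}.
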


\subsection{Special cases and examples}\label{examplesintro}
In $\S$\ref{examples} we consider the special case when the support $T$ of $J$ is abelian. Finally we expand \cite[Examples 6.1-6.5]{G2} to illustrate our results. 

\subsection{Notation}\label{notation} 

We work throughout over the complex numbers $\mathbb{C}$, although many of our results are valid over an arbitrary algebraically closed field of characteristic $0$. All unadorned tensor products are assumed to be over $\mathbb{C}$. The notation $(H, m_H, 1_H, \epsilon_H, \Delta_H, S_H)$ will always denote a Hopf $\mathbb{C}$-algebra and its standard operations, although we omit the suffix $H$ whenever possible. The augmentation ideal of $H$ is denoted by $H^+$, extending this usage by writing $A^+$ for $H^+ \cap A$ when $A\subset H$ is a subalgebra. We denote the finite dual of $H$ by $H^{\circ}$ \cite[$\S$9.1]{M}, and use $\ast$ to denote convolution product.

\section{Preliminaries}\label{prelims} 

\subsection{Hopf $2$-cocycles}\label{cocycles}

Let $H$ be a Hopf $\mathbb{C}$-algebra.

\begin{definition}\label{twococyc}(Doi \cite{D}, Drinfeld \cite{Dr}, Majid \cite[$\S\S$2.1,2.2,2.3]{Ma1}) A \emph{Hopf $2$-cocycle} for $H$ is a convolution-invertible bilinear map $J: H \otimes H \rightarrow \mathbb{C}$ such that
\begin{equation}\label{Jone} \sum J(a_1b_1,c)J(a_2,b_2) = \sum J(a, b_1c_1)J(b_2,c_2)
\end{equation}
and
\begin{equation}\label{Jtwo} J(a,1)= \epsilon (a)= J(1,a),
\end{equation}
for all $a,b,c \in H$. If $G$ is an affine algebraic group and $H:=\mathcal{O}(G)$ is the algebra of polynomial functions, then $J$ is often referred to as a \emph{Hopf $2$-cocycle for $G$}. \qed
\end{definition}

Let $J$ and $K$ be two Hopf $2$-cocycles for $H$. Then one can construct a new algebra $_K H_J$ as follows: as vector spaces, $_K H_J = H$, but the new multiplication $_K m_J$ is given for $a,b \in H$ by
\begin{equation}\label{mult0} _K m_J(a\otimes b)=\sum K^{-1}(a_1,b_1)a_2b_2 J(a_3,b_3).
\end{equation}
In particular there are special cases when $K$ or $J$ is $\epsilon \otimes \epsilon$, denoted respectively by $H_J$ and by $_K H$, with respective multiplications
\begin{equation}\label{mult1} m_J(a\otimes b)  =  \sum a_1b_1 J(a_2,b_2)\,\, \textit{  and  }\,\, _K m(a\otimes b)  =  \sum K^{-1}(a_1,b_1)a_2b_2.
\end{equation}
In the most important case $K = J$, $_J H_J$ is a Hopf algebra, with coalgebra structure unchanged from $H$, but with multiplication
\begin{equation}\label{mult} {}_J m_J(a\otimes b)= \sum J^{-1}(a_1,b_1)a_2b_2 J(a_3,b_3),
\end{equation}
and antipode
\begin{equation}\label{twistanti}  S^J (a)=\sum J^{-1}(a_1,S(a_2))S(a_3)J(S(a_4),a_5),
\end{equation}
for all $a,b \in H$. To avoid needless clutter we will abbreviate the notation for the multiplication in ${}_JH_J$ to $m_J$ from now on, when no confusion seems likely. We record the following basic properties, which will be needed later. 

\begin{lemma}\label{action0} Let $H$, $K$ and $J$ be as above, and let $G\subset (H^*)^{\times}$ be a subgroup.
\begin{enumerate}
\item ${}_K H_J$ is a right and left $G$-module under the respective actions defined by 
\begin{equation}\label{action}   
f\cdot g = \sum g(f_1)f_2\,\, \textit{   and   }\,\, g\cdot f  = \sum f_1 g(f_2);\,\,\,f\in {}_K H_J,\, g \in G.
\end{equation}
\item Suppose that $H = \mathcal{O}(G)$ for a group $G$. Then the first of the actions in (1) defines an action of $G$ by algebra automorphisms on $\mathcal{O}(G)_J$, and the second yields algebra automorphisms of ${}_J\mathcal{O}(G)$.
\item $\Delta$ gives an algebra embedding of $_KH_J$ into ${}_K H \otimes H_J$.
\end{enumerate}
\end{lemma}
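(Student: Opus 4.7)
The plan is to establish all three parts by direct Sweedler-notation computations, exploiting the key point that although the multiplication is deformed in passing from $H$ to ${}_KH_J$, the coalgebra structure is unchanged; consequently, the comultiplication, iterated coproducts, and the counit axiom behave exactly as in $H$, and convolution on $H^\ast$ is still governed by $\Delta$. The subgroup $G \subset (H^\ast)^\times$ sits inside an associative algebra under convolution, so the product $gh$ of two elements of $G$ acts on $H$ as the convolution product $g \ast h$.

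For part (1), I verify the two action axioms. The unit axiom is immediate: $\epsilon \cdot f = \sum f_1 \epsilon(f_2) = f$, and similarly for the right action. For associativity of the left action, I compute
\[
g \cdot (h \cdot f) = g \cdot \bigl(\textstyle\sum f_1 h(f_2)\bigr) = \sum f_1 g(f_2) h(f_3) = \sum f_1 (g \ast h)(f_2) = (gh) \cdot f,
\]
the middle equalities being coassociativity and the definition of convolution. The right action is handled symmetrically.

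For part (2), the essential input beyond part (1) is that when $H = \mathcal{O}(G)$ and $x \in G$ is viewed in $(\mathcal{O}(G)^\ast)^\times$ as the evaluation character $f \mapsto f(x)$, each such $x$ is an \emph{algebra} homomorphism $\mathcal{O}(G) \to \mathbb{C}$. Combined with the fact that $\Delta$ is an algebra map, for $f,f' \in \mathcal{O}(G)_J$ I get
\[
(f \cdot x)\, m_J (f' \cdot x) = \sum x(f_1) x(f'_1)\, f_2 f'_2\, J(f_3, f'_3) = \sum x(f_1 f'_1)\, f_2 f'_2\, J(f_3, f'_3) = m_J(f \otimes f') \cdot x,
\]
using $\Delta(f_1 f'_1) = \sum f_1 f'_1 \otimes f_2 f'_2$ and the fact that $J(f_3,f'_3)$ is a scalar. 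The analogous computation for the left action on ${}_J\mathcal{O}(G)$ proceeds with $J^{-1}$ on the left instead of $J$ on the right.

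For part (3), I would check directly that $\Delta : {}_KH_J \to {}_KH \otimes H_J$ is a homomorphism. Expanding the left-hand side,
\[
\Delta(a \,\cdot_{K,J}\, b) = \sum K^{-1}(a_1,b_1)\, \Delta(a_2 b_2)\, J(a_3,b_3) = \sum K^{-1}(a_1,b_1)\, a_2 b_2 \otimes a_3 b_3\, J(a_4,b_4),
\]
while the right-hand side, using the twisted multiplications on the two tensor factors of ${}_KH \otimes H_J$, gives
\[
\Delta(a)\cdot\Delta(b) = \sum K^{-1}(a_{1,(1)},b_{1,(1)})\, a_{1,(2)} b_{1,(2)} \otimes a_{2,(1)} b_{2,(1)}\, J(a_{2,(2)},b_{2,(2)}),
\]
and coassociativity identifies this with the previous line. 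Injectivity is immediate from $(\epsilon \otimes \id) \circ \Delta = \id$. The main "obstacle" here is purely organisational — matching Sweedler indices consistently on both sides — which is bookkeeping rather than a genuine difficulty; the only place where something nontrivial beyond coassociativity is used is part (2), where the fact that elements of $G$ are algebra characters (and not merely convolution-invertible functionals) is exactly what makes the deformed multiplication $G$-equivariant.
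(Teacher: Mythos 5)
Your proposal is correct and follows essentially the same route as the paper: part (1) is the routine verification the paper declares clear, and part (2) is exactly the paper's computation, hinging on the undeformed coalgebra structure and the fact that evaluation at a group element is an algebra character so that it commutes with the cocycle-deformed product. The only difference is in part (3), where the paper simply cites \cite[Lemma 2.2]{G2} while you carry out the (standard) Sweedler-notation verification directly; your computation is the expected proof of that cited lemma, so nothing is gained or lost.
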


\begin{proof}
(1) This is clear.

(2) We check the first statement, the second being similar. Use $\cdot$ to denote multiplication in $\mathcal{O}(G)_J$. As in $(1)$, use $g(-)$ to denote evaluation at $g$, and $-\cdot g$ for the right $G$-module action on the (vector space) $\mathcal{O}(G)$. For $f,h \in \mathcal{O}(G)$ and $g \in G$,
\begin{align*} 
(f\cdot h)\cdot g &=\left(\sum f_1 h_1J(f_2,h_2)\right)\cdot g \\
&= \sum (f_1  h_1)\cdot g J(f_2,h_2)
= \sum (f_1\cdot g) (h_1\cdot g) J(f_2, h_2) \\
&= \sum(g(f_1)f_2 g(h_1)h_2) J(f_3,h_3)= (f\cdot g)\cdot (h\cdot g).
\end{align*}

(3) This is \cite[Lemma 2.2]{G2}.
\end{proof}

\begin{remarks}\label{Jrems}
(1) If $J$ is a Hopf $2$-cocycle for $H$ and $A$ is a Hopf subalgebra of $H$, then $J_{|A \otimes A}$ is a Hopf $2$-cocycle for $A$.

(2) By (the dualised version of) a theorem of Schauenberg \cite{S} (see also \cite{EG5} for an alternative proof), for all finite dimensional Hopf algebras $H$ and $A$, we have $\mathrm{Corep}(H)\cong \mathrm{Corep}(A)$ as tensor categories if and only if $H$ and $A$ are related by a Hopf $2$-cocycle. This extends to all Hopf algebras, with basically the same proof.

(3) Let $J$ be a Hopf $2$-cocycle for a Hopf algebra $H$. So $J \in (H \otimes H)^{\ast}$ and there is an action of the group $(H^*)^{\times}$ of units of $H^{\ast}$ on the set of Hopf $2$-cocycles for $H$: for $\chi\in (H^*)^{\times}$ there is a Hopf $2$-cocycle $J^{\chi}$ of $H$, defined by 
$$ J^{\chi} (a,b):=\sum \chi(a_1b_1) J(a_2, b_2) \chi^{-1}(a_3)\chi^{-1}(b_3);\,\,\,a,b \in H.$$
Two Hopf $2$-cocycles for $H$ which are in the same orbit under $(H^*)^{\times}$ are said to be \emph{gauge equivalent}. If $J_1$ and $J_2$ are gauge equivalent Hopf $2$-cocycles for the finite dimensional Hopf algebra $H$, then ${}_{J_1} H_{J_1} \cong {}_{J_2}H_{J_2}$ as Hopf algebras, by the dualised version of \cite[page 115]{EGNO}. The same proof applies in the infinite dimensional setting.

(4) A Hopf $2$-cocycle $J$ for $H$ is called \emph{invariant} if $_JH_J=H$ as Hopf algebras \cite[p.2]{G2} (i.e., isomorphic via the identity map). The set of invariant Hopf $2$-cocycles for $H$ is a subgroup of $((H \otimes H)^*)^{\times}$, denoted $Z^2_{\mathrm{inv}}(H)$; in view of the previous remark, $Z^2_{\mathrm{inv}}(H)$ is normalized by $(H^*)^{\times}$. Invariant Hopf $2$-cocycles are studied in, for example, \cite{BC, EG4,S2}. \qed
\end{remarks}

\subsection{Cotriangular Hopf algebras}\label{cotri1}
A key point for us will be the preservation of the following property under cocycle deformation.

\begin{definition}\label{cotridef} (\cite[Definition 8.3.19]{EGNO}) A \emph{coquasitriangular} Hopf algebra  is a pair $(H,R)$, where $H$ is a Hopf $\mathbb{C}$-algebra and $R: H \otimes H \rightarrow \mathbb{C}$ is a convolution invertible bilinear map such that
\begin{enumerate}
\item 
$R(h, \ell g )=\sum R(h_1, g)R(h_2, \ell)\,\, \textit{  and  }\,\, R(gh, \ell) = \sum R(g, \ell_1)R(h, \ell_2);$
\item
$\sum R(h_1,g_1)h_2g_2=\sum g_1 h_1 R(h_2,g_2)$
\end{enumerate}
for $g,h, \ell \in H$. Such a functional $R$ is called an \emph{$R$-form} for $H$.

If in addition
$$ \textrm{(3)} \quad \sum R(h_1,g_1)R(g_2,h_2)=\epsilon(g)\epsilon (h), $$
that is, if $R^{-1} = R_{21}$, then $(H,R)$ is called \emph{cotriangular}. \qed
\end{definition}

There is a dual concept of a \emph{(quasi)triangular Hopf algebra} $(H,R)$, defined by Drinfeld \cite{Dr2}, \cite[Definition 8.3.1]{EGNO}, where now $R$, the \emph{universal $R$-matrix}, is an invertible element of $H \otimes H$ satisfying certain equations \cite[(8.8), page 198]{EGNO}, stemming from the braid relations. A finite dimensional Hopf algebra $(H,R)$ is easily seen to be (quasi)triangular if and only if $(H^*, R)$ is co(quasi)triangular \cite[page 201]{EGNO} (noting that $R \in H \otimes H$ can be regarded as an element of $(H^* \otimes H^*)^*$).

The following classes of examples are key for us:  

\begin{example}\label{commex}(1)(\cite[Example 8.3.4]{EGNO}) If $H$ is a commutative Hopf algebra, then $(H,\epsilon\otimes\epsilon)$ is cotriangular. Equivalently, any cocommutative Hopf algebra is triangular with universal $R$-matrix $1 \otimes 1$.

(2)(\cite[Remark 8.3.24]{EGNO}) Let $(H,R)$ be co(quasi)triangular, and let $J$ be a Hopf $2$-cocycle for $H$. Define $R^J := J_{21}^{-1}\ast R\ast J$. Then $R^J$ is an $R$-form for ${}_JH_J$, and we say that $R^J$ is obtained from $R$ by \emph{twisting by} $J$. Thus, $({}_J H_J, R^J)$ is a co(quasi)triangular Hopf algebra. \qed 
\end{example}

The following properties of the $R$-forms obtained as in Example \ref{commex}(2) will be needed later.

\begin{lemma}\label{quick} 
Let $G$ be an affine algebraic group, so $(\mathcal{O}(G),\epsilon \otimes \epsilon)$ is cotriangular. Let $J$ be a Hopf $2$-cocycle for $G$, and define $ R^J:=J_{21}^{-1}\ast J$. 
Then $R^J$ is an $R$-form for ${}_J\mathcal{O}(G)_J$ by Example \ref{commex}(2). 
\begin{enumerate}
\item For all $\beta \in \mathcal{O}(G)^+$, $R^J(\beta,1) = R^J(1,\beta) = 0$.
\item Let $p  \in \mathcal{O}(G)$ be primitive. Then for all $\alpha \in \mathcal{O}(G)$,
$$  R^J(p,\alpha)=(J - J_{21})(p,\alpha)= (J_{21}^{-1} - J^{-1})(p,\alpha) .$$
\end{enumerate}
\end{lemma}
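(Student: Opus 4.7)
The plan is to prove both parts by direct manipulation of the Sweedler expansion of $R^J = J_{21}^{-1} \ast J$, using only the normalization condition \eqref{Jtwo}, the coproduct of primitive elements, and the defining property of the convolution inverse.

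For part (1), I would first record the auxiliary fact that $J^{-1}(a,1) = J^{-1}(1,a) = \epsilon(a)$ for all $a \in \mathcal{O}(G)$. This follows by setting $b = 1$ in the identity $\sum J(a_1,b_1)J^{-1}(a_2,b_2) = \epsilon(a)\epsilon(b)$, using \eqref{Jtwo} on the first factor, and applying $\epsilon \otimes \id$ to collapse the Sweedler sum. With this in hand, $R^J(\beta,1) = \sum J^{-1}(1,\beta_1) J(\beta_2,1) = \sum \epsilon(\beta_1)\epsilon(\beta_2) = \epsilon(\beta) = 0$ since $\beta \in \mathcal{O}(G)^+$; the computation for $R^J(1,\beta)$ is the mirror image.

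For part (2), I would use $\Delta(p) = p \otimes 1 + 1 \otimes p$ to expand
\begin{equation*}
R^J(p,\alpha) = \sum J^{-1}(\alpha_1,p_1) J(p_2,\alpha_2) = J^{-1}(\alpha,p) + J(p,\alpha),
\end{equation*}
where the normalization facts above eliminate the two ``cross'' terms involving $J(1,\alpha_2)$ and $J^{-1}(\alpha_1,1)$. The remaining task is to relate the values of $J^{-1}$ on primitive arguments to those of $J$. For this I would expand $(J \ast J^{-1})(\alpha,p) = 0$ and $(J^{-1} \ast J)(p,\alpha) = 0$ using the primitive coproduct of $p$; after again collapsing via $\epsilon$, these yield the two key identities
\begin{equation*}
J^{-1}(\alpha,p) = -J(\alpha,p) \quad \text{and} \quad J^{-1}(p,\alpha) = -J(p,\alpha).
\end{equation*}
Substituting the first into the displayed formula for $R^J(p,\alpha)$ gives the equality with $(J - J_{21})(p,\alpha)$, while combining both identities gives the equality with $(J_{21}^{-1} - J^{-1})(p,\alpha)$.

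There is no real obstacle here; the whole argument is a two-line Sweedler manipulation once one observes that a primitive element's coproduct lets the normalization conditions on $J$ and $J^{-1}$ do all the work. The only mild point of care is keeping the order of arguments in $J_{21}^{-1}(a,b) = J^{-1}(b,a)$ correct throughout, so that the final comparison with $J - J_{21}$ and $J_{21}^{-1} - J^{-1}$ is unambiguous.
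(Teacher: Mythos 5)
Your proposal is correct and proceeds by the same direct Sweedler manipulation as the paper: for (1) the paper computes $R^J(\beta,1)=\sum J_{21}^{-1}(\beta_1,1)J(\beta_2,1)$ and kills every summand via the normalization (\ref{Jtwo}) (implicitly also for $J^{-1}$, which you verify explicitly — a harmless addition), exactly as you do. For (2) the paper merely cites \cite[Lemma 2.3]{G2}, and your computation — collapsing $R^J(p,\alpha)$ to $J^{-1}(\alpha,p)+J(p,\alpha)$ and then deriving $J^{-1}(\alpha,p)=-J(\alpha,p)$ and $J^{-1}(p,\alpha)=-J(p,\alpha)$ from $J\ast J^{-1}=\epsilon\otimes\epsilon$ evaluated against the primitive $p$ — is a correct self-contained proof of precisely that cited statement.
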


\begin{proof} 
(1) Let $\beta \in \mathcal{O}(G)^+$. Then, by definition,  
$$R^J(\beta,1) = \sum J_{21}^{-1}(\beta_1,1)J(\beta_2,1),$$
and this equals 0 by equation (\ref{Jtwo}) in Definition \ref{twococyc}, since in every summand above, at least one of $\beta_1, \beta_2$ is in $\mathcal{O}(G)^+$.

(2) This is \cite[Lemma 2.3]{G2}.
\end{proof}

Notice that we can reformulate Definition \ref{cotridef}(2) as
$m^{\mathrm{op}}=R \ast m \ast R^{-1}$, 
thus exposing the relevance of $R$-forms for $H$ to the existence of braidings on the category $\mathrm{Comod}(H)$ of right $H$-comodules. Namely, recall the following, where (2) is the dual of \cite[Proposition 8.3.14]{EGNO}.

\begin{theorem}\label{cotri} {\rm (\cite[p. 200--202]{EGNO})} 
Let $H$ be a Hopf algebra, and let $\mathcal{C}$ be the tensor category of finite dimensional right $H$-comodules.
\begin{enumerate}
\item If $R$ is an $R$-form yielding a coquasitriangular structure on $H$, then the category $\mathcal{C}$ is a braided category, with braiding
\begin{equation}\label{braid} c_{X,Y}:= \tau \circ R^{24} \circ (\rho_X \otimes \rho_Y), 
\end{equation}
where $X,Y\in \mathcal{C}$ with coactions $\rho_X,\rho_Y$, respectively. Conversely, if $\mathcal{C}$ is braided, then this induces a natural coquasitriangular structure on $H$.

\item If $(H,R)$ is a coquasitriangular Hopf algebra and $J$ is a Hopf $2$-cocycle for $H$, then the categories of comodules over $(H,R)$ and $({}_J H_J, R^J)$ are naturally equivalent as braided categories. There is thus a bijection between braided structures on $\mathcal{C}$ and coquasitriangular structures on $H$ up to twisting by a Hopf $2$-cocycle; this bijection restricts to a bijection between symmetric structures on $\mathcal{C}$ and cotriangular structures on $H$, up to twisting by a Hopf $2$-cocycle. \qed
\end{enumerate}
\end{theorem}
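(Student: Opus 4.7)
The plan is to prove (1) and (2) by direct verification of axioms, following the standard Hopf-algebraic dictionary between $R$-forms and braidings on comodule categories; the symmetric/cotriangular refinement of (2) will then drop out from the observation that twisting preserves axiom (3) of Definition \ref{cotridef}.

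For the forward direction of (1), given an $R$-form $R$ I would define $c_{X,Y}: X \otimes Y \to Y \otimes X$ by the stated formula and verify three things. Naturality in $X$ and $Y$ follows because morphisms in $\mathcal{C}$ intertwine the coactions $\rho_X,\rho_Y$, while $R$ acts only on the two $H$-factors produced after applying $\rho_X \otimes \rho_Y$. The two hexagon axioms, when unwound on a threefold tensor product $X \otimes Y \otimes Z$ using iterated coactions and coassociativity, reduce precisely to the two multiplicativity identities in Definition \ref{cotridef}(1). Invertibility of $c_{X,Y}$ comes from convolution invertibility of $R$, with inverse built by the same formula but using $R^{-1}$ and the inverse swap. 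For the converse, I would invoke Tannakian reconstruction: given a braiding on $\mathcal{C}$, evaluate $c$ on the finite-dimensional subcomodules of $(H,\Delta)$ and extract a bilinear functional on $H \otimes H$ by composing with $\epsilon \otimes \epsilon$; naturality of $c$ and compatibility with the tensor product then force this functional to satisfy the axioms of Definition \ref{cotridef}(1) and (2).

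For (2), I would construct a functor $F: \mathrm{Comod}(H) \to \mathrm{Comod}({}_J H_J)$ which is the identity on underlying vector spaces and on coaction maps; this is legitimate because $H$ and ${}_J H_J$ share the same coalgebra structure. The nontrivial data is the monoidal structure on $F$: the coherence isomorphism $F(X) \otimes F(Y) \to F(X \otimes Y)$ is built by applying $J$ to the $H$-components of $\rho_X \otimes \rho_Y$. The pentagon condition for these coherence maps unwinds to the $2$-cocycle identity (\ref{Jone}), and the unit coherence to (\ref{Jtwo}), so $F$ is a tensor equivalence. Compatibility with the braidings is exactly the identity $R^J = J_{21}^{-1} \ast R \ast J$, which one verifies by chasing the braiding through the coherence isomorphisms and matching with the formula (\ref{braid}) applied in the target category.

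The main obstacle throughout is not conceptual but bookkeeping: every verification involves sumless Sweedler manipulations with threefold iterated coactions and convolution products, and one must carefully track the interleaving of $R$, $J$, $R^J$ and the coactions. The symmetric refinement follows by a short direct computation: substituting $R^J = J_{21}^{-1} \ast R \ast J$ into the expression $R^J \ast R^J_{21}$ and using associativity of convolution together with the cocycle condition (\ref{Jone}) shows that $R^J$ satisfies axiom (3) of Definition \ref{cotridef} whenever $R$ does, so the braided equivalence $F$ restricts to an equivalence of symmetric monoidal categories.
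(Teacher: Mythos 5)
The paper offers no proof of its own here: the theorem is quoted verbatim from \cite[pp.~200--202]{EGNO}, with part (2) the dual of \cite[Proposition 8.3.14]{EGNO}. Your outline is exactly the standard argument behind that citation — direct verification of the braiding axioms for the forward direction of (1), reconstruction of $R$ from the braiding evaluated on finite dimensional subcomodules of $H$ for the converse, and for (2) the identity-on-objects equivalence $\mathrm{Comod}(H)\to\mathrm{Comod}({}_JH_J)$ whose monoidal structure maps are given by letting $J$ act through the coactions — so in route and substance you are doing what the cited source does.

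Two points need repair before the outline is complete. First, in part (1) you list naturality, the two hexagons, and invertibility as the things to verify, but none of these uses Definition \ref{cotridef}(2), and none of them shows that $c_{X,Y}$ is a morphism of $H$-comodules at all. Colinearity, i.e. $\rho_{Y\otimes X}\circ c_{X,Y}=(c_{X,Y}\otimes \mathrm{id}_H)\circ\rho_{X\otimes Y}$, unwinds precisely to $\sum R(h_1,g_1)h_2g_2=\sum g_1h_1R(h_2,g_2)$, so this is a separate check you must add; dually, in the converse direction it is the fact that the given braiding is a morphism in $\mathcal{C}$ (not mere naturality) that yields axiom (2) for the reconstructed $R$. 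Second, in the symmetric refinement the cocycle identity (\ref{Jone}) plays no role: since $(R^J)_{21}=J^{-1}\ast R_{21}\ast J_{21}$, associativity of convolution alone gives
$$R^J\ast (R^J)_{21}=J_{21}^{-1}\ast R\ast R_{21}\ast J_{21}=J_{21}^{-1}\ast J_{21}=\epsilon\otimes\epsilon$$
whenever $R\ast R_{21}=\epsilon\otimes\epsilon$. Neither point derails the argument, but the first is a genuine missing step (it is the only place the second $R$-form axiom enters), and the second is a misattribution of what makes the computation work.
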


The following lemma will be important in analyzing the structure of cotriangular Hopf algebras.

\begin{lemma}\label{radical}{\rm (\cite[Proposition 2.1]{G1})} 
Let $(H,R)$ be a cotriangular Hopf algebra, and let 
$I_R:=\{a \in H \mid R(b,a) = 0, \, \forall b \in H \}$ be 
the right radical of $R$. Then $I_R$ is also the left radical (with the obvious definition), and it is a Hopf ideal of $H$. \qed
\end{lemma}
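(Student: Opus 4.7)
The plan is to identify the left radical $L := \{a \in H \mid R(a,b) = 0 \text{ for all } b \in H\}$ of $R$ with the kernel of an explicit Hopf algebra homomorphism—this immediately yields that $L$ is a Hopf ideal of $H$—and then to use the cotriangularity relation $R^{-1} = R_{21}$ to show $L = I_R$.

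First I would define $\Phi \colon H \to H^{\ast}$ by $\Phi(a)(b) := R(a,b)$. A direct unpacking of Definition \ref{cotridef}(1) shows that $\Phi(gh)(\ell) = \sum R(g,\ell_1)R(h,\ell_2) = (\Phi(g) \ast \Phi(h))(\ell)$, so $\Phi$ is an algebra map into $(H^\ast, \ast)$; the other axiom $R(h, \ell g) = \sum R(h_1, g) R(h_2, \ell)$ yields $\Phi(a)(cb) = \sum \Phi(a_1)(b) \Phi(a_2)(c)$, which both places $\Phi(a)$ in the finite dual $H^\circ$ and shows that $\Phi$ is a coalgebra map into $H^{\circ,\mathrm{cop}}$. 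Thus $\Phi \colon H \to H^{\circ,\mathrm{cop}}$ is a Hopf algebra homomorphism, and $L = \ker \Phi$ is a Hopf ideal of $H$; in particular $S(L) \subseteq L$.

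Second, I would derive the standard formula $R^{-1}(a,b) = R(S(a),b)$ by checking that $R(S(-), -)$ is a two-sided convolution inverse of $R$: the coquasitriangular axiom collapses $\sum R(a_1, b_1) R(S(a_2), b_2)$ to $R\bigl(\sum a_1 S(a_2),\, b\bigr) = \epsilon(a)\epsilon(b)$, and symmetrically on the other side. Invoking cotriangularity $R^{-1} = R_{21}$ (Definition \ref{cotridef}(3)) then gives the key symmetry
\[ R(b, a) \;=\; R^{-1}(a,b) \;=\; R(S(a), b) \qquad \text{for all } a, b \in H, \]
equivalently $R(a,b) = R(S(b),a)$.

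Finally, $L = I_R$ follows in two short inclusions. For $L \subseteq I_R$: if $a \in L$, then $S(a) \in L$ by $S$-stability, so $R(S(a), b) = 0$ for every $b$, and hence $R(b,a) = R(S(a),b) = 0$, so $a \in I_R$. For $I_R \subseteq L$: if $a \in I_R$, then $R(c,a) = 0$ for every $c \in H$; applying this with $c = S(b)$ and using $R(a,b) = R(S(b),a)$ yields $R(a,b) = 0$ for every $b$, so $a \in L$. Combined with the first step, this proves $I_R = L$ is a Hopf ideal.

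I expect the main bookkeeping to be in the first step, where one must track that the natural target of $\Phi$ is $H^{\circ,\mathrm{cop}}$ rather than $H^{\circ}$ (the opposite coproduct arising because the second coquasitriangular axiom mixes the factors in reversed order). Once that structural observation is secured, the formula for $R^{-1}$ and the coincidence of the two radicals are short symbolic manipulations.
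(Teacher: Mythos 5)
Your proof is correct and uses essentially the machinery the paper itself relies on: the paper quotes the lemma from \cite[Proposition 2.1]{G1} without reproving it, but your map $\Phi(a)=R(a,-)$ into $H^{\circ,\mathrm{cop}}$ is exactly the $\Psi_{\ell}$ of Lemma \ref{into1}, and the identification of the two radicals via $R^{-1}(a,b)=R(S(a),b)$ combined with $R^{-1}=R_{21}$ is the same \cite[Lemma 2.2.2]{Ma1} identity the paper invokes in Lemmas \ref{kernel2} and \ref{Claim III}. The only points you pass over silently are the standard facts that $R(1,a)=R(a,1)=\epsilon(a)$ (needed for the collapse to $\epsilon(a)\epsilon(b)$ and for the unit/counit compatibility of $\Phi$) and that a bialgebra map of Hopf algebras automatically intertwines the antipodes, so that $\ker\Phi$ is $S$-stable; both are routine and consistent with the level of detail in the paper.
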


\begin{definition}\label{min} A cotriangular Hopf algebra $(H,R)$ whose radical $I_R$ is $\{0\}$ is called \emph{minimal}, and we say in this case that $R$ is \emph{nondegenerate}. \qed
\end{definition}

Clearly, by Lemma \ref{radical}, a cotriangular Hopf algebra $(H,R)$ has a unique minimal cotriangular Hopf quotient, which is denoted by $(H_{\mathrm{min}}, R)$. 

\begin{lemma}\label{into1} 
Let $(H,R)$ be a cotriangular Hopf algebra. Then there are maps $\Psi_{\ell}, \Psi_r:H\to H^{\circ}$, defined respectively by $\Psi_{\ell}(a) = R(a, -)$ and $\Psi_r(a) = R(-,a)$ for $a \in H$. Each of these is a map of Hopf algebras with kernel $I_R$, mapping into (the isomorphic Hopf algebras) $H^{\circ, \, {\rm cop}}$ and  $H^{\circ,\, {\rm op}}$, respectively.
\end{lemma}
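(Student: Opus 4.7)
The plan is essentially a bookkeeping exercise using Definition \ref{cotridef}(1) together with Lemma \ref{radical}. I will carry out the argument in detail for $\Psi_r$; the case of $\Psi_\ell$ is entirely parallel, with the two halves of Definition \ref{cotridef}(1) exchanging r\^oles and ``$\op$'' replaced by ``${\rm cop}$''.

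First, to see that $\Psi_r(a) \in H^{\circ}$, I will compute, for $a, g, h \in H$,
\[ \Psi_r(a)(gh) = R(gh, a) = \sum R(g, a_1)R(h, a_2), \]
using the second half of Definition \ref{cotridef}(1). Hence
\[ \Delta_{H^*}(\Psi_r(a)) = \sum \Psi_r(a_1) \otimes \Psi_r(a_2) \in H^* \otimes H^*, \]
a finite sum of simple tensors; by the standard criterion for the finite dual \cite[\S 9.1]{M} this both shows $\Psi_r(a) \in H^{\circ}$ and exhibits $\Psi_r$ as a coalgebra map to $H^{\circ}$ (the comultiplication on $H^{\circ,\,\op}$ coincides with that on $H^{\circ}$). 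Next, the first half of Definition \ref{cotridef}(1) gives
\[ \Psi_r(ab)(h) = R(h, ab) = \sum R(h_1, b)R(h_2, a) = (\Psi_r(b) \ast \Psi_r(a))(h), \]
so $\Psi_r(ab) = \Psi_r(b) \ast \Psi_r(a)$, which is precisely the condition that $\Psi_r$ be an algebra map into $H^{\circ,\,\op}$. Preservation of unit and counit comes from the standard identities $R(h,1) = R(1,h) = \epsilon(h)$ for coquasitriangular Hopf algebras, which are immediate consequences of Definition \ref{cotridef}(1) together with the convolution-invertibility of $R$. Thus $\Psi_r \colon H \to H^{\circ,\,\op}$ is a bialgebra map, and is therefore automatically a Hopf algebra map since source and target are Hopf algebras.

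Finally, $\ker \Psi_r = \{ a \in H \mid R(h,a) = 0 \text{ for all } h \in H \}$ is by definition the right radical of $R$, which equals $I_R$ by Lemma \ref{radical}. The parallel argument for $\Psi_\ell$ uses the complementary halves of Definition \ref{cotridef}(1): one obtains $\Psi_\ell(ab) = \Psi_\ell(a) \ast \Psi_\ell(b)$ (so $\Psi_\ell$ is an algebra map into $H^\circ$) and $\Delta_{H^*}(\Psi_\ell(a)) = \sum \Psi_\ell(a_2) \otimes \Psi_\ell(a_1)$, which is precisely what it means for $\Psi_\ell$ to be a coalgebra map into $H^{\circ,\,{\rm cop}}$; the kernel is again $I_R$ by Lemma \ref{radical}. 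The only ``obstacle'' is the bookkeeping of matching each coproduct convention with the correct side of the $R$-form axioms, but there is no genuine difficulty, and the fact that the left and right radicals of $R$ agree is absorbed once and for all by quoting Lemma \ref{radical}.
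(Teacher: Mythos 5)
Your proof is correct, and at its core it is the same bookkeeping with Definition \ref{cotridef}(1) and Lemma \ref{radical} that the paper's proof carries out; it deviates in two sub-steps, both legitimately. First, to place the images in $H^{\circ}$ you use the characterisation of the finite dual by $m^*(\Psi_r(a))=\sum \Psi_r(a_1)\otimes\Psi_r(a_2)\in H^*\otimes H^*$ (\cite[\S 9.1]{M}), which at the same time delivers the coalgebra-map property; the paper instead shows that $\Psi_r(c)$ vanishes on the cofinite ideal $I_C=\{h\in H\mid R(h,d)=0,\ \forall d\in C\}$ attached to a finite dimensional subcoalgebra $C$ containing $c$. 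The two criteria are standard and equivalent, and yours is marginally more economical since the comultiplicativity comes for free. Second, for the antipode you invoke the general fact that a bialgebra morphism between Hopf algebras automatically intertwines antipodes, whereas the paper checks $\Psi_{\ell}(S(h))=(S^{\ast})^{-1}(\Psi_{\ell}(h))$ explicitly using $R(S(h),x)=R(h,S^{-1}(x))$ and \cite[Lemma 1.5.11]{M}; either way one needs $H^{\circ,\,{\rm op}}$ and $H^{\circ,\,{\rm cop}}$ to be Hopf algebras, i.e.\ $S^{\ast}$ to be invertible, which both you and the paper take as given (it holds because the antipode of a coquasitriangular Hopf algebra is bijective). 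Your treatment of unit and counit via $R(h,1)=R(1,h)=\epsilon(h)$, and of the kernels via Lemma \ref{radical}, is correct and matches the paper; the only item of the statement you leave untouched is the parenthetical isomorphism $H^{\circ,\,{\rm cop}}\cong H^{\circ,\,{\rm op}}$, which the paper dispatches with a citation.
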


\begin{proof}
Clearly $\Psi_{\ell}$ and $\Psi_r$ are linear maps into $H^{\ast}$ with kernel $I_R$. To see that their images lie in $H^{\circ}$, let $c \in H$ and fix a finite dimensional subcoalgebra $C$ of $H$ with $c \in C$. Setting $I_C := \{h \in H\mid R(h,d) = 0,\, \forall d \in C\}$, one sees from the definition of $R$ that $I_C$ is an ideal of $H$ containing $I_R$, and that $H/I_C$ embeds into $C^{\ast}$ via $\Psi_{\ell}$. Since $\Psi_r (c)(I_C) = 0$, this proves that $\mathrm{im}(\Psi_{r}) \subset H^{\circ}$. Similarly, $\mathrm{im}(\Psi_{\ell}) \subset H^{\circ}$. 

Since for all $a,b,x \in H$,
$$ \Psi_{\ell}(ab)(x) = R(ab,x) = \sum R(a,x_1)R(b,x_2) = (\Psi_{\ell}(a)\ast\Psi_{\ell}(b))(x), $$
$\Psi_{\ell}$ is an algebra homomorphism from $H$ to $H^{\circ}$. Moreover, the first part of Definition \ref{cotridef}(1) shows that $\Psi_{\ell}$ is a bialgebra homomorphism into $H^{\circ, \, {\rm cop}}$. 

Recall that by \cite[Lemma 1.5.11]{M}, $H^{\circ, \, {\rm cop}}$ has antipode $(S^{\ast})^{-1}$, the composition inverse of the antipode $S^{\ast}$ of $H^{\circ}$. That is, 
$$(S^{\ast})^{-1}f(x)= f(S^{-1}(x)) $$
for all $f \in H^{\circ,\, {\rm cop}}$ and $x \in H$. Thus, for $h,x \in H$, 
$$ \Psi_{\ell}(S(h)) = R(S(h),x) = R(h, S^{-1}(x)) = (S^{\ast})^{-1}(\Psi_{\ell}(h)), $$
proving that $\Psi_{\ell}$ is a Hopf algebra map from $H$ into $H^{\circ, \, {\rm cop}}$.

The argument for $\Psi_r$ is similar, noting that $H^{\circ, \, {\rm op}}$ has antipode $(S^{\ast})^{-1}$. Note that the two codomain Hopf algebras are isomorphic by \cite[Corollary III.3.5]{K}.
\end{proof}

Specialise now to the case where $H$ is the coordinate ring $\mathcal{O}(G)$ of an affine algebraic group $G$, so that $(\mathcal{O}(G),\epsilon \otimes \epsilon)$ is cotriangular by Example \ref{commex}(1). Let $J$ be a Hopf $2$-cocycle for $G$, so $({}_J\mathcal{O}(G)_J,R^J)$ is cotriangular by Example \ref{commex}(2). Extending the terminology of Definition \ref{min}, we say that $J$ is \emph{minimal} if $({}_J\mathcal{O}(G)_J, R^J)$ is minimal. We then have:

\begin{proposition}\label{factor} {\rm (\cite[Theorem 3.1]{G1})} 
Let $G$ be an affine algebraic group, and let $J$ be a Hopf $2$-cocycle for $G$. Then there is a closed subgroup $T\subseteq G$ and a minimal Hopf $2$-cocycle $\widehat{J}$ for $T$, such that $J$ is gauge equivalent to $\widehat{J}$ (where $\widehat{J}$ is viewed in the obvious way as a Hopf $2$-cocycle for $G$).
In particular, there is an epimorphism of cotriangular Hopf algebras from $({}_J\mathcal{O}(G)_J,R^J)$ to $({}_{\widehat{J}}\mathcal{O}(T)_{\widehat{J}},R^{\widehat{J}})$ with kernel $I_{R^{J}}$, the radical of $R^J$. \qed
\end{proposition}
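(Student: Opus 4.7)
The plan is to first reduce to studying the minimal cotriangular quotient of $({}_J\mathcal{O}(G)_J, R^J)$, then identify this quotient with $\mathcal{O}(T)$-style data, and finally promote this identification to a gauge equivalence between $J$ and $\widehat{J}$ at the level of Hopf $2$-cocycles. By Lemma \ref{radical}, $I:=I_{R^J}$ is a Hopf ideal of ${}_J\mathcal{O}(G)_J$, so $\overline{H}:={}_J\mathcal{O}(G)_J/I$ inherits a cotriangular Hopf structure $\overline{R^J}$ which is, by construction, nondegenerate, hence minimal in the sense of Definition \ref{min}. Everything that follows amounts to analysing $\overline{H}$ and lifting the result back to the cocycle level.

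To produce the subgroup $T$, apply the map $\Psi_r$ of Lemma \ref{into1} to ${}_J\mathcal{O}(G)_J$; since $\ker(\Psi_r)=I$, this gives an injection of Hopf algebras $\overline{H}\hookrightarrow \mathcal{O}(G)^{\circ,\op}$. The image is a Hopf subalgebra of $\mathcal{O}(G)^{\circ}$ (up to the op-structure). For $G$ affine algebraic over $\mathbb{C}$, the dual $\mathcal{O}(G)^{\circ}$ is sufficiently controlled (Hochschild's theory of representative functions, or equivalently the Cartier--Gabriel--Kostant decomposition) that Hopf subalgebras finitely generated by rational representations arise from closed subgroup schemes of $G$. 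In the Tannakian language of Theorem \ref{cotri} and Remark \ref{Jrems}(2), this corresponds to the fact that the symmetric tensor subcategory of $\mathrm{Corep}({}_J\mathcal{O}(G)_J,R^J)$ generated by the image of the forgetful functor is, by Deligne's theorem, the representation category of an affine algebraic group, which is realized as a closed subgroup $T\subseteq G$ since it embeds into $\mathrm{Rep}(G)$. Define $T$ in this way; then $\overline{H}\cong {}_{\widehat{J}}\mathcal{O}(T)_{\widehat{J}}$ as cotriangular Hopf algebras for a unique minimal Hopf $2$-cocycle $\widehat{J}$ for $T$, which is the cocycle transported across this identification.

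It remains to produce the gauge equivalence $J\sim \widehat{J}$ over $G$, where $\widehat{J}$ is inflated to $\mathcal{O}(G)\otimes\mathcal{O}(G)$ via the restriction map. By Schauenburg's theorem, the Hopf isomorphism class of ${}_J\mathcal{O}(G)_J$ determines $J$ up to gauge equivalence, so it suffices to produce a single Hopf isomorphism ${}_J\mathcal{O}(G)_J\cong {}_{\widehat{J}}\mathcal{O}(G)_{\widehat{J}}$ compatible with the cotriangular structures; equivalently, by Theorem \ref{cotri}(2), it suffices to exhibit an isomorphism of the two symmetric tensor structures on the forgetful functor $\mathrm{Rep}(G)\to \mathrm{Vec}$ induced by $J$ and by the inflation of $\widehat{J}$. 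These two tensor structures have isomorphic minimal cotriangular quotients by the previous paragraph, and the non-minimal parts contribute no braiding, so a standard argument with natural isomorphisms of fibre functors produces the required $\chi\in(\mathcal{O}(G)^{\ast})^{\times}_1$ realising $J^{\chi}=\widehat{J}$. The claimed epimorphism of cotriangular Hopf algebras is then the composition of the Hopf isomorphism ${}_J\mathcal{O}(G)_J\cong {}_{\widehat{J}}\mathcal{O}(G)_{\widehat{J}}$ with the restriction ${}_{\widehat{J}}\mathcal{O}(G)_{\widehat{J}}\twoheadrightarrow {}_{\widehat{J}}\mathcal{O}(T)_{\widehat{J}}$ dual to $T\hookrightarrow G$, whose kernel is identified with $I_{R^J}$ by tracking the radicals.

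The main obstacle is the middle step: showing that the abstract minimal cotriangular quotient $\overline{H}$ really arises as the coordinate ring of a \emph{closed algebraic subgroup} $T$ of $G$, rather than some larger group scheme or pro-algebraic object. This is where one must combine the fact that $G$ is affine algebraic (so $\mathcal{O}(G)^{\circ}$ has a tractable structure) with the minimality of $\overline{R^J}$ (which forces $\overline{H}$ to be finitely generated by rational $G$-representations through $\Psi_r$), and then invoke Tannakian reconstruction in the symmetric monoidal setting to realise $T$ inside $G$.
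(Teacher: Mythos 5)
You should note first that the paper does not prove Proposition \ref{factor} at all: it is quoted from \cite[Theorem 3.1]{G1}, whose proof goes through Gelaki's theory of fibre functors and module categories over affine group schemes, so your sketch cannot be checked against an internal argument — and as written it has concrete gaps. The first is the claim that $\Psi_r$ gives an injection $\overline H\hookrightarrow \mathcal{O}(G)^{\circ,\mathrm{op}}$. Lemma \ref{into1} only places the image in $({}_J\mathcal{O}(G)_J)^{\circ,\mathrm{op}}$, the finite dual of the \emph{twisted} algebra; finite duals are cut out by ideals of finite codimension, and ideals of ${}_J\mathcal{O}(G)_J$ need not be ideals of $\mathcal{O}(G)$. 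Comparing the two finite duals is exactly the hard point: in the paper the analogous containment (Proposition \ref{include}) is proved only in the unipotent minimal case and requires Etingof--Kazhdan quantization, and the related comparison of powers of the augmentation ideal is left open as Question \ref{compwg22q}. So "the image consists of representative functions on $G$" cannot simply be asserted. Even granting it, Hopf subalgebras of $\mathcal{O}(G)^{\circ}$ do not correspond to closed subgroups of $G$ ($U(\mathfrak{g})$ and group algebras of abstract subgroups are counterexamples), and the Tannakian shortcut has the variance backwards: a closed subgroup $T\subseteq G$ corresponds to a surjection $\mathcal{O}(G)\twoheadrightarrow\mathcal{O}(T)$, i.e.\ a dominant restriction functor $\mathrm{Rep}(G)\to\mathrm{Rep}(T)$, not to a symmetric subcategory "embedding into $\mathrm{Rep}(G)$". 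Moreover the forgetful functor on $\mathrm{Comod}(\overline H,\overline{R^J})$ is \emph{not} symmetric, so Deligne's theorem cannot be invoked through it directly; one must produce a symmetric fibre functor and rule out the super-Tannakian alternative, none of which is addressed.

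The second decisive gap is the final step. "By Schauenburg's theorem, the Hopf isomorphism class of ${}_J\mathcal{O}(G)_J$ determines $J$ up to gauge equivalence" is not what Schauenburg's theorem (Remark \ref{Jrems}(2)) says, and it is false as stated: Remark \ref{Jrems}(3) gives only the implication that gauge-equivalent cocycles yield isomorphic twisted Hopf algebras, while the gauge class of $J$ is determined by the monoidal structure it induces on the fibre functor of $\mathrm{Rep}(G)$ up to monoidal natural isomorphism, not by the abstract isomorphism class of ${}_J\mathcal{O}(G)_J$. Hence to conclude $J^{\chi}=\widehat{J}$ you must actually construct a monoidal natural isomorphism between the two tensor structures on the forgetful functor, and the sentence "the non-minimal parts contribute no braiding, so a standard argument with natural isomorphisms of fibre functors produces the required $\chi$" is a restatement of what has to be proved rather than a proof. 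In short, the skeleton (pass to the minimal cotriangular quotient, identify it with a twisted $\mathcal{O}(T)$ for a closed subgroup $T$, lift the identification to a gauge equivalence) is the right philosophy and matches how \cite{G1} frames the result, but each of the three steps is precisely where the work lies, and the justifications offered either point the wrong way or assume the conclusion.
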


\begin{definition}\label{supp} The (conjugacy class of the) closed subgroup $T$ of Proposition \ref{factor} is called the \emph{support} of $J$. \qed
\end{definition}

\subsection{Unipotent groups}\label{unipotent} Let $G$ be a unipotent algebraic group of dimension $n$ over $\mathbb{C}$. We recall here some basic facts, and introduce some notation needed in the sequel (see \cite[2.5]{G2}). Fix a chain of closed normal subgroups $G_i$  of $G$, for $0 \leq i \leq n$, with $G_0 = \{1\}$ and $G_n = G$, and with $G_{i+1}/G_i$ central in $G/G_i$ and isomorphic to $(\mathbb{C},+)$, for $i = 0, \ldots, n-1$. This corresponds to a chain of Hopf subalgebras 
\begin{equation}\label{poly} \mathbb{C} = H_0 \subset \cdots \subset H_i \subset \cdots \subset H_n = \mathcal{O}(G)
\end{equation}
of $\mathcal{O}(G)$, with $H_i \cong \mathcal{O}(G/G_{n-i})$ and $\mathcal{O}(G)/H_i^+ \mathcal{O}(G) \cong \mathcal{O}(G_{n-i})$ for all $i$. We can fix elements $X_1, \ldots, X_n$ of $\mathcal{O}(G)$ such that
$$ H_i = \mathbb{C}[X_1, \ldots , X_i] $$
for all $i$. By construction, and by the basic theory of affine algebraic groups (or by \cite{BOZZ} or \cite{H}), for all $i = 1, \ldots , n$, we have 
\begin{equation}\label{good} 
\Delta (X_i) = X_i \otimes 1 + 1 \otimes X_i + q(X_i),
\end{equation}
where 
\begin{equation}\label{qform} q(X_i):= \sum x^i_1 \otimes x^i_2 \in H_{i-1}^+ \otimes H_{i-1}^+ 
\end{equation}
is a coalgebra $2$-cocycle. In $\S$\ref{IHOE} we will also need the notation 
$$ (\mathrm{id} \otimes \Delta)(q(X_i))= \sum x^i_1 \otimes x^i_{21} \otimes x^i_{22}.\footnote{The notation $q(X_i)= \sum X_i' \otimes X_i''$, and $(\mathrm{id} \otimes \Delta)(q(X_i))= \sum X_i' \otimes X_{i1}'' \otimes X_{i2}''$ is used in \cite[\S 2.5]{G2}.}$$

In studying cocycle twists of unipotent groups we will make use of the embedding of Lemma \ref{action0}(3) into the  tensor product of one sided twisted algebras. Here is the key result for these one-sided twists of unipotent groups. As usual, $A_n(\mathbb{C})$ denotes the $n$th Weyl $\mathbb{C}$-algebra, a simple Noetherian domain whose Gelfand-Kirillov dimension $\mathrm{GKdim}(A_n(\mathbb{C}))$ is $2n$ \cite[Proposition 8.1.15(2)]{McCR}.

\begin{theorem}\label{Weyl} {\rm (\cite[Theorem 4.7]{G1})} Let $G$ be a unipotent algebraic group of dimension $n$ with Hopf $2$-cocycle $J$, and let $T$ be the support of $J$.
\begin{enumerate}
\item As algebras,
$$ \mathcal{O}(G)_J\cong\mathcal{O}(T)_J \otimes \mathcal{O}(G/T),$$
where $G/T$ denotes the space of left cosets of $T$ in $G$, so $\mathcal{O}(G/T)$ is the commutative polynomial algebra in $\mathrm{dim}(G) - \mathrm{dim}(T)$ variables.
\item $\mathcal{O}(T)_J\cong A_{\frac{\mathrm{dim}(T)}{2}}(\mathbb{C})$ as algebras.
\item $\mathcal{O}(G)_J$ is a noetherian domain of GK-dimension $\mathrm{dim}(G)$.
\item $(\mathcal{O}(G)_J)^{\mathrm{op}} \cong {}_J\mathcal{O}(G)$. \qed
\end{enumerate}
\end{theorem}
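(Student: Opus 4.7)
The plan is to prove the parts in the order $(1) \Rightarrow (2) \Rightarrow (3)$, with $(4)$ treated as a separate algebraic fact. The basic reduction available throughout is Proposition \ref{factor}: since $J$ is gauge equivalent to a minimal cocycle $\widehat{J}$ for $T$, and gauge equivalent cocycles produce isomorphic twisted Hopf algebras (Remark \ref{Jrems}(3)), one may replace $J$ by its pullback from $\mathcal{O}(T)\otimes\mathcal{O}(T)$ along the restriction map $\mathcal{O}(G)\to\mathcal{O}(T)$. I will use the correspondence from $\S$\ref{2cocycunip} between minimal Hopf $2$-cocycles on $T$ and nondegenerate alternating forms $\omega\in\wedge^2\mathfrak{t}^{\ast}$, so in particular $\dim(T)=2d$ is even.

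For part $(1)$, the key geometric input is that for a closed subgroup $T$ of a unipotent algebraic group $G$ over $\mathbb{C}$ the quotient map $G\to G/T$ admits a section of algebraic varieties (obtainable, for instance, by exponentiating a vector-space complement of $\mathfrak{t}$ in $\mathfrak{g}$), so $\mathcal{O}(G)\cong\mathcal{O}(G/T)\otimes\mathcal{O}(T)$ as commutative algebras. Because $J$ has been reduced to a cocycle supported on $T$, the twisted product $m_J(f\otimes g)=\sum f_1 g_1 J(f_2,g_2)$ restricts to the untwisted product on the $\mathcal{O}(G/T)$ factor, yielding the asserted tensor decomposition. For part $(2)$ I would argue by induction on $d$ using a chain of central one-dimensional normal extensions $1=T_0\triangleleft\cdots\triangleleft T_{2d}=T$ as in \eqref{poly}; each inclusion $\mathcal{O}(T/T_{2d-i})_J\subset\mathcal{O}(T/T_{2d-i-1})_J$ becomes an Ore extension whose derivation can be read off from the primitive-element commutator formula $[X,\alpha]_J=\sum\alpha_1(J-J_{21})(X,\alpha_2)$ (a direct calculation from \eqref{mult1} using $\Delta(X)=X\otimes 1+1\otimes X$, cf.\ Lemma \ref{quick}(2)). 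Nondegeneracy of $\omega$ lets one choose the generators in symplectic (Darboux) pairs $(p_i,q_i)$, so that the iterated Ore extensions assemble into the Weyl algebra $A_d(\mathbb{C})$ rather than a more exotic skew polynomial ring. Part $(3)$ is then immediate from $(1)$ and $(2)$: $\mathcal{O}(G)_J\cong A_d(\mathbb{C})\otimes\mathcal{O}(G/T)$ is a tensor product of a noetherian domain with a commutative polynomial ring, hence is itself a noetherian domain, and Gelfand--Kirillov dimension adds to give $2d+(n-2d)=n$.

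For part $(4)$, I would verify directly that, since $\mathcal{O}(G)$ is commutative, the antipode $S$ is an algebra automorphism satisfying the identity $J^{-1}(a,b)=J(S(b),S(a))$ (derivable from the cocycle axiom \eqref{Jone} together with $S^2=\mathrm{id}$). Using this identity together with the antipode rule $\Delta\circ S=(S\otimes S)\circ\tau\circ\Delta$ one checks that $S$ is an algebra anti-homomorphism $\mathcal{O}(G)_J\to{}_J\mathcal{O}(G)$, hence an algebra isomorphism $(\mathcal{O}(G)_J)^{\mathrm{op}}\cong{}_J\mathcal{O}(G)$. The principal obstacle in this plan is the identification in part $(2)$ of the iterated Ore extension with precisely the Weyl algebra: it depends crucially on pairing up the generators via a Darboux basis for $\omega$, which forces one to choose the normal chain $\{T_i\}$ compatible with such a basis rather than an arbitrary refinement. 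A subsidiary checkpoint is verifying that the partial twists $(H_i)_J\subseteq(H_{i+1})_J$ inherit the Ore extension structure at each stage directly from the global cocycle, rather than only after passage to an associated graded algebra.
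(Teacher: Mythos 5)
First, a framing remark: the paper does not prove Theorem \ref{Weyl} at all — it is quoted from \cite[Theorem 4.7]{G1} — so your argument has to stand on its own, and as written it has two genuine gaps, one of which you partly flag yourself.

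The serious gap is part (2), which is the real content of the theorem. Your mechanism — an iterated Ore extension along a chain of normal subgroups ``chosen compatibly with a Darboux basis for $\omega$'' — is not available in general: the chain must consist of normal subgroups of $T$, i.e.\ correspond to a flag of ideals of the nilpotent Lie algebra $\mathfrak{t}$, and a symplectic basis for $\omega$ need not be adapted to any flag of ideals (indeed $\omega$ restricted to the terms of such a flag is typically degenerate). Moreover the commutator formula $[X,\alpha]=\sum\alpha_1(J-J_{21})(X,\alpha_2)$ holds only for primitive $X$; the later generators carry the correction term $q(X_i)\in H_{i-1}^+\otimes H_{i-1}^+$, so the relations are not read off from $\omega$ alone (compare the one-sided analogue of Theorem \ref{Ore}(3),(4) and the changes of variables needed in $\S$\ref{examples2}). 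So the identification of the iterated Ore extension with $A_{\dim(T)/2}(\mathbb{C})$ — exactly the nontrivial assertion — is left unproved. There is also a circularity caution: in this paper the nondegeneracy of $\omega$ (hence evenness of $\dim T$) is \emph{deduced} from Theorem \ref{Weyl}(2) in Remarks \ref{equivrmk}(1), so the correspondence ``minimal $J\leftrightarrow$ nondegenerate $\omega$'' must be taken from Theorem \ref{classify} (\cite{EG6}), not from that discussion. A route that does work is to identify $\mathcal{O}(T)_J$ with the twisted enveloping algebra $U^{\omega}(\mathfrak{t})=U(\widehat{\mathfrak{t}})/\langle z-1\rangle$ and then show this is the primitive quotient of $U(\widehat{\mathfrak{t}})$ attached to $z^*$, whose coadjoint orbit has dimension $\dim\mathfrak{t}$ by nondegeneracy of $\omega$, so Dixmier's theory of primitive quotients of nilpotent enveloping algebras (\cite{Dix}) gives the Weyl algebra; alternatively an induction using a central $c\in\mathfrak{t}$ and $b$ with $\omega(c,b)=1$ (a Weyl pair $[c,b]=1$ in $U^{\omega}(\mathfrak{t})$) plus Dixmier's splitting lemma. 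Some such input beyond ``Darboux coordinates'' is unavoidable.

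Part (4) as written is wrong: the identity $J^{-1}(a,b)=J(S(b),S(a))$ does \emph{not} follow from the cocycle axiom, and it is false in general even for commutative $H$. For any primitive $X$ the normalization (\ref{Jtwo}) gives $0=(J*J^{-1})(X,X)=J(X,X)+J^{-1}(X,X)$, so $J^{-1}(X,X)=-J(X,X)$, while $J(S(X),S(X))=J(-X,-X)=J(X,X)$; your identity would therefore force $J(X,X)=0$ for every Hopf $2$-cocycle, which fails, e.g., for the gauge-trivial cocycle $J=(\epsilon\otimes\epsilon)^{\chi}$ on $\mathcal{O}((\mathbb{C},+))=\mathbb{C}[X]$ with $\chi(1)=1$, $\chi(X^2)=1$ and $\chi$ zero on all other monomials, where $J(X,X)=\chi(X^2)=1$. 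What the antipode actually gives (using commutativity and $\Delta\circ S=(S\otimes S)\circ\tau\circ\Delta$) is an algebra isomorphism $(\mathcal{O}(G)_J)^{\mathrm{op}}\cong {}_{K}\mathcal{O}(G)$ with $K=J_{21}^{-1}\circ(S\otimes S)$, so you still must show that the left twists by $K$ and by $J$ are isomorphic, e.g.\ by a gauge/coboundary argument. Relatedly, both here and in your reduction for part (1) you invoke Remark \ref{Jrems}(3), which concerns the two-sided twist; the statement that gauge-equivalent cocycles give isomorphic \emph{one-sided} twists is true but needs its own (easy) check, e.g.\ $a\mapsto\sum a_1\chi^{-1}(a_2)$ gives $\mathcal{O}(G)_{J^{\chi}}\cong\mathcal{O}(G)_J$. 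With that check, your part (1) is essentially sound, provided you also note that $m_J$ depends only on the right $\mathcal{O}(T)$-comodule algebra structure and that the unipotent section $G\cong(G/T)\times T$ can be chosen right-$T$-equivariantly; part (3) is then fine.
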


\subsection{Hopf $2$-cocycles for unipotent groups}\label{2cocycunip}  
A \emph{quasi-Frobenius Lie algebra} is a finite dimensional Lie algebra $\mathfrak{t}$ equipped with a symplectic $2$-cocycle $\omega$. That is, $\omega:\mathfrak{t} \times \mathfrak{t}\to \mathbb{C}$ is a non-degenerate skew-symmetric bilinear form such that for all $x,y,z \in \mathfrak{t}$,
\begin{equation}\label{cycle} \omega([x,y],z) + \omega([z,x],y) + \omega([y,z],x) = 0.
\end{equation} 
Thus every quasi-Frobenius Lie algebra has even dimension.

\begin{definition}\label{CYBE} Given a Lie algebra $\mathfrak{g}$, $r \in \wedge^2\mathfrak{g}$ is a solution to the \emph{classical Yang-Baxter equation (CYBE)} if
$[r_{12},r_{13}] +[r_{12},r_{23}] + [r_{13},r_{23}]=0$. \qed
\end{definition}

The equivalence of (3) and (4) below is due to Drinfeld \cite{Dr} and applies to all complex finite dimensional Lie algebras: for a pair $(\mathfrak{t},\omega)$ as in (4) one takes $r := \omega^{-1} \in \wedge^2\mathfrak{t} \subset \wedge^2 \mathfrak{g}$. Every solution to the CYBE arises in this way.

The equivalence between (1) and (3) was proved in \cite[Theorem 3.2]{EG6}, and later in \cite[Theorem 5.1]{G1} by an alternative proof avoiding the use of Etingof-Kazhdan deformation theory. Full details and references can be found in \cite[$\S$5]{G1} (see also Remarks \ref{equivrmk}(2) below).

\begin{theorem}\label{classify}{\rm ({\rm Etingof-Gelaki}, \cite{EG6})} Let $G$ be a unipotent affine algebraic group over $\mathbb{C}$, with $\mathfrak{g}:=\mathrm{Lie}(G)$. Then the following sets are in bijection with each other.
\begin{enumerate}
\item Gauge equivalence classes of Hopf $2$-cocycles for $G$.
\item Equivalence classes of pairs $(T, \omega)$, where $T\subseteq G$ is a closed subgroup and $\omega$ is a left invariant symplectic form on $T$.
\item Equivalence classes of pairs $(\mathfrak{t},r)$, where $\mathfrak{t}\subseteq \mathfrak{g}$ is a Lie subalgebra and $r \in \wedge^2\mathfrak{t}$ is a nondegenerate solution of the CYBE.
\item Equivalence classes of pairs $(\mathfrak{t},\omega)$, where $\mathfrak{t}\subseteq \mathfrak{g}$ is a quasi-Frobenius subalgebra with symplectic $2$-cocycle $\omega$. \qed
\end{enumerate}
\end{theorem}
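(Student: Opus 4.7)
The plan is to dispatch the purely linear-algebraic bijections $(3) \Leftrightarrow (4)$ and $(2) \Leftrightarrow (4)$ first, and then attack the deeper equivalence $(1) \Leftrightarrow (3)$ by an infinitesimal-to-group passage that is especially tractable for unipotent groups.

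For $(3) \Leftrightarrow (4)$, I would view a nondegenerate $r \in \wedge^2 \mathfrak{t}$ as an isomorphism $r^{\sharp}: \mathfrak{t}^{\ast} \to \mathfrak{t}$ and set $\omega := (r^{\sharp})^{-1} \in \wedge^2 \mathfrak{t}^{\ast}$. A tensor-coordinate computation in a basis of $\mathfrak{t}$ shows that the CYBE for $r$ is equivalent to the cocycle identity (\ref{cycle}) for $\omega$. For $(2) \Leftrightarrow (4)$, a left invariant $2$-form on $T$ is determined by its value at the identity, an element of $\wedge^2 \mathfrak{t}^{\ast}$; nondegeneracy is preserved, and the closedness of the form translates exactly to the Chevalley--Eilenberg condition (\ref{cycle}), since for a left invariant form one has $d\omega(X,Y,Z) = -\omega([X,Y],Z) - \omega([Z,X],Y) - \omega([Y,Z],X)$ on left invariant vector fields. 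The $G$-conjugation action on pairs $(T,\omega)$ translates to the $G$-adjoint action on $\mathfrak{t}$ and its induced action on $\wedge^2 \mathfrak{t}^{\ast}$, so equivalence classes match up.

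For the central equivalence $(1) \Leftrightarrow (3)$, I would first invoke Proposition \ref{factor} to reduce to the minimal case, where the support of $J$ is all of $G$; it then suffices to exhibit a bijection between gauge equivalence classes of minimal Hopf $2$-cocycles for $G$ and nondegenerate elements of $\wedge^2 \mathfrak{g}$ satisfying CYBE, both modulo the adjoint action. For the forward direction, the Cartier--Gabriel--Kostant theorem for unipotent $G$ identifies $\mathcal{O}(G)^{\circ}$ with $U(\mathfrak{g})$, so $J \in (\mathcal{O}(G) \otimes \mathcal{O}(G))^{\ast}$ can be interpreted inside a completion of $U(\mathfrak{g}) \otimes U(\mathfrak{g})$. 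The $\mathfrak{g} \otimes \mathfrak{g}$-component of $R^J = J_{21}^{-1} \ast J$, read off via Lemma \ref{quick}(2) applied to primitive elements, supplies a candidate $r$; cotriangularity $R^J_{21} = (R^J)^{-1}$ forces $r \in \wedge^2 \mathfrak{g}$, minimality yields nondegeneracy, and expanding the Hopf $2$-cocycle identity (\ref{Jone}) to lowest order in the augmentation filtration produces the CYBE.

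For the backward direction, given $(\mathfrak{g}, r)$ with $r$ nondegenerate, I would construct $J$ explicitly. Because $G$ is unipotent, $\exp: \mathfrak{g} \to G$ is a polynomial isomorphism, and the Baker--Campbell--Hausdorff formula realises $\mathcal{O}(G)$ concretely with its coproduct. In these coordinates one defines $J$ as a formal exponential of (a suitable multiple of) $r$, interpreted as a bilinear pairing on $\mathcal{O}(G)$; the pairing terminates in a finite sum on any pair of polynomials thanks to the nilpotency of $\mathfrak{g}$. Gauge equivalence on the Hopf side corresponds to the adjoint action on the Lie side, since conjugation by elements of $(\mathcal{O}(G)^{\ast})^{\times}$ reduces modulo the square of the augmentation ideal to the adjoint action on $\wedge^2 \mathfrak{g}$. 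The main obstacle is verifying that this exponentiated $r$ genuinely satisfies the full Hopf $2$-cocycle identity (\ref{Jone}), not merely its lowest-order avatar; in the general Lie-algebraic setting this requires Etingof--Kazhdan quantization, but in the unipotent case the direct argument of \cite[\S 5]{G1} exploits the polynomial structure of $\mathcal{O}(G)$ and the BCH formula to write $J$ in closed form and verify (\ref{Jone}) by explicit computation, which is the route I would follow.
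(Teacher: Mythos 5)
Your handling of $(3)\Leftrightarrow(4)$ and $(2)\Leftrightarrow(4)$ is fine and matches the standard Drinfeld correspondence the paper invokes (note only that you are implicitly using that, for unipotent $G$ in characteristic $0$, closed subgroups of $G$ biject with Lie subalgebras of $\mathfrak{g}$). Bear in mind that the paper does not reprove $(1)\Leftrightarrow(3)$ at all: it cites \cite{EG6} and \cite{G1} and merely recalls the two recipes in Remarks \ref{equivrmk}. Measured against those, your forward direction is a plausible classical-limit sketch, but it glosses over points that carry real weight in the cited proofs: nondegeneracy of $r$ is not an immediate consequence of minimality --- in the sources it comes from the inner-derivation argument of Remarks \ref{equivrmk}(1) together with the nontrivial fact (Theorem \ref{Weyl}(2)) that $\mathcal{O}(T)_J$ is a Weyl algebra; the matching of equivalence relations is delicate, since a gauge element $\chi\in(\mathcal{O}(G)^*)^{\times}_1$ is far more general than a group element, so ``reduces mod $\mathfrak{m}^2$ to the adjoint action'' needs an actual argument; and reading $r$ off $R^J$ via Lemma \ref{quick}(2) on primitives of $\mathcal{O}(G)$ only sees $\wedge^2(\mathfrak{g}/[\mathfrak{g},\mathfrak{g}])$, since $\mathcal{P}(\mathcal{O}(G))=([\mathfrak{g},\mathfrak{g}])^{\perp}$; you must extract $r$ in the completed $U(\mathfrak{g})^{\otimes 2}$ picture.

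The step that actually fails is your construction of $J$ from $r$ in $(3)\Rightarrow(1)$. Setting $s:=r/2$, the cocycle identity (\ref{Jone}) for $(\epsilon\otimes\epsilon)\circ e^{s}$ amounts to $e^{s_{12}}e^{s_{13}+s_{23}}=e^{s_{23}}e^{s_{12}+s_{13}}$ in the completion of $U(\mathfrak{g})^{\otimes 3}$; taking logarithms and comparing the components with three Lie-algebra letters forces $[s_{12},s_{23}]=0$ \emph{in addition to} the CYBE, a condition which holds when the carrier $\mathfrak{t}$ is abelian but fails in general (e.g.\ for $r=a\wedge c+d\wedge b$ of Example \ref{ex4}). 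Unipotence buys only termination of the pairing --- that is what allows the specialisation $\hbar=1$ --- not the cocycle identity, so no ``closed-form exponential via BCH'' can replace the higher-order corrections. The known constructions are exactly the two the paper records: the Etingof--Kazhdan twist $J_f(r,\hbar)=1+\hbar r/2+\cdots$ specialised as in Remarks \ref{equivrmk}(2), or the EK-free proof of \cite[Theorem 5.1]{G1}, which (contrary to your description of it) is not a direct verification of (\ref{Jone}) for an exponential but proceeds through supports, the Weyl-algebra structure of $\mathcal{O}(T)_J$ and inner derivations; consistently, the paper's examples use the formula $J=(\epsilon\otimes\epsilon)\circ e^{r/2}$ only when the support is abelian. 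As written, the existence half of $(3)\Rightarrow(1)$ in your proposal is unproved.
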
 

\begin{remarks}\label{equivrmk}
(1) Here is the recipe, given in \cite[Theorem 5.1]{G1}, taking us from (1) to (4) in Theorem \ref{classify}. 

Recall first that given a Lie algebra $\mathfrak{p}$ and a $2$-cocycle $\omega \in \wedge^2 \mathfrak{p}^{\ast}$, one can construct a central extension $\widehat{\mathfrak{p}} := \mathfrak{p} \oplus \mathbb{C}z$ of $\mathfrak{p}$ as follows. Form the exact sequence
\begin{equation}\label{ses} 0 \rightarrow \mathbb{C}z \rightarrow \widehat{\mathfrak{p}} \rightarrow \mathfrak{p} \rightarrow 0,
\end{equation}
with the bracket on $\widehat{\mathfrak{p}}$ given by
$$ [(x,\alpha z), (y,\beta z)]:=([x,y],\omega (x,y) z) $$
for $x,y \in \mathfrak{p}$ and $\alpha,\beta \in \mathbb{C}$. We define
\begin{equation}\label{omtwist} U^{\omega}(\mathfrak{p}) := U(\widehat{\mathfrak{p}})/\langle z - 1 \rangle,
\end{equation}
a \emph{twisted enveloping algebra} of $\mathfrak{p}$ which will feature also in Question \ref{compwg2q2} below.

Now given a Hopf $2$-cocycle $J$ for $G$, let $T$ be its support and $\mathfrak{t}:={\rm Lie}(T)$. So $J$ induces a Hopf $2$-cocycle on $\mathcal{O}(T)$, which we also denote by $J$. By Lemma \ref{action0}(2), the right twisted algebra $\mathcal{O}(T)_J$ admits an action of $T$ by algebra automorphisms. This yields an action of $\mathfrak{t}$ on $\mathcal{O}(T)_J$ by derivations. But $\mathcal{O}(T)_J$ is a Weyl algebra by Theorem \ref{Weyl}(2), so all its derivations are inner \cite[Lemma 4.6.8]{Dix}. Thus, for each $x \in \mathfrak{t}$, we can choose $w_x \in \mathcal{O}(T)_J$ such that $x$ acts as $[w_x,-]$ on $\mathcal{O}(T)_J$. One then checks that 
$$ w_{[x,y]} = w_x w_y - w_y w_x + \omega(x,y);\,\,x,y \in \mathfrak{t},$$
where $\omega(x,y) \in \mathbb{C}$ and $\omega$ is a $2$-cocycle. 

It remains to show that $(\mathfrak{t}, \omega)$ is quasi-Frobenius. To this end, form the extension $\widehat{\mathfrak{t}} = \mathbb{C}z \oplus \mathfrak{t}$ as at (\ref{ses}), and observe that the map $x \mapsto w_x$ yields an isomorphism  of $T$-algebras
\begin{equation}\label{done} U^{\omega}(\mathfrak{t})= U(\widehat{\mathfrak{t}})/\langle z-1 \rangle \cong \mathcal{O}(T)_J.
\end{equation}
Now the fact that $\mathcal{O}(T)_J$ is a Weyl algebra by Theorem \ref{Weyl}(2) implies that $\omega$ is nondegenerate, so that $(\mathfrak{t}, \omega)$ is quasi-Frobenius, as claimed.

(2) The recipe taking us from (3) to (1) in Theorem \ref{classify} is given by the results of Etingof-Kazhdan \cite{EK} (see, e.g., \cite[Example 5.3, Theorem 5.3]{EG1} and \cite[\S 3.1]{EG6}). More precisely, there exists a twist $J_f(r,\hbar)\in U(\mathfrak{t})^{\otimes 2}[[\hbar ]]$ (see, e.g., \cite[Eq. (10)]{EG1}), such that $J:=J(r)$ is given by the composition
\begin{equation}\label{univforJ}
J:\mathcal{O}(G)^{\otimes 2}\twoheadrightarrow \mathcal{O}(T)^{\otimes 2}\xrightarrow{J_f(r,\hbar)}\mathcal{O}(T)^{\otimes 2}\xrightarrow{\epsilon\otimes \epsilon}\mathbb{C}.
\end{equation}

(3) Let $J$ be a Hopf $2$-cocycle for $G$ with support $T$. Then $J$ corresponds to $(\mathfrak{t},r)$ as in Theorem \ref{classify}(3). Given $t\in T$, set $J^t:=(t\ot t)*J*(t^{-1}\ot t^{-1})$ and $r^t:={\rm Ad}(t)(r)\in\wedge^2\mathfrak{t}$. Then it follows from \cite[Eq. (10)]{EG1} that for any $t\in T$, $J^t=J(r^t)$, so $J^t$ corresponds to $(\mathfrak{t},r^t)$. \qed 
\end{remarks}

\section{IHOE structure of $_J\mathcal{O}(G)_J$ for $G$ unipotent}\label{IHOEand}

\subsection{${}_J\mathcal{O}(G)_J$ is an IHOE}\label{IHOE}

The acronym IHOE stands for \emph{iterated Hopf Ore extension}. Given a positive integer $n$, an \emph{$n$-step IHOE over} $\mathbb{C}$ is a Hopf $\mathbb{C}$-algebra $H$ with a chain
\begin{equation}\label{IHOEchain} \mathbb{C} = H_0 \subset H_1 \subset \cdots \subset H_n = H 
\end{equation}
of Hopf subalgebras $H_i$ of $H$, such that for $i = 1, \ldots , n$, 
\begin{equation}\label{extend}H_i= H_{i-1}[X_i; \sigma_i, \partial_i],
\end{equation} 
with $X_i \in H_i$, $\sigma_i$ an algebra automorphism of $H_{i-1}$, and $\partial_i$ a $\sigma_i$-derivation of $H_{i-1}$. For details, see \cite{BOZZ}, where many of the basic properties of IHOEs were determined. It was shown there that the IHOE structure imposes strong constraints on $\Delta(X_i)$; definitive improvements to these constraints were obtained in \cite{H}. 

The IHOEs appearing in this paper are all of the form specified in (1) of the following definition, with the individual inclusions in the chain (\ref{IHOEchain}) as in (2).

\begin{definition}\label{dertype}\begin{enumerate}
\item An $n$-step IHOE $H$ is \emph{of derivation type} if it possesses a chain (\ref{IHOEchain}) for which each automorphism $\sigma_i$ is the identity map on $H_{i-1}$. 
\item A Hopf $\mathbb{C}$-algebra $H$ containing a Hopf subalgebra $A$ and element $X$ such that $H = A[X;\partial]$ for a $\mathbb{C}$-derivation $\partial$ of $A$ is a \emph{Hopf Ore extension of derivation type}. \qed
\end{enumerate} 
\end{definition}

As noted in $\S$\ref{unipotent}, the coordinate ring of a unipotent group of dimension $n$ is an $n$-step IHOE with all $\sigma_i$ and $\partial_i$ trivial. More precisely, throughout this section we keep the following hypotheses and notation in place:

\medskip

${\bf (H)}\,\,$  $G$ \textit{is a unipotent algebraic group over} $\mathbb{C}$, \textit{with} $ \mathcal{O}(G) = \mathbb{C}[X_1, \ldots , X_n]$ \textit{as in} $\S$\ref{unipotent}, \textit{and with further notation as specified there.}

\begin{theorem}\label{Ore} Let $G$ be as above, and let $J$ be a Hopf $2$-cocycle for $G$. In addition define $Q := J - J_{21}$ and $\overline{Q} := J^{-1} - J^{-1}_{21}$, and set $\mathfrak{m}:=\mathcal{O}(G)^+$.
\begin{enumerate}
\item The chain (\ref{poly}), viewed as a chain of vector spaces, gives a chain of Hopf subalgebras of ${}_J\mathcal{O}(G)_J$, which we write as
\begin{equation}\label{poly2} \mathbb{C} = H_0 \subset \cdots \subset \, _J(H_i)_J \subset \cdots \subset {}_J(H_n)_J ={}_J\mathcal{O}(G)_J.
\end{equation}
\item 
For all $1 \leq i,j \leq n$, we have
$$ J(X_i,X_j) + J^{-1}(X_i,X_j) + \sum J(x^i_1,x^j_1)J^{-1}(x^i_2, x^j_2) =0.$$
\item 
For all $1 \leq i,j \leq n$, we have
\begin{align*}X_i \cdot X_j &= X_iX_j + \sum J^{-1}(X_i, x^j_1)x^j_2 + \sum x^j_1J(X_i, x^j_2)\\
& +\sum x^i_1 J(x^i_2, X_j) + \sum J^{-1}(x^i_1,X_j)x^i_2 + \sum x^i_1 x^j_1J(x^i_2,x^j_2)\\
&+ \sum_{x^i_{21}x^j_{21} \in \mathfrak{m}} J^{-1}(x^i_1,x^j_1)x^i_{21}x^j_{21}J(x^i_{22},x^j_{22}).
\end{align*}
\item 
For all $1 \leq i,j \leq n$, we have 
\begin{align*}  [X_i,X_j] &= X_i\cdot X_j - X_j \cdot X_i\\
&= \quad \sum x^i_1 Q(x^i_2, X_j) + \sum x^j_1 Q(X_i, x^j_2) + \sum x^i_1 x^j_1 Q(x^i_2,x^j_2)\\
&  + \sum \left( J^{-1}(x^i_1,x^j_1)J(x^i_{22},x^j_{22}) - J_{21}^{-1}(x^i_1,x^j_1)J_{21}(x^i_{22},x^j_{22})\right)x^i_{21}x^j_{21}\\
& + \sum x^i_2\overline{Q}(x^i_1,X_j) + \sum x^j_2 \overline{Q}(X_i, x^j_1).
\end{align*}
\noindent In particular, $[X_i,X_j] \in H_{i-1}^+$ for all $1 \leq j<i \leq n$.
\item 
For all $1 \leq i \leq n$, ${}_J(H_i)_J \cong  
{}_J(H_{i-1})_J[X_i; \partial_i]$ is a Hopf Ore extension of derivation type.
\item 
$X_1$ and $X_2$ are primitive elements of ${}_J\mathcal{O}(G)_J$. 
\item 
The space $\mathcal{P}({}_J\mathcal{O}(G)_J)$ of primitive elements of $_J\mathcal{O}(G)_J$, which equals $\mathcal{P}(\mathcal{O}(G))$, generates a commutative Hopf subalgebra of $_J\mathcal{O}(G)_J$, a polynomial algebra of rank $\mathrm{dim}_k(\mathcal{P}(\mathcal{O}(G)))= \mathrm{dim}(G/[G,G])$.
\end{enumerate}
\end{theorem}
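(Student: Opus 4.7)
The plan is to derive each part by unpacking the formula $m_J(a\ot b)=\sum J^{-1}(a_1,b_1)a_2b_2 J(a_3,b_3)$ of \eqref{mult} applied to $a=X_i$ and $b=X_j$, using the structural inputs $\Delta(X_i) = X_i\ot 1 + 1\ot X_i + q(X_i)$ with $q(X_i)\in H_{i-1}^+\ot H_{i-1}^+$ and $\eps(X_i)=0$. Part $(1)$ is immediate from Remark \ref{Jrems}(1): the restriction $J|_{H_i\ot H_i}$ is a Hopf $2$-cocycle on the Hopf subalgebra $H_i\subseteq \mathcal{O}(G)$, so ${}_J(H_i)_J$ is a Hopf subalgebra of ${}_J\mathcal{O}(G)_J$. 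For part $(2)$, apply $J*J^{-1}=\eps\ot \eps$ to $X_i\ot X_j$; expanding $\Delta(X_i)$ and $\Delta(X_j)$ and using $\eps(X_i)=\eps(x_1^i)=0$ together with $J(1,-)=J(-,1)=\eps$, only the three stated terms survive.

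Parts $(3)$ and $(4)$ are then direct, if elaborate, bookkeeping. For part $(3)$, compute $\Delta^{(2)}(X_i)$ by coassociativity from $\Delta(X_i)$, substitute into the defining formula for $m_J(X_i\ot X_j)$, and discard every summand in which $J$ or $J^{-1}$ has an $\eps$-zero entry paired with $1$; the seven surviving summands are precisely those listed. For part $(4)$, compute $X_i\cdot X_j-X_j\cdot X_i$ using the formula of part $(3)$: the commutative term $X_iX_j$ cancels, the singleton $J$- and $J^{-1}$-contributions in $x_1^i$ and $x_1^j$ combine with their $i\leftrightarrow j$ counterparts as the $Q$- and $\overline Q$-evaluations shown, and in the ternary sum $\sum J^{-1}(x_1^i,x_1^j)x_{21}^ix_{21}^jJ(x_{22}^i,x_{22}^j)$ one uses the identity from part $(2)$ to handle the portion where $x_{21}^ix_{21}^j=1$. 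Every term of the resulting commutator is a scalar times a product of factors from $H_{i-1}^+$ or $H_{j-1}^+$; therefore for $j<i$ one has $[X_i,X_j]\in H_{i-1}^+$, the containment needed for $(5)$.

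For part $(5)$, the inner commutator $\partial_i:=[X_i,-]$ is automatically a $\mathbb{C}$-derivation of ${}_J\mathcal{O}(G)_J$. The containment just established shows $\partial_i(X_j)\in {}_J(H_{i-1})_J$ for each algebra generator $X_j$ of ${}_J(H_{i-1})_J$ with $j<i$, and by the Leibniz rule $\partial_i$ restricts to a derivation of ${}_J(H_{i-1})_J$. The universal property of Ore extensions then provides a surjective algebra map ${}_J(H_{i-1})_J[X_i;\partial_i]\twoheadrightarrow {}_J(H_i)_J$; injectivity follows by comparing dimensions, since both sides carry the common basis $\{X_1^{a_1}\cdots X_i^{a_i}\}$ coming from the vector-space identity ${}_J(H_i)_J=H_i=\mathbb{C}[X_1,\ldots,X_i]$. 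The extension is Hopf of derivation type by Definition \ref{dertype}(2) combined with part $(1)$.

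For parts $(6)$ and $(7)$, the coalgebra structures of $\mathcal{O}(G)$ and ${}_J\mathcal{O}(G)_J$ agree, so $\mathcal{P}({}_J\mathcal{O}(G)_J)=\mathcal{P}(\mathcal{O}(G))$. The element $X_1$ is primitive because $q(X_1)\in H_0^+\ot H_0^+=0$; for $X_2$, the quotient $G/G_{n-2}$ is a central extension of $\mathbb{G}_a$ by $\mathbb{G}_a$, hence abelian over $\mathbb{C}$ and isomorphic to $\mathbb{G}_a^2$, so $H_2\cong \mathcal{O}(\mathbb{G}_a^2)$ has a $2$-dimensional space of primitives from which $X_2$ may be chosen. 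The primitives of $\mathcal{O}(G)$ are the algebraic group homomorphisms $G\to \mathbb{G}_a$; these factor through $G/[G,G]$ and form a space of dimension $\dim(G/[G,G])$. Specializing the formula of part $(4)$ to primitive elements $u,v$ (so $q(u)=q(v)=0$) collapses every summand and yields $[u,v]=0$, so primitives commute in ${}_J\mathcal{O}(G)_J$ and generate a commutative subalgebra $B$. Specialized to primitives, part $(3)$ reduces to $u\cdot v=uv+J(u,v)+J^{-1}(u,v)$, differing from the commutative product only by a scalar; so $B$ is a filtered deformation of $\mathbb{C}[\mathcal{P}(\mathcal{O}(G))]\subseteq \mathcal{O}(G)$ with the same associated graded, hence a polynomial algebra of rank $\dim\mathcal{P}(\mathcal{O}(G))=\dim(G/[G,G])$. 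The principal obstacle throughout is the combinatorial bookkeeping in parts $(3)$ and $(4)$; it is the identity of part $(2)$ that permits the unwieldy expansion of $[X_i,X_j]$ to collapse into the tidy form stated, and that delivers the containment underlying $(5)$.
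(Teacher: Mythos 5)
Your proposal is correct and follows essentially the same route as the paper: part (2) by evaluating $J\ast J^{-1}=\epsilon\otimes\epsilon$ on $X_i\otimes X_j$, parts (3)--(4) by expanding (\ref{mult}) against $(\mathrm{id}\otimes\Delta)\Delta(X_i)$ and subtracting, and (5)--(7) as you describe. One bookkeeping correction: in (3), discarding the summands killed by (\ref{Jtwo}) leaves ten terms, not seven --- the terms $J^{-1}(X_i,X_j)$, $J(X_i,X_j)$ and the scalar part (where $x^i_{21}x^j_{21}\notin\mathfrak{m}$) of the ternary sum also survive, and it is precisely the identity of part (2) that cancels these three; so (2) is already needed to reach the formula of (3) as stated, with its last sum restricted to $x^i_{21}x^j_{21}\in\mathfrak{m}$, rather than first entering in (4) as you place it. With that adjustment your argument coincides with the paper's proof.
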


\begin{proof} 
(1) This is clear from  (\ref{mult}) and (\ref{poly}). 

(2) Omitting summation symbols for clarity where convenient, we calculate, using (\ref{good}) for $\Delta(X_i)$ and $\Delta (X_j)$, and the notation (\ref{qform}), and applying (\ref{Jtwo}),
\begin{align*} 
0 &= (\epsilon \otimes \epsilon)(X_i \otimes X_j)= (J\ast J^{-1})(X_i \otimes X_j)\\
&= J(X_i,X_j)J^{-1}(1,1) + J(X_i,1)J^{-1}(1,X_j) +J(1,X_j)J^{-1}(X_i,1)\\
& +J(1,1)J^{-1}(X_i,X_j) + J(x^i_1,X_j)J^{-1}(x^i_2,1)+J(X_i, x^j_1)J^{-1}(1,x^j_2)\\
& + J(1,x^j_1)J^{-1}(X_i,x^j_2)+J(x^i_1,1)J^{-1}(x^i_2, X_j) + J(x^i_1,x^j_1)J^{-1}(x^i_2, x^j_2)\\
&= J(X_i,X_j) + J^{-1}(X_i,X_j) + J(x^i_1,x^j_1)J^{-1}(x^i_2, x^j_2), 
\end{align*}
where the final equality holds since the omitted terms are all $0$ by (\ref{Jtwo}) and (\ref{qform}).

(3) We start from the expansions
\begin{align*} 
(\mathrm{id}\otimes \Delta)\circ\Delta (X_i) &= X_i \otimes 1 \otimes 1 + 1 \otimes X_i \otimes 1 + 1 \otimes 1 \otimes X_i\\
&  + 1 \otimes q(X_i) + \sum x^i_1 \otimes x^i_{21} \otimes x^i_{22},
\end{align*}
and a corresponding one for $X_j$. 
Thus, from (\ref{mult}) applied to the above expansions, and using (\ref{Jtwo}) to see that many terms in the product are $0$, we calculate
\begin{align*} 
X_i \cdot X_j &= J^{-1}(X_i,X_j)J(1,1) +J^{-1}(X_i,x^j_1)x^j_{21}J(1,x^j_{22})+ X_iX_j + J^{-1}(1,1)J(X_i,X_j)\\
&+J^{-1}(1,1)x^j_1 J(X_i,x^j_2)+ J^{-1}(1,1)x^i_1J(x^i_2,X_j)+ J^{-1}(1,1)x^i_1x^j_1J(x^i_2,x^j_2)\\ 
&+ J^{-1}(x^i_1,X_j)x^i_{21}J(x^i_{22},1) + J^{-1}(x^i_1,x^j_1)x^i_{21}x^j_{21}J(x^i_{22},x^j_{22})\\
&= J^{-1}(X_i, X_j) + J(X_i,X_j) + J^{-1}(X_i, x^j_1)x^j_{21}J(1,x^j_{22})+ X_iX_j + x^j_1J(X_i,x^j_2)\\
& + x^i_1 J(x^i_2,X_j)+ x^i_1x^j_1J(x^i_2,x^j_2) + J^{-1}(x^i_1,X_j)x^i_{21}J(x^i_{22},1)\\
&+ J^{-1}(x^i_1,x^j_1)x^i_{21}x^j_{21}J(x^i_{22},x^j_{22})\\
&= J^{-1}(X_i, X_j) + J(X_i,X_j) + J^{-1}(x^i_1,x^j_1)J(x^i_{22},x^j_{22})+ J^{-1}(X_i, x^j_1)x^j_{21}J(1,x^j_{22})\\
& + X_iX_j + x^j_1J(X_i,x^j_2) + x^i_1 J(x^i_2,X_j)+ x^i_1x^j_1J(x^i_2,x^j_2) + J^{-1}(x^i_1,X_j)x^i_{21}J(x^i_{22},1)\\
&+ \sum_{x^i_{21}x^j_{21} \in \mathfrak{m}} J^{-1}(x^i_1,x^j_1)x^i_{21}x^j_{21}J(x^i_{22},x^j_{22})\\
&= J^{-1}(X_i, x^j_1)x^j_{21}J(1,x^j_{22})+ X_iX_j + x^j_1J(X_i,x^j_2) + x^i_1 J(x^i_2,X_j)+ x^i_1x^j_1J(x^i_2,x^j_2) \\
&+ J^{-1}(x^i_1,X_j)x^i_{21}J(x^i_{22},1)+ \sum_{x^i_{21}x^j_{21} \in \mathfrak{m}} J^{-1}(x^i_1,x^j_1)x^i_{21}x^j_{21}J(x^i_{22},x^j_{22}),
\end{align*} 
where the sum of the first three terms in the penultimate expansion equals $0$ by (2). In the final expansion above, the summands in the first term are all 0 by (\ref{Jtwo}), except when $x^j_{22} = 1$, which occurs when $x^j_{21} = x^j_2$, so that the first term collapses to $\sum J^{-1}(X_i, x^j_1)x^j_2$. Similarly, the penultimate term collapses to $\sum J^{-1}(x^i_1, X_j)x^i_2$. 
Hence, we finally obtain 
\begin{align*}
X_i \cdot X_j &= X_iX_j + \sum J^{-1}(X_i, x^j_1)x^j_2 + \sum x^j_1J(X_i, x^j_2)\\
& +\sum x^i_1 J(x^i_2, X_j) + \sum J^{-1}(x^i_1,X_j)x^i_2 + \sum x^i_1 x^j_1J(x^i_2,x^j_2)\\
&+ \sum_{x^i_{21}x^j_{21} \in \mathfrak{m}} J^{-1}(x^i_1,x^j_1)x^i_{21}x^j_{21}J(x^i_{22},x^j_{22}).
\end{align*}

(4) Combining the calculation of $X_i \cdot X_j$ from (3) with the corresponding one for $X_j \cdot X_i$, we arrive at the formula stated in the proposition.

(5) This is immediate from (1) and (4) when we bear in mind that, as a vector space, ${}_J(H_i)_J$ has the same basis as $H_i$.

(6) That $X_1$ and $X_2$ are primitive follows from the fact that the comultiplication in ${}_J\mathcal{O}(G)_J$ is unchanged from $\mathcal{O}(G)$, since in characteristic $0$ there are no non-abelian unipotent groups of dimension at most $2$.

(7) It is clear from (4) that primitive elements commute. The rest of (7) now follows easily. 
\end{proof}

The following key corollary is easily deduced from Theorem \ref{Ore}(5):

\begin{corollary}\label{nIHOE} With $G$ and $J$ as in Theorem \ref{Ore}, ${}_J\mathcal{O}(G)_J$ is an $n$-step IHOE of derivation type.
\end{corollary}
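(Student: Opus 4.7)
The plan is to observe that Corollary \ref{nIHOE} is essentially a direct bookkeeping consequence of the work already done in Theorem \ref{Ore}, so the proof amounts to verifying that its conclusions literally match Definition \ref{dertype}(1). All the analytic work — handling the twisted multiplication, controlling where $[X_i, X_j]$ lands, and producing the derivation $\partial_i$ — is already done.

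First, I would invoke Theorem \ref{Ore}(1) to obtain the required chain
\[
\mathbb{C} = H_0 \subset {}_J(H_1)_J \subset \cdots \subset {}_J(H_i)_J \subset \cdots \subset {}_J(H_n)_J = {}_J\mathcal{O}(G)_J
\]
of Hopf subalgebras. This puts in place the chain (\ref{IHOEchain}) from the general definition of an IHOE, with $n = \dim(G)$ and with $X_1, \ldots, X_n$ the generators fixed in $\S$\ref{unipotent}. Next, I would apply Theorem \ref{Ore}(5) for each $i = 1, \ldots, n$, which provides an isomorphism ${}_J(H_i)_J \cong {}_J(H_{i-1})_J[X_i; \partial_i]$ as a Hopf Ore extension of derivation type — that is, with the automorphism $\sigma_i$ in the general formula (\ref{extend}) equal to the identity on ${}_J(H_{i-1})_J$.

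Taken together, the chain of Hopf subalgebras with each step realized as a Hopf Ore extension of derivation type is precisely the data required by Definition \ref{dertype}(1) to certify ${}_J\mathcal{O}(G)_J$ as an $n$-step IHOE of derivation type. There is no real obstacle to surmount here; if anything, the only point worth underlining is that the chain (\ref{poly}) of Hopf subalgebras of $\mathcal{O}(G)$ survives as a chain of Hopf subalgebras after the twist (which is Theorem \ref{Ore}(1), ultimately a consequence of the fact that the coalgebra structure is unchanged under the $J$-deformation), so that the ``iterated'' aspect of the definition is genuinely inherited from the filtration of the unipotent group $G$ by its central series.
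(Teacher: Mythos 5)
Your proposal is correct and follows essentially the same route as the paper: both treat the corollary as an immediate consequence of Theorem \ref{Ore}(1) and (5), chaining the Hopf Ore extensions of derivation type to match Definition \ref{dertype}(1). The paper merely adds the supplementary observation that ${}_J\mathcal{O}(G)_J$ is a factor of the iterated Ore extension $\mathbb{C}[X_1][X_2;\partial_2]\cdots[X_n;\partial_n]$ by the zero ideal (since the coalgebra, hence the vector-space basis, is unchanged by the twist), a point you also touch on.
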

\begin{proof} This is essentially proved already by Theorem \ref{Ore}(5). It is perhaps worth observing that the relations given by Theorem \ref{Ore}(4) ensure that ${}_J\mathcal{O}(G)_J$ is a \emph{factor} of the iterated Ore extension $\mathbb{C}[X_1][X_2; \partial_2] \cdots [X_n ; \partial_n]$. However, we know that the coalgebra structure is preserved on the same vector space by the twist operation, so the ideal we are factoring by is $\{0\}$.
\end{proof}

\begin{remarks}\label{compwg2}
(1) Here is a perspective on Corollary \ref{nIHOE}, which is useful for instance when discussing the finite dimensional simple ${}_J\mathcal{O}(G)_J$-modules in $\S$\ref{simples2}. We can view both $\mathcal{O}(G)$ and ${}_J\mathcal{O}(G)_J$ as factor algebras of the free $\mathbb{C}$-algebra on $n$ generators, $F := \mathbb{C}\langle X_1,\dots, X_n \rangle$, the former being the factor of $F$ by the ideal 
$$ D:=\langle [X_i,X_j]\mid 1 \leq i < j \leq n \rangle, $$
and the latter the factor by the ideal
$$ E:= \langle [X_i,X_j] - f_{ij}\mid 1 \leq i < j \leq n \rangle, $$
where the elements $f_{ij}\in F$ are given by Theorem \ref{Ore}(4). Let 
$$ \mathcal{B}:= \{X_1^{a_1} X_2^{a_2}\cdots X_n^{a_n}\mid a_i \in \mathbb{Z}^{\geq 0} \}  \subset F.$$ Then the cosets $$ \{ b +D \mid b \in \mathcal{B} \} \textit{   and   }\{ b +E \mid b \in \mathcal{B} \}$$
provide $\mathbb{C}$-bases of $\mathcal{O}(G)$ and ${}_J\mathcal{O}(G)_J$ respectively, \emph{with the multiplication in each case induced by the multiplication in $F$}. In particular, one sees that the sets of powers of each generator $X_i$ are the same in both $\mathcal{O}(G)$ and in ${}_J\mathcal{O}(G)_J$.

(2) Theorem \ref{Ore}(3)-(5) correct typos in \cite[Lemma 3.1]{G2}. More precisely, Theorem \ref{Ore}(3) corrects \cite[Lemma 3.1(4)]{G2} in which the second and fifth summands are missing. Theorem \ref{Ore}(4) corrects \cite[Lemma 3.1(5)]{G2} in which the fifth and sixth summands are missing. Theorem \ref{Ore}(5) corrects \cite[Lemma 3.1(6)]{G2} in which it is mistakenly stated that $X_1$ and $X_2$ are central. \qed
\end{remarks}

\subsection{Immediate consequences of IHOE structure}\label{immediate}
Here are some more or less immediate consequences of Corollary \ref{nIHOE}. Definitions of the homological terms appearing in (2) can be found in \cite{BZ}, for example. Recall for (3) that a Hopf algebra is \emph{connected} \cite[Definition 5.1.5]{M} if its coradical is just the base field $\mathbb{C}$; note for future use that this property passes to all factor Hopf algebras by \cite[Corollary 5.3.5]{M}.

\begin{corollary}\label{IHOEcor} Keep the notation and hypotheses of ${\bf(H)}$ from the start of $\S$\ref{IHOEand}.
\begin{enumerate}
\item {\rm (\cite[Corollary 3.2]{G2})} ${}_J\mathcal{O}(G)_J$ is a noetherian domain of GK dimension $n$.
\item ${}_J\mathcal{O}(G)_J$ is Artin-Schelter regular and Auslander regular with global homological dimension $n$, and is GK-Cohen-Macaulay.
\item ${}_J\mathcal{O}(G)_J$ is a connected Hopf algebra whose associated graded algebra with respect to its coradical filtration is the coordinate ring $\mathcal{O}(\widehat{G})$, the associated graded ring of $\mathcal{O}(G)$, $\widehat{G}$ being the associated graded unipotent group of $G$.
\item Every prime ideal of ${}_J\mathcal{O}(G)_J$ is completely prime.
\item Every finite dimensional simple ${}_J\mathcal{O}(G)_J$-module $V$ has $\mathrm{dim}_{\mathbb{C}}(V) = 1$.
\end{enumerate}
\end{corollary}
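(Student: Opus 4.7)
My plan attacks the five parts in order, using Corollary \ref{nIHOE} as the principal input throughout. For parts (1) and (2), I would induct along the IHOE chain (\ref{IHOEchain}): Ore extensions of derivation type over a noetherian domain remain noetherian domains, and the standard results on the behavior of GK-dimension, global dimension, Artin-Schelter regularity, Auslander regularity, and the GK-Cohen-Macaulay property under such extensions (see, e.g., the relevant chapters of \cite{McCR}) show that each step raises the two dimensions by one and preserves the other properties. Since $\mathbb{C}$ trivially satisfies all these conditions and has GK-dimension zero, an $n$-step induction finishes (1) and (2).

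For part (3), the key observation is that the twist leaves the coalgebra structure of $\mathcal{O}(G)$ untouched, so the coradical filtration $\{C_p\}_{p\ge 0}$ of ${}_J\mathcal{O}(G)_J$ coincides as a filtered vector space with that of $\mathcal{O}(G)$, and in particular the coradical is $\mathbb{C}$, making ${}_J\mathcal{O}(G)_J$ a connected Hopf algebra. To identify $\mathrm{gr}({}_J\mathcal{O}(G)_J)$ as an algebra, I would analyze the twist formula (\ref{mult}): writing $J=\epsilon\otimes\epsilon+J'$ with $J'$ vanishing on pairs involving $1$, one checks that for $a\in C_p$ and $b\in C_q$ the twisted product $a\cdot b$ differs from the ordinary product $ab$ only by correction terms whose surviving factors lie strictly lower in the coradical filtration. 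Thus modulo $C_{p+q-1}$ the twisted multiplication agrees with the original, and $\mathrm{gr}({}_J\mathcal{O}(G)_J)=\mathrm{gr}(\mathcal{O}(G))=\mathcal{O}(\widehat{G})$ as graded Hopf algebras.

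Part (4) is where I expect the main obstacle. My strategy is to use the positive filtration from (3), whose associated graded $\mathcal{O}(\widehat{G})$ is a commutative noetherian polynomial ring, and transfer the completely-prime property from the graded to the filtered side. The classical tool (see, e.g., \cite[Ch.~7]{McCR}) is that for an exhaustive positive filtration with noetherian commutative associated graded, a semiprime ideal $P$ has semiprime symbol ideal $\mathrm{gr}(P)$. Applied to a prime $P$ of ${}_J\mathcal{O}(G)_J$, this forces $\mathrm{gr}(P)$ to be semiprime in $\mathcal{O}(\widehat{G})$, so $\mathrm{gr}({}_J\mathcal{O}(G)_J/P)$ is a reduced commutative ring. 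A principal-symbol argument then lifts this back to show that ${}_J\mathcal{O}(G)_J/P$ is itself a domain, and hence $P$ is completely prime. The delicate point is verifying that the McConnell-Robson hypotheses apply cleanly in our setting, which should follow from the structural description in (3).

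Finally, part (5) is immediate from (4): for a finite dimensional simple module $V$ of dimension $d$ over the $\mathbb{C}$-algebra ${}_J\mathcal{O}(G)_J$, Schur's lemma over the algebraically closed field together with Jacobson density gives ${}_J\mathcal{O}(G)_J/\mathrm{Ann}(V)\cong M_d(\mathbb{C})$. Since $\mathrm{Ann}(V)$ is primitive, hence prime, (4) forces $M_d(\mathbb{C})$ to be a domain, whence $d=1$.
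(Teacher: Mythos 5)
Parts (1), (2) and (5) of your proposal follow the same route as the paper: the paper disposes of (1) and (2) by citing the general IHOE results of \cite[Theorem 3.2]{BOZZ} (which are proved by exactly the induction along the chain (\ref{IHOEchain}) that you describe), and its proof of (5) is your Artin--Wedderburn/Schur argument verbatim. For (3) your argument is correct and in fact a little more self-contained than the paper's: writing $J=\epsilon\otimes\epsilon+J'$ and $J^{-1}=\epsilon\otimes\epsilon+J''$ with $J',J''$ vanishing when either argument is a scalar, the correction terms in (\ref{mult}) for $a\in C_p$, $b\in C_q$ involve at least one comultiplication component lying in $C_{p-1}^+$ or $C_{q-1}^+$, so $a\cdot b\equiv ab \pmod{C_{p+q-1}}$ and the two associated graded Hopf algebras coincide; the paper instead quotes \cite[Theorem 3.2(4)]{BOZZ} together with the coincidence of the coalgebra structures.

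The genuine gap is in (4). The transfer principle you invoke goes in the wrong direction: for a filtration with commutative noetherian associated graded, the classical results (e.g.\ in \cite{McCR}) say that if $\mathrm{gr}(I)$ is (semi)prime then $I$ is (semi)prime; it is \emph{false} that a prime $P$ must have (semi)prime symbol ideal $\mathrm{gr}(P)$, and consequently commutativity of the associated graded does not force primes to be completely prime. The enveloping algebra $U(\mathfrak{sl}_2)$ is the standard counterexample, and it is a connected Hopf algebra whose coradical filtration is the PBW filtration with associated graded a commutative polynomial ring in three variables: the annihilator of a $d$-dimensional simple module with $d\geq 2$ is a prime (indeed maximal) ideal whose quotient is $M_d(\mathbb{C})$, not a domain. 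So no argument using only the conclusion of part (3) can prove (4); one must use the finer structure established in Corollary \ref{nIHOE}, namely that ${}_J\mathcal{O}(G)_J$ is an \emph{iterated Ore extension of derivation type} $\mathbb{C}[X_1][X_2;\partial_2]\cdots[X_n;\partial_n]$ (a structure which $U(\mathfrak{sl}_2)$ does not admit). The paper's proof of (4) is exactly this: it invokes Lorenz's theorem \cite{L} that in characteristic zero every prime ideal of such an iterated differential operator ring is completely prime. With (4) repaired in this way, your deduction of (5) goes through unchanged.
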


\begin{proof}(1), (2) These properties are valid for every $n$-step IHOE - see \cite[Theorem 3.2(2),(5)]{BOZZ}.

(3) Since the coalgebra structure of $_J\mathcal{O}(G)_J$ is the same as that of $\mathcal{O}(G)$, ${}_J\mathcal{O}(G)_J$ is a connected Hopf algebra (which also follows from the fact that it is an IHOE, \cite[Theorem 3.2(1)]{BOZZ}). Hence its coradical filtration is an algebra filtration by \cite[Lemma 5.2.8]{M}. It follows from Corollary \ref{nIHOE} and \cite[Theorem 3.2(4)]{BOZZ} that the resulting associated graded algebra of ${}_J\mathcal{O}(G)_J$ is a coradically graded commutative polynomial Hopf algebra in $n$ variables. But as already recalled, ${}_J\mathcal{O}(G)_J$ and $\mathcal{O}(G)$ are the same as coalgebras, hence they have the same associated graded Hopf algebra.

(4) This is true for all iterated Ore extensions of $\mathbb{C}$ of derivation type by \cite{L}.

(5) The annihilator $I$ of a finite dimensional simple ${}_J\mathcal{O}(G)_J$-module $V$ is a maximal ideal with $\mathrm{dim}_{\mathbb{C}}({}_J\mathcal{O}(G)_J/I) < \infty$. So, by the Artin-Wedderburn theorem and (4), ${}_J\mathcal{O}(G)_J/I \cong \mathbb{C}$. Hence, $\mathrm{dim}_{\mathbb{C}}(V) = 1$.
\end{proof}

Further to Corollary \ref{IHOEcor}(2), basic properties of IHOEs allow us to say also that $_J\mathcal{O}(G)_J$ is skew Calabi-Yau, but we do not include that property here as we will later - in Corollary \ref{payoff} - show that $_J\mathcal{O}(G)_J$ is actually Calabi-Yau. The group of finite dimensional simple ${}_J\mathcal{O}(G)_J$-modules (which exists by (5)), is studied in $\S$\ref{simples2}.

\section{Structure of twisted unipotent groups - the minimal case}\label{structure}

Retain the notation and hypotheses {\bf (H)} from the beginning of $\S$\ref{IHOEand}, so $(\mathcal{O}(G),\epsilon \otimes \epsilon)$ is cotriangular by Example \ref{commex}(1). Fix a Hopf $2$-cocycle $J$ for $G$, so $R^J:= J_{21}^{-1}\ast J$ and $({}_J\mathcal{O}(G)_J, R^J)$ is cotriangular by Example \ref{commex}(2). The aim in this section is to prove Theorem \ref{minLie} (as predicted in \cite[Theorem 3.4]{G2}; see Remark \ref{compwg22} below), describing the algebra structure of ${}_J\mathcal{O}(G)_J$ in the case where $({}_J\mathcal{O}(G)_J, R^J)$ is minimal (Definition \ref{min}). 

\subsection{The minimal case - inclusion of $\mathrm{im}(\Psi)$}\label{minimal1} 
We assume throughout this subsection that $R^J$ is nondegenerate for ${}_J\mathcal{O}(G)_J$, as in Definition \ref{min}. By Lemma \ref{into1}, there is an embedding of Hopf algebras
\begin{equation}\label{caught}     
\Psi: {}_J\mathcal{O}(G)_J \xrightarrow{1:1} ({}_J\mathcal{O}(G)_J)^{\circ, {\rm op}},\,\,\, a \mapsto R^J(-,a). 
\end{equation}
Observe that $({}_J\mathcal{O}(G)_J)^{\circ, {\rm op}}$ is a subalgebra of $({}_J\mathcal{O}(G)_J)^{*,{\rm op}}$ (where $(_J\mathcal{O}(G)_J)^*$ is the full vector space dual of ${}_J\mathcal{O}(G)_J$). Since $\mathcal{O}(G)={}_J\mathcal{O}(G)_J$ as coalgebras, their vector space duals are the same as algebras. Namely, via the identity map, as algebras,
\begin{equation}\label{same} ({}_J\mathcal{O}(G)_J)^{*,{\rm op}}= (\mathcal{O}(G)^*)^{{\rm op}}.
\end{equation}
In particular, $({}_J\mathcal{O}(G)_J)^{*,{\rm op}}$ contains as a subalgebra the (opposite algebra of the) finite dual $\mathcal{O}(G)^{\circ}$ of $\mathcal{O}(G)$. Since $\mathcal{O}(G)$ is commutative, its finite dual is cocommutative, so by the Cartier-Gabriel-Kostant structure theorem \cite[Corollary 5.6.4(3) and Theorem 5.6.5]{M} and (\ref{same}), $ ({}_J\mathcal{O}(G)_J)^{*,{\rm op}}$ contains the subalgebra $(U(\mathfrak{g})\#\mathbb{C}[G])^{{\rm op}}$, where $\mathfrak{g}:={\rm Lie}(G)$. Thus, $({}_J\mathcal{O}(G)_J)^{*,{\rm op}}$ contains the subalgebra $U(\mathfrak{g})^{{\rm op}} \cong U(\mathfrak{g})$, consisting of the distributions supported (set-theoretically) at $1$, that is vanishing on some power of the augmentation ideal $\mathfrak{m}$ of $\mathcal{O}(G)$. We shall show that 
\begin{equation}\label{image} \mathrm{im}(\Psi) = U(\mathfrak{g})^{{\rm op}}.
\end{equation}

First we use the work of Etingof and Kazhdan \cite{EK} to prove

\begin{proposition}\label{include}
With the above hypotheses and notation,
$$\mathrm{im}(\Psi) \subseteq U(\mathfrak{g})^{{\rm op}}.$$
\end{proposition}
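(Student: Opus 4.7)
The plan is to exploit the Etingof--Kazhdan universal description of $J$ recalled in Remark \ref{equivrmk}(2). Since $R^{J}$ is nondegenerate, Proposition \ref{factor} implies that the support of $J$ is all of $G$, so $\mathfrak{t}=\mathfrak{g}$, and formula (\ref{univforJ}) simplifies to $J=(\epsilon\otimes\epsilon)\circ J_f(r,\hbar)$ with $J_f(r,\hbar)\in U(\mathfrak{g})^{\otimes 2}[[\hbar]]$, acting on $\mathcal{O}(G)^{\otimes 2}$ via left-invariant bidifferential operators. In the unipotent setting this formal twist takes honest complex values on each pair $(c,a)\in\mathcal{O}(G)^{\otimes 2}$ (this is precisely what the bijection in Theorem \ref{classify} encodes), so $J$ may be regarded as coming from an element of the completed tensor product $U(\mathfrak{g})\,\widehat{\otimes}\,U(\mathfrak{g})$ sitting inside $(\mathcal{O}(G)\otimes\mathcal{O}(G))^{\ast}$.

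The first consequence to extract is that for each fixed $a\in\mathcal{O}(G)$ the linear functional $J(-,a):\mathcal{O}(G)\to\mathbb{C}$ is a finite linear combination of evaluations of the form $c\mapsto\epsilon(\xi\cdot c)$, $\xi\in U(\mathfrak{g})$, and therefore lies in $U(\mathfrak{g})\subseteq\mathcal{O}(G)^{\circ}$. Finiteness here is forced by the fact that any polynomial $a$ of finite $\mathfrak{m}$-adic order is annihilated (after evaluating at $1$) by differential operators of sufficiently large PBW order, so only the finite-dimensional part $U_{d}(\mathfrak{g})$ with $d=\deg_{\mathfrak{m}}(a)$ contributes. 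Equivalently, $J(-,a)$ vanishes on $\mathfrak{m}^{N_a}$ for some integer $N_a$, where $\mathfrak{m}:=\mathcal{O}(G)^{+}$. The same argument applies to $J(a,-)$, $J^{-1}(-,a)$, and $J^{-1}(a,-)$, the convolution inverse arising from the analogous Etingof--Kazhdan recipe applied to $-r$.

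The argument then reduces to a clean filtration computation. Writing
\[ R^{J}(c,a)=(J_{21}^{-1}\ast J)(c,a)=\sum J^{-1}(a_{1},c_{1})\,J(c_{2},a_{2}), \]
and using that $\Delta(a)$ is a finite sum, pick $N$ so large that $J^{-1}(a_{1},-)$ and $J(-,a_{2})$ vanish on $\mathfrak{m}^{N}$ for every summand of $\Delta(a)$. Because $\mathcal{O}(G)$ is a connected Hopf algebra with $\Delta(\mathfrak{m})\subseteq\mathfrak{m}\otimes 1+1\otimes\mathfrak{m}+\mathfrak{m}\otimes\mathfrak{m}$, iterating yields $\Delta(\mathfrak{m}^{2N})\subseteq\sum_{p+q\geq 2N}\mathfrak{m}^{p}\otimes\mathfrak{m}^{q}$. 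Hence, for any $c\in\mathfrak{m}^{2N}$, we may expand $\Delta(c)$ so that in each summand $c_{1}\otimes c_{2}$ either $c_{1}\in\mathfrak{m}^{N}$ or $c_{2}\in\mathfrak{m}^{N}$, killing the corresponding term of $R^{J}(c,a)$. Therefore $\Psi(a)=R^{J}(-,a)$ vanishes on $\mathfrak{m}^{2N}$ and lies in $U(\mathfrak{g})\subseteq\mathcal{O}(G)^{\circ}$; via the identification (\ref{same}) this places $\mathrm{im}(\Psi)$ inside $U(\mathfrak{g})^{\mathrm{op}}\subseteq(\mathcal{O}(G)^{\ast})^{\mathrm{op}}$, as claimed.

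The main obstacle is the first step: one must make precise in what sense the Etingof--Kazhdan formal twist $J_f(r,\hbar)$ specialises in the unipotent case to an element of $U(\mathfrak{g})\,\widehat{\otimes}\,U(\mathfrak{g})$ whose partial contractions at fixed arguments fall strictly inside $U(\mathfrak{g})$, rather than into some nontrivial combination with the group-like part $\mathbb{C}[G]$ of the cocommutative finite dual $\mathcal{O}(G)^{\circ}\cong U(\mathfrak{g})\#\mathbb{C}[G]$. Once this analytic point is settled, the remainder is purely coalgebra bookkeeping, driven by the cocycle identity for $\Delta$ on powers of $\mathfrak{m}$.
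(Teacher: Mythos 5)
Your overall strategy is the same as the paper's --- push everything through the Etingof--Kazhdan universal twist and use order estimates to show that $\Psi(a)$ kills a power of $\mathfrak{m}$ --- and your final coalgebra bookkeeping is correct: $\Delta(\mathfrak{m}^{2N})\subseteq\sum_{p+q\ge 2N}\mathfrak{m}^{p}\otimes\mathfrak{m}^{q}$ does kill every term of $R^{J}(c,a)=\sum J^{-1}(a_{1},c_{1})J(c_{2},a_{2})$ once $J^{-1}(a_{1},-)$ and $J(-,a_{2})$ vanish on $\mathfrak{m}^{N}$. But there is a genuine gap exactly where you flag ``the main obstacle'': the claim that for fixed $a$ the functional $J(-,a)$ is a \emph{finite} combination of elements of $U(\mathfrak{g})$, equivalently vanishes on some $\mathfrak{m}^{N_a}$, is never established. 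Writing $J_f(r,\hbar)=1+\sum_{i\ge 1}(j_{i1}\otimes j_{i2})\hbar^{i}$, your observation that only the part of $U(\mathfrak{g})$ modulo a suitable power of $U(\mathfrak{g})^{+}$ can pair nontrivially with $a$ controls each individual term, but it does not prevent infinitely many indices $i$ from contributing: for that you need a lower bound on the order of the coefficients, namely that the $j_{i2}$ lie in higher and higher powers of $U(\mathfrak{g})^{+}$ as $i\to\infty$, so that all but finitely many annihilate the fixed $a$ (which sits in a finite stage of the coradical filtration). Without such an estimate the specialisation at $\hbar=1$ could a priori yield a distribution of infinite order at $1$ --- for instance a group-like in the $\mathbb{C}[G]$ part of $\mathcal{O}(G)^{\circ}$ --- which is precisely what must be excluded. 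You name this issue but leave it unresolved, so the argument is incomplete as written.

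The paper closes this gap by a citation, and in a more economical way: it applies the estimate directly to $R_f(r,\hbar):=J_f(r,\hbar)_{21}^{-1}J_f(r,\hbar)=1+\sum_{i\ge 1}(r_{i1}\otimes r_{i2})\hbar^{i}$, quoting \cite[Example 5.3]{EG1} for the fact that $r_{i2}\in (U(\mathfrak{g})^{+})^{2i}$ for all $i$. Since a fixed $z\in\mathcal{O}(G)$ vanishes on some power of $U(\mathfrak{g})^{+}$, only finitely many $i$ contribute to $R^{J}(-,z)$, and each surviving $r_{i1}\in U(\mathfrak{g})$ kills a power of $\mathfrak{m}$, giving the conclusion at once. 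Working with $R_f$ rather than with $J_f$ and $J_f^{-1}$ separately both supplies the missing input (the estimate is available in the literature for $R_f$) and makes your convolution-plus-coproduct step unnecessary. If you prefer to keep your two-step structure, you must locate or prove the analogous order estimate for the coefficients of $J_f$ itself and of its inverse.
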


\begin{proof} 
Let $J_f(r,\hbar)\in U(\mathfrak{g})^{\otimes 2}[[\hbar ]]$ be such that $J$ is given by the composition
\begin{equation}\label{univforJ}
J:\mathcal{O}(G)^{\otimes 2}\xrightarrow{J_f(r,\hbar)}\mathcal{O}(G)^{\otimes 2}\xrightarrow{\epsilon\otimes \epsilon}\mathbb{C}
\end{equation}
(see Remarks \ref{equivrmk}(2)). Then setting 
$$R_f(r,\hbar):=J_f(r,\hbar)_{21}^{-1}J_f(r,\hbar)\in U(\mathfrak{g})^{\otimes 2}[[\hbar ]],$$ we have that $R^J$ is given by the composition
$$R^J:\mathcal{O}(G)^{\otimes 2}\xrightarrow{R_f(r,\hbar)}\mathcal{O}(G)^{\otimes 2}\xrightarrow{\epsilon\otimes \epsilon}\mathbb{C}.$$ 
Namely, writing
$$
R_f(r,\hbar)=1+\sum_{i=1}^{\infty} (r_{i1}\otimes r_{i2})\hbar ^i\in U(\mathfrak{g})^{\otimes 2}[[\hbar ]],$$ the bilinear form $R^J$ is given by
$$
R^J(y,z)=\epsilon(yz)+\sum_{i=1}^{\infty} r_{i1}(y)r_{i2}(z);\,\,\,y,z\in \mathcal{O}(G),
$$
where the sum is well defined since it is actually finite (as we see below).

Let $z \in \mathfrak{m}:=\mathcal{O}(G)^+$. Pick $n$ such that $r_{i2}(z)=0$ for every $i>n$, which is possible since $z$ vanishes on some power of $U(\mathfrak{g})^+$, and $r_{i2}$ lies in $(U(\mathfrak{g})^+)^{2i}$ for every $i$ \cite[Example 5.3]{EG1}. Next fix $N$ such that $r_{11},\dots,r_{1n}$ vanish on 
$\mathfrak{m}^N$. Then for every $y\in\mathfrak{m}^N$, we have
$$
\Psi (z)(y)=R^J(y,z) =\sum_{i=1}^{\infty} r_{i1}(y)r_{i2}(z)=\sum_{i=1}^{N}r_{i1}(y)r_{i2}(z) = 0.
$$
Thus, $\Psi(z)$ vanishes on $\mathfrak{m}^N$, so $\Psi$ maps ${}_J\mathcal{O}(G)_{J}$ into $U(\mathfrak{g})$, as claimed.
\end{proof}

\subsection{The minimal case - surjectivity of $\Psi$}\label{minimal2}

We continue to assume throughout this subsection that $R^J$ is nondegenerate for ${}_J\mathcal{O}(G)_J$, and we retain all the notation introduced so far in $\S$\ref{structure}. The proof that $\Psi$ maps ${}_J\mathcal{O}(G)_J$ {\em onto} $U(\mathfrak{g})$ is by induction on $\mathrm{dim}(G)$, so we start by introducing the induction setup. 

Recall from $\S$\ref{2cocycunip} that $n:=\dim(G)$ is even. Given elements $x,y$ in (the vector space) $\mathcal{O}(G)$, when confusion seems possible we will use the notation $x\cdot y$ to denote multiplication in ${}_J\mathcal{O}(G)_J$ and $xy$ to denote multiplication in $\mathcal{O}(G)$.

Choose a $1$-dimensional closed central subgroup $C$ of $G$, and let $K/C$ be the support of $J$ on $\mathcal{O}(G/C)\subset \mathcal{O}(G)$. By \cite[Proposition 4.6]{G1}, $K/C $ has codimension $1$ in $G/C$, so $K$ has codimension $1$ in $G$, and hence is normal. Denote the Lie algebras of $K$ [resp. $G$]  by $\mathfrak{k}$ [resp. $\mathfrak{g}$]. Clearly, the exact sequence $0 \rightarrow \mathfrak{k} \rightarrow \mathfrak{g} \rightarrow \mathfrak{g}/\mathfrak{k}\rightarrow 0$ splits, with $\mathfrak{g} = \mathfrak{k} \oplus \mathbb{C}x$ for any element $x \in \mathfrak{g} \setminus \mathfrak{k}$. Hence, the same is true for the corresponding groups, namely, we have
\begin{equation}\label{sdirpr2}
G = K\rtimes Y \cong K\rtimes (G/K),
\end{equation}
where the subgroup $Y$ of $G$ is isomorphic to $(\mathbb{C},+)$. Note that $C  \subseteq  K$ and $C$ is central in $G$, so that 
$$ C\rtimes Y= C \times Y \subseteq G.$$

By (\ref{sdirpr2}), we can pick a primitive element $y_1\in\mathcal{O}(G)$ such that $y_1(K) = 0$ and $\mathcal{O}(Y) \cong \mathcal{O}(G/K)=\mathbb{C}[y_1]$ as Hopf algebras. Consider the chain of group epimorphisms
$$G\twoheadrightarrow G/C\twoheadrightarrow G/K.$$
By the discussion in $\S$\ref{unipotent}, this yields the chain of cotriangular Hopf algebras 
\begin{equation}\label{indcothaschain2}
\mathbb{C}[y_1] = {}_J\mathcal{O}(G/K)_J\subseteq {}_J\mathcal{O}(G/C)_J\subset {}_J\mathcal{O}(G)_J,
\end{equation}
where we will elaborate on the first equality in Lemma \ref{supportai2} below. Pick $y_{n}\in\mathcal{O}(G)$ whose restriction to $C$ generates $\mathcal{O}(C )$. Then
\begin{equation*}\label{oreextyn2}
{}_J\mathcal{O}(G)_J={}_J\mathcal{O}(G/C)_J[y_{n};\partial_{n}]
\end{equation*}
is a Hopf Ore extension, and by (\ref{qform}) we see that
\begin{equation}\label{deltayn2}
\Delta(y_n)-y_n\otimes 1-1\otimes y_n := \sum Y_{n1} \otimes Y_{n2} \in \mathcal{O}(G/C )^+\otimes \mathcal{O}(G/C )^+.
\end{equation}

It follows from (\ref{indcothaschain2}), and (again) will be further explicated in Lemma \ref{supportai2}, that $y_1$ generates the kernel of the surjective cotriangular Hopf algebra map 
\begin{equation}\label{newsurjhom2}
{}_J\mathcal{O}(G/C)_J\twoheadrightarrow {}_J\mathcal{O}(K/C)_J.
\end{equation}

We need the following easy lemma.
 
\begin{lemma}\label{kernel2} Let $(G,J)$ be a unipotent group with a Hopf $2$-cocycle, and let $p$ be a primitive element of $\mathcal{O}(G)$. Let $R^J $ be the $R$-form $J_{21}^{-1}\ast J$ as in Example \ref{commex}(2). Define
\begin{equation}\label{kill2} I_p:=\{ \alpha \in (_J\mathcal{O}(G)_J)^+\mid  R^J(\alpha, p) = 0\}. 
\end{equation}
\begin{enumerate}
\item 
$I_p$ is an ideal of ${}_J\mathcal{O}(G)_J$.
\item $I_p=\{ \alpha \in (_J\mathcal{O}(G)_J)^+ \mid R^J(p, \alpha) = 0\}.$
\end{enumerate}
\end{lemma}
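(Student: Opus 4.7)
The plan is to prove both parts by direct manipulation using the coquasitriangular axioms of Definition \ref{cotridef} applied to $R^J$, exploiting the primitivity of $p$ throughout.

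For (1), I will show $I_p$ is a two-sided ideal by computing $R^J(a\cdot b,p)$ for arbitrary $a,b \in {}_J\mathcal{O}(G)_J$. By Definition \ref{cotridef}(1), $R^J(a\cdot b, p) = \sum R^J(a,p_1)R^J(b,p_2)$. Since $\Delta(p) = p\otimes 1 + 1\otimes p$, this collapses to
\[
R^J(a\cdot b, p) \;=\; R^J(a,p)\,R^J(b,1) + R^J(a,1)\,R^J(b,p) \;=\; R^J(a,p)\,\epsilon(b) + \epsilon(a)\,R^J(b,p),
\]
using the standard identity $R^J(-,1) = \epsilon = R^J(1,-)$ (which follows from Definition \ref{cotridef}(1), or directly from Lemma \ref{quick}(1) together with $R^J(1,1) = 1$). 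Now if $\alpha \in I_p$ and $b \in {}_J\mathcal{O}(G)_J$, then $\epsilon(\alpha) = 0$ and $R^J(\alpha,p) = 0$, so taking $a = \alpha$ gives $R^J(\alpha\cdot b, p) = 0$, and taking $a = b$, $b = \alpha$ gives $R^J(b\cdot\alpha, p) = 0$. Since $\epsilon$ remains an algebra homomorphism on ${}_J\mathcal{O}(G)_J$ (the counit is unchanged under twisting), both $\alpha\cdot b$ and $b\cdot\alpha$ lie in $({}_J\mathcal{O}(G)_J)^+$, so $I_p$ is a two-sided ideal.

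For (2), the key is cotriangularity, Definition \ref{cotridef}(3): $\sum R^J(\alpha_1, p_1) R^J(p_2, \alpha_2) = \epsilon(\alpha)\epsilon(p)$ for all $\alpha$. Since $p \in \mathcal{O}(G)^+$ we have $\epsilon(p) = 0$, and for $\alpha \in ({}_J\mathcal{O}(G)_J)^+$ also $\epsilon(\alpha) = 0$. Expanding $\Delta(p) = p\otimes 1 + 1\otimes p$ on the left-hand side and again invoking $R^J(-,1) = \epsilon = R^J(1,-)$,
\[
0 \;=\; \sum R^J(\alpha_1, p)\,\epsilon(\alpha_2) + \sum \epsilon(\alpha_1)\,R^J(p,\alpha_2) \;=\; R^J(\alpha, p) + R^J(p, \alpha).
\]
Thus $R^J(\alpha, p) = 0$ if and only if $R^J(p,\alpha) = 0$, proving the claimed alternative description of $I_p$.

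There is no real obstacle here; the proof is a short exercise in the coquasitriangular/cotriangular axioms combined with primitivity of $p$. The only minor point to be careful about is recording that $\epsilon$ remains an algebra map on the twisted algebra ${}_J\mathcal{O}(G)_J$ (which is immediate from $J^{-1}\ast J = \epsilon\otimes\epsilon$ applied to formula \eqref{mult}), so that $I_p$ is genuinely an ideal of the augmentation ideal rather than merely a subspace of it.
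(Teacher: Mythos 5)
Your proof is correct. Part (1) is exactly the paper's argument: expand $R^J(\alpha\cdot\beta,p)$ via Definition \ref{cotridef}(1) and primitivity of $p$, and kill the two terms using $\alpha\in I_p$ and $R^J(\alpha,1)=0$ (Lemma \ref{quick}(1)). For part (2) you take a slightly different, more self-contained route: you derive the skew-symmetry $R^J(\alpha,p)=-R^J(p,\alpha)$ directly from the cotriangularity axiom $\sum R^J(\alpha_1,p_1)R^J(p_2,\alpha_2)=\epsilon(\alpha)\epsilon(p)$ together with $\Delta(p)=p\otimes 1+1\otimes p$ and $\epsilon(p)=0$, whereas the paper gets the same identity by invoking the compatibility of the $R$-form with the antipode, $R^J(x,p)=(R^J)^{-1}(x,S(p))=R^J(S(p),x)$ (citing Majid), combined with $S(p)=-p$. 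Both are one-line derivations of the same key identity; yours avoids the external reference at the cost of a short computation, and your closing remark that $\epsilon$ is unchanged under twisting (so $\alpha\cdot\beta,\ \beta\cdot\alpha\in({}_J\mathcal{O}(G)_J)^+$) correctly covers the small point the paper leaves implicit.
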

 
\begin{proof}$(1)$ Clearly $I_p$ is a vector subspace. Let $\alpha \in I_p$ and $\beta \in {}_J\mathcal{O}(G)_J$. Then
$$ R^J(\alpha \cdot \beta, p) = R^J(\alpha, p)R^J(\beta,1) + R^J(\alpha,1)R^J(\beta,p) = 0 + 0 = 0,$$
where the first term is $0$ since $\alpha \in I_p$ and the second is $0$ by Lemma \ref{quick}(1). A similar calculation deals with $\beta \cdot \alpha$.

$(2)$ Since $({}_J\mathcal{O}(G)_J, R^J)$ is cotriangular, and using \cite[Lemma 2.2.2]{Ma1}, 
$$ R^J(x,p)=(R^J)^{-1}(x,S(p))= R^J(S(p),x)$$
for all $x \in {}_J\mathcal{O}(G)_J$. So (2) follows from the fact that $S(p) = -p$.
\end{proof}

Returning to our induction setup, we prove the following basic facts.

\begin{lemma}\label{supportai2}
The following hold:
\begin{enumerate}
\item
For every $x\in\mathcal{O}(G/C)$, we have 
$R^J(x,y_{1})= R^J (y_1,x) = 0$.
\item
$y_1$ is a central primitive element of ${}_J\mathcal{O}(G)_{J}$.
\item
For all $x\in\mathcal{O}(G)^+$, $R^J(y_{1}\cdot x,y_{n})=R^J(y_n, y_1 \cdot x) = 0$.
\item
Using the notation (\ref{kill2}),
\begin{align*} I_{y_{1}}  &= ({}_J\mathcal{O}(G/C)_J)^+{}_J\mathcal{O}(G)_J + y_n^2 {}_J\mathcal{O}(G)_J\\
&= ({}_J\mathcal{O}(G/C)_J)^+{}_J\mathcal{O}(G)_J + y_n\cdot y_n {}_J\mathcal{O}(G)_J.
\end{align*}
\item
We may assume that $R^J(y_{n}, y_{1})=1$.
\end{enumerate}
\end{lemma}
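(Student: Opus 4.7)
The plan is to handle the parts in the order $(1), (5), (2), (3), (4)$, since centrality in $(2)$ and the kernel identification in $(4)$ both depend on the vanishing statement $(1)$ and the normalisation $(5)$. All arguments rest on two pillars: the interaction of $R^J$ with the primitive element $y_1$ via Definition~\ref{cotridef}, and the Ore decomposition ${}_J\mathcal{O}(G)_J={}_J\mathcal{O}(G/C)_J[y_n;\partial_n]$. For $(1)$, the input is that $K/C$ is the support of the restricted cocycle $J|_{\mathcal{O}(G/C)}$, so by Proposition~\ref{factor} the radical of $R^J|_{\mathcal{O}(G/C)}$ coincides with the kernel of the Hopf surjection $\mathcal{O}(G/C)\twoheadrightarrow\mathcal{O}(K/C)$; since $y_1$ vanishes on $K$, it lies in this radical, and Lemma~\ref{radical} (equality of left and right radicals) yields both identities. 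For $(5)$, nondegeneracy of $R^J$ provides some $z$ with $R^J(z,y_1)\neq 0$; writing $z=\sum_k a_k\cdot y_n^{\cdot k}$ in the Ore basis and using Definition~\ref{cotridef}(1) with $\Delta(y_1)=y_1\otimes 1+1\otimes y_1$ together with $(1)$, each summand collapses to $\epsilon(a_k)R^J(y_n^{\cdot k},y_1)$. The same identity applied once more gives $R^J(y_n^{\cdot k},y_1)=0$ for $k\geq 2$ (both resulting terms contain a factor $R^J(y_n^{\cdot j},1)=\epsilon(y_n^{\cdot j})=0$ or $R^J(y_n,1)=0$), so $R^J(z,y_1)=\epsilon(a_1)R^J(y_n,y_1)$, forcing $R^J(y_n,y_1)\neq 0$; a scalar rescaling of $y_n$ (still a generator of $\mathcal{O}(C)$) normalises the value to $1$.

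Primitivity of $y_1$ in $(2)$ is immediate since the coalgebra is unchanged. For centrality, Definition~\ref{cotridef}(2) with $h=y_1$ yields, after using the counit property to rewrite $\sum\epsilon(g_1)y_1\cdot g_2=y_1\cdot g$ and $\sum g_1\cdot y_1\,\epsilon(g_2)=g\cdot y_1$,
\[
y_1\cdot g-g\cdot y_1=\sum g_1\,R^J(y_1,g_2)-\sum R^J(y_1,g_1)\,g_2.
\]
Part $(1)$ makes this vanish for $g\in\mathcal{O}(G/C)$; for $g=y_n$, the $Y_{ni}$ contributions in $(\ref{deltayn2})$ are killed by $(1)$, leaving two occurrences of $R^J(y_1,y_n)$ that cancel. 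Since $\mathcal{O}(G/C)$ and $y_n$ generate ${}_J\mathcal{O}(G)_J$, centrality follows. Part $(3)$ is then a short computation: Definition~\ref{cotridef}(1) expands $R^J(y_1\cdot x,y_n)$ into $\sum R^J(y_1,y_{n,1})R^J(x,y_{n,2})$, and using $(\ref{deltayn2})$ every resulting term contains a factor $R^J(-,1)$ (killed by Lemma~\ref{quick}(1)) or $R^J(y_1,Y_{ni})$ with $Y_{ni}\in\mathcal{O}(G/C)$ (killed by $(1)$); the symmetric expansion handles $R^J(y_n,y_1\cdot x)$.

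For $(4)$, the inclusion $\supseteq$ is straightforward: $\mathcal{O}(G/C)^+\subseteq I_{y_1}$ by $(1)$, and $R^J(y_n\cdot y_n,y_1)=0$ by the same sort of expansion as in $(3)$, so Lemma~\ref{kernel2}(1) places the full right-hand side inside $I_{y_1}$. For the opposite inclusion, denoting the right-hand side by $A$, the aim is to show ${}_J\mathcal{O}(G)_J/A$ is at most two-dimensional, spanned by $1+A$ and $y_n+A$. The key auxiliary identity is $\epsilon\circ\partial_n=0$, obtained by applying $\epsilon$ to the Ore relation $y_n\cdot a=a\cdot y_n+\partial_n(a)$ together with $\epsilon(y_n)=0$. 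With this in hand, iterating the Ore rewriting of $y_n\cdot y_n\cdot c$ shows that left-multiplication by $y_n\cdot y_n$ lands, modulo $\mathcal{O}(G/C)^+{}_J\mathcal{O}(G)_J$, in $\bigoplus_{k\geq 2}\mathbb{C}\,y_n^{\cdot k}$, confirming the two-dimensional quotient. For $\alpha\in I_{y_1}\cap({}_J\mathcal{O}(G)_J)^+$, the $1$-coefficient of $\alpha\bmod A$ is killed by $\epsilon(\alpha)=0$, and the $y_n$-coefficient is killed by $R^J(\alpha,y_1)=0$ together with $(5)$, so $\alpha\in A$. The principal obstacle is precisely this $\subseteq$ direction of $(4)$: establishing $\epsilon\circ\partial_n=0$ and tracking the Ore rewrites carefully are what make the codimension-two description of $I_{y_1}$ come out correctly.
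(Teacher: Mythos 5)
Your proposal is correct, and for parts (1), (3) and the forward inclusion in (4) it essentially coincides with the paper's argument; the genuine divergences are in (2) and (5). For (2), the paper does not apply Definition \ref{cotridef}(2) directly: it converts part (1) via Lemma \ref{quick}(2) into the vanishing of $Q=J-J_{21}$ and $\overline{Q}=J^{-1}-J^{-1}_{21}$ against $y_1$ on $\mathcal{O}(G/C)$, and then kills the commutators $[X_i,y_1]$ and $[y_n,y_1]$ using the explicit IHOE commutator formula of Theorem \ref{Ore}(4); your route through the cotriangularity axiom with $h=y_1$ is shorter and avoids those formulas, at the negligible cost of the identities $R^J(1,\beta)=R^J(\beta,1)=\epsilon(\beta)$ from Lemma \ref{quick}(1). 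For (5), the paper proves (4) and (5) simultaneously: $R^J(-,y_1)$ is nonzero by minimality but kills $A+\mathbb{C}$ (where $A$ denotes the right-hand ideal), so it must be nonzero on $y_n$; your up-front Ore-basis computation, showing $R^J(z,y_1)=\epsilon(a_1)R^J(y_n,y_1)$ for $z=\sum_k a_k\cdot y_n^{\cdot k}$, proves the same nonvanishing more explicitly and lets you quote (5) cleanly when doing (4). Within (4), your auxiliary identity $\epsilon\circ\partial_n=0$ is true but not really the crux: once $A\subseteq I_{y_1}$ is known, it suffices that every element is congruent modulo $A$ to $\lambda+\mu y_n$ (immediate from the Ore basis, since $y_n^{\cdot k}\in y_n\cdot y_n\,{}_J\mathcal{O}(G)_J$ for $k\geq 2$ and $({}_J\mathcal{O}(G/C)_J)^+{}_J\mathcal{O}(G)_J\subseteq A$), after which $\epsilon$ and $R^J(-,y_1)$, both of which vanish on $A$, force $\lambda=\mu=0$; exact two-dimensionality of the quotient, which is what $\epsilon\circ\partial_n=0$ buys you, is then a byproduct rather than an input. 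One small point you should add: the statement of (4) gives two descriptions of the ideal, with $y_n^2$ and with $y_n\cdot y_n$, and these agree because $y_n^2-y_n\cdot y_n\in\mathcal{O}(G/C)^+$ by Theorem \ref{Ore}(3), which is how the paper disposes of that discrepancy in one line.
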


\begin{proof}
(1) By definition of $K$ and of $y_1$, $y_1$ lies in the radical of $R^J$ restricted to ${}_J\mathcal{O}(G/C )_J$.

(2) By (1) and Lemma \ref{quick}(2), for all $x \in \mathcal{O} (G/C)$, we have 
\begin{equation}\label{RandJ2} 0 = R^J(y_1, x) = (J - J_{21})(y_1, x) = (J_{21}^{-1} - J^{-1})(y_1,x). 
\end{equation}
The chain $1 \subset C \subseteq K \subset G$ of normal subgroups of $G$ can be refined to a chain of normal subgroups with successive subfactors all isomorphic to $(\mathbb{C},+)$. This in turn yields a chain of Hopf Ore extensions
$$ \{0\} \subset \mathcal{O}(G/K) = \mathbb{C}[y_1] \subseteq \cdots \subseteq \mathcal{O}(G/C) \subset  \mathcal{O}(G/C)[y_n] = \mathcal{O}(G),$$
with a similar chain for ${}_J\mathcal{O}(G)_J$ formed from identical subspaces, as explained in $\S$\ref{unipotent} and Theorem \ref{Ore}. Fix generators $X_2, \ldots , X_{n-1}$ for the Hopf Ore extensions from ${}_J\mathcal{O}(G/K)_J$ to ${}_J\mathcal{O}(G/C)_J$, and define $Q := J - J_{21}$ and $\overline{Q} := J^{-1} - J^{-1}_{21}$. Recall that we write, for each $i$,
\begin{equation}\label{comformula}
\Delta (X_i) - X_i \otimes 1 - 1 \otimes X_i = \sum x^i_1 \otimes x^i_2,
\end{equation}
where, as in (\ref{deltayn2}), the elements $x^i_1, x^i_2 $ lie in lower Hopf algebras, and in particular are in ${}_J\mathcal{O}(G/C)_J$. Hence, by Theorem \ref{Ore}(4), taking commutators in ${}_J\mathcal{O}(G/C)_J$, we have for $i = 2, \ldots , n-1$,
\begin{equation}\label{comm2} 
[X_i, y_1] = \sum x^i_1 Q(x^i_2,y_1) + \sum x^i_2 \overline{Q}(x^i_1,y_1) = 0,
\end{equation}
where the second equality follows from (\ref{RandJ2}).

An exactly similar argument shows that $[y_n,y_1] = 0$, in view of (\ref{deltayn2}). Since ${}_J\mathcal{O}(G)_J$ is generated by $\{y_1, X_2, \ldots , X_{n-1}, y_n\}$, (2) is proved.

(3) By (\ref{deltayn2}) and by parts (1) and (2) of the lemma, we have
\begin{align*} R^J(y_{1}\cdot x,y_{n}) &= R^J(y_1,y_n)R^J(x,1) + R^J(y_1,1)R^J(x,y_n)\\ & + \sum R^J(y_{1},Y_{n_1})R^J(x, Y_{n_2})=\sum R^J(y_{1},Y_{n_1})R^J(x, Y_{n_2})=0,
\end{align*}
where the second equality follows from Lemma \ref{quick}(1) and the third from (\ref{deltayn2}) and (1). The argument for $R^J(y_n, y_{1}\cdot x)$ is exactly similar.

(4), (5) Note first that $y_{n}^2=y_{n}\cdot y_{n}+z$ for some $z$ in $\mathcal{O}(G/C )^+$ by Theorem \ref{Ore}(3). Thus, the ideals appearing in the second and third entries of the identity in (4) are equal. Moreover, $ ({}_J\mathcal{O}(G/C)_J)^+{}_J\mathcal{O}(G)_J \subset I_{y_1}$ by Lemmas \ref{kernel2}(1) and \ref{supportai2}(1). So to prove that the ideal on the right is contained in $I_{y_1}$ it remains only to show that $y_n\cdot y_n \in I_{y_1}$. But this is immediate from Lemma \ref{quick}(1), since $y_1$ is primitive.

For the reverse inclusion, note that $R^J(-,y_1)$ is a nonzero functional on ${}_J\mathcal{O}(G)_J$ by minimality of $J$. However, by the inclusion proved above and Lemma \ref{quick}(1),
$$ R^J\left(\left({}_J\mathcal{O}(G/C)_J\right)^+{}_J\mathcal{O}(G)_J + y_n\cdot y_n {}_J\mathcal{O}(G)_J + \mathbb{C}, y_1\right) = 0.$$
Hence we must have $R^J(y_n,y_1) \neq 0$, which proves the reverse inclusion and also shows that we can rescale so that $R^J(y_n,y_1) = 1$, proving (5).
\end{proof}

We proceed now to prove Theorem \ref{noethdomminunip2}, that $\mathrm{Im}(\Psi)$ is $U(\mathfrak{g})^{{\rm op}}$, thus completing the  proof of  (\ref{image}). Recall that $\mathfrak{g}$ and $\mathfrak{k}$ are  the Lie algebras of $G$ and $K$, respectively, and let $\mathfrak{c}$ denote the Lie algebra of $C$. By (\ref{sdirpr2}), we have a chain of ideals $\mathfrak{c}\subseteq \mathfrak{k}\subset \mathfrak{g}$, 
such that $\mathfrak{c}\cong \mathbb{C}$ is central in $\mathfrak{g}$, and 
\begin{equation*}\label{sdirprlie2}
\mathfrak{g}=\mathfrak{k}\rtimes (\mathfrak{g}/\mathfrak{k})\cong  \; \mathfrak{k}\rtimes \mathbb{C}.
\end{equation*}
It follows (see, e.g., \cite[\S 1.7.10, \S 1.7.12]{McCR}) that we have algebra isomorphisms 
\begin{equation}\label{sdirprlieuniv2}
U(\mathfrak{g})\cong U(\mathfrak{k})\#U(\mathfrak{g}/\mathfrak{k}) \cong \left(U(\mathfrak{c})*U(\mathfrak{k}  /\mathfrak{c})\right)\#U(\mathfrak{g}/\mathfrak{k}),
\end{equation}
where $*$ denotes a crossed product and $\#$ a smash product. Thus, $U(\mathfrak{g})$ is generated by its subspaces $U(\mathfrak{c})$, $U(\mathfrak{k}/\mathfrak{c})$, and $U(\mathfrak{g}/\mathfrak{k})$.  
Bearing in mind that $\mathfrak{g}$ constitutes precisely the $\mathbb{C}$-derivations of $\mathcal{O}(G)$, we can specify a basis $u_1, \ldots  , u_n$ of $\mathfrak{g}$ as follows: 
\begin{align*} u_1(y_1) &= u_1(X_j) = 0, \; 2 \leq j \leq n-1, \; u_1 (y_n) = 1;\\ 
u_i(y_1) &= u_i(y_n) = 0, \; u_i (X_j) = \delta_{i,n+1-j}, \; 2 \leq i,j \leq n-1;\\
u_n(X_j) &= u_n(y_n) = 0, \; 2 \leq j \leq n-1, \; u_n(y_1) = 1.
\end{align*}
Set $\mathfrak{m}:=\mathcal{O}(G)^+$. Since the above maps are derivations,  $u_i(\mathfrak{m}^2) = u_i(1) = 0$ for all $i = 1, \ldots , n$, noting that $\mathfrak{m}^2$ is the product $\mathfrak{m}\mathfrak{m}$ using the \emph{commutative} multiplication. Recall that $U(\mathfrak{g})$ is the subalgebra of $\mathcal{O}(G)^*$ consisting of distributions supported (set-theoretically) at $1$, that is, vanishing on some power of $\mathfrak{m}$.

\begin{theorem}\label{noethdomminunip2}
Keep all the notation introduced above, and in the rest of $\S$\ref{structure}. The image of the Hopf algebra monomorphism $\Psi$ from $_J\mathcal{O}(G)_J$ defined at (\ref{caught}) is $U(\mathfrak{g})^{{\rm op}}$. 
\end{theorem}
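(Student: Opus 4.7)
The approach is to identify $\mathrm{im}(\Psi)$ structurally using the Cartier--Kostant theorem and then pin it down by a Gelfand--Kirillov dimension count; this sidesteps the explicit preimage construction that the authors' induction setup seems built for.

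By Proposition~\ref{include}, $\mathrm{im}(\Psi)\subseteq U(\mathfrak g)^{\mathrm{op}}$, and since $R^J$ is nondegenerate (minimality), $\Psi$ is an injective Hopf algebra homomorphism by Lemma~\ref{into1}. Thus $\mathrm{im}(\Psi)$ is a Hopf subalgebra of $U(\mathfrak g)^{\mathrm{op}}$. The coalgebra structure of $U(\mathfrak g)^{\mathrm{op}}$ agrees with that of $U(\mathfrak g)$, so $U(\mathfrak g)^{\mathrm{op}}$ is pointed, connected, and cocommutative, and these three properties pass to every Hopf subalgebra. The Cartier--Kostant structure theorem (over $\mathbb C$, characteristic $0$) therefore yields
\[
\mathrm{im}(\Psi)=U(\mathfrak h)^{\mathrm{op}},\qquad \mathfrak h:=\mathrm{Prim}(\mathrm{im}(\Psi))=\mathrm{im}(\Psi)\cap\mathfrak g,
\]
a Lie subalgebra of $\mathfrak g$. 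Now Corollary~\ref{IHOEcor}(1) gives $\mathrm{GKdim}({}_J\mathcal O(G)_J)=n$, and injectivity of $\Psi$ transfers this to $\mathrm{GKdim}(\mathrm{im}(\Psi))=n$; while PBW yields $\mathrm{GKdim}(U(\mathfrak h))=\dim\mathfrak h$. Hence $\dim\mathfrak h=n=\dim\mathfrak g$, so $\mathfrak h=\mathfrak g$, and $\mathrm{im}(\Psi)=U(\mathfrak g)^{\mathrm{op}}$.

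The chief obstacle in this approach is conceptual rather than computational: one must verify that $\mathrm{im}(\Psi)$ inherits the pointed-connected-cocommutative property (routine) and that the Cartier--Kostant theorem applies over $\mathbb C$ (standard). The authors' inductive route along the chain $C\subseteq K\subset G$ would instead apply the induction hypothesis to $(K/C,J|_{K/C})$ (minimal of dimension $n-2$) to obtain $\mathrm{im}(\Psi_{K/C})=U(\mathfrak k/\mathfrak c)^{\mathrm{op}}$, identify $\Psi(y_1)=u_1\in\mathfrak g/\mathfrak k$ via Lemma~\ref{supportai2}(1),(4),(5) together with primitivity of $y_1$, and construct a preimage of $u_n\in\mathfrak c$ through an Ore-extension manipulation along ${}_J\mathcal O(G)_J={}_J\mathcal O(G/C)_J[y_n;\partial_n]$. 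The real difficulty in that route would be the \emph{lifting step}: since ${}_J\mathcal O(K/C)_J$ is only a quotient of ${}_J\mathcal O(G/C)_J$ rather than a subalgebra of ${}_J\mathcal O(G)_J$, one has to choose preimages in ${}_J\mathcal O(G/C)_J$ and then control the lower-order correction terms in $U(\mathfrak g)^{\mathrm{op}}$ produced when $\Psi_G$ is evaluated against $y_n$ and its Ore products. The cocommutative-structure-plus-GKdim approach above absorbs all such bookkeeping automatically.
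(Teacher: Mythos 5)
Your argument has a genuine gap at the step ``$\mathrm{im}(\Psi)$ is a Hopf subalgebra of $U(\mathfrak g)^{\mathrm{op}}$, hence pointed, connected and cocommutative, hence equal to $U(\mathfrak h)^{\mathrm{op}}$ by Cartier--Kostant.'' Proposition \ref{include} gives containment of $\mathrm{im}(\Psi)$ in $U(\mathfrak g)^{\mathrm{op}}$ only as a \emph{subalgebra} of $(\mathcal{O}(G)^*)^{\mathrm{op}}$: the convolution algebra structure on the dual depends only on the (untwisted) coalgebra structure of $\mathcal{O}(G)$. But the coalgebra structure under which $\Psi$ is a coalgebra map is the one on $({}_J\mathcal{O}(G)_J)^{\circ}$ dual to the \emph{twisted} multiplication $m_J$; concretely, Definition \ref{cotridef}(1) gives $R^J(g\cdot h,\ell)=\sum R^J(g,\ell_1)R^J(h,\ell_2)$ for the twisted product $g\cdot h$, not for the commutative product of $\mathcal{O}(G)$ which is what the standard coproduct of $U(\mathfrak g)\subset \mathcal{O}(G)^{\circ}$ dualizes. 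Hence what you know is that $\mathrm{im}(\Psi)$, equipped with the coproduct transported along $\Psi$, is isomorphic to ${}_J\mathcal{O}(G)_J$ -- whose coproduct is that of $\mathcal{O}(G)$, not cocommutative for nonabelian $G$ (equivalently, by Theorem \ref{minLie} the image carries $\Delta^J=J^{-1}*\Delta*J$, not $\Delta$). Closure of the subspace $\mathrm{im}(\Psi)$ under the \emph{standard} coproduct of $U(\mathfrak g)$ is nowhere established, and it is essentially as delicate as the surjectivity you are trying to prove; without it Cartier--Kostant does not apply.

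The GK-dimension count cannot rescue the argument on its own: a proper subalgebra of $U(\mathfrak g)$ can have full GK dimension (already $\mathbb{C}[x^2,x^3]\subset\mathbb{C}[x]$ has GK dimension $1$), so you really need the structural identification $\mathrm{im}(\Psi)=U(\mathfrak h)$ before comparing dimensions, and that identification is exactly the unproven step. This is why the paper's proof is an induction on $\dim(G)$ along the chain $C\subseteq K\subset G$, constructing explicit preimages of a basis $u_1,\dots,u_n$ of $\mathfrak g$ (Lemmas \ref{Claim I}, \ref{ClaimII}, \ref{newlemma}, \ref{Claim III}), including the correction terms $\widehat{z_i}=\widetilde{z_i}-\widetilde{z_i}'$ needed to kill the values on $\mathfrak m^2$; the surjectivity is genuinely hard (cf.\ Remark \ref{compwg22} and Question \ref{compwg22q} on why the earlier proof in \cite{G2} was incomplete), and your shortcut assumes it away.
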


\begin{proof}
Since $\Psi$ is an algebra map, it is enough to show that 
\begin{equation}\label{heart} u_i \in \mathrm{im}(\Psi),\; i = 1, \ldots , n.
\end{equation}

The proof of (\ref{heart}) is quite involved, requiring four lemmas, the first being:

\begin{lemma}\label{Claim I} 
After multiplying if necessary by a non-zero scalar, we have 
\begin{equation}\label{start} u_1=\Psi(y_1)\in \mathrm{im}(\Psi).
\end{equation}
\end{lemma} 

\begin{proof}
This follows from Lemma \ref{supportai2}(1),(4),(5) and the definition of $u_1$.
\end{proof}
 
We now proceed by induction on even $n\ge 0$, the case $n=0$ being trivial. So we assume that the result is true for all minimal $2$-cocycle deformations of unipotent groups whose dimension is less than $n$. In particular, by definition of $K$, the restriction of $J$ to $\mathcal{O}(G/C)$ induces a minimal Hopf $2$-cocycle $\overline{J}$ for $K/C$, with corresponding $R$-form $\overline{R}^{\overline{J}}$, and injective algebra homomorphism 
$$\overline{\Psi} : {}_{\overline{J}}\mathcal{O}(K/C)_{\overline{J}} \xrightarrow{1:1} U(\mathfrak{k}/\mathfrak{c}).$$
Thus, our induction hypothesis ensures that $\overline{\Psi}$ maps onto $U(\mathfrak{k}/\mathfrak{c})$, so there exist elements $\overline{z}_i\in {}_{\overline{J}}\mathcal{O}(K/C)_{\overline{J}}$, $2 \leq i \leq n-1$, such that we have
\begin{equation}\label{induct}  \overline{\Psi} (\overline{z_i})= \overline{u}_{i}, \quad \textit{for } i = 2, \ldots , n-1,
\end{equation}
where $\overline{\Psi}(\overline{a}) := \overline{R}^{\overline{J}}(-,\overline{a}) $ for $\overline{a} \in {}_{\overline{J}}\mathcal{O}(K/C)_{\overline{J}}$, and $\overline{u}_{i} \in \mathcal{O}(K/C)^*$ is the derivation mapping $\overline{X}_j$ to $\delta_{n+1 - i,j}$, $2 \leq i,j \leq n-1$. (Note that it makes sense to use the notation $\overline{u}_{i}$ since $u_{i} (y_1) = 0$ for $i = 2, \ldots n-1$.)

Recall from Corollary \ref{IHOEcor}(3), and the remarks preceding it, that ${}_J\mathcal{O}(G/C)_J$ is connected. This implies by a result of Schneider \cite{Sch} - see \cite[Theorem 8.4.8 and Example (2) in remarks following]{M} - that the exact sequence of Hopf algebras 
$$ 0 \rightarrow \mathcal{O}(G/K)  \rightarrow {}_J\mathcal{O}(G/C)_J  \rightarrow {}_{\overline{J}}\mathcal{O}(K/C)_{\overline{J}} \rightarrow 0 $$
is cleft. By definition, this means that there is an invertible cleaving map $\gamma$ of ${}_{\overline{J}}\mathcal{O}(K/C)_{\overline{J}}-$comodules, 
$$\gamma:{}_{\overline{J}}\mathcal{O}(K/C)_{\overline{J}}\xrightarrow{1:1}{}_J\mathcal{O}(G/C)_{J}.$$
Thus, we can use $\gamma$ to view ${}_{\overline{J}}\mathcal{O}(K/C)_{\overline{J}}$ as a subspace of ${}_J\mathcal{O}(G/C)_{J}$, so that there is a crossed product decomposition
$${}_J\mathcal{O}(G/C)_{J}=\mathcal{O}(G/K)\#_{\sigma}{}_{\overline{J}}\mathcal{O}(K/C)_{\overline{J}}\subset {}_J\mathcal{O}(G)_J $$ 
by \cite[Theorem 7.2.2]{M}.

Let $\iota:K/C\hookrightarrow G/C$ denote the inclusion morphism, so we have a surjective Hopf algebra map $\iota^* : \mathcal{O}(G/C) \twoheadrightarrow \mathcal{O}(K/C)$, and
$$\iota^*\circ \gamma={\rm id}_{|{}_{\overline{J}}\mathcal{O}(K/C)_{\overline{J}}}.$$
Denote the canonical surjection from $U(\mathfrak{k})$ onto $U(\mathfrak{k}/\mathfrak{c})$ by $\pi$. Then, by definition, for every $\overline{x}\in {}_{\overline{J}}\mathcal{O}(K/C)_{\overline{J}}$, we have 
$$ \pi \circ \Psi(\gamma(\overline{x}))=\overline{\Psi}(\iota^*\gamma(\overline{x}))=\overline{\Psi}(\overline{x}).$$
Thus, lifting back along $\gamma$ we obtain elements 
$$z_i := \gamma (\overline{z}_i)\in {}_J\mathcal{O}(G/C)_J,\,\,\,2\le i \le n-1,$$ such that 
$$ \Psi(z_i)\left(y_1{}_J\mathcal{O}(G/C)_J\right)=0,\,\,\,2\le i \le n-1,$$
since $y_1$ is in the radical of $R^J_{|{}_J\mathcal{O}(G/C)_J}$. Then the induced map on ${}_J\mathcal{O}(G/C)_J$ is
\begin{equation}\label{capture} 
\Psi(z_i)_{|{}_J\mathcal{O}(G/C)_J}=\overline{\Psi}(\iota^*\circ \gamma (\overline{z}_i)) \circ \iota^* = \overline{\Psi}(\overline{z}_i)\circ \iota^* =  \overline{u}_{i}\circ \iota^*  = u_{i\, |{}_J\mathcal{O}(G/C)_J}.
\end{equation}

The second and third lemmas together will prove that $u_2, \ldots, u_{n-1}$ are in $\mathrm{im}(\Psi)$.

\begin{lemma}\label{ClaimII} 
For each $i = 2, \ldots , n-1$, after replacing $z_i$ by $\widehat{z_i} \in {}_J\mathcal{O}(G/C)_J$, 
$$\Psi(\widehat{z_i})=u_i\in \mathrm{im}(\Psi),\,\,\,2 \leq i \leq n-1.$$
\end{lemma}

\begin{proof} 
It suffices to prove that after suitable adjustments changing the elements $z_i$ to $\widehat{z_i} \in {}_J\mathcal{O}(G/C)_J$ for every $2 \leq i \leq n-1$, we have
\begin{align*} \Psi(\widehat{z_2}), \ldots ,&\Psi(\widehat{z_{n-1}}) \textit{ are linearly independent elements of }\mathfrak{g},\\
&\textit{still satisfying (\ref{capture}), and with }  \Psi(\widehat{z_i})(y_n) = 0.
\end{align*}

First, for $i = 2,\ldots , n-1$, define $\tau_i := R^J(y_n, z_i) \in \mathbb{C}$,  and set
$$ \widetilde{z_i}:= z_i - \tau_i y_1 .$$
Then for every $w \in {}_J\mathcal{O}(G/C)_J$, and $2\le i\le n-1$, we have 
\begin{equation}\label{new}
\Psi(\widetilde{z_i})(w)=R^J(w, \widetilde{z_i})= R^J(w, z_i)
\end{equation}
since $y_1$ lies in the radical of $({}_J\mathcal{O}(G/C)_J, R^J)$ by Lemma \ref{supportai2}(1). Moreover, 
\begin{equation}\label{new2} R^J(y_n, \widetilde{z_i})=R^J(y_n,z_i) - \tau_i R^J(y_n, y_1)= \tau_i - \tau_i= 0
\end{equation}
for every $i = 2, \ldots ,n-1$, by Lemma \ref{supportai2}(5).

To complete the proof it remains to show that for every $2\le i\le n-1$ we can adjust $\widetilde{z_i}$ to an element $\widehat{z_i} \in {}_J\mathcal{O}(G/C)_J$, without damaging the versions for $\widehat{z_i}$ of (\ref{new}) and (\ref{new2}), but such that $\Psi(\widehat{z_i})\in \mathfrak{g}$; that is, that 
\begin{equation}\label{sort} \Psi(\widehat{z_i})(\mathfrak{m}^2)=0. 
\end{equation}

Fix $2 \leq i \leq n-1$. We already know by Proposition \ref{include}, that there exists $\ell > 1$ such that
\begin{equation}\label{limit} \Psi(\widetilde{z_i})(\mathfrak{m}^{\ell})= 0.
\end{equation}
Also, by (\ref{capture}) and (\ref{new}), $\Psi(\widetilde{z_i})_{|{}_J\mathcal{O}(G/C)_J} = u_{i\, |{}_J\mathcal{O}(G/C)_J}$, and $u_{i\, |{}_J\mathcal{O}(G/C)_J}$ is a derivation of $\mathcal{O}(G/C)$, hence we have
\begin{equation}\label{safe} \Psi(\widetilde{z_i})\left(\mathfrak{m}^2 \cap {}_J\mathcal{O}(G/C)_J\right)=0.
\end{equation}

For each $n$-tuple ${\bf b}=(b_1, b_2,\dots,b_n) \in (\mathbb{Z}_{\geq 0})^n$ define, using the usual commutative product in $\mathcal{O}(G)$,
\begin{equation}\label{ydef} {\bf y}^{{\bf b}}:= y_1^{b_1}X_2^{b_2}\cdots X_{n-1}^{b_{n-1}}y_n^{b_n}.
\end{equation}
In view of (\ref{limit}) and (\ref{safe}) we must show that for each $i = 2, \ldots , n-1$, $\widetilde{z_i}$ can be modified to $\widehat{z_i}$  so that $\Psi(\widehat{z_i})$ maps to $0$ every monomial in the set 
\begin{equation}\label{setofmons}
\mathcal{M}:= \left\{ {\bf y}^{{\bf b}}\, \bigg |\, 2 \leq \sum_{i = 1}^n b_i < \ell, \, b_n \neq 0 \right\}\subset \mathfrak{m}^2.
\end{equation}

Denote by $\mathcal{N}$ the set of $(n-1)$-tuples ${\bf a}=(a_2,\dots,a_n)\in (\mathbb{Z}_{\geq 0})^{n-1}$ such that 
$$2 \leq \sum_{i = 2}^n a_i < \ell\,\,\,{\rm and}\,\,\,a_n \neq 0.$$ 
Using the usual product in $\mathcal{O}(G)$, for each ${\bf a} \in \mathcal{N}$ define the element 
$$ {\bf z}_{{\bf a}}:=\frac{1}{a_n!\cdots a_2!}y_1^{a_n}\widetilde{z_2}^{a_{n-1}}\cdots \widetilde{z_{n-1}}^{a_2}\in {}_J\mathcal{O}(G)_J.$$
Set $ \mathcal{A}:=\{{\bf z}_{\bf a} \mid {\bf a} \in \mathcal{N} \}.$

To complete the proof of Lemma \ref{ClaimII}, we therefore need:

\begin{lemma}\label{newlemma}
\begin{enumerate}
\item Let ${\bf b} = (b_1, \ldots , b_n) \in (\mathbb{Z}_{\geq 0})^{n}$ with $b_1 > 0$. For every ${\bf z}_{{\bf a}} \in \mathcal{A}$,
$$ R^J({\bf y}^{{\bf b}}, {\bf z}_{{\bf a}})=0.$$
\item For every ${\bf a}\in \mathcal{A}$, and every $n$-tuple ${\bf b} = (0,b_2, \ldots , b_n)\in (\mathbb{Z}_{\geq 0})^{n}$, we have  
$R^J({\bf y}^{{\bf b}},{\bf z}_{{\bf a}}) = 1$ when ${\bf b} = (0, {\bf a})$ and $0$ otherwise.
\end{enumerate}
\end{lemma}

\begin{proof}
(1) By hypothesis, ${\bf y}^{{\bf b}} = y_1 \cdot {\bf z}$ for some monomial ${\bf z} \in {}_J\mathcal{O}(G)_J$.
Thus
$$R^J({\bf y}^{{\bf b}}, {\bf z}_{{\bf a}})=R^J(y_1 \cdot {\bf z}, {\bf z}_{{\bf a}})= \sum R^J( y_1, \left({\bf z}_{{\bf a}}\right)_{1})R^J({\bf z}, \left({\bf z}_{{\bf a}}\right)_{2})=0,
$$
where the second equality follows from Definition \ref{cotridef}(2), and the final equality holds because all the terms  $R^J( y_1, ({\bf z}_{{\bf a}})_{1})$ are $0$ by Lemma \ref{supportai2}(1).

(2) Fix ${\bf a} \in \mathcal{A}$. Then for any ${\bf b} = (0,b_2, \ldots , b_n)\in (\mathbb{Z}_{\geq 0})^{n}$, using the centrality of $y_1$ in ${}_J\mathcal{O}(G)_J$ (Lemma \ref{supportai2}(2)) for the second equality, and Definition \ref{cotridef}(1) for the third, we have (omitting the summation symbol)
\begin{eqnarray*}
\lefteqn{a_n!\cdots a_2!R^J({\bf y}^{{\bf b}},{\bf z}_{{\bf a}})}\\
& = & R^J\left(X_2^{b_2}\cdots X_{n-1}^{b_{n-1}}y_n^{b_n},y_1^{a_n}\widetilde{z_2}^{a_{n-1}}\cdots \widetilde{z_{n-1}}^{a_2}\right)\\
& = & R^J\left(X_2^{b_2}\cdots X_{n-1}^{b_{n-1}}y_n^{b_n}, \widetilde{z_2}^{a_{n-1}}\cdots \widetilde{z_{n-1}}^{a_2}\cdot y_1^{a_n}\right)\\
& = & R^J\left(\left( X_2^{b_2}\cdots X_{n-1}^{b_{n-1}}y_n^{b_n}\right)_1,y_1^{a_n}\right)R^J\left(\left( X_2^{b_2}\cdots X_{n-1}^{b_{n-1}}y_n^{b_n}\right)_2, \widetilde{z_2}^{a_{n-1}}\cdots \widetilde{z_{n-1}}^{a_2}\right)\\
& = & \Psi \left(y_1\right)^{a_n}\left(\left( X_2^{b_2}\cdots X_{n-1}^{b_{n-1}}y_n^{b_n}\right)_1\right)\Psi\left(\widetilde{z_2}^{a_{n-1}}\cdots \widetilde{z_{n-1}}^{a_2}\right)\left(\left( X_2^{b_2}\cdots X_{n-1}^{b_{n-1}}y_n^{b_n}\right)_2\right)\\
& = & u_1^{a_n}\left(\left( X_2^{b_2}\cdots X_{n-1}^{b_{n-1}}y_n^{b_n}\right)_1\right)\Psi\left(\widetilde{z_2}^{a_{n-1}}\cdots \widetilde{z_{n-1}}^{a_2}\right)\left(\left( X_2^{b_2}\cdots X_{n-1}^{b_{n-1}}y_n^{b_n}\right)_2\right),
\end{eqnarray*}
where the last equality follows from Lemma \ref{Claim I}. Using (\ref{comformula}) and the definition of $u_1$ to calculate the terms $u_1^{a_n}\left(\left( X_2^{b_2}\cdots X_{n-1}^{b_{n-1}}y_n^{b_n}\right)_1\right)$, it is straightforward to verify that if $R^J({\bf y}^{{\bf b}},{\bf z}_{{\bf a}})\ne 0$, then $b_n=a_n$ and  
$$a_n!\cdots a_2!R^J({\bf y}^{{\bf b}},{\bf z}_{{\bf a}})=a_n! \Psi \left(\widetilde{z_2}^{a_{n-1}}\cdots \widetilde{z_{n-1}}^{a_2}\right)\left(X_2^{b_2}\cdots X_{n-1}^{b_{n-1}}\right)\ne 0.
$$
Therefore, by (\ref{capture}) and (\ref{new}), we have
$$a_n!\cdots a_2!R^J({\bf y}^{{\bf b}},{\bf z}_{{\bf a}})=a_n!u_2^{a_{n-1}}\cdots u_{n-1}^{a_2}\left(X_2^{b_2}\cdots X_{n-1}^{b_{n-1}}\right)\ne 0.
$$
This implies that if $R^J({\bf y}^{{\bf b}},{\bf z}_{{\bf a}})\ne 0$, then ${\bf a}={\bf b}$ and $R^J({\bf y}^{{\bf b}},{\bf z}_{{\bf a}})=1$, as claimed.
\end{proof}

Return now to the proof of Lemma \ref{ClaimII}. For $i = 2, \ldots , n-1$, set 
$$\widetilde{z_i}':=\sum_{{\bf a}\in \mathcal{A}}R^J({\bf y}^{{\bf a}},\widetilde{z_i}){\bf z}_{{\bf a}}\in {}_J\mathcal{O}(G/C )_J.$$ 
By Lemma \ref{newlemma}, for any monomial ${\bf y}^{{\bf b}}$ in the set $\mathcal{M}$ defined in (\ref{setofmons}), we have
\begin{eqnarray*}
\lefteqn{\Psi(\widetilde{z_i}-\widetilde{z_i}')({\bf y}^{{\bf b}}) =\Psi(\widetilde{z_i})({\bf y}^{{\bf b}})-\Psi(\widetilde{z_i}')({\bf y}^{{\bf b}})}\\
& = &
R^J({\bf y}^{{\bf b}},\widetilde{z_i})-\sum_{{\bf a}\in \mathcal{A}}R^J({\bf y}^{{\bf a}},\widetilde{z_i})R^J({\bf y}^{{\bf b}},{\bf z}_{{\bf a}}) =R^J({\bf y}^{{\bf b}},\widetilde{z_i})- R^J({\bf y}^{{\bf b}},\widetilde{z_i})=0.
\end{eqnarray*}
Thus, replacing $\widetilde{z_i}$ by $\widehat{z_i} := \widetilde{z_i}-\widetilde{z_i}'$ for each $i = 2, \ldots , n-1$ completes the proof of Lemma \ref{ClaimII}, since now $\Psi (\widehat{z_i})$ coincides with $u_i$ on every monomial in $\mathcal{O}(G)$.
\end{proof}

For Theorem \ref{noethdomminunip2} it thus remains only to show that 

\begin{lemma}\label{Claim III}
$u_n \in \mathrm{im}(\Psi)$.
\end{lemma} 

\begin{proof}
By Proposition \ref{include}, $\Psi(y_n)$ lies in $U(\mathfrak{g})$, so we can fix an integer ${\ell}\ge 2$ such that $\Psi(y_n)$ vanishes on $\mathfrak{m}^{\ell}$.

Retain the notation (\ref{ydef}), and for each ${\bf b} \in (\mathbb{Z}_{\geq 0})^n$, let ${\bf y}^{{\bf b*}}$ denote the linear form on $\mathcal{O}(G)$ which is $1$ on ${\bf y}^{{\bf b}}$ and $0$ on every monomial ${\bf y}^{{\bf i}}$ such that ${\bf i}\ne {\bf b}$. Let ${\bf b}=(0,b_2,\dots,b_n) \in (\mathbb{Z}_{\geq 0})^n$. Then ${\bf y}^{{\bf b*}}$ vanishes on some power of the ideal generated by $y_1,X_2, \ldots,X_{n-1},y_n$, so that
\begin{equation}\label{contain}
{\bf y}^{{\bf b*}}\in U(\mathfrak{k})\subset U(\mathfrak{g}).
\end{equation}
Hence, by Lemmas \ref{Claim I} and \ref{ClaimII}, for each ${\bf b}=(0,i_2,\dots,i_n) \in (\mathbb{Z}_{\geq 0})^n$, there exists an element ${\bf z}_{{\bf b}}\in {}_J\mathcal{O}(G/C)_{J}$ such that 
\begin{equation}\label{shrew}{\bf y}^{{\bf b*}}=\Psi({\bf z}_{{\bf b}}).
\end{equation}

For each ${\bf b}=(b_1,\ldots,b_n) \in (\mathbb{Z}_{\geq 0})^n$, define $|{\bf b}|:=\sum_{i=1}^n b_i$. Set
$$z:=\sum_{{\bf b}=(0,b_2,\dots,b_n) \in (\mathbb{Z}_{\geq 0})^n,\,2\le |{\bf b}|<{\ell}}R^J({\bf y}^{{\bf b}},y_n){\bf z}_{{\bf b}}\in {}_J\mathcal{O}(G/C )_J.$$ 
Then by (\ref{contain}) and (\ref{shrew}), we have
$$\Psi(z)=\sum_{{\bf b}=(0,b_2,\dots,b_n) \in (\mathbb{Z}_{\geq 0})^n,\,2\le |{\bf b}|<{\ell}}R^J({\bf y}^{{\bf b}},y_n)\Psi({\bf z}_{{\bf b}})\in U(\mathfrak{k}).$$
We claim that $\Psi(z-y_n)$ vanishes on $\mathfrak{m}^2$; that is,  
$$\Psi(z-y_n)({\bf y}^{{\bf d}})=0$$ 
for every ${\bf d} \in (\mathbb{Z}_{\geq 0})^n$ with $|{\bf d}|\geq 2$. 
Indeed, if ${\bf d}=(0,d_2,\dots,d_n) \in (\mathbb{Z}_{\geq 0})^n$ such that $2\le |{\bf d}|<\ell$, then 
\begin{eqnarray*}
\lefteqn{\Psi(z)({\bf y}^{{\bf d}})=\sum_{{\bf b}=(0,b_2,\dots,b_n) \in (\mathbb{Z}_{\geq 0})^n,\,2\le |{\bf b}|<{\ell}}R^J({\bf y}^{{\bf b}},y_n)\Psi({\bf z}_{{\bf b}})({\bf y}^{{\bf d}})}\\
& = & \sum_{{\bf b}=(0,b_2,\dots,b_n) \in (\mathbb{Z}_{\geq 0})^n,\,2\le |{\bf b}|<{\ell}}R^J({\bf y}^{{\bf b}},y_n)\langle {\bf y}^{{\bf b*}},{\bf y}^{{\bf d}}\rangle
=R^J({\bf y}^{{\bf d}},y_n)=\Psi(y_n)(y^{{\bf d}}).
\end{eqnarray*}

Next if ${\bf b} =(b_1, \ldots , b_n) \in (\mathbb{Z}_{\geq 0})^n$ such that $2\le |{\bf b}|<\ell$ and $b_1\ge 1$, then by Lemma \ref{supportai2}(3), 
$$\Psi(z)({\bf y}^{{\bf b}})=0=\Psi(y_n)({\bf y}^{{\bf b}}).$$
Finally, it is clear that if $|{\bf b}|\geq \ell$, then  
$$\Psi(z)({\bf y}^{{\bf b}})=0=\Psi(y_n)({\bf y}^{{\bf b}}).$$
Thus, $\Psi(z-y_n)$ vanishes on $\mathfrak{m}^2$, as claimed. 

Finally, since ${}_J\mathcal{O}(G)_J$ is cotriangular and $y_1$ is primitive, \cite[Lemma 2.2.2]{Ma1} shows that
$$ R^J(y_n,y_1)=(R^J)^{-1}(y_1,y_n)= R^J(S(y_1), y_n)= -R^J(y_1,y_n).$$
Hence, by Lemma \ref{supportai2}(5),
\begin{equation}\label{value} R^J(y_1, y_n)= -1.
\end{equation}
Thus, $\Psi(z-y_n)(y_1)=R^J(y_1, z)-R^J(y_1, y_n)=1$, so $\Psi(z-y_n)=u_n$, as desired.
\end{proof}
\medskip

The theorem follows now from Lemmas \ref{Claim I}, \ref{ClaimII}, and \ref{Claim III}.
\end{proof}

Combining Proposition \ref{include} and Theorem \ref{noethdomminunip2} yields
 
\begin{theorem}\label{minLie} 
Let $G$ be a unipotent affine algebraic group, and let $J$ be a Hopf $2$-cocycle for $G$ such that the $R$-form $R^J := J_{21}^{-1}\ast J$ is nondegenerate. Then the map $\Psi$ defined at (\ref{caught}) yields an isomorphism of Hopf algebras
$${}_J\mathcal{O}(G)_J\cong\left(U(\mathfrak{g})^J\right)^{{\rm cop}},$$
where $U(\mathfrak{g})^J=U(\mathfrak{g})$ as algebras, with coproduct given by $\Delta^J(\cdot)=J^{-1}*\Delta(\cdot)*J$, where $\Delta$ is the standard cocommutative coproduct of $U(\mathfrak{g})$. \qed
\end{theorem}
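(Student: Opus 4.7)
The plan is to assemble the two results immediately preceding: Proposition \ref{include} gives $\mathrm{im}(\Psi) \subseteq U(\mathfrak{g})^{\mathrm{op}}$, Theorem \ref{noethdomminunip2} gives the reverse inclusion, and Lemma \ref{into1} says $\Psi$ is a Hopf algebra map with kernel $I_{R^J}$, which is zero by the standing nondegeneracy hypothesis. Therefore $\Psi$ is a Hopf algebra isomorphism from ${}_J\mathcal{O}(G)_J$ onto $U(\mathfrak{g})^{\mathrm{op}}$ viewed as a sub-Hopf algebra of $({}_J\mathcal{O}(G)_J)^{\circ,\mathrm{op}}$. All that is left is to name the induced Hopf structure on $U(\mathfrak{g})$.

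To identify the multiplication, I would observe that since $\mathcal{O}(G)$ and ${}_J\mathcal{O}(G)_J$ share the same coalgebra, the convolution product on $({}_J\mathcal{O}(G)_J)^\circ$ is the one dual to the cocommutative $\Delta$ of $\mathcal{O}(G)$; on the subalgebra $U(\mathfrak{g})\subset \mathcal{O}(G)^\circ$ guaranteed by Cartier--Gabriel--Kostant, this is the ordinary enveloping algebra product, so the $\mathrm{op}$-structure on the image is $U(\mathfrak{g})^{\mathrm{op}}$, which is also the algebra underlying $(U(\mathfrak{g})^J)^{\mathrm{cop}}$. To identify the comultiplication, I would dualise the twisted product (\ref{mult}): for $\phi\in U(\mathfrak{g})$ and $a,b\in\mathcal{O}(G)$,
\[
\Delta^\circ(\phi)(a\otimes b)=\phi\bigl(m_J(a,b)\bigr)=\sum J^{-1}(a_1,b_1)\,\phi_{(1)}(a_2)\,\phi_{(2)}(b_2)\,J(a_3,b_3),
\]
using that the standard coproduct $\Delta(\phi)=\sum \phi_{(1)}\otimes\phi_{(2)}$ of $U(\mathfrak{g})$ is dual to the commutative multiplication of $\mathcal{O}(G)$. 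Interpreting $J,J^{-1}$ in an appropriate completion of $U(\mathfrak{g})^{\otimes 2}$ via the natural pairing with $(\mathcal{O}(G)\otimes \mathcal{O}(G))^\ast$, this is precisely the twisted coproduct $\Delta^J(\phi)=J^{-1}*\Delta(\phi)*J$ defining $U(\mathfrak{g})^J$. Hence $({}_J\mathcal{O}(G)_J)^\circ$, restricted to the sub $U(\mathfrak{g})$, is $U(\mathfrak{g})^J$.

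To pass from the isomorphism ${}_J\mathcal{O}(G)_J \cong (U(\mathfrak{g})^J)^{\mathrm{op}}$ produced by $\Psi$ to the form $(U(\mathfrak{g})^J)^{\mathrm{cop}}$ stated in the theorem, I would post-compose with the antipode of $U(\mathfrak{g})^J$, which is a Hopf isomorphism $(U(\mathfrak{g})^J)^{\mathrm{op}}\xrightarrow{\sim}(U(\mathfrak{g})^J)^{\mathrm{cop}}$; equivalently, replacing $\Psi=\Psi_r$ in Lemma \ref{into1} by $\Psi_\ell$ lands directly in $({}_J\mathcal{O}(G)_J)^{\circ,\mathrm{cop}}$ and produces $(U(\mathfrak{g})^J)^{\mathrm{cop}}$ without an intermediate antipode. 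The main obstacle is purely notational: carefully tracking the $\mathrm{op}$ versus $\mathrm{cop}$ conventions for twisting, and giving rigorous meaning to the convolution $J^{-1}*\Delta*J$ when $J\in (\mathcal{O}(G)\otimes\mathcal{O}(G))^\ast$ rather than in $U(\mathfrak{g})\otimes U(\mathfrak{g})$ itself. The latter is settled by restricting to finite-dimensional subcoalgebras of $\mathcal{O}(G)$ on which $J$ truncates to an honest element of the relevant completed tensor square, after which the equality of the two comultiplications on each $\phi$ is a direct comparison of finite sums.
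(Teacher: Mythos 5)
Your proposal is correct and takes essentially the same route as the paper: the paper's proof of Theorem \ref{minLie} is exactly the combination of Proposition \ref{include}, Theorem \ref{noethdomminunip2} and Lemma \ref{into1} (injectivity coming from nondegeneracy of $R^J$), with the dualisation of $m_J$ to the twisted coproduct and the ${\rm op}$/${\rm cop}$ adjustment via the antipode (or via $\Psi_\ell$) treated as routine bookkeeping, which you spell out correctly. One cosmetic slip only: $\Delta_{\mathcal{O}(G)}$ is not cocommutative in general — what your argument actually uses, and states correctly later, is that the coalgebra is unchanged by twisting and that the standard coproduct of $U(\mathfrak{g})$ is dual to the commutative multiplication of $\mathcal{O}(G)$.
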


\begin{remark}\label{compwg22}
Theorem \ref{minLie} is \cite[Theorem 3.4]{G2}. However, the proof given in \cite{G2} is incomplete. More precisely, the proof in \cite{G2} assumes a positive answer to Question \ref{compwg22q} below, which we are unable to prove. \qed
\end{remark}

\begin{question}\label{compwg22q}
Set $\mathfrak{m}:=\mathcal{O}(G)^+$. Is it true that every power of $\mathfrak{m}$ in the algebra ${}_J\mathcal{O}(G)_J$ contains a power of $\mathfrak{m}$ in the algebra $\mathcal{O}(G)$? \qed
\end{question}

\begin{corollary}\label{teneqvrepcat}
Under the hypotheses of Theorem \ref{minLie}, there are equivalences of tensor categories 
$${\rm Rep}({}_J\mathcal{O}(G)_J)\xrightarrow{\cong}{\rm Rep}(U(\mathfrak{g}))\xrightarrow{\cong}{\rm Rep}(\mathfrak{g}).$$
\end{corollary}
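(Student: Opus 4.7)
The plan is to derive both equivalences by composing the Hopf-algebra isomorphism of Theorem~\ref{minLie} with two standard Hopf-theoretic principles: that Drinfeld twisting of the coproduct induces a tensor equivalence of module categories, and that $U(\mathfrak{g})$-modules coincide with $\mathfrak{g}$-modules as tensor categories.

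First, Theorem~\ref{minLie} supplies an isomorphism of Hopf algebras ${}_J\mathcal{O}(G)_J \cong (U(\mathfrak{g})^J)^{\mathrm{cop}}$, which immediately yields an equivalence of tensor categories $\Rep({}_J\mathcal{O}(G)_J) \cong \Rep((U(\mathfrak{g})^J)^{\mathrm{cop}})$. To relate the right-hand side to $\Rep(U(\mathfrak{g}))$, I would exploit the cocommutativity of $\Delta$ on $U(\mathfrak{g})$: since $\tau\circ\Delta = \Delta$, a direct computation gives
$$
(\Delta^J)^{\mathrm{op}}(x) \;=\; \tau\bigl(J^{-1}\Delta(x)J\bigr) \;=\; J_{21}^{-1}\,\Delta(x)\,J_{21} \;=\; \Delta^{J_{21}}(x)
$$
for every $x\in U(\mathfrak{g})$. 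Hence $(U(\mathfrak{g})^J)^{\mathrm{cop}} = U(\mathfrak{g})^{J_{21}}$ as Hopf algebras, and $J_{21}$ is itself a Drinfeld twist (the twist axioms for $J$ translate into those for $J_{21}$ under cocommutativity).

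The central step is then the standard Drinfeld twist principle: for any Hopf algebra $H$ and twist $F\in H\otimes H$, the identity-on-objects functor $\Rep(H^F)\to\Rep(H)$ is a tensor equivalence once equipped with the tensor constraint $\varphi_{V,W}=F\cdot(-)$, the pentagon coherence being precisely the $2$-cocycle identity for $F$ (see, e.g., \cite[Chapter~5]{EGNO}). Applied with $F=J_{21}$, this gives $\Rep(U(\mathfrak{g})^{J_{21}})\cong\Rep(U(\mathfrak{g}))$ as tensor categories. The final equivalence $\Rep(U(\mathfrak{g}))\cong\Rep(\mathfrak{g})$ is the classical universal-property correspondence, compatible with tensor products because $\mathfrak{g}$ sits inside $U(\mathfrak{g})$ as the space of primitive elements, so the diagonal $\mathfrak{g}$-action matches the coproduct of $U(\mathfrak{g})$ on tensor products.

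The chief technical obstacle is reconciling the two incarnations of $J$: originally a linear functional on $\mathcal{O}(G)^{\otimes 2}$, but used in the twist formalism as a genuine element of $U(\mathfrak{g})^{\otimes 2}$ acting on tensor products of $\mathfrak{g}$-modules. This is precisely the content of Remark~\ref{equivrmk}(2): the functional $J$ arises as the composition of $\epsilon\otimes\epsilon$ with the Etingof--Kazhdan twist $J_f(r,\hbar)\in U(\mathfrak{g})^{\otimes 2}[[\hbar]]$. On any finite-dimensional $\mathfrak{g}$-module the formal series truncates (as $\mathfrak{g}$ is nilpotent, such a module is annihilated by a power of $U(\mathfrak{g})^+$ by Engel's theorem), producing a well-defined action of both $J$ and $J_{21}$ on $V\otimes W$, so that the Drinfeld twist formalism applies verbatim and the abuse of notation in the formula $\Delta^J(\cdot)=J^{-1}*\Delta(\cdot)*J$ from Theorem~\ref{minLie} becomes a genuine identity.
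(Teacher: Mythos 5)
Your argument is essentially the paper's: Theorem \ref{minLie} reduces everything to a tensor equivalence ${\rm Rep}(U(\mathfrak{g}))\cong {\rm Rep}(U(\mathfrak{g})^J)$, which the paper, like you, realises by the identity functor equipped with the tensor constraint given by the Etingof--Kazhdan twist $J_f(r,\hbar)$ of Remarks \ref{equivrmk}(2) acting on $V\otimes W$; your explicit handling of the ${\rm cop}$ by passing to the twist $J_{21}$ is a minor refinement that the paper leaves implicit (alternatively one can use that ${\rm Rep}(U(\mathfrak{g})^J)$ is symmetric, $U(\mathfrak{g})^J$ being triangular). The one caveat is your parenthetical appeal to Engel's theorem: a finite dimensional module over a nilpotent Lie algebra need not be annihilated by any power of $U(\mathfrak{g})^+$ (a nontrivial character of an abelian $\mathfrak{g}$ already fails), so the truncation of $J_f(r,\hbar)_{\mid V\otimes W}$ is justified only for modules on which $\mathfrak{g}$ acts by nilpotent operators, i.e.\ those arising from rational representations of the unipotent group $G$. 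This is the same implicit restriction under which the paper's own assertion that ${\rm J}_{V,W}$ is well defined operates, so it does not change the substance of the proof, but the justification should be phrased in those terms rather than as a consequence of Engel's theorem for arbitrary finite dimensional modules.
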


\begin{proof}
By Theorem \ref{minLie}, it suffices to prove that ${\rm Rep}(U(\mathfrak{g}))\cong{\rm Rep}(U(\mathfrak{g})^J)$ as tensor categories.
To this end, note that the functor 
$$\left({\rm Id},{\rm J}\right):{\rm Rep}(U(\mathfrak{g}))\xrightarrow{\cong}{\rm Rep}(U(\mathfrak{g})^J),$$
where ${\rm J}_{V,W}:=J_f(r,\hbar)_{\mid V\otimes W}:V\otimes W\xrightarrow{\cong}V\otimes W$ (\ref{univforJ}), is a well defined tensor equivalence.
\end{proof}

\section{Structure of twisted unipotent groups - the general case}\label{structure2} We assume throughout this section that $G$ is a unipotent algebraic group over $\mathbb{C}$ of dimension $n$, and that $J$ is a Hopf $2$-cocycle for $G$ with corresponding $R$-form $R^J := J_{21}^{-1}*J$ for ${}_J\mathcal{O}(G)_J$. Let $T$ be a chosen representative of the conjugacy class of supports of $J$, and $(\mathfrak{t}, \omega)$ the Lie algebra of $T$, so $(\mathfrak{t},\omega)$ is a quasi-Frobenius Lie algebra as explained in $\S$\ref{2cocycunip}.

\subsection{Crossed product decomposition}\label{crossedsec}  Recall that since $T$ is a closed subgroup of $G$, $\mathcal{O}(G/T)$ and $\mathcal{O}(T\backslash G)$ are left and right coideal subalgebras of $\mathcal{O}(G)$ respectively, and hence are polynomial subalgebras in $\mathrm{dim}(G) - \mathrm{dim}(T)$ variables; see for example \cite{BG}.

Following Proposition \ref{factor}, there is an epimorphism $\pi$ of Hopf algebras from ${}_J\mathcal{O}(G)_J$ onto ${}_J\mathcal{O}(T)_J$ with kernel $I_{R^J}$, the radical of $R^J$. The notation (\ref{mult1}) is used below for the algebras ${}_J\mathcal{O}(G)$ and $\mathcal{O}(G)_J$, and extended in the obvious way to subalgebras of these algebras.
 Finally, recall that $S^J$ denotes the antipode of ${}_J \mathcal{O}(G)_J$.
 
\begin{lemma}\label{basics} 
With the above notation, we have the following:
\begin{enumerate}
\item The algebra ${}_J\mathcal{O}(G/T)_J={}_J\mathcal{O}(G/T)$ is a left coideal subalgebra of $_J\mathcal{O}(G)_J$, and a subalgebra of ${}_J\mathcal{O}(G)$. Correspondingly, $_J\mathcal{O}(T\backslash G)_J =\mathcal{O}(T\backslash G)_J$ is a right coideal subalgebra of ${}_J\mathcal{O}(G)_J$, and a subalgebra of $\mathcal{O}(G)_J$.
\item $I_{R^J}=(\mathcal{O}(T\backslash G)_J)^+ {}_J\mathcal{O}(G)_J=({}_J\mathcal{O}(G/T))^+ {}_J\mathcal{O}(G)_J.$
\item $S^J(\mathcal{O}(T\backslash G)_J)={}_J\mathcal{O}(G/T)$, and vice versa.
\end{enumerate}
\end{lemma}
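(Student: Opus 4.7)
The plan is to prove the three parts in sequence, with Part (1) providing crucial reductions used in Parts (2) and (3).

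For Part (1), the crux will be that the support condition on $J$ — namely, that $J$ and $J^{-1}$ factor through the restriction $\pi_T \colon \mathcal{O}(G) \twoheadrightarrow \mathcal{O}(T)$ — forces appropriate vanishing on the relevant coideal subalgebras. First I will note that for $a \in \mathcal{O}(G/T)$, constancy on left $T$-cosets forces $\pi_T(a) = \epsilon(a)\cdot 1_T$. Then iterating the left coideal property $\Delta(\mathcal{O}(G/T)) \subseteq \mathcal{O}(G) \otimes \mathcal{O}(G/T)$ will show $a_2, a_3 \in \mathcal{O}(G/T)$ for $a \in \mathcal{O}(G/T)$, so that in formulas (\ref{mult1})--(\ref{mult}) the $J(a_3,b_3)$ factor collapses to $\epsilon(a_3)\epsilon(b_3)$. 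This will yield $m_J|_{\mathcal{O}(G/T)^{\otimes 2}} = m|_{\mathcal{O}(G/T)^{\otimes 2}}$ and ${}_Jm_J|_{\mathcal{O}(G/T)^{\otimes 2}} = {}_Jm|_{\mathcal{O}(G/T)^{\otimes 2}}$. Because twisting does not alter the coalgebra, the coideal and subalgebra properties transfer automatically; the symmetric argument using the right coideal property will handle $\mathcal{O}(T\backslash G)$.

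For Part (2), I will invoke Proposition \ref{factor} to obtain a Hopf epimorphism $\pi \colon {}_J\mathcal{O}(G)_J \twoheadrightarrow {}_{\widehat J}\mathcal{O}(T)_{\widehat J}$ with kernel $I_{R^J}$. After gauge normalization (which preserves $I_{R^J}$ by Remark \ref{Jrems}(3)), $\pi$ is the twisted lift of $\pi_T$, so as a vector subspace $I_{R^J}$ will be identified with the classical vanishing ideal $\mathcal{I}(T) = \mathcal{O}(G/T)^+\cdot \mathcal{O}(G) = \mathcal{O}(T\backslash G)^+\cdot \mathcal{O}(G)$. For the inclusion $\supseteq$ in the twisted statement, I will fix $a \in \mathcal{O}(G/T)^+$ and compute $R^J(b,a) = \sum J^{-1}(a_1,b_1)J(b_2,a_2)$: the reduction of Part (1) shows $J(b_2,a_2) = \epsilon(a_2)\epsilon(b_2)$ (since $a_2 \in \mathcal{O}(G/T)$), so the expression collapses to $J^{-1}(a,b) = J_T^{-1}(\pi_T(a),\pi_T(b)) = 0$ because $\pi_T(a) = \epsilon(a) = 0$. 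Hence $\mathcal{O}(G/T)^+ \subseteq I_{R^J}$, and the right ideal it generates sits inside $I_{R^J}$. For $\subseteq$, I will use Part (1) to reduce ${}_Jm_J(a\otimes f)$ for $a \in \mathcal{O}(G/T)$ to $\sum J^{-1}(a_1,f_1)\,a_2 f_2$, then expand $\Delta(a) = a\otimes 1 + 1\otimes a + a'$ with $a' \in \mathcal{O}(G)^+\otimes \mathcal{O}(G/T)^+$, and induct on coradical filtration depth: each commutative product $af \in \mathcal{I}(T)$ will be realized as a twisted product of an element of $\mathcal{O}(G/T)^+$ with an element of ${}_J\mathcal{O}(G)_J$, up to a lower-depth correction handled inductively. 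A symmetric argument will handle the $\mathcal{O}(T\backslash G)^+$ equality.

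For Part (3), I will exploit that the classical antipode $S$, as the pullback of group inversion, restricts to a bijection $\mathcal{O}(T\backslash G) \xrightarrow{\cong} \mathcal{O}(G/T)$. Applying the twisted antipode formula (\ref{twistanti}) to $a \in \mathcal{O}(T\backslash G)$, the iterated right coideal property controls $a_1$, while the coset-swapping property of $S$ places $S(a_2), S(a_3), S(a_4)$ appropriately; combining these with the support of $J$ on $T$, the factors $J^{\pm 1}$ will collapse via $\pi_T$ to scalars, leaving $S^J(a)$ expressible as a sum of elements of $\mathcal{O}(G/T)$. The reverse containment will follow by the same analysis applied to $(S^J)^{-1}$. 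The hardest step will be the $\subseteq$-inclusion in Part (2): matching the classical vanishing ideal $\mathcal{I}(T)$ with a right ideal for the twisted multiplication requires careful bookkeeping of Sweedler components along the coradical filtration, and it is precisely here that the simplifications established in Part (1) do the essential work.
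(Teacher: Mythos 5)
Your proof of (1) is essentially the paper's own argument: both exploit that $J$ (taken, as throughout \S\ref{structure2}, to be pulled back along $\pi_T\otimes\pi_T$ from its support) evaluates as $\epsilon\otimes\epsilon$ whenever one leg lies in the coinvariants, so one of the two cocycle factors in (\ref{mult}) collapses. For (2) and (3), however, you take a genuinely different route: the paper disposes of both parts in one line by applying Masuoka-type results for connected Hopf algebras (the correspondence between coideal subalgebras and Hopf ideals, faithful flatness, and the fact that the antipode exchanges left and right coinvariants, citing \cite[Proposition 2.3]{BG}) directly to the connected Hopf algebra ${}_J\mathcal{O}(G)_J$ and the surjection onto ${}_J\mathcal{O}(T)_J$ of Proposition \ref{factor}, whereas you prove them by hand. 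Your computation $R^J(b,a)=J^{-1}(a,b)=0$ for $a\in\mathcal{O}(G/T)^+$, combined with Lemma \ref{radical}, correctly gives $({}_J\mathcal{O}(G/T))^+{}_J\mathcal{O}(G)_J\subseteq I_{R^J}$; and your reverse inclusion works because $I_{R^J}$ is, as a subspace, the kernel of restriction to $T$, i.e.\ the classical ideal $\mathcal{O}(G/T)^+\mathcal{O}(G)$, and in $a\cdot f=\sum J^{-1}(a_1,f_1)a_2f_2$ the term with $a_1=a$ dies (since $\pi_T(a)=0$) while the corrections have second legs in $\mathcal{O}(G/T)^+$ of strictly smaller coradical degree, so the induction closes; the symmetric argument for $\mathcal{O}(T\backslash G)^+$ goes through the same way. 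Your (3) is likewise a direct collapse of (\ref{twistanti}), with equality obtained via $(S^J)^{-1}$. The trade-off: the paper's citation is shorter and hides the filtration work inside general connected-Hopf machinery, while your argument is self-contained modulo the standard commutative fact $\mathcal{I}(T)=\mathcal{O}(G/T)^+\mathcal{O}(G)$ (the commutative instance of the very result the paper cites) and makes explicit why the twisted and untwisted ideals coincide as subspaces. Two points to make explicit in a write-up: state the standing convention that $J$ itself factors through $\mathcal{O}(T)^{\otimes 2}$ (the paper uses this tacitly in part (1) as well, so your "gauge normalization" aside is really a convention, not a step), and in (3) justify bijectivity of $S^J$ with an explicit inverse, e.g.\ writing $S^J(a)=\sum u(a_1)S(a_2)u^{-1}(a_3)$ with $u(a)=\sum J^{-1}(a_1,S(a_2))$ and using $S^2=\mathrm{id}$ on $\mathcal{O}(G)$, so that the same collapsing applies to $(S^J)^{-1}$.
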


\begin{proof} 
(1) The maps $\pi:\mathcal{O}(G)\twoheadrightarrow \mathcal{O}(T)$ and $\Delta$ are the same when applied to $_J\mathcal{O}(G)$. Therefore, since ${}_J\mathcal{O}(G/T)_J$ is defined using the coalgebra structure, we see that, as vector subspaces of $\mathcal{O}(G)$, we have
\begin{equation}\label{coinv} {}_J\mathcal{O}(G/T)_J = \mathcal{O}(G/T) = \{f \in \mathcal{O}(G)\mid (\mathrm{id} \otimes \pi)\circ \Delta (f) = f \otimes \overline{1}\}.
\end{equation}
In particular, ${}_J\mathcal{O}(G/T)_J$ is a left coideal subalgebra of ${}_J\mathcal{O}(G)_J$ since $\mathcal{O}(G/T)$ is a left coideal subalgebra of $\mathcal{O}(G)$. Hence, for 
$a,b \in {}_J\mathcal{O}(G/T)_J$, we have
\begin{align*} m_J(a \otimes b) &= \sum J^{-1}(a_1,b_1)a_2b_2J(a_3,b_3)\\
 &=  \sum J^{-1}(a_1,b_1)a_2b_2\epsilon(a_3)\epsilon(b_3)\\
&= \sum J^{-1}(a_1,b_1)a_2b_2.
\end{align*}
That is, ${}_J\mathcal{O}(G/T)_J={}_J\mathcal{O}(G/T)$. This proves the first half of (1); the second half is exactly similar.

(2), (3) These follow from (\ref{coinv}) by standard properties of Hopf ideals in connected Hopf algebras due to Masuoka, see e.g. \cite[Proposition 2.3]{BG}.
\end{proof}

The Hopf algebra $U(\mathfrak{t})^J$ featuring in the next theorem is $U(\mathfrak{t})$ with twisted coproduct, as defined in Theorem \ref{minLie}.

\begin{theorem}{\rm (\cite[Theorem 3.8]{G2})}\label{crossed} 
With the notation and hypotheses given at the start of $\S$\ref{structure2}, ${}_J\mathcal{O}(G)_J$ decomposes as a crossed product,
\begin{align*} {}_J\mathcal{O}(G)_J &\cong {}_J\mathcal{O}(G/T)\#_{\sigma} {}_J\mathcal{O}(T)_J\cong \mathcal{O}(T\backslash G)_J \#_{\sigma'} {}_J\mathcal{O}(T)_J\\
&\cong {}_J\mathcal{O}(G/T)\#_{\sigma} U(\mathfrak{t})^J \cong  \mathcal{O}(T\backslash G)_J \#_{\sigma'} U(\mathfrak{t})^J,
\end{align*}
where $\sigma$ and $\sigma'$ are invertible $2$-cocycles from $({}_J\mathcal{O}(T)_J)^{\otimes2}$ to ${}_J\mathcal{O}(G/T)$ and $\mathcal{O}(T\backslash G)_J$, respectively. \qed
\end{theorem}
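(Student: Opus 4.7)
The plan is to realise ${}_J\mathcal{O}(G)_J$ as a cleft extension of its Hopf algebra quotient ${}_J\mathcal{O}(T)_J$, and then to invoke the standard equivalence between cleft extensions and crossed products.

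First I would set up the short exact sequence of Hopf algebras
\[
{}_J\mathcal{O}(G/T) \;\hookrightarrow\; {}_J\mathcal{O}(G)_J \;\xrightarrow{\;\pi\;}\; {}_J\mathcal{O}(T)_J,
\]
where $\pi$ is the surjection of Proposition \ref{factor} (with kernel $I_{R^J}$). By Lemma \ref{basics}(1) the subalgebra ${}_J\mathcal{O}(G/T)$ is a left coideal subalgebra of ${}_J\mathcal{O}(G)_J$, and by the coinvariant description (\ref{coinv}) in the proof of Lemma \ref{basics} it is precisely the subalgebra of left $\pi$-coinvariants. Lemma \ref{basics}(2) identifies the generated ideal $({}_J\mathcal{O}(G/T))^{+}\,{}_J\mathcal{O}(G)_J$ with $I_{R^J}=\Ker(\pi)$, confirming that the sequence is exact in the Hopf-algebra sense.

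Next I would argue that the extension is cleft. Since ${}_J\mathcal{O}(G)_J$ is connected by Corollary \ref{IHOEcor}(3), so is its Hopf quotient ${}_J\mathcal{O}(T)_J$. Thus Schneider's theorem (\cite[Theorem 8.4.8 and the Example (2) following]{M}), applied exactly as in the proof of Theorem \ref{noethdomminunip2}, furnishes a convolution-invertible right ${}_J\mathcal{O}(T)_J$-colinear section
\[
\gamma:{}_J\mathcal{O}(T)_J \;\longrightarrow\; {}_J\mathcal{O}(G)_J, \qquad \pi\circ\gamma=\mathrm{id}.
\]
By \cite[Theorem 7.2.2]{M} any such cleaving map yields an invertible $2$-cocycle $\sigma:\bigl({}_J\mathcal{O}(T)_J\bigr)^{\otimes 2}\to {}_J\mathcal{O}(G/T)$ together with a measuring action, and an algebra (and right comodule) isomorphism
\[
{}_J\mathcal{O}(G)_J \;\xrightarrow{\;\cong\;}\; {}_J\mathcal{O}(G/T)\,\#_{\sigma}\, {}_J\mathcal{O}(T)_J.
\]
This gives the first of the four claimed decompositions. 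The second, with $\mathcal{O}(T\backslash G)_J$ in place of ${}_J\mathcal{O}(G/T)$, is obtained by repeating the argument on the right: the right coinvariants of $\pi$ equal $\mathcal{O}(T\backslash G)_J$ by the right-sided analogue of (\ref{coinv}), and this subalgebra generates the same ideal $I_{R^J}$ by Lemma \ref{basics}(2). Alternatively, one may apply the antipode isomorphism of Lemma \ref{basics}(3) to convert the left-sided decomposition into the right-sided one, which automatically produces the $2$-cocycle $\sigma'$.

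Finally I would upgrade the coefficient Hopf algebra from ${}_J\mathcal{O}(T)_J$ to $U(\mathfrak{t})^{J}$. Since $J$ restricted to $T$ is minimal, Theorem \ref{minLie} provides a Hopf algebra isomorphism ${}_J\mathcal{O}(T)_J\cong(U(\mathfrak{t})^{J})^{\mathrm{cop}}$, and in particular an algebra isomorphism ${}_J\mathcal{O}(T)_J\cong U(\mathfrak{t})^{J}$. Transporting $\sigma$ and $\sigma'$ along this isomorphism yields the remaining two crossed product presentations. The only real work is verifying that Schneider's cleftness result applies to this setting, which is handled exactly as in the proof of Theorem \ref{noethdomminunip2}; the remainder is bookkeeping. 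The main subtle point, and what I would write most carefully, is checking that the right coinvariants of $\pi$ genuinely equal $\mathcal{O}(T\backslash G)_J$ as a subalgebra of ${}_J\mathcal{O}(G)_J$ (not merely as a vector subspace), so that the right-sided version of the cleftness argument proceeds without modification.
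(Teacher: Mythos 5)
Your argument is correct and is essentially the paper's own route: the paper states Theorem \ref{crossed} without proof (quoting \cite[Theorem 3.8]{G2}), but the identical mechanism you propose — connectedness from Corollary \ref{IHOEcor}(3), Schneider's cleftness theorem via \cite[Theorem 8.4.8]{M}, the crossed-product recognition of \cite[Theorem 7.2.2]{M}, and then Theorem \ref{minLie} to replace ${}_J\mathcal{O}(T)_J$ by $U(\mathfrak{t})^J$ — is exactly what the paper deploys in the proof of Theorem \ref{noethdomminunip2} for the analogous decomposition ${}_J\mathcal{O}(G/C)_J = \mathcal{O}(G/K)\#_{\sigma}\,{}_{\overline{J}}\mathcal{O}(K/C)_{\overline{J}}$. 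One small caution: ${}_J\mathcal{O}(G/T)$ is only a left coideal subalgebra (not a Hopf subalgebra unless $T$ is normal), so calling your sequence a ``short exact sequence of Hopf algebras'' is an abuse of language, but this is harmless since Schneider's criterion needs only the Hopf surjection $\pi$ onto the connected quotient together with the identification of the coinvariants in (\ref{coinv}).
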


\begin{remarks}
(1) Suppose in the above that $T$ is normal in $G$. Then of course $\mathcal{O}(G/T)$ is a Hopf subalgebra of $\mathcal{O}(G)$, and from the proof of Lemma \ref{basics} we see that ${}_J\mathcal{O}(G/T)_J  = \mathcal{O}(G/T)$; in fact by Proposition \ref{centre} below, it is a central (polynomial) Hopf subalgebra of ${}_J\mathcal{O}(G)_J$. But in general, ${}_J\mathcal{O}(G/T)$ is not commutative, as will be seen in Example \ref{ex5}.

(2) We do not know whether the $2$-cocycle $\sigma$ in Theorem \ref{crossed} can be non-trivial. But the action of ${}_J \mathcal{O}(T)_J$ on ${}_J\mathcal{O}(G/T)$ can be non-trivial, as is shown by Examples \ref{ex5} and \ref{ex6} below. \qed 
\end{remarks}

\subsection{Consequences of Theorem \ref{crossed}}\label{crossedconseq} 
Although we are unable to give a precise description of the centre of ${}_J\mathcal{O}(G)_J$ for unipotent affine algebraic groups $G$, we can at least identify an important central subalgebra:

\begin{proposition}{\rm (\cite[Lemma 3.7]{G2})}\label{centre} 
If $G$ is a unipotent affine algebraic group with a Hopf $2$-cocycle $J$ whose support is $T$, then the algebra $\mathcal{O}(T\backslash G / T)$ of functions which are constant on the double cosets of $T$ in $G$ is contained in the centre of ${}_J\mathcal{O}(G)_J$.
\end{proposition}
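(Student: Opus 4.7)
The strategy is to apply the cotriangularity axiom (Definition \ref{cotridef}(2)) for $R^J$ to any $a \in \mathcal{O}(T\backslash G/T)$ and arbitrary $b \in {}_J\mathcal{O}(G)_J$, and to show that every $R^J$-value appearing collapses to counits because $\Delta(a)$ lives in a tensor product of the two ``trivialising'' coideal subalgebras for $R^J$ identified in Lemma \ref{basics}(2).

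First I would record a coproduct observation: for $a \in \mathcal{O}(T\backslash G/T)$, viewing $\Delta(a) \in \mathcal{O}(G \times G)$ via $\Delta(a)(x,y) = a(xy)$, the double invariance $a(txys) = a(xy)$ for $s,t \in T$ says exactly that $\Delta(a)$ is left-$T$-invariant in $x$ and right-$T$-invariant in $y$. Under the identification $\mathcal{O}(G) \otimes \mathcal{O}(G) \cong \mathcal{O}(G\times G)$ this means $\Delta(a) \in \mathcal{O}(T\backslash G) \otimes \mathcal{O}(G/T)$, so we may choose a Sweedler decomposition $\Delta(a) = \sum a_1 \otimes a_2$ with $a_1 \in \mathcal{O}(T\backslash G)$ and $a_2 \in \mathcal{O}(G/T)$.

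Next I would show $R^J(x,y) = \epsilon(x)\epsilon(y)$ whenever $x \in \mathcal{O}(T\backslash G)$ (for every $y$) and, symmetrically, whenever $y \in \mathcal{O}(G/T)$ (for every $x$). Lemma \ref{basics}(2) gives $\mathcal{O}(T\backslash G)^+ \subseteq I_{R^J}$ and $\mathcal{O}(G/T)^+ \subseteq I_{R^J}$, while Lemma \ref{radical} guarantees that $I_{R^J}$ is two-sided, i.e.\ annihilated by $R^J$ in either slot. Combining this with $R^J(1,-) = R^J(-,1) = \epsilon$ (immediate from Lemma \ref{quick}(1) together with $R^J(1,1) = 1$) and decomposing $x = \epsilon(x)\cdot 1 + (x - \epsilon(x)\cdot 1)$ delivers the desired trivialisation in both variables.

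Finally, I would invoke Definition \ref{cotridef}(2) with $h := a$ and $g := b$, namely $\sum R^J(a_1,b_1)\, a_2 \cdot b_2 = \sum b_1 \cdot a_1\, R^J(a_2,b_2)$, and substitute. Since $a_1 \in \mathcal{O}(T\backslash G)$ on the left and $a_2 \in \mathcal{O}(G/T)$ on the right, both occurrences of $R^J$ collapse to counits, so the left side becomes $\sum \epsilon(a_1)\epsilon(b_1)\, a_2 \cdot b_2 = a \cdot b$ and the right side becomes $\sum b_1 \cdot a_1\, \epsilon(a_2)\epsilon(b_2) = b \cdot a$, whence $a \cdot b = b \cdot a$. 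The argument is essentially formal; the only point requiring care is the correct pairing of the two one-sided coideal structures hidden inside $\Delta(a)$ with the two slots of $R^J$ in the cotriangularity axiom, and that matching is provided precisely by bi-$T$-invariance of $a$.
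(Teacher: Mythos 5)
Your proof is correct, but it takes a genuinely different route from the paper's. You work entirely inside the cotriangular structure $({}_J\mathcal{O}(G)_J,R^J)$: bi-$T$-invariance of $a$ gives $\Delta(a)\in\mathcal{O}(T\backslash G)\otimes\mathcal{O}(G/T)$, Lemma \ref{basics}(2) together with Lemma \ref{radical} shows that $R^J$ collapses to $\epsilon\otimes\epsilon$ whenever an argument lies in $\mathcal{O}(T\backslash G)$ or $\mathcal{O}(G/T)$, and Definition \ref{cotridef}(2) then forces $a\cdot b=b\cdot a$. (One small imprecision: you state the collapse for a first argument in $\mathcal{O}(T\backslash G)$ or a \emph{second} argument in $\mathcal{O}(G/T)$, whereas on the right-hand side of the axiom you need it for $a_2\in\mathcal{O}(G/T)$ sitting in the \emph{first} slot; this is harmless, because your justification --- both $(\mathcal{O}(T\backslash G))^+$ and $(\mathcal{O}(G/T))^+$ lie in $I_{R^J}$, which annihilates $R^J$ in either slot --- covers all cases.) The paper never touches the $R$-form axiom: it quotes \cite[Theorem 3.5]{G1} for the one-sided centrality statements $\mathcal{O}(G/T)_J\subseteq Z(\mathcal{O}(G)_J)$ and ${}_J\mathcal{O}(T\backslash G)\subseteq Z({}_J\mathcal{O}(G))$, makes the same coproduct observation $\Delta(\mathcal{O}(T\backslash G/T))\subseteq\mathcal{O}(T\backslash G)\otimes\mathcal{O}(G/T)$, and concludes via the embedding $\Delta:{}_J\mathcal{O}(G)_J\hookrightarrow{}_J\mathcal{O}(G)\otimes\mathcal{O}(G)_J$ of Lemma \ref{action0}(3). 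Your argument isolates a clean abstract fact --- in a cotriangular Hopf algebra $(H,R)$, any $a$ with $\Delta(a)\in A\otimes B$ for subalgebras satisfying $A^+,B^+\subseteq I_R$ is central --- at the cost of invoking Lemma \ref{basics}(2), whose proof rests on Masuoka-type results for connected Hopf algebras; the paper's proof avoids $R$-form computations but imports the one-sided twist result from \cite{G1}, which holds without unipotence. Since Lemma \ref{basics} precedes Proposition \ref{centre} and its proof does not use it, there is no circularity in your approach.
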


\begin{proof} 
By \cite[Theorem 3.5]{G1} (without needing the hypothesis of unipotence), there are algebra embeddings 
$$\mathcal{O}(G/T) = \mathcal{O}(G/T)_J \subseteq Z(\mathcal{O}(G)_J)\,\,\,{\rm and}\,\,\,\mathcal{O}(T \backslash G) = {}_J\mathcal{O}(T \backslash G)  \subseteq Z({}_J \mathcal{O}(G)),$$ where the equalities emphasise that the multiplication is the same in the linked algebras. Hence, we have
\begin{equation}\label{hum} 
\mathcal{O}(T\backslash G / T) \subseteq \mathcal{O}(G/T) = \mathcal{O}(G/T)_J \subseteq Z(\mathcal{O}(G)_J) 
\end{equation}
and
\begin{equation}\label{haw} \mathcal{O}(T\backslash G / T) \subseteq  \mathcal{O}(T \backslash G) ={}_J\mathcal{O}(T \backslash G) \subseteq Z(_J \mathcal{O}(G)). 
\end{equation}
Moreover (\ref{hum}) and (\ref{haw}) imply that
\begin{equation}\label{catch}  
\Delta (\mathcal{O}(T\backslash G / T)) \subseteq \mathcal{O}(T \backslash G) \otimes \mathcal{O}(G/T) \subseteq Z({}_J\mathcal{O}(G) \otimes \mathcal{O}(G)_J)),
\end{equation}
since $\mathcal{O}(T\backslash G)$ and $\mathcal{O}(G/T)$ are right and  left coideal subalgebras of $\mathcal{O}(G)$, respectively. Thus, the result follows from (\ref{catch}), and the fact that by Lemma \ref{action0}(3), $\Delta$ embeds ${}_J\mathcal{O}(G)_J$ in ${}_J\mathcal{O}(G) \otimes \mathcal{O}(G)_J$. 
\end{proof}

Recall that an ideal $I$ of a ring $A$ is called \emph{polycentral} if there is a finite subset $\{y_1, \ldots , y_t\}\subset I$, which generate $I$ as a right ideal, such that for all $i = 1, \ldots , t$,
$$ y_i + \left(\sum_{j < i} y_j A\right) \textit{ is central in } A/\left(\sum_{j < i}y_j A\right). $$
The ideal $I$ and its corresponding generating set are called \emph{regular polycentral} if in addition each $y_i$ is not a zero divisor in $A/(\sum_{j<i} y_j A)$.

\begin{theorem}\label{augcent} 
Let $G$ be a unipotent affine algebraic group of dimension $n$, and $J$ a Hopf $2$-cocycle for $G$. Then the augmentation ideal ${}_J\mathcal{O}(G)_J^+$ of ${}_J\mathcal{O}(G)_J$ has a regular polycentral generating set of $n$ elements.
\end{theorem}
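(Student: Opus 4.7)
The plan is to take the IHOE generators $X_1,\dots,X_n$ from Theorem \ref{Ore} as the prospective regular polycentral generating sequence of $A^+$, where $A := {}_J\mathcal{O}(G)_J$. I first set up the witness ideals: for $0\le k\le n$, put $I_k := {}_J(H_k)_J^+\cdot A$. Since $A$ is a connected Hopf algebra by Corollary \ref{IHOEcor}(3) and each ${}_J(H_k)_J$ is a Hopf subalgebra, the standard normality result for Hopf subalgebras in connected Hopf algebras ensures that $I_k$ is a Hopf ideal, in particular two-sided. Combined with the iterated Ore extension identity ${}_J(H_k)_J = {}_J(H_{k-1})_J[X_k;\partial_k]$ from Theorem \ref{Ore}(5) and the fact that $\epsilon(X_k)=0$, a short induction on $k$ yields ${}_J(H_k)_J^+ = \sum_{j=1}^{k} X_j\cdot {}_J(H_k)_J$ as a right ideal, and hence $I_k = X_1 A + \cdots + X_k A$ as a right ideal of $A$. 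In particular $0 = I_0 \subsetneq I_1 \subsetneq \cdots \subsetneq I_n = A^+$, with each step singly generated on the right by the new $X_k$.

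The centrality requirement that $X_k$ be central in $A/I_{k-1}$ is the crux of the argument. For $j<k$, Theorem \ref{Ore}(4) gives $[X_k,X_j]\in {}_J(H_{k-1})_J^+\subseteq I_{k-1}$ immediately. For $j>k$, by contrast, $[X_j,X_k]$ lies a priori only in ${}_J(H_{j-1})_J^+$, which strictly contains $I_{k-1}$, and this is where the main technical hurdle lies. The plan is to expand $[X_j,X_k]$ using the explicit formula of Theorem \ref{Ore}(4) and exploit the support structure of $J$: by Proposition \ref{factor} we may assume, up to gauge equivalence, that $J$ is supported on $\mathcal{O}(T)$, so that both $J(f,g)$ and $J^{-1}(f,g)$ vanish as soon as one of $f,g$ restricts to zero on $T$. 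Tracking which summands in the expansion can survive this support constraint, one verifies that the surviving contributions all acquire a left factor coming from ${}_J(H_{k-1})_J^+$ (since the $q(X_k)$-terms contribute factors $x^k_{\cdot}\in {}_J(H_{k-1})_J^+$, and the $q(X_j)$-terms survive only when the $\overline{Q}$- or $Q$-pairings force the remaining factor into the same place). The main effort in the proof will be this term-by-term case analysis, showing that every surviving summand of $[X_j,X_k]$ lies in $I_{k-1}$.

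Regularity is the easier half. Each quotient $A/I_{k-1}$ is again a connected Hopf algebra, and it inherits an IHOE structure of derivation type of length $n-k+1$ (the chain ${}_J(H_k)_J/I_{k-1}\subset \cdots \subset A/I_{k-1}$ being a chain of Hopf subalgebras obtained via Ore extension by the images of $X_k,\dots,X_n$ with the induced derivations). Consequently, Corollary \ref{IHOEcor}(1) applies to $A/I_{k-1}$, showing that it is a noetherian domain, so the nonzero image $\overline{X_k}$ of $X_k$ is automatically a non-zero-divisor in $A/I_{k-1}$. Together with the centrality step, this shows that $\{X_1,\dots,X_n\}$ is an $n$-element regular polycentral generating set of $A^+$, as required.
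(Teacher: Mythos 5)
There is a genuine gap, and it is located exactly where you place the ``main technical hurdle'': the claim that each $X_k$ is central modulo $I_{k-1}$ is simply false for the generators coming from the chain of Theorem \ref{Ore}, because that chain is an \emph{arbitrary} refinement of a central series of $G$, chosen with no reference to $J$ or to its support $T$. Already the first step fails: $X_1$ need not be central in ${}_J\mathcal{O}(G)_J$. A concrete counterexample is Example \ref{ex4}: there $\mathfrak{g}$ has basis $a,b,c,d$ with $[a,b]=c$, $[c,b]=d$, and the chain $G_1=\exp(\mathbb{C}d)\subset G_2=\exp(\mathrm{sp}_{\mathbb{C}}\{c,d\})\subset G_3=\exp(\mathrm{sp}_{\mathbb{C}}\{b,c,d\})\subset G$ is admissible for $\S$\ref{unipotent} (the quotient $G/G_2$ is abelian, so any intermediate $G_3$ works), and it yields $X_1=X$, $X_2=Y$, $X_3=V$, $X_4=W$. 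But in ${}_J\mathcal{O}(G)_J$ one has $[W,X]=Y\neq 0$, so $X_1$ is not central and the polycentrality test fails at the very first stage. Note also that in this example $J$ is minimal, i.e.\ $T=G$, so your proposed rescue -- forcing vanishing of $J$-terms via the support -- gives no constraint whatsoever; the term-by-term analysis you defer to cannot succeed. The same phenomenon occurs with a proper support in Example \ref{ex6}: with a chain beginning at $F_{12}$ (admissible, since $\{a_{12}=0\}$ is normal and contains the upper central series), $[F_{14},F_{12}]=F_{34}\neq 0$, so again $X_1$ is not central. What is true is that \emph{some} well-chosen generating set works, but it must be adapted to $T$ (in Example \ref{ex6} the natural polycentral sequence even involves the combination $F_{12}-F_{24}$ rather than raw IHOE generators), and producing such an adapted choice is the actual content of the theorem.

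This is precisely how the paper proceeds, and the contrast shows what your outline is missing. The paper does not use the IHOE generators at all: when $T=G$ it uses ${}_J\mathcal{O}(G)_J\cong U(\mathfrak{t})$ (Theorem \ref{crossed}) and takes a basis of $\mathfrak{t}$ along its \emph{upper central series}; when $T\subsetneq G$ it climbs the normalizer tower $T\subsetneq N_G(T)\subseteq\cdots$ to produce a normal subgroup $N\trianglelefteq G$ with $T\subseteq N$ and $G/N\cong(\mathbb{C},+)$, takes the primitive $X$ with $\mathbb{C}[X]=\mathcal{O}(G/N)$, observes that $TgT\subseteq gN$ forces $X$ to be constant on double $T$-cosets, and invokes Proposition \ref{centre} to conclude $X$ is central in ${}_J\mathcal{O}(G)_J$; then Lemma \ref{basics}(2) identifies ${}_J\mathcal{O}(G)_J/X{}_J\mathcal{O}(G)_J$ with ${}_J\mathcal{O}(N)_J$ (a domain, giving regularity), and one inducts on dimension. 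So the central elements are manufactured from double-coset invariance, not read off from an arbitrary presentation; your secondary steps (the identification $I_k=\sum_{j\le k}X_jA$, connectedness of quotients, regularity via the domain property) are fine in spirit, but the crux of the proof is absent and cannot be repaired along the route you describe.
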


\begin{proof} By Theorem \ref{crossed}, with $T$ as usual denoting the support of $J$, and $\mathfrak{t}$ the Lie algebra of $T$, we have
\begin{equation}\label{product} {}_J\mathcal{O}(G)_J \cong {}_J\mathcal{O}(G/T)_J \#_{\sigma} U(\mathfrak{t})^J.
\end{equation}
If $T = G$ then ${}_J\mathcal{O}(G)_J \cong U(\mathfrak{t})$ as an algebra, and the result follows by taking as generators of ${}_J\mathcal{O}(G)_J^+$ a basis of $\mathfrak{t}$ obtained from its upper central series. So we may suppose without loss of generality that $T \subsetneq G$. By standard properties of unipotent groups in characteristic $0$, the normaliser $N_G(T)$ is a closed subgroup of $G$ strictly containing $T$. Thus, $N_G(T)/T$ is a non-trivial unipotent group, so we can find a chain of subgroups
$$ 1 \subset T_1/T \subset T_2/T \subset \cdots \subset T_{\ell}/T = N_G(T)/T,$$
with each subgroup normal in the next, and successive factors isomorphic to $(\mathbb{C},+)$. Repeating with $N_G(T)$ in place of $T$, we eventually obtain a normal subgroup $N$ of $G$ with $T \subseteq N$ and $G/N \cong (\mathbb{C},+)$.

By the discussion at the start of $\S$\ref{minimal2}, the factor $G/N$ of $G$ corresponds to a Hopf subalgebra $H_1 = \mathbb{C}[X]$ of $\mathcal{O}(G)$. Here $X$ is primitive, and for all $g \in G$, $X$ is a constant function on the coset $gN = Ng$. Moreover, for all $g \in G$, we have 
$$ TgT \subset NgN = gN,$$
so that $X$ is constant on the double coset $TgT$. In particular, by Proposition \ref{centre}, $X$ is central in $_J\mathcal{O}(G)_J$. Since $X \in {}_J\mathcal{O}(G/T)^+$, we can factor by the Hopf ideal $X {}_J\mathcal{O}(G)_J$ of ${}_J\mathcal{O}(G)_J$. The ideal $X {}_J\mathcal{O}(G)_J$ is, by construction, contained in $({}_J\mathcal{O}(G/T))^+_J\mathcal{O}(G)_J$. Hence, Lemma \ref{basics}(2) applies, and we obtain a quotient Hopf algebra of GK-dimension $n-1$, namely ${}_J\mathcal{O}(N)_J$. Now one inducts on $n$ to obtain the theorem. 
\end{proof}

\begin{corollary}\label{payoff} 
Let $G$ be a unipotent affine algebraic group, and $J$ a Hopf $2$-cocycle for $G$.
\begin{enumerate}
\item $(S^J)^2 = \mathrm{id}$.
\item ${}_J\mathcal{O}(G)_J$ is Calabi-Yau; that is, its Nakayama automorphism is trivial.
\end{enumerate}
\end{corollary}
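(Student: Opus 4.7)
My plan is to derive both parts from the cotriangularity of $({}_J\mathcal{O}(G)_J, R^J)$ established in Example \ref{commex}(2), combined with the structural and homological results already in hand. Throughout, set $H := {}_J\mathcal{O}(G)_J$ and $R := R^J$.

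For part (1) I appeal to the general principle that every cotriangular Hopf algebra has involutive antipode. Define the Drinfeld functional $u \in H^*$ by $u(h) := \sum R(S^J(h_1), h_2)$. A standard coquasitriangular calculation using the axioms of Definition \ref{cotridef} yields $(S^J)^2(h) = \sum u^{-1}(h_1)\, h_2\, u(h_3)$, while the triangularity identity $R^{-1} = R_{21}$ from Definition \ref{cotridef}(3), together with convolution-invertibility of $R$, forces $u = \epsilon$. Hence $(S^J)^2 = \mathrm{id}$. If care is needed to adapt this (formally finite-dimensional) style of argument to our infinite-dimensional $H$, one can instead verify $(S^J)^2 = \mathrm{id}$ directly on the IHOE generators $X_1, \ldots, X_n$ from Corollary \ref{nIHOE} by induction on $i$, using (\ref{twistanti}) and the base case $S^J(X_1) = -X_1$, $S^J(X_2) = -X_2$ provided by Theorem \ref{Ore}(6).

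For part (2) I combine part (1) with the Brown--Zhang formula for the Nakayama automorphism of a noetherian AS-regular Hopf algebra (\cite{BZ}). By Corollary \ref{IHOEcor}(2), $H$ is noetherian, AS-regular and Auslander regular of global dimension $n$, so it is twisted Calabi--Yau of dimension $n$, with Nakayama automorphism of the form
$$ \nu(h) = \sum \chi(h_1)\, (S^J)^2(h_2), $$
where $\chi: H \to \mathbb{C}$ is the character arising from the left homological integral. By part (1) this collapses to the right winding automorphism $\nu(h) = \sum \chi(h_1) h_2$, so it remains only to show $\chi = \epsilon$.

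To verify $\chi = \epsilon$ I will exploit Theorem \ref{augcent}: the augmentation ideal $H^+$ admits a regular polycentral generating sequence $y_1, \ldots, y_n$ aligned with the IHOE chain (\ref{poly2}). Walking through the corresponding tower of Ore extensions of derivation type (Corollary \ref{nIHOE}) and applying Brown--Zhang step by step, the value of $\chi$ on $y_i$ is controlled by the trace of the $\mathbb{C}$-derivation $\partial_i$ on the one-dimensional quotient $H_{i-1}/H_{i-1}^+ \cong \mathbb{C}$, which vanishes automatically. Hence $\chi$ annihilates every $y_i$, and so $\chi = \epsilon$. I expect the main obstacle to be the precise bookkeeping in this polycentral/IHOE induction, in particular showing that the homological integral behaves multiplicatively along derivation Ore extensions of connected Hopf algebras; but this should reduce to a standard application of Brown--Zhang, and an alternative route is to observe that $\mathcal{O}(G)$ itself is Calabi--Yau with trivial modular character and to argue that cocycle twisting in our connected IHOE setting preserves this triviality.
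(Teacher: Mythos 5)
Your argument for part (1) rests on a false general principle: cotriangularity does \emph{not} force $S^2=\mathrm{id}$. For a cotriangular Hopf algebra the Drinfeld-type functional $u$ only satisfies a relation of the form $u\ast(u\circ S)=\epsilon$; triangularity does not give $u=\epsilon$, and indeed Sweedler's $4$-dimensional Hopf algebra is triangular with $S^2\neq\mathrm{id}$. The paper itself flags this: by Remark \ref{ssqnot1} (citing \cite[Example 5.2]{EG1}), $S^J$ can fail to be involutive for non-unipotent $G$, even though $({}_J\mathcal{O}(G)_J,R^J)$ is always cotriangular, so any proof of (1) must use unipotence in an essential way. Your fallback — a direct induction on the IHOE generators using (\ref{twistanti}) — is only a sketch: $(S^J)^2(X_i)=X_i$ is not evident from the twisted antipode formula once the correction terms $\sum x^i_1\otimes x^i_2$ enter, and no computation is offered. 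The paper's route is the reverse of yours: it first proves, via the regular polycentral sequence of Theorem \ref{augcent} and the change-of-rings lemma \cite[Lemma, $\S$6.5]{BZ}, that $\mathrm{Ext}^i_H(\mathbb{C},H)\cong\delta_{in}\mathbb{C}$ \emph{as bimodules} (trivial homological integral), deduces $\nu=(S^J)^2$ from \cite[Proposition 4.5]{BZ}, gets $(S^J)^4=\mathrm{id}$ from \cite[Corollary 4.6]{BZ} because the only units of an IHOE are scalars, and then invokes the dichotomy \cite[Theorem 3.2(viii)]{BOZZ} (the antipode of an IHOE has order $2$ or infinite order) to conclude $(S^J)^2=\mathrm{id}$; part (2) then follows from $\nu=(S^J)^2$.

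There is also a gap in your verification that $\chi=\epsilon$ in part (2). The mechanism you propose — that $\chi(y_i)$ is controlled by ``the trace of $\partial_i$ on the one-dimensional quotient $H_{i-1}/H_{i-1}^+$'', which vanishes automatically — cannot be right, because it makes no use of polycentrality and would equally ``prove'' unimodularity of $U(\mathfrak{b})=\mathbb{C}[x][y;\partial]$, $\partial(x)=x$, for the two-dimensional non-abelian solvable Lie algebra $\mathfrak{b}$: this is a $2$-step IHOE of derivation type whose integral character is nontrivial. The correct input, and the point of Theorem \ref{augcent}, is that $H^+$ has a \emph{regular polycentral} generating sequence (centrality modulo earlier generators is the extra structure unipotence buys), and feeding this into the change-of-rings lemma of \cite{BZ} gives the bimodule isomorphism $\mathrm{Ext}^n_H(\mathbb{C},H)\cong\mathbb{C}$, i.e.\ $\chi=\epsilon$, directly — no step-by-step Ore-extension bookkeeping is needed, and the polycentral generators produced in the paper come from the normalizer tower rather than being aligned with the chain (\ref{poly2}) as you assume. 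Finally, note the circularity risk in your architecture: your (2) presupposes your (1), while in the paper (1) is itself a consequence of the homological facts you are trying to use.
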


\begin{proof} 
Suppose that $\mathrm{dim}(G) =n$, and for brevity, denote ${}_J\mathcal{O}(G)_J$ by $H$. Since $H^+$ has a regular polycentral sequence of $n$ generators by Theorem \ref{augcent}, it follows from the change of rings theorem 
\cite[Lemma, $\S$6.5]{BZ} that, with $\mathbb{C}$ denoting the trivial $H$-module, we have
\begin{equation}\label{triv} \mathrm{Ext}^i_H(\mathbb{C},H)\cong  \delta_{in}\mathbb{C}, 
\end{equation}
where the isomorphism is of $(H,H)$-bimodules. 

Therefore, since ${}_J\mathcal{O}(G)_J$ is Artin-Schelter regular by Corollary \ref{IHOEcor}(2), it follows from \cite[Proposition 4.5]{BZ} that the Nakayama automorphism $\nu$ of $H$ is given by
\begin{equation}\label{coup} \nu = (S^J)^2.
\end{equation}
On the other hand, since $H$ is an IHOE by Proposition \ref{Ore}(5), its only units are the nonzero scalars $\mathbb{C}^{\times}$, so by (\ref{triv}) and \cite[Corollary 4.6]{BZ}, we have
\begin{equation}\label{four} (S^{J})^4 = \mathrm{id}. 
\end{equation} 
However, we know from \cite[Theorem 3.2(viii)]{BOZZ} that the antipode of an IHOE over $\mathbb{C}$ is either of infinite order or has order $2$. The first option is ruled out by (\ref{four}), so (1) is proved, and then (2) follows from (\ref{coup}).   
\end{proof}

\begin{remark}\label{ssqnot1}
If $G$ is not unipotent, $S^J$ may not be involutive, as demonstrated in \cite[Example 5.2]{EG1}. \qed
\end{remark}

\subsection{Stratification of representations by double cosets}\label{repstrat}

We keep the notation as specified at the start of $\S$\ref{structure2}. When $J$ is trivial, so that ${}_J\mathcal{O}(G)_J = \mathcal{O}(G)$ is commutative and $T = \{1\}$, the simple $\mathcal{O}(G)$-modules are of course parametrized by the points of $G$. We generalize this to arbitrary $J$ here.

The starting point is Proposition \ref{centre}. By \cite[Proposition 9.1.7]{McCR}, the endomorphism algebra of each simple ${}_J\mathcal{O}(G)_J$-module $V$ is $\mathbb{C}$, so $V$ is killed by a maximal ideal of the central subalgebra ${}_J\mathcal{O}(T\backslash G / T)_J$. Our target is to make this observation more precise. For $g \in G$, let $Z_g := TgT$ be the double coset of $T$ in $G$ containing $g$, so that $Z_g$ is the orbit containing $g$ under the left action of $T \times T$ on $G$ given by
\begin{equation}\label{act} 
(r,s)\cdot g := rgs^{-1};\,\,\,r,s \in T. 
\end{equation}

\begin{lemma}{\rm (\cite[\S 4]{G2})}\label{doubles} 
Fix $g \in G$, and set $T_g := T \cap gTg^{-1}$.
\begin{enumerate}
\item $Z_g$ is an irreducible closed subset of $G$ of dimension
$$ 2\mathrm{dim}(T) - \mathrm{dim}(T_g ).$$
\item Define the embedding $ \theta: T_g \rightarrow T\times T,\,\,\,a \mapsto (a,g^{-1}ag).$ Then $\theta(T_g)$ acts on $T \times T$ on the right, and 
$$ \mathcal{O}(Z_g) \cong  (\mathcal{O}(T) \otimes \mathcal{O}(T))^{\theta(T_g)}. $$
\item Let $\mathcal{I}(Z_g)$ be the defining ideal of $Z_g$ in $\mathcal{O}(G)$, and let $i:T\hookrightarrow G$ be the inclusion morphism. Then there is an algebra epimorphism
$$ m_g^{\ast} := (i^{\ast} \otimes i^{\ast})\circ(\mathrm{id} \otimes \lambda_{g^{-1}})\circ \Delta : \mathcal{O}(G) \twoheadrightarrow \left(\mathcal{O}(T) \otimes \mathcal{O}(T)\right)^{\theta(T_g)}$$
with kernel $\mathcal{I}(Z_g)$. \qed
\end{enumerate}
\end{lemma}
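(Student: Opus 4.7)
All three statements will come from analysing the single morphism
$$\mu_g:T\times T\longrightarrow G,\qquad (r,s)\mapsto rgs,$$
whose image is $Z_g=TgT$. My plan is: (i) read off (1) from the closedness of orbits of unipotent group actions; (ii) identify $m_g^*$ in (3) as the comorphism of $\mu_g$; and (iii) derive both (2) and (3) simultaneously by recognising $Z_g$ as the geometric quotient of $T\times T$ by $\theta(T_g)$.

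For (1), since $T$ is a closed subgroup of the unipotent group $G$, $T$ is itself unipotent and hence so is $T\times T$. The Kostant--Rosenlicht theorem then guarantees that $Z_g=\mu_g(T\times T)$, being an orbit of a unipotent group on an affine variety, is closed in $G$; it is irreducible because $T\times T$ is. For the dimension I solve the stabilizer equation $rgs=g$ directly: this forces $r\in T\cap gTg^{-1}=T_g$ and $s=g^{-1}r^{-1}g$, so the stabilizer of $g$ is precisely (the image of) $\theta$, a group isomorphic to $T_g$, and the orbit--stabilizer formula yields $\dim Z_g=2\dim T-\dim T_g$.

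For (3), I unpack $m_g^*$ in Sweedler notation. Writing $\Delta(f)=\sum f_{(1)}\otimes f_{(2)}$ and taking $\lambda_{g^{-1}}$ as left translation of functions, $(\lambda_{g^{-1}}h)(y)=h(gy)$, a direct computation gives
$$m_g^*(f)(r,s)=\sum f_{(1)}(r)\,f_{(2)}(gs)=f(rgs)=\mu_g^*(f)(r,s),$$
so $m_g^*$ is literally the comorphism of $\mu_g$. Closedness of $Z_g$ from (1) gives $\ker(m_g^*)=\mathcal{I}(Z_g)$, and the stabilizer computation from (1) shows at once that $\mathrm{im}(m_g^*)\subseteq(\mathcal{O}(T)\otimes\mathcal{O}(T))^{\theta(T_g)}$. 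Combining these with the first isomorphism theorem, establishing (2) and (3) both reduce to proving equality with the invariant ring.

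The main obstacle is exactly this surjectivity: every $\theta(T_g)$-invariant regular function on $T\times T$ must descend to a \emph{regular} function on $Z_g$. My plan is to argue this via geometric invariant theory for unipotent groups in characteristic zero. Since $\theta(T_g)$ is a closed subgroup of the unipotent group $T\times T$, its right-translation action is free with closed cosets as orbits, and these cosets are precisely the fibres of $\mu_g$. Consequently $\mu_g$ exhibits $Z_g$ as a geometric quotient $(T\times T)/\theta(T_g)$, and the standard identification of the structure sheaf of such a quotient with the sheaf of invariants supplies $\mathcal{O}(Z_g)\cong(\mathcal{O}(T)\otimes\mathcal{O}(T))^{\theta(T_g)}$, simultaneously completing (2) and (3).
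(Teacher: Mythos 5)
Your argument is correct and follows essentially the same route as the paper's proof: closedness and irreducibility of the unipotent $(T\times T)$-orbit via Kostant--Rosenlicht, the stabilizer computation for the dimension, the identification of $m_g^*$ as the comorphism of $(r,s)\mapsto rgs$ (which the paper leaves implicit as ``a reformulation''), and the realisation of $Z_g$ as the quotient of $T\times T$ by $\theta(T_g)$ with coordinate ring the invariant functions. One point of precision: the action of $\theta(T_g)$ whose orbits are the fibres of $\mu_g$ is not literal right translation but the twisted action $(x,y)\cdot\theta(a)=(xa,\,g^{-1}a^{-1}gy)$ --- equivalently, the stabilizer of $g$ for the genuine action $(r,s)\cdot g=rgs^{-1}$ is exactly $\theta(T_g)$, whereas your equation $rgs=g$ yields the set $\{(r,g^{-1}r^{-1}g)\}$ --- and with that adjustment (the two actions are intertwined by inversion in the second factor) your dimension count, invariance of $\mathrm{im}(m_g^*)$, and geometric-quotient conclusion go through verbatim.
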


\begin{proof} 
(1), (2)  Since $T \times T$ is unipotent its orbits on $G$ under the action (\ref{act}) are closed by the theorem of Kostant, Rosenberg and Rosenlicht, \cite[Theorem 17.64]{Mi}, so $Z_g$ is an irreducible closed subset of $G$. The stabilizer of $g$ in $T \times T$ is $\{(a,g^{-1}ag)\mid a \in T \cap gTg^{-1}\}$, so that $Z_g \cong (T \times T)/(T \cap gTg^{-1})$. More precisely, the closed subgroup $\theta(T_g)$ acts on $T \times T$ on the right by
$$ (x,y)\cdot \theta(a) = (xa, g^{-1}a^{-1}gy),$$
for $x,y \in T$ and  $a \in T_g$, yielding an isomorphism of affine varieties
$$ (T \times T)/\theta(T_g) \rightarrow Z_g, \quad \overline{(x,y)}\mapsto xgy.$$
This proves (1) and (2).

(3) The above right action of $\theta(T_g)$ induces a left action of $\theta(T_g)$ on $\mathcal{O}(T)\otimes \mathcal{O}(T)$, given, for $a \in T_g$, $x,y \in T$, and $\alpha, \beta \in \mathcal{O}(T)$, by 
$$ \theta(a)(\alpha \otimes \beta)(x,y)=\alpha(xa)\beta(g^{-1}a^{-1}gy). $$ 
That is, $\theta(a)$ acts via $\rho_a \otimes \lambda_{g^{-1}ag}$, where $\rho$ and $\lambda$ are the right and left regular actions of $G$ on $\mathcal{O}(G)$ as noted in (\ref{action}). So there is an algebra isomorphism
$$ \mathcal{O}(Z_g) \cong (\mathcal{O}(T) \otimes \mathcal{O}(T))^{\theta(T_g)}.$$ 
Now (3) is just a reformulation of the above.
\end{proof}

The value of Lemma \ref{doubles} lies in the following result:

\begin{proposition}{\rm (\cite[Proposition 4.1]{G2})}\label{homo}
Retain the above notation. The map $m_g^{\ast}$ is also a surjective algebra homomorphism from $_J\mathcal{O}(G)_J$ onto 
$({}_J\mathcal{O}(T) \otimes \mathcal{O}(T)_J)^{\theta(T_g)}$. 
Hence, $\mathcal{I}(Z_g)$ is also a two-sided ideal of ${}_J\mathcal{O}(G)_J$. \qed
\end{proposition}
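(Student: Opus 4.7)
The plan is to promote $m_g^*$ from its known role as an algebra map on $\mathcal{O}(G)$ (Lemma~\ref{doubles}(3)) to an algebra map on ${}_J\mathcal{O}(G)_J$, by factoring it as a composition of three operations, each compatible with the appropriate twisted multiplication via Lemma~\ref{action0}. First, by Lemma~\ref{action0}(3) applied with $K=J$, the comultiplication $\Delta$ is an algebra embedding of ${}_J\mathcal{O}(G)_J$ into ${}_J\mathcal{O}(G)\otimes\mathcal{O}(G)_J$. Second, left translation $\lambda_{g^{-1}}$ realises the first $G$-action of Lemma~\ref{action0}(1) at the element $g$, so by Lemma~\ref{action0}(2) it is an algebra automorphism of $\mathcal{O}(G)_J$; hence $\mathrm{id}\otimes\lambda_{g^{-1}}$ is an algebra automorphism of ${}_J\mathcal{O}(G)\otimes\mathcal{O}(G)_J$. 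Third, by Remark~\ref{Jrems}(1), $J$ restricts to a Hopf $2$-cocycle on $\mathcal{O}(T)$, so $i^*$ induces algebra surjections ${}_J\mathcal{O}(G)\twoheadrightarrow {}_J\mathcal{O}(T)$ and $\mathcal{O}(G)_J\twoheadrightarrow \mathcal{O}(T)_J$. Composing these three steps shows that $m_g^*=(i^*\otimes i^*)\circ(\mathrm{id}\otimes\lambda_{g^{-1}})\circ\Delta$ is an algebra homomorphism from ${}_J\mathcal{O}(G)_J$ into ${}_J\mathcal{O}(T)\otimes\mathcal{O}(T)_J$.

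For surjectivity onto the $\theta(T_g)$-invariants, the key observation is that $m_g^*$ is the \emph{same underlying linear map} whether its source and target are equipped with the original commutative multiplications or with the twisted ones, since its defining formula uses only the coalgebra structure. By Lemma~\ref{doubles}(3), its image therefore equals the vector subspace $(\mathcal{O}(T)\otimes\mathcal{O}(T))^{\theta(T_g)}$ and its kernel equals $\mathcal{I}(Z_g)$. Each $a\in T_g$ acts on $\mathcal{O}(T)\otimes\mathcal{O}(T)$ by the linear automorphism $\rho_a\otimes\lambda_{g^{-1}ag}$, where both $a$ and $g^{-1}ag$ lie in $T$; by Lemma~\ref{action0}(2), $\rho_a$ restricts to an algebra automorphism of ${}_J\mathcal{O}(T)$ and $\lambda_{g^{-1}ag}$ to one of $\mathcal{O}(T)_J$. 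Hence $\theta(T_g)$ acts by the same collection of linear automorphisms on ${}_J\mathcal{O}(T)\otimes\mathcal{O}(T)_J$, the invariant subspaces coincide as sets, and the image of $m_g^*$ is exactly $({}_J\mathcal{O}(T)\otimes\mathcal{O}(T)_J)^{\theta(T_g)}$. The assertion that $\mathcal{I}(Z_g)$ is a two-sided ideal of ${}_J\mathcal{O}(G)_J$ is then immediate, as this ideal is the kernel of the algebra homomorphism just constructed.

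The main obstacle is conceptual bookkeeping rather than calculation: one must keep track of which of the two one-sided twists appears on each tensor factor throughout, since Lemma~\ref{action0}(2) assigns left- and right-translations to different twists, and Lemma~\ref{action0}(3) is precisely what permits $\Delta$ to bridge the two-sided twist ${}_J\mathcal{O}(G)_J$ and the mixed tensor product ${}_J\mathcal{O}(G)\otimes\mathcal{O}(G)_J$. Once that alignment is pinned down, no further computation is required beyond the commutative case already established in Lemma~\ref{doubles}(3).
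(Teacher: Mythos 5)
Your proof is correct, and its skeleton is the paper's: factor $m_g^{\ast}$ as $(i^{\ast}\otimes i^{\ast})\circ(\mathrm{id}\otimes\lambda_{g^{-1}})\circ\Delta$, check that each piece respects the appropriate twisted multiplications, and then obtain surjectivity onto the invariants and the identification of the kernel from Lemma \ref{doubles}(3), since the underlying linear map (and the linear $\theta(T_g)$-action) is unchanged by twisting. The one point where you genuinely diverge is the intermediate codomain for the ``routine check'': the paper's one-line reduction asks that $(\mathrm{id}\otimes\lambda_{g^{-1}})\circ\Delta$ be an algebra map into ${}_J\mathcal{O}(G)_J\otimes{}_J\mathcal{O}(G)_J$, whereas you route it through the mixed one-sided twist ${}_J\mathcal{O}(G)\otimes\mathcal{O}(G)_J$ using Lemma \ref{action0}(2),(3). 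Your bookkeeping is the one that works for every $g$: asking $\lambda_{g^{-1}}$ to be an automorphism of the two-sided twist is, in view of Lemma \ref{stratIIfinish}(1), essentially the condition $J^g=J$, while the mixed target (and hence ${}_J\mathcal{O}(T)\otimes\mathcal{O}(T)_J$, exactly as in the statement of Proposition \ref{homo}) requires no such hypothesis. One small repair: Remark \ref{Jrems}(1) concerns restricting $J$ to a Hopf \emph{subalgebra}, whereas $\mathcal{O}(T)$ is a quotient of $\mathcal{O}(G)$; the fact you need --- that $i^{\ast}$ intertwines the one-sided twisted products ${}_J\mathcal{O}(G)\twoheadrightarrow{}_J\mathcal{O}(T)$ and $\mathcal{O}(G)_J\twoheadrightarrow\mathcal{O}(T)_J$ --- comes from $J$ being (after the gauge adjustment implicit in Proposition \ref{factor}, or by the construction recalled in Remarks \ref{equivrmk}(2)) the pullback along $i^{\ast}\otimes i^{\ast}$ of a Hopf $2$-cocycle on $\mathcal{O}(T)$, together with $i^{\ast}$ being a coalgebra map. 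With that citation fixed, the argument is complete.
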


\begin{proof} 
Since $i^{\ast} :{}_J\mathcal{O}(G)_J \rightarrow {}_J\mathcal{O}(T)_J$ is an algebra epimorphism, by Proposition \ref{factor}, it is  enough by Lemma \ref{doubles}(3) to show that $(\mathrm{id} \otimes \lambda_{g^{-1}})\circ \Delta$ is an algebra homomorphism from ${}_J\mathcal{O}(G)_J$ to ${}_J\mathcal{O}(G)_J \otimes \,{}_J\mathcal{O}(G)_J$. This is a routine check.
\end{proof}

In view of Proposition \ref{homo}, we follow \cite{G2} and make the following 
\begin{definition}\label{strat1} For $g \in G$, set ${}_J\mathcal{O}(Z_g)_J := {}_J\mathcal{O}(G)_J/\mathcal{I}(Z_g)$. \qed
\end{definition}

\begin{corollary}\cite[Corollary 4.2]{G2}\label{strat2} 
For any $g \in G$, ${}_J\mathcal{O}(Z_g)_J$ is a noetherian domain, with
$ \mathrm{GKdim}({}_J\mathcal{O}(Z_g)_J)=2\mathrm{dim}(T) - \mathrm{dim}(T_g)$.
\end{corollary}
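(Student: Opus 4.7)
The plan is to handle noetherianness, the domain property, and the GK dimension in turn, each being a fairly direct consequence of ingredients already established. Noetherianness is immediate: by Proposition \ref{homo}, $\mathcal{I}(Z_g)$ is a two-sided ideal of ${}_J\mathcal{O}(G)_J$, and the latter is noetherian by Corollary \ref{IHOEcor}(1), so the quotient is noetherian.

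For the domain property I would invoke Proposition \ref{homo} a second time: since $m_g^{\ast}$ has kernel exactly $\mathcal{I}(Z_g)$, it descends to an injective algebra map
$$
{}_J\mathcal{O}(Z_g)_J \;\hookrightarrow\; \left({}_J\mathcal{O}(T) \otimes \mathcal{O}(T)_J\right)^{\theta(T_g)} \;\subseteq\; {}_J\mathcal{O}(T) \otimes \mathcal{O}(T)_J.
$$
By Theorem \ref{Weyl}(2), $\mathcal{O}(T)_J \cong A_{\mathrm{dim}(T)/2}(\mathbb{C})$, and by Theorem \ref{Weyl}(4) combined with the well-known isomorphism $A_n(\mathbb{C})^{\mathrm{op}} \cong A_n(\mathbb{C})$, also ${}_J\mathcal{O}(T) \cong A_{\mathrm{dim}(T)/2}(\mathbb{C})$. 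The target tensor product is therefore the Weyl algebra $A_{\mathrm{dim}(T)}(\mathbb{C})$, a simple noetherian domain, so its subalgebra ${}_J\mathcal{O}(Z_g)_J$ is a domain as well.

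The GK dimension is the subtlest point, and I expect it to be the main obstacle. My plan is to use the coradical filtration $\{F_n\}$, which because the coalgebra structures of ${}_J\mathcal{O}(G)_J$ and $\mathcal{O}(G)$ coincide is literally the same vector-subspace filtration on both algebras. It is an algebra filtration on each (both are connected Hopf algebras, by \cite[Lemma 5.2.8]{M}), and by Corollary \ref{IHOEcor}(3) the associated graded of each is the finitely generated commutative algebra $\mathcal{O}(\widehat{G})$. Descending to the common vector-space quotient by $\mathcal{I}(Z_g)$, both ${}_J\mathcal{O}(Z_g)_J$ and $\mathcal{O}(Z_g)$ inherit algebra filtrations with the same sequence of piece-dimensions, and with the same (commutative) associated graded obtained as a single graded quotient of $\mathcal{O}(\widehat{G})$. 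Since the GK dimension of a filtered algebra with finitely generated associated graded equals the GK dimension of its associated graded, this forces
$$
\mathrm{GKdim}({}_J\mathcal{O}(Z_g)_J) = \mathrm{GKdim}(\mathcal{O}(Z_g)) = \mathrm{dim}(Z_g) = 2\mathrm{dim}(T) - \mathrm{dim}(T_g),
$$
where the last equality is Lemma \ref{doubles}(1). The obstacle here is chiefly bookkeeping: one must verify that the coradical-filtration data transfer intact across the twist, which rests squarely on the coalgebra structures being unchanged, and that the passage from $\mathrm{GKdim}$ to growth of filtration pieces is valid in the twisted setting.
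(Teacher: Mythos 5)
Your argument is correct and follows essentially the same route as the paper: noetherianness as a quotient of the noetherian algebra ${}_J\mathcal{O}(G)_J$, the domain property via the embedding from Proposition \ref{homo} into ${}_J\mathcal{O}(T)\otimes\mathcal{O}(T)_J$, which is a Weyl algebra by Theorem \ref{Weyl}, and the GK dimension by noting that the coradical filtrations, hence the associated graded algebras of the twisted and untwisted quotients by $\mathcal{I}(Z_g)$, coincide, so their GK dimensions agree (the paper cites \cite[Proposition 6.6]{KL} for exactly the filtered-to-graded transfer you invoke) and equal $\dim(Z_g)$ from Lemma \ref{doubles}(1).
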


\begin{proof} 
Since ${}_J\mathcal{O}(Z_g)_J$ is a factor of ${}_J\mathcal{O}(G)_J$, it is affine Noetherian by Corollary \ref{IHOEcor}(1). It is a domain because Proposition \ref{homo} shows that it is a subalgebra of ${}_J\mathcal{O}(T) \otimes \mathcal{O}(T)_J$, which is a Weyl algebra by Theorem \ref{Weyl}. The associated (commutative) graded algebras of $\mathcal{O}(G)$ and of ${}_J\mathcal{O}(G)_J$ with respect to the filtrations induced by their coradical filtrations are identical, since the coradical filtration of these algebras is the same. The same is thus true of the factor algebras $\mathcal{O}(G)/\mathcal{I}(Z_g)$ and ${}_J\mathcal{O}(G)_J/\mathcal{I}(Z_g)$. Denoting this commutative affine algebra by $A$, we have
$$ \mathrm{GKdim}(\mathcal{O}(G)/\mathcal{I}(Z_g))=\mathrm{GKdim}(A)=\mathrm{GKdim}({}_J\mathcal{O}(G)_J/\mathcal{I}(Z_g)) $$
by two applications of \cite[Proposition 6.6]{KL}. The value of this common GK-dimension is given by Lemma \ref{doubles}(1).
\end{proof}

Keep the notation from the start of $\S$\ref{structure2} and from earlier in $\S$\ref{repstrat}. We now provide an elaborated version of the proof of \cite[Theorem 5.1]{G2} for the reader's convenience.

\begin{theorem}{\rm (\cite[Theorem 5.1]{G2})}\label{strata} 
Every simple ${}_J\mathcal{O}(G)_J$-module factors through a unique quotient ${}_J\mathcal{O}(Z_g)_J$.
\end{theorem}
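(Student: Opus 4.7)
Let $V$ be a simple ${}_J\mathcal{O}(G)_J$-module. The plan is to first produce a uniquely determined maximal ideal of the central subalgebra $\mathcal{O}(T\backslash G/T)$ annihilating $V$, identify this maximal ideal with a double coset $Z_g$, and then upgrade annihilation by the maximal ideal to annihilation by the full defining ideal $\mathcal{I}(Z_g)$ via a filtration argument. For the first step, since ${}_J\mathcal{O}(G)_J$ is affine, $V$ has at most countable $\mathbb{C}$-dimension, so Dixmier's form of Schur's lemma yields $\End_{{}_J\mathcal{O}(G)_J}(V) = \mathbb{C}$. Consequently the center of ${}_J\mathcal{O}(G)_J$ acts on $V$ via a character, and by Proposition \ref{centre} this restricts to a character $\chi \colon \mathcal{O}(T\backslash G/T) \to \mathbb{C}$ whose kernel is a uniquely determined maximal ideal $\mathfrak{m}$.

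For the second step, recall that $T \times T$ is unipotent, so its orbits on $G$ under $(r,s)\cdot g = rgs^{-1}$ are all closed by the theorem of Kostant--Rosenberg--Rosenlicht (as invoked in the proof of Lemma \ref{doubles}). Hence $\mathcal{O}(T\backslash G/T) = \mathcal{O}(G)^{T\times T}$ is the coordinate ring of the orbit space, and the Nullstellensatz puts maximal ideals of $\mathcal{O}(T\backslash G/T)$ in bijection with the double cosets $Z_g = TgT$. So $\mathfrak{m}$ corresponds to a unique $Z_g$, which already gives the \emph{uniqueness} half of the theorem: any other ${}_J\mathcal{O}(Z_{g'})_J$ through which $V$ factored would force $\mathfrak{m}_{Z_{g'}}$ to annihilate $V$, whence $\mathfrak{m}_{Z_{g'}} = \mathfrak{m}$ and $Z_{g'} = Z_g$.

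The main obstacle is the existence claim $\mathcal{I}(Z_g) \cdot V = 0$, which I would reduce to producing an integer $N$ with $\mathcal{I}(Z_g)^N \subseteq \mathfrak{m} \cdot {}_J\mathcal{O}(G)_J$. Granting this, simplicity of $V$ forces $\mathcal{I}(Z_g)\cdot V$ to be either $0$ or $V$, but $V = \mathcal{I}(Z_g)\cdot V$ iterated gives $V = \mathcal{I}(Z_g)^N V \subseteq \mathfrak{m}\cdot{}_J\mathcal{O}(G)_J \cdot V = 0$, a contradiction. To produce $N$, I would pass to the associated graded ring with respect to the coradical filtration, which by Corollary \ref{IHOEcor}(3) is the commutative polynomial ring $\mathcal{O}(\widehat{G})$. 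Because the twisted multiplication (\ref{mult}) agrees with the commutative one at top filtration degree, the associated graded of $\mathfrak{m}\cdot{}_J\mathcal{O}(G)_J$ should coincide with $\widehat{\mathfrak{m}}\cdot\mathcal{O}(\widehat{G})$, whose commutative radical equals the associated graded of $\mathcal{I}(Z_g)$ by the classical Nullstellensatz applied to the variety $\widehat{Z_g}\subseteq\widehat{G}$. A standard filtered lifting argument in a Noetherian ring (equivalently, using that the image of $\mathcal{I}(Z_g)$ in the Noetherian quotient ${}_J\mathcal{O}(G)_J/\mathfrak{m}\cdot{}_J\mathcal{O}(G)_J$ is a nil, hence nilpotent, ideal) then yields such an $N$. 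The delicate technical point I expect to be the main difficulty is cleanly justifying the associated-graded identification $\mathrm{gr}(\mathfrak{m}\cdot{}_J\mathcal{O}(G)_J) = \widehat{\mathfrak{m}}\cdot\mathcal{O}(\widehat{G})$, which requires carefully comparing the twisted and untwisted products of elements of $\mathfrak{m}\subseteq\mathcal{O}(T\backslash G/T)$ with arbitrary elements of ${}_J\mathcal{O}(G)_J$, exploiting the centrality of $\mathcal{O}(T\backslash G/T)$ in both algebras.
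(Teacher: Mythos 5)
Your opening step (Dixmier--Schur plus Proposition \ref{centre}, producing a unique maximal ideal $\mathfrak{m}$ of $\mathcal{O}(T\backslash G/T)$ annihilating $V$) is exactly how the paper begins, but the remainder of your plan rests on a false premise: that maximal ideals of $\mathcal{O}(T\backslash G/T)$ are in bijection with the double cosets $TgT$. Since $T\times T$ is unipotent rather than reductive, its invariant functions need not separate its (closed) orbits, so $\mathcal{V}(\sqrt{\mathfrak{m}\mathcal{O}(G)})$ is in general a union of many double cosets. Concretely, in Example \ref{ex3} one has $\mathcal{O}(T\backslash G/T)=\mathbb{C}[Y]$, while the fibre $\{Y=0\}$ is the disjoint union of the infinitely many double cosets $\{g(x,v,w_0,0)\mid x,v\in\mathbb{C}\}$, one for each $w_0\in\mathbb{C}$. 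Consequently $\sqrt{\mathfrak{m}\mathcal{O}(G)}=\bigcap_{s\in\mathcal{S}}\mathcal{I}(Z_s)\subsetneq\mathcal{I}(Z_g)$ whenever more than one coset lies over $\mathfrak{m}$, and your key reduction --- finding $N$ with $\mathcal{I}(Z_g)^N\subseteq\mathfrak{m}\,{}_J\mathcal{O}(G)_J$ --- is unachievable: in that example, with $\mathfrak{m}=(Y)$ and $\mathcal{I}(Z_g)=\langle Y, W-w_0\rangle$, the image of $W-w_0$ in ${}_J\mathcal{O}(G)_J/\mathfrak{m}\,{}_J\mathcal{O}(G)_J\cong\mathbb{C}[X,V,W]$ is not nilpotent, so neither the ``nil hence nilpotent'' step nor any associated-graded comparison can produce such an $N$ (no filtration argument can change the fact that the radical of $\mathfrak{m}\mathcal{O}(G)$ is the intersection of several $\mathcal{I}(Z_s)$, not $\mathcal{I}(Z_g)$ itself). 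The same failure of injectivity breaks your uniqueness argument as stated, since $\mathfrak{m}_{Z_{g'}}=\mathfrak{m}_{Z_g}$ does not force $Z_{g'}=Z_g$; uniqueness is, however, easily repaired, because distinct double cosets are disjoint closed sets by Lemma \ref{doubles}(1), so their defining ideals are comaximal and cannot both annihilate a nonzero module. (Incidentally, the identification $\mathfrak{m}\,{}_J\mathcal{O}(G)_J=\mathfrak{m}\mathcal{O}(G)$ that you propose to extract from the coradical filtration is immediate without it, since elements of $\mathcal{O}(T\backslash G/T)$ multiply identically in the two algebras.)

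The missing existence step is precisely where the paper's proof does its real work, and it cannot be bypassed by commutative-algebra arguments applied to $\mathfrak{m}$ alone. Starting from the fact that $\sqrt{K\mathcal{O}(G)}$ (with $K=\mathfrak{m}$) kills $V$, the paper embeds ${}_J\mathcal{O}(G)_J/\sqrt{K\mathcal{O}(G)}$ into $\prod_{s\in\mathcal{S}}{}_J\mathcal{O}(Z_s)_J$, computes that each stratum has only scalar $(T\times T)$-invariants, and deduces that every $(T\times T)$-invariant of the quotient is central. If more than one coset survives, rationality of the $(T\times T)$-action and unipotence produce a nonzero invariant $\alpha\in\mathcal{I}(Z_x)\setminus\sqrt{K\mathcal{O}(G)}$; being central, $\alpha$ acts on $V$ by a scalar $\lambda$ (Schur's lemma again), so the annihilator of $V$ grows strictly while remaining $(T\times T)$-stable, the corresponding variety loses at least one coset, and noetherian termination leaves a single $Z_g$ with $\mathcal{I}(Z_g)V=0$. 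Some such iterative use of invariant central elements beyond $\mathcal{O}(T\backslash G/T)$ (or an equivalent device) is needed in your write-up; as it stands, the proposal proves annihilation only by $\sqrt{\mathfrak{m}\mathcal{O}(G)}$, which is weaker than the theorem.
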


\begin{proof} 
Let $V$ be a simple (left) ${}_J\mathcal{O}(G)_J$-module. By Proposition \ref{centre} and Schur's lemma \cite[Proposition 9.1.7]{McCR}, $V$ is annihilated by $K{}_J\mathcal{O}(G)_J$ for a maximal ideal $K$ of $\mathcal{O}(T\backslash G \slash T)$. Now, $K{}_J\mathcal{O}(G)_J = K\mathcal{O}(G)$ is also an ideal of $\mathcal{O}(G)$, and is clearly $(T \times T)$-invariant under the action of the left and right winding automorphisms of $T$ on $\mathcal{O}(G)$ (\ref{windaut}). Therefore, the same invariance is true of $\sqrt{ K\mathcal{O}(G)}$, where $\sqrt{ K\mathcal{O}(G)}/K\mathcal{O}(G)$ denotes the nilradical of $\mathcal{O}(G)/K\mathcal{O}(G)$. Thus, the variety $\mathcal{V}(\sqrt{ K\mathcal{O}(G)})$ is a disjoint union of orbits $TgT$, all of them closed by Lemma \ref{doubles}(1), as $g$ runs through those elements of $G$ whose maximal ideals $\mathfrak{m}_g$ contain $K\mathcal{O}(G)$. Choose a set of representatives $\mathcal{S} \subset G$ for these orbits. Thus, by the commutative Nullstellensatz, we have 
\begin{equation}\label{split} \bigcap_{g \in \mathcal{S}} \mathcal{I}(Z_g)=\sqrt{ K\mathcal{O}(G)}.
\end{equation}
Note that (\ref{split}) is valid whether viewed in $\mathcal{O}(G)$ or in ${}_J\mathcal{O}(G)_J$, and that for $g \in \mathcal{S}$, the ideals $\mathcal{I}(Z_g)$ of ${}_J\mathcal{O}(G)_J$ are comaximal, because they are by definition comaximal as ideals of $\mathcal{O}(G)$, and since addition of ideals involves only the vector space structure. Hence, $V$ is annihilated by at most one of the ideals $\mathcal{I}(Z_g)$ for $g \in \mathcal{S}$. The theorem is thus equivalent to the claim that
\begin{equation}\label{crux} V \textit{ is killed by } \mathcal{I}(Z_g) \textit{ for some } g \in \mathcal{S}.
\end{equation}

To prove (\ref{crux}), we first determine the $(T\times T)$-invariants of the strata: namely,
\begin{equation}\label{invar} 
{}^T\left({}_J\mathcal{O}(Z_g)_J\right)^T=\,{}^T\left({}\mathcal{O}(Z_g)\right)^T=\mathbb{C}.
\end{equation}
The first equality in (\ref{invar}) holds because the space of invariants is independent of the multiplication on the algebra, and the second equality holds because the maximal ideals of $\mathcal{O}(Z_g)$ form a single $(T \times T)$-orbit intersecting in ${0}$, so that every nonzero $(T \times T)$-invariant in $\mathcal{O}(Z_g)$ must be a unit, while the only units of $\mathcal{O}(Z_g)$ are the scalars $\mathbb{C}^{\times}$ by Lemma \ref{doubles}(2), noting that $\mathcal{O}(T)^{\otimes 2}$ is a polynomial algebra.   

Now observe that by (\ref{split}), there is a canonical embedding of algebras
\begin{equation}\label{into1A} 
\varphi : {}_J\mathcal{O}(G)_J/\sqrt{K\mathcal{O}(G)} \hookrightarrow \Pi_{s \in \mathcal{S}}\,{}_J\mathcal{O}(Z_s)_J.
\end{equation}
Moreover, $\varphi$ is clearly $(T\times T)$-equivariant, so the restriction of $\varphi$ gives an embedding
\begin{equation}\label{into2} 
\varphi:{}^T\left({}_J\mathcal{O}(G)_J/\sqrt{K\mathcal{O}(G)}\right)^T \hookrightarrow \Pi_{s \in \mathcal{S}}\, {}^T({}_J\mathcal{O}(Z_s)_J)^T= \Pi_{s \in \mathcal{S}}\mathbb{C}, 
\end{equation}
where the final equality follows from (\ref{invar}). From (\ref{into1A}) and (\ref{into2}) we see that
\begin{equation}\label{caught}  
{}^T\left({}_J\mathcal{O}(G)_J/\sqrt{K\mathcal{O}(G)}\right)^T \subset Z\left({}_J\mathcal{O}(G)_J/\sqrt{K\mathcal{O}(G)}\right).
\end{equation}

If $|\mathcal{S}| = 1$, then ${}_J\mathcal{O}(Z_g)_J={}_J\mathcal{O}(G)_J/\sqrt{K\mathcal{O}(G)}$, and (\ref{crux}) is immediate by definition of $K$. So suppose that $|\mathcal{S}| > 1$, and let $TxT$ be a double coset with $x \in \mathcal{S}$. Then $T x T$ is preserved by $T \times T$, and so therefore is $\mathcal{I}(Z_x)$. Therefore, $T \times T$ acts rationally on the non-zero space $\mathcal{I}(Z_x)/\sqrt{K\mathcal{O}(G)}$. Since $T \times T$ is unipotent, its only rational irreducible module is the trivial module, hence there exists $0 \neq \alpha \in \mathcal{I}(Z_x) \setminus \sqrt{K\mathcal{O}(G)}$ with $\alpha$ being $(T \times T)$-invariant. 

By (\ref{caught}), $\alpha \in Z\left({}_J\mathcal{O}(G)_J/\sqrt{K\mathcal{O}(G)}\right)$, but $\alpha$ is not constant on $\mathcal{V}(\sqrt{K\mathcal{O}(G)})$ since $\alpha(TxT) = 0$ whereas $TxT \subsetneq \mathcal{V}(\sqrt{K\mathcal{O}(G)})$ and $\alpha \notin \sqrt{K\mathcal{O}(G)}$. Hence, by the noncommutative Nullstellensatz \cite[Proposition 9.1.7]{McCR}, there exists $\lambda \in \mathbb{C}$ such that $(\alpha - \lambda)V = 0$, and $\alpha - \lambda \notin \sqrt{K\mathcal{O}(G)}$. Define 
$$ I_1:= \sqrt{K\mathcal{O}(G)} + (\alpha - \lambda)\mathcal{O}(G), $$
so that $I_1$ is a $(T \times T)$-invariant ideal of both $\mathcal{O}(G)$ and of ${}_J\mathcal{O}(G)_J$ with 
$$ \sqrt{K\mathcal{O}(G)} \subsetneq I_1 \textit{  and  } I_1V = 0. $$
Now $\mathcal{V}_1 := \mathcal{V}(\sqrt{I_1})$ is a proper subvariety of $\mathcal{V}(\sqrt{K\mathcal{O}(G)})$ and is a union of $(T \times T)$-orbits labelled by a proper subset $\mathcal{S}_1$ of $\mathcal{S}$. If $|\mathcal{S}_1| = 1$, say $\mathcal{S}_1 = \{y \}$, then $\sqrt{I_1} V = 0$ with $\sqrt{I_1} = \mathcal{I}(Z_{y}) $, proving (\ref{crux}).

If on the other hand $|\mathcal{S}_1| > 1$, then we can repeat the above argument with $\sqrt{I_1}$ replacing $\sqrt{K\mathcal{O}(G)}$. Since $\mathcal{O}(G)$, or equivalently, ${}_J\mathcal{O}(G)_J$ are noetherian, this process must terminate after finitely many repeats, so that (\ref{crux}) follows, completing the proof of the theorem.   
\end{proof}

\subsection{On the structure of the algebras ${}_J\mathcal{O}(Z_g)_J$}\label{stratstruc}

Retain the notation and hypotheses from the start of $\S$\ref{structure2}, the definitions of the algebras ${}_J\mathcal{O}(Z_g)_J$ from Definition \ref{strat1}, and the ideals $\mathcal{I}(Z_g)$ (of $\mathcal{O}(G)$ and of ${}_J\mathcal{O}(G)_J$) from Lemma \ref{doubles}(3).  Let $g \in G$, and recall from Lemma \ref{doubles} that we denote $T \cap gTg^{-1}$ by $T_g$. We divide the discussion of ${}_J\mathcal{O}(Z_g)_J$ into the following three cases, recalling that this algebra depends only on the double coset $TgT$ of $T$ in $G$ to which $g$ belongs. Here, and in $\S$\ref{simples2}, we denote the normaliser $N_G(T)$ of $T$ in $G$ by $N$.
\medskip

\noindent {\bf Case I:} $T_g = \{1\}$.
\medskip

\noindent {\bf Case II:} $T_g = T$; that is, $g \in N$.
\medskip

\noindent {\bf Case III:} $\{1\} \subsetneq T_g \subsetneq T$.
\medskip

\noindent We treat {\bf Cases I} and {\bf II} separately in the following results, keeping the above notation $G$, $J$ and $T$ in force throughout.

\begin{theorem}{\rm (\cite[Remark 4.4]{G2})}\label{caseIstruc}
Let $g \in G$ be such that {\bf Case I} holds; that is $T_g = \{1\}$. Then ${}_J\mathcal{O}(Z_g)_J$ is a Weyl algebra; specifically, 
$$ {}_J\mathcal{O}(Z_g)_J\cong A_{\mathrm{dim}(T)}(\mathbb{C}). $$
\end{theorem}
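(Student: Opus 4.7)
The plan is to identify ${}_J\mathcal{O}(Z_g)_J$ directly via the surjection $m_g^{\ast}$ of Proposition \ref{homo}. When $T_g = \{1\}$, the subgroup $\theta(T_g) \subseteq T \times T$ is trivial, so the target of $m_g^{\ast}$ simplifies, and Proposition \ref{homo} becomes a surjective algebra homomorphism
\[ m_g^{\ast}:{}_J\mathcal{O}(G)_J \twoheadrightarrow {}_J\mathcal{O}(T) \otimes \mathcal{O}(T)_J. \]
Since $m_g^{\ast}$ is the same underlying linear map as in Lemma \ref{doubles}(3), its kernel is $\mathcal{I}(Z_g)$ (the kernel depends only on the linear structure, not on the multiplication). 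Hence $m_g^{\ast}$ descends to an algebra isomorphism
\[ {}_J\mathcal{O}(Z_g)_J \;\cong\; {}_J\mathcal{O}(T) \otimes \mathcal{O}(T)_J. \]

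Next I would identify the right-hand side as a Weyl algebra. By Theorem \ref{Weyl}(2), $\mathcal{O}(T)_J \cong A_{\dim(T)/2}(\mathbb{C})$; note $\dim(T)$ is even because $T$ carries a quasi-Frobenius Lie algebra structure by Theorem \ref{classify}. By Theorem \ref{Weyl}(4), ${}_J\mathcal{O}(T) \cong (\mathcal{O}(T)_J)^{\mathrm{op}} \cong A_{\dim(T)/2}(\mathbb{C})^{\mathrm{op}}$. The standard antiautomorphism of the Weyl algebra fixing the $x_i$ and negating the $\partial_i$ gives $A_n(\mathbb{C})^{\mathrm{op}} \cong A_n(\mathbb{C})$, and tensor products of Weyl algebras add indices: $A_m(\mathbb{C}) \otimes A_n(\mathbb{C}) \cong A_{m+n}(\mathbb{C})$. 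Combining,
\[ {}_J\mathcal{O}(T) \otimes \mathcal{O}(T)_J \;\cong\; A_{\dim(T)/2}(\mathbb{C}) \otimes A_{\dim(T)/2}(\mathbb{C}) \;\cong\; A_{\dim(T)}(\mathbb{C}), \]
completing the proof.

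There is essentially no hard step: everything is assembled from previously established results. The one thing that needs a moment of care is verifying that the kernel of $m_g^{\ast}$ viewed as a map out of ${}_J\mathcal{O}(G)_J$ really is $\mathcal{I}(Z_g)$; but since the comultiplication, left translation $\lambda_{g^{-1}}$, and the restriction $i^{\ast}$ are all intrinsic to the coalgebra structure, which is unchanged by the twist, the underlying linear map $m_g^{\ast}$ is literally the same as in the commutative setting of Lemma \ref{doubles}(3). The identification $A_n(\mathbb{C})^{\mathrm{op}} \cong A_n(\mathbb{C})$ could alternatively be avoided by observing that already ${}_J\mathcal{O}(T) \otimes \mathcal{O}(T)_J$ is a simple noetherian domain of GK-dimension $2\dim(T)$ that is a quotient of an IHOE, and then invoking Corollary \ref{strat2}, but the direct argument above is cleaner.
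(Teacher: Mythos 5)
Your proposal is correct and follows essentially the same route as the paper: specialise Proposition \ref{homo} to the case $\theta(T_g)=\{1\}$ to get ${}_J\mathcal{O}(Z_g)_J\cong {}_J\mathcal{O}(T)\otimes\mathcal{O}(T)_J$, then apply Theorem \ref{Weyl}(2),(4). The only difference is that you spell out the (standard) facts $A_n(\mathbb{C})^{\mathrm{op}}\cong A_n(\mathbb{C})$ and $A_m(\mathbb{C})\otimes A_n(\mathbb{C})\cong A_{m+n}(\mathbb{C})$, which the paper leaves implicit.
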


\begin{proof} 
In this case $\theta (T_g) = \{1\}$, so ${}_J\mathcal{O}(Z_g)_J \cong {}_J\mathcal{O}(T) \otimes \mathcal{O}(T)_J$ by Proposition \ref{homo}. The result follows from Theorem \ref{Weyl}(2),(4).
\end{proof}
 
In contrast to Case I, the Case II strata are no longer in general mutually isomorphic algebras. The following is a refinement and elaboration of Theorem \ref{crossed}.

\begin{theorem}\label{caseIIstruc}
The following hold: 
\begin{enumerate}
\item The Hopf ideal $\mathcal{O}(G/N)^+\mathcal{O}(G)$ of $\mathcal{O}(G)$ is also a Hopf ideal of ${}_J\mathcal{O}(G)_J$, so $\mathcal{O}(G/N)^+\mathcal{O}(G) = \mathcal{O}(G/N)^+{}_J\mathcal{O}(G)_J = ({}_J\mathcal{O}(G/N)_J)^+{}_J\mathcal{O}(G)_J $, and  
$${}_J\mathcal{O}(G)_J/({}_J\mathcal{O}(G/N)_J)^+{}_J\mathcal{O}(G)_J \cong{}_J\mathcal{O}(N)_J$$
as Hopf algebras.
\item ${}_J\mathcal{O}(G/T)_J/({}_J\mathcal{O}(G/N)_J)^+{}_J\mathcal{O}(G/T)_J$ is isomorphic to the central (polynomial) Hopf subalgebra $\mathcal{O}(N/T)$ of the quotient Hopf algebra ${}_J\mathcal{O}(N)_J$ of ${}_J\mathcal{O}(G)_J$, with 
\begin{equation}\label{twister} {}_J\mathcal{O}(N)_J\cong \mathcal{O}(N/T)\#_{\sigma}\left(U(\mathfrak{t})^J\right),
\end{equation}
where $\sigma$ is an invertible 2-cocycle from $\left(U(\mathfrak{t})^J\right) \otimes \left(U(\mathfrak{t})^J\right)$ to $\mathcal{O}(N/T)$.
\item Let $g$ be such that {\bf Case II} holds; that is $g \in N$. Then $\mathcal{I}(Z_g)$ is generated by its intersection with ${}_J\mathcal{O}(G/T)_J$. That is, we have
$$\mathcal{I}(Z_g)\cap {}_J\mathcal{O}(G/T)_J = \mathfrak{m}_{gT},$$
where $\mathfrak{m}_{gT}$ denotes the functions in $\mathcal{O}(G/T)$ which are 0 on $gT$. Thus, $\mathfrak{m}_{gT}$ is an ideal of ${}_J\mathcal{O}(G/T)_J$, which contains $({}_J\mathcal{O}(G/N)_J)^+{}_J\mathcal{O}(G/T)_J$, with ${}_J\mathcal{O}(G/T)_J/\mathfrak{m}_{gT} \cong\mathbb{C}$. Moreover,
$\mathcal{I}(Z_g) =\mathfrak{m}_{gT}{}_J\mathcal{O}(G)_J$.
\end{enumerate} 
\end{theorem}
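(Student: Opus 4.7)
The plan is to reduce all three parts to a single centrality lemma and then follow the structural results already established. The lemma asserts that for every $a \in \mathcal{O}(T\backslash G/T)$ and every $b \in \mathcal{O}(G)$, the twisted product $a\cdot_J b$ coincides with the ordinary commutative product $ab$ in $\mathcal{O}(G)$. I would prove it via the algebra embedding $\Delta: {}_J\mathcal{O}(G)_J \hookrightarrow {}_J\mathcal{O}(G) \otimes \mathcal{O}(G)_J$ of Lemma \ref{action0}(3). Because $a$ lies in both coideal subalgebras, $\Delta(a) \in \mathcal{O}(T\backslash G) \otimes \mathcal{O}(G/T)$. By the proof of Proposition \ref{centre}, $\mathcal{O}(T\backslash G)$ embeds as a central subalgebra of ${}_J\mathcal{O}(G)$ on which the twisted multiplication coincides with the commutative one, and symmetrically $\mathcal{O}(G/T)$ embeds as a central commutative subalgebra of $\mathcal{O}(G)_J$; combining this with the tensor decomposition of Theorem \ref{Weyl}(1), multiplication by an element of either subalgebra against an arbitrary element of $\mathcal{O}(G)$ coincides with ordinary function multiplication. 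Thus $\Delta(a)\Delta(b)$ in ${}_J\mathcal{O}(G) \otimes \mathcal{O}(G)_J$ reduces to componentwise commutative multiplication, which is $\Delta(ab)$, and injectivity of $\Delta$ delivers the claim.

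For parts (1) and (2), the normality of $N$ in $G$ together with $T \subseteq N$ forces each coset $hN$ to be a union of $T$-double cosets, so $\mathcal{O}(G/N) \subseteq \mathcal{O}(T\backslash G/T)$. Applying the centrality lemma to $a \in \mathcal{O}(G/N)^+$ yields $\mathcal{O}(G/N)^+ \cdot_J {}_J\mathcal{O}(G)_J = \mathcal{O}(G/N)^+ \mathcal{O}(G)$, the standard Hopf ideal of $\mathcal{O}(G)$ cutting out $N$; it is therefore also a Hopf ideal of ${}_J\mathcal{O}(G)_J$. After gauge equivalence, $J$ is pulled back from a cocycle on $\mathcal{O}(T) \subseteq \mathcal{O}(N)$, so the induced twisted multiplication on the coalgebra quotient $\mathcal{O}(N)$ is precisely that of ${}_J\mathcal{O}(N)_J$. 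For (2), Theorem \ref{crossed} applied to $(N, T, J)$ gives ${}_J\mathcal{O}(N)_J \cong {}_J\mathcal{O}(N/T)_J \#_\sigma U(\mathfrak{t})^J$; since $T \triangleleft N$ yields $T\backslash N/T = N/T$, the centrality lemma inside ${}_J\mathcal{O}(N)_J$ identifies ${}_J\mathcal{O}(N/T)_J$ with the central polynomial Hopf subalgebra $\mathcal{O}(N/T)$. The remaining isomorphism ${}_J\mathcal{O}(G/T)_J/({}_J\mathcal{O}(G/N)_J)^+{}_J\mathcal{O}(G/T)_J \cong \mathcal{O}(N/T)$ follows from Lemma \ref{basics}(1), which makes the left side commutative, combined with the classical identification $\mathcal{O}(G/T)/\mathcal{O}(G/N)^+\mathcal{O}(G/T) \cong \mathcal{O}(N/T)$ arising from the fibration $G/T \twoheadrightarrow G/N$ with fibre $N/T$.

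For part (3), the hypothesis $g \in N$ collapses $Z_g = TgT$ to the single left coset $gT$. The identity $\mathcal{I}(Z_g) \cap \mathcal{O}(G/T) = \mathfrak{m}_{gT}$ is immediate since $\mathcal{O}(G/T)$ consists of functions constant on left $T$-cosets; the containment $({}_J\mathcal{O}(G/N)_J)^+{}_J\mathcal{O}(G/T)_J \subseteq \mathfrak{m}_{gT}$ follows from $gT \subseteq N$; and ${}_J\mathcal{O}(G/T)_J/\mathfrak{m}_{gT} \cong \mathbb{C}$ is evaluation at $gT$. The main assertion is $\mathcal{I}(Z_g) = \mathfrak{m}_{gT}\cdot_J{}_J\mathcal{O}(G)_J$, and the inclusion $\supseteq$ is immediate from Proposition \ref{homo} once one notes $\mathfrak{m}_{gT} \subseteq \mathcal{I}(Z_g)$. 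For $\subseteq$, introduce
\[
\mathfrak{n}_{gT} := \{ f \in \mathcal{O}(T\backslash G/T) : f(gT) = 0\} \subseteq \mathfrak{m}_{gT},
\]
the maximal ideal of the central subalgebra $\mathcal{O}(T\backslash G/T)$ corresponding to the double coset $gT$. Because $gT = Z_g$ is the fibre of the quotient $G \twoheadrightarrow T\backslash G/T$ over the closed point $gT$, commutative algebra gives $\mathcal{I}(Z_g) = \mathfrak{n}_{gT}\mathcal{O}(G)$; the centrality lemma upgrades this to $\mathcal{I}(Z_g) = \mathfrak{n}_{gT}\cdot_J{}_J\mathcal{O}(G)_J \subseteq \mathfrak{m}_{gT}\cdot_J{}_J\mathcal{O}(G)_J$, closing the inclusion.

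The principal obstacle is the centrality lemma itself; specifically, one must verify that the commutative tensor factors in Theorem \ref{Weyl}(1) and its symmetric analogue for ${}_J\mathcal{O}(G)$ can be identified on the nose with the subspaces $\mathcal{O}(G/T) \subset \mathcal{O}(G)_J$ and $\mathcal{O}(T\backslash G) \subset {}_J\mathcal{O}(G)$, so that the multiplications on these central subalgebras agree with ordinary function multiplication rather than being only abstractly commutative. Granting this, the rest of the proof is formal, invoking only Lemmas \ref{action0}, \ref{basics}, Propositions \ref{centre}, \ref{homo}, and Theorems \ref{crossed}, \ref{Weyl}.
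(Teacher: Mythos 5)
Your centrality lemma itself is fine once one takes $J$ (as the paper implicitly does) to be pulled back from its support: for $a\in\mathcal{O}(T\backslash G/T)$ one has $a_1\in\mathcal{O}(T\backslash G)$ and $a_3\in\mathcal{O}(G/T)$, whose restrictions to $T$ are scalars, so $J^{-1}(a_1,b_1)$ and $J(a_3,b_3)$ collapse to counits and $a\cdot_J b=ab$ directly; no appeal to Theorem \ref{Weyl}(1) is needed. The problem is how you use it. In part (1) you assert that $N=N_G(T)$ is normal in $G$ and deduce $\mathcal{O}(G/N)\subseteq\mathcal{O}(T\backslash G/T)$; both claims are false in general. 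In Example \ref{ex5} ($G=\mathrm{U}(4,\mathbb{C})$, $T=\{I+xE_{12}+uE_{34}\}$, $N=\{I+xE_{12}+uE_{34}+zE_{14}\}$ by (\ref{normal})), $N$ is not normal in $G$, and $F_{13}$ is constant on left cosets of $N$ but not on double cosets of $T$; indeed $[F_{34},F_{13}]=F_{23}\neq 0$ by (\ref{twice}), so $F_{13}$ is not even central in ${}_J\mathcal{O}(G)_J$. Hence your route to ``$\mathcal{O}(G/N)^+\mathcal{O}(G)$ is an ideal of ${}_J\mathcal{O}(G)_J$'' collapses. The paper needs no such inclusion: for $g\in N$ one has $Z_g=gT$, the vanishing ideal of $N$ equals $\bigcap_{g\in N}\mathcal{I}(Z_g)$, and each $\mathcal{I}(Z_g)$ is a two-sided ideal of ${}_J\mathcal{O}(G)_J$ by Proposition \ref{homo}, whence so is the intersection.

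Part (3) contains a second, independent error: the claim that $Z_g$ is the fibre of $G\twoheadrightarrow \mathrm{Spec}\,\mathcal{O}(T\backslash G/T)$ over a closed point, so that $\mathcal{I}(Z_g)=\mathfrak{n}_{gT}\mathcal{O}(G)$. For a unipotent group action the invariants do not separate closed orbits. Again in Example \ref{ex5}, take $g(z)=I+zE_{14}\in N$: for $(y,b)\neq(0,0)$ the double coset through $I+yE_{13}+bE_{24}+zE_{14}$ is independent of $z$ (Case I(1) there), so by continuity every $f\in\mathcal{O}(T\backslash G/T)$ satisfies $f(I+zE_{14})=f(I)$ for all $z$. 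Consequently $\mathfrak{n}_{g(z)T}\mathcal{O}(G)\subseteq\bigcap_{z'}\mathcal{I}(Z_{g(z')})$, which does not contain $F_{14}-z\in\mathcal{I}(Z_{g(z)})$; so $\mathcal{I}(Z_{g(z)})\neq\mathfrak{n}_{g(z)T}\mathcal{O}(G)$ and your inclusion $\subseteq$ is unproved. The paper instead notes the statement is trivial in $\mathcal{O}(N)$, transfers it to ${}_J\mathcal{O}(N)_J$ using the centrality of $\mathcal{O}(N/T)$ established in (2), and lifts back along the quotient map of (1). A smaller point: in (2), the commutativity of ${}_J\mathcal{O}(G/T)_J/({}_J\mathcal{O}(G/N)_J)^+{}_J\mathcal{O}(G/T)_J$ does not follow from Lemma \ref{basics}(1), since ${}_J\mathcal{O}(G/T)$ is noncommutative in general (cf. (\ref{once})); that identification too has to be routed through (1) and Proposition \ref{centre} applied inside ${}_J\mathcal{O}(N)_J$.
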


\begin{proof}
(1) Working first in $\mathcal{O}(G)$, observe that 
$$\mathcal{O}(G/N)^+ \mathcal{O}(G)=\bigcap_{x \in N}\mathfrak{m}_x,$$
where as usual, for $x \in G$, $\mathfrak{m}_x = \{f \in \mathcal{O}(G)\mid f(x) = 0 \}$. Since for  each $g \in N$,
$$ \mathcal{I}(Z_g) = \bigcap_{x \in gT} \mathfrak{m}_x,$$
by the definition of $\mathcal{I}(Z_g)$ in Lemma \ref{doubles}(3) and its proof, it follows that 
\begin{equation}\label{collect}
\mathcal{O}(G/N)^+ \mathcal{O}(G)=\bigcap_{g \in N}\mathcal{I}(Z_g).
\end{equation}
Since each $\mathcal{I}(Z_g)$ is an ideal of ${}_J\mathcal{O}(G)_J$ by Proposition \ref{homo}, we see from (\ref{collect}) that $\mathcal{O}(G/N)^+ \mathcal{O}(G)$ is an ideal of ${}_J\mathcal{O}(G)_J$. Since the comultiplication is unchanged in the switch from $\mathcal{O}(G)$ to ${}_J\mathcal{O}(G)_J$, $\mathcal{O}(G/N)^+ \mathcal{O}(G)$ is a Hopf ideal of ${}_J\mathcal{O}(G)_J$. The displayed isomorphism now follows because $\mathcal{O}(G/N)^+\mathcal{O}(G)$ is contained in the radical of $R^J$ by Lemma \ref{basics}(2), so that ${}_J\mathcal{O}(N)_J$ is well-defined, with an epimorphism to it from ${}_J\mathcal{O}(G)_J$ as in Proposition \ref{factor}.

(2) Working in ${}_J\mathcal{O}(N)_J$, we see that since $T \triangleleft N$, the first claim is a special case of Proposition \ref{centre}. Moreover, $\mathcal{O}(N/T)$ is a polynomial algebra because $N/T$ is a unipotent group. Finally, (\ref{twister}) is a special case of Theorem \ref{crossed}, where in this case the action is trivial thanks to the centrality of $\mathcal{O}(N/T)$.

(3) The displayed equality is proved by first noting that the equivalent statement is obviously true in $\mathcal{O}(N)$, hence also true in ${}_J\mathcal{O}(N)_J$ since $\mathcal{O}(N/T)$ is central in ${}_J\mathcal{O}(N)_J$. Finally, we can lift the statements back to ${}_J\mathcal{O}(G)_J$ using the displayed isomorphism in (1). 
\end{proof}

For any $g\in G$, let 
\begin{equation}\label{windaut}
\begin{split}
\tau^{\ell}_g :\mathcal{O}(G)\rightarrow \mathcal{O}(G),\,\,\,\alpha \mapsto \sum \alpha_1(g)\alpha_2,\\
\tau^{r}_g :\mathcal{O}(G)\rightarrow \mathcal{O}(G),\,\,\,\alpha \mapsto \sum \alpha_1\alpha_2(g),
\end{split}
\end{equation}
be the associated winding automorphisms of $\mathcal{O}(G)$.

\begin{lemma}\label{stratIIfinish}
Let $g\in G$. The following hold:
\begin{enumerate}
\item 
{\rm (\cite[Lemma 3.16]{G2})} $\tau^{\ell}_g :{}_J\mathcal{O}(G)_J\rightarrow {}_{J^g}\mathcal{O}(G)_J$ 
is an algebra isomorphism, where ${}_{J^g} \mathcal{O}(G)_J $ is defined at (\ref{mult0}).

\item 
Assume that $g \in N$. With ${}_{J^g} \mathcal{O}(T)_J $ defined as at (\ref{mult0}), there is an algebra isomorphism 
${}_J\mathcal{O}(Z_g)_J\cong {}_{J^g} \mathcal{O}(T)_J$.
\end{enumerate}
\end{lemma}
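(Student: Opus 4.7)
For part (1), I would verify that $\tau^{\ell}_g:\mathcal{O}(G)\to\mathcal{O}(G)$ intertwines the ${}_Jm_J$-multiplication with the ${}_{J^g}m_J$-multiplication by expanding both $\tau^{\ell}_g\bigl({}_Jm_J(a\otimes b)\bigr)$ and ${}_{J^g}m_J\bigl(\tau^{\ell}_g(a)\otimes\tau^{\ell}_g(b)\bigr)$ in Sweedler notation. Using coassociativity and the character property $g(a_1 b_1)=g(a_1)g(b_1)$, together with the formula for $J^g$ from Remark \ref{Jrems}(3), both sides reduce to the same expression. Bijectivity is clear from $\tau^{\ell}_{g^{-1}}\circ \tau^{\ell}_g=\mathrm{id}$. (This is the content of \cite[Lemma 3.16]{G2}.)

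For part (2), the decisive preliminary is that, for $g\in N$, the cocycle $J^g$ still factors through $i^{\ast}\otimes i^{\ast}$. Writing $J=J_0\circ (i^{\ast}\otimes i^{\ast})$ for $J_0$ the induced cocycle on $\mathcal{O}(T)$, a Sweedler calculation using the character property gives
$$\sum g(a_1)\,i^{\ast}(a_2)\,g^{-1}(a_3)=c_g^T(i^{\ast}(a));\quad a\in\mathcal{O}(G),$$
where $c_g^T$ denotes the automorphism of $\mathcal{O}(T)$ induced by conjugation --- well defined precisely because $g\in N$ normalizes $T$. This yields $J^g(a,b)=J_0\bigl(c_g^Ti^{\ast}(a),\,c_g^Ti^{\ast}(b)\bigr)$, so $J^g$ vanishes whenever either argument lies in $\mathcal{I}(T)$, and hence descends to a cocycle on $\mathcal{O}(T)$. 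A routine Sweedler check then shows that $i^{\ast}$ gives a surjective algebra homomorphism ${}_{J^g}\mathcal{O}(G)_J\twoheadrightarrow{}_{J^g}\mathcal{O}(T)_J$.

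Composing with part (1), $\psi:=i^{\ast}\circ\tau^{\ell}_g:{}_J\mathcal{O}(G)_J\to{}_{J^g}\mathcal{O}(T)_J$ is a surjective algebra homomorphism. Its kernel consists of those $\alpha\in\mathcal{O}(G)$ with $\alpha(gt)=0$ for all $t\in T$, i.e.\ those $\alpha\in\mathcal{I}(gT)$; since $Z_g=TgT=gT$ for $g\in N$, this is $\mathcal{I}(Z_g)$. Hence $\psi$ descends to the desired algebra isomorphism ${}_J\mathcal{O}(Z_g)_J\cong{}_{J^g}\mathcal{O}(T)_J$. The main obstacle is establishing that $J^g$ descends to $\mathcal{O}(T)$; this is exactly where the hypothesis $g\in N$ is essential, since for general $g\in G$ the conjugated cocycle $J^g$ would be supported on $gTg^{-1}$ rather than on $T$.
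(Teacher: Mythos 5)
Your proposal is correct and follows essentially the same route as the paper: part (1) is the same direct Sweedler verification, and part (2) transports along $\tau^{\ell}_g$ and then passes to $T$, using that $Z_g=TgT=gT$ when $g\in N$. The only difference is cosmetic: you verify explicitly that $J^g$ factors through $i^{\ast}\otimes i^{\ast}$ and compute the kernel of $i^{\ast}\circ\tau^{\ell}_g$ directly, whereas the paper cites Theorem \ref{caseIIstruc}(3) and leaves the identification ${}_{J^g}\mathcal{O}(G)_J/\mathcal{I}(T)={}_{J^g}\mathcal{O}(T)_J$ (i.e.\ the descent of $J^g$ to $\mathcal{O}(T)$) implicit.
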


\begin{proof}
(1) Note that $\tau^{\ell}_g$ is at least a linear isomorphism of the underlying vector space, and let $\alpha, \beta \in {}_J\mathcal{O}(G)_J$. Use $\cdot$ [resp.$\ast$] to denote multiplication in ${}_J\mathcal{O}(G)_J$ [resp. in ${}_{J^g}\mathcal{O}(G)_J$], with juxtaposition for multiplication in $\mathcal{O}(G)$. Then
\begin{align*} \tau^{\ell}_g (\alpha \cdot\beta) &= \sum \tau^{\ell}_g\left(J^{-1} (\alpha_1 ,\beta_1) \alpha_2\beta_2 J(\alpha_3, \beta_3)\right)\\
&= \sum J^{-1} (\alpha_1 ,\beta_1)\tau^{\ell}_g(\alpha_2)\tau^{\ell}_g(\beta_2) J(\alpha_3, \beta_3)\\
&= \sum J^{-1} (\alpha_1 ,\beta_1)\alpha_2(g)\beta_2(g) \alpha_3\beta_3 J(\alpha_4, \beta_4).
\end{align*}
On the other hand, 
\begin{align*} 
\tau^{\ell}_g(\alpha )\ast \tau^{\ell}_g(\beta) &=   \left(\sum \alpha_1(g)\alpha_2\right) \ast \left(\sum \beta_1(g)\beta_2\right)= \sum \alpha_1(g)\beta_1(g)\alpha_2 \ast \beta_2\\
&= \sum \alpha_1(g)\beta_1(g) J^{-1}\left(\alpha_2(g^{-1})\alpha_3 \alpha_4(g),\beta_2(g^{-1})\beta_3\beta_4(g)\right)\alpha_5\beta_5J(\alpha_6,\beta_6)\\
&= \sum\epsilon(g)\epsilon(g)J^{-1}(\alpha_1,\beta_1)\alpha_2(g)\beta_2(g)\alpha_3\beta_3J(\alpha_4,\beta_4)\\
&= \sum J^{-1}(\alpha_1, \beta_1) \alpha_2(g)\beta_2(g)\alpha_3\beta_3J(\alpha_4,\beta_4).
\end{align*}
This proves (1).

(2) The isomorphism $\tau^{\ell}_g$ from (1) maps $\mathfrak{m}_{gT}\mathcal{O}(G)$ onto $\mathfrak{m}_{T}\mathcal{O}(G)$. Since $g \in N$, Theorem \ref{caseIIstruc}(3) shows that 
$${}_J\mathcal{O}(Z_g)_J = {}_J\mathcal{O}(G)_J/\mathcal{I}(gT)\cong  {}_{J^g}\mathcal{O}(G)_J/\mathcal{I}(T) = {}_{J^g}\mathcal{O}(T)_J,$$
as claimed.
\end{proof}

\begin{question}\label{compwg2q2}
Let $g\in N$. Is it true that ${}_J\mathcal{O}(Z_g)_J\cong U^{\omega_g}(\mathfrak{t})$ for a $2$-cocycle $\omega_g$ of $\mathfrak{t}$? If yes, is it true that $\omega_g=\omega - \omega \circ (\mathrm{Ad}g \otimes \mathrm{Ad}g)$, where $\omega$ is the $2$-cocycle of $\mathfrak{t}$ corresponding to $J$? (In \cite[Theorem 4.5]{G2} it is claimed without a proof that the answer to the second question is positive.) \qed
\end{question}

Although we do not currently have detailed information on the structure of the {\bf Case III} strata, we note here that if $g \in G$ is such that {\bf Case III} or {\bf Case I} holds that is, if $g \notin N$, then ${}_J\mathcal{O}(Z_g)_J$ has no nonzero finite dimensional modules, by Theorem \ref{caseIstruc}, and Theorem \ref{findims} below.

\section{Finite dimensional simple ${}_J\mathcal{O}(G)_J$-modules}\label{simples2}

In this section we study the group of finite dimensional simple ${}_J\mathcal{O}(G)_J$-modules. (In particular, we correct \cite[Theorem 5.2]{G2}.) Since the finite dimensional simple ${}_J\mathcal{O}(G)_J$-modules are $1$-dimensional by Corollary \ref{IHOEcor}(5), the set of these modules forms a group, which we denote in this section by $\Gamma$. We show in $\S$\ref{subgp} that $\Gamma$ is a closed subgroup of $G$, and then we give a precise description of $\Gamma$ in $\S$\ref{gamma}, proving Theorem \ref{findims} (= Theorem \ref{findimintro}).

\subsection{$\Gamma$ is a subgroup of $G$}\label{subgp} We start with the following elementary observation.

\begin{lemma}\label{free} 
Let $k$ be a field, $n \geq 1$, and $F = k\langle X_1,\dots, X_n \rangle$ the free $k$-algebra of rank $n$. Let $D :=\langle [X_i,X_j]\mid 1\leq i < j \leq n \rangle$ be the commutator ideal of $F$, so $ F/D \cong k[X_1,\dots, X_n]$. Let $S := F/E$ be any factor algebra of $F$. Then the commutator ideal $\langle [S,S] \rangle$ of $S$ is $I := (D + E)/E$. 
\end{lemma}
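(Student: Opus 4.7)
The plan is to prove the two inclusions $I \subseteq \langle [S,S]\rangle$ and $\langle [S,S]\rangle \subseteq I$ separately; both follow almost at once from general principles about how commutator ideals behave under quotients, so I expect no real obstacle here and the main task is just to present it cleanly.

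First I would verify $I \subseteq \langle [S,S]\rangle$. By hypothesis, $D$ is generated as a two-sided ideal of $F$ by the commutators $[X_i,X_j]$, so $D+E$ is generated modulo $E$ by the cosets $[X_i,X_j]+E$. But each such coset equals $[X_i+E,\,X_j+E]$, which is by definition an element of $[S,S]$. Thus $I$, which is the ideal of $S$ generated by these cosets, is contained in the ideal generated by $[S,S]$.

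For the reverse inclusion $\langle [S,S]\rangle \subseteq I$, I would observe that
\[
S/I \;=\; F/(D+E) \;\cong\; \bigl(F/D\bigr)/\bigl((D+E)/D\bigr),
\]
a factor of $F/D = k[X_1,\dots,X_n]$, hence commutative. Consequently every commutator $[a,b]$ of elements $a,b\in S$ maps to $0$ in $S/I$, i.e.\ $[S,S]\subseteq I$, and since $I$ is a two-sided ideal of $S$ the ideal $\langle [S,S]\rangle$ it generates is also contained in $I$.

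Combining the two inclusions gives $\langle [S,S]\rangle = I$, as required. The argument is essentially formal: the only point to stress is that the commutator ideal of a quotient is computed by pushing forward the generators of $D$, which works because commutators are preserved by algebra homomorphisms and because $F/D$ is commutative.
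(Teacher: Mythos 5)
Your proof is correct and follows essentially the same route as the paper: one inclusion because the cosets of the generators $[X_i,X_j]$ of $D$ are commutators in $S$, the other because $S/I \cong F/(D+E)$ is a quotient of the commutative algebra $F/D$, so $\langle [S,S]\rangle \subseteq I$.
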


\begin{proof} 
Since $D \subseteq D + E$, $S/I$ is commutative, so $\langle [S,S] \rangle \subseteq I$. On the other hand, the image in $S$ of every generator $[X_i,X_j]$ of $D$ lies in $\langle [S,S]\rangle$, so $I \subseteq  \langle [S,S] \rangle $.
\end{proof}

Now we apply Lemma \ref{free} in the setting of Remarks \ref{compwg2}(1). That is, in the lemma we take $k = \mathbb{C}$ and $n = \mathrm{dim}(G)$, so $\mathcal{O}(G) \xrightarrow{\pi_D}F/D$, $X_i\mapsto X_i+D$, is an algebra isomorphism, and we define the ideal $E$ of $F$ to be the ideal $E$ of Remarks \ref{compwg2}(1), so that ${}_J\mathcal{O}(G)_J \xrightarrow{\pi_E}F/E$, $X_i\mapsto X_i+E$, is an algebra isomorphism. Then by Lemma \ref{free}, $(D +E)/E$ is the commutator ideal of $F/E$. 

Now let $I:=\langle [{}_J\mathcal{O}(G)_J, {}_J\mathcal{O}(G)_J]\rangle$ be the commutator ideal of ${}_J\mathcal{O}(G)_J$. Since the commutator ideal of any Hopf algebra is a Hopf ideal, $I$ is a Hopf ideal of ${}_J\mathcal{O}(G)_J$, and $\pi_E$ induces a canonical Hopf algebra isomorphism  
$${}_J\mathcal{O}(G)_J/I\xrightarrow{\overline{\pi_E}} (F/E)/((D + E)/E),\,\,\,X_i+I\mapsto \pi_E(X_i)+(D + E)/E.$$
In particular, since the algebra $(F/E)/((D + E)/E)$ is canonically isomorphic to the algebra $F/(D + E)$, we see that the algebra $F/(D + E)$ is a factor Hopf algebra of ${}_J\mathcal{O}(G)_J$ in a canonical way. 

Now set $\mathscr{I}:=\pi_D^{-1}\left ((D + E)/D\right )$; it is an ideal of $\mathcal{O}(G)$, and $\pi_D$ induces a canonical algebra isomorphism  
$$\mathcal{O}(G)/\mathscr{I}\xrightarrow{\overline{\pi_D}} (F/D)/((D + E)/D),\,\,\,X_i+\mathscr{I}\mapsto \pi_D(X_i)+(D + E)/D.$$
Since the algebra $(F/D)/((D + E)/D)$ is canonically isomorphic to the algebra $F/(D + E)$, it follows from the above that $\mathscr{I}$ is a Hopf ideal of $\mathcal{O}(G)$, so the algebra $F/(D + E)$ is also a factor Hopf algebra of $\mathcal{O}(G)$ in a canonical way.

It follows that the Hopf algebras $\mathcal{O}(G)/\mathscr{I}$ and ${}_J\mathcal{O}(G)_J/I$ are canonically isomorphic via the ``identity map" 
$$\mathcal{O}(G)/\mathscr{I}\xrightarrow{{\rm id}} {}_J\mathcal{O}(G)_J/I,\,\,\,X_i+\mathscr{I}\mapsto X_i+I,$$
hence $I=\mathscr{I}$, so $I$ is also a Hopf ideal of $\mathcal{O}(G)$.

Thus, combining the above with Remarks \ref{compwg2}(1), we have obtained the following.

\begin{theorem}\label{abel} 
Let $G$ be a unipotent affine algebraic group, $J$ a Hopf $2$-cocycle for $G$, $I:=\langle [{}_J\mathcal{O}(G)_J, {}_J\mathcal{O}(G)_J]\rangle$ the commutator ideal of ${}_J\mathcal{O}(G)_J$, and $\Gamma$ the group of finite dimensional simple ${}_J\mathcal{O}(G)_J$-modules. The following hold:
\begin{enumerate}
\item ${}_J\mathcal{O}(G)_J/I \cong \mathcal{O}(\Gamma)$, so $\Gamma$ is a unipotent affine algebraic group.
\item
$I$ is a Hopf ideal of both ${}_J\mathcal{O}(G)_J$ and $\mathcal{O}(G)$.
\item
The identity map $\mathcal{O}(G)/I\xrightarrow{{\rm id}} {}_J\mathcal{O}(G)_J/I$ is a Hopf algebra isomorphism.
\item
Every maximal ideal of ${}_J\mathcal{O}(G)_J$ of codimension $1$ is also a maximal ideal of $\mathcal{O}(G)$. 
\item
$\Gamma$ is the closed subgroup of $G$ whose defining ideal is $I\subseteq \mathcal{O}(G)$. \qed
\end{enumerate}
\end{theorem}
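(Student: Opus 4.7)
My plan is to harvest the consequences of the discussion directly preceding the theorem, which does essentially all the technical work of identifying $I$ with an ideal of $\mathcal{O}(G)$.

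For parts (2) and (3) I will read the conclusion straight off that material. Lemma \ref{free} identifies $(D+E)/E$ as the commutator ideal of $F/E$, so $\pi_E$ yields a canonical Hopf algebra isomorphism ${}_J\mathcal{O}(G)_J/I \cong F/(D+E)$, while $\pi_D$ yields $\mathcal{O}(G)/\mathscr{I}\cong F/(D+E)$. Composing these two isomorphisms gives a Hopf algebra isomorphism $\mathcal{O}(G)/\mathscr{I}\to {}_J\mathcal{O}(G)_J/I$ that is the identity on the common underlying vector space, forcing $I = \mathscr{I}$. Since $\mathscr{I}$ has simultaneously been exhibited as a Hopf ideal of $\mathcal{O}(G)$ through the factorization of the Hopf quotient $\mathcal{O}(G)\twoheadrightarrow F/(D+E)$, parts (2) and (3) both drop out.

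For (1) and (5) I will argue that ${}_J\mathcal{O}(G)_J/I$ is commutative by definition of $I$, and by (3) coincides with the commutative Hopf algebra $\mathcal{O}(G)/I$. As an affine commutative Hopf algebra quotient of $\mathcal{O}(G)$, this is the coordinate ring of a closed subgroup of $G$, automatically unipotent since $G$ is. I will identify this closed subgroup with $\Gamma$ by observing that its $\mathbb{C}$-points are exactly the algebra characters of ${}_J\mathcal{O}(G)_J$, while by Corollary \ref{IHOEcor}(5) every finite dimensional simple ${}_J\mathcal{O}(G)_J$-module is one-dimensional and hence a character. The group operation on $\Gamma$ (convolution via the coproduct of ${}_J\mathcal{O}(G)_J$) agrees with the restriction of the group operation on $G$ because the coalgebras of ${}_J\mathcal{O}(G)_J$ and $\mathcal{O}(G)$ are literally the same.

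Part (4) then falls out: any codimension-one maximal ideal $M$ of ${}_J\mathcal{O}(G)_J$ has commutative quotient, so $M\supseteq I$; then $M/I$ is a codimension-one maximal ideal of ${}_J\mathcal{O}(G)_J/I = \mathcal{O}(G)/I$, whose preimage in $\mathcal{O}(G)$ coincides as a subspace with $M$ and is therefore a maximal ideal of $\mathcal{O}(G)$ of codimension one. I anticipate no serious obstacle; the whole argument is driven by the single observation that the twist disappears in the abelianization, which the preceding paragraphs have already made precise.
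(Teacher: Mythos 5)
Your proposal is correct and follows essentially the same route as the paper: it extracts $I=\mathscr{I}$ and the Hopf-ideal/identity-map statements from the free-algebra discussion via Lemma \ref{free} and Remarks \ref{compwg2}(1), and then deduces (1), (4), (5) from commutativity of the abelianization together with Corollary \ref{IHOEcor}(5) and the fact that the coalgebra (hence the convolution group of characters) is untouched by the twist. No gaps; this is the paper's own argument in only slightly rearranged form.
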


\subsection{The determination of $\Gamma$}\label{gamma} 
In this subsection we obtain an explicit description of the subgroup $\Gamma$ of $G$. We deal first in Theorem \ref{strfindimsF} with the case where $J$ is minimal, then address the general case from Lemma \ref{alghomIJ} onwards. The main result of this subsection is Theorem \ref{findims}.

Retain the notation of $\S$\ref{stratstruc} and Lemma \ref{stratIIfinish}. 
By \cite[Proposition 3.3 and Definition 3.2]{ES}, $(\mathfrak{t}, [\,,\,],r,\delta)$ is a \emph{nondegenerate coboundary Lie bialgebra} with cobracket 
\begin{equation}\label{cobracket}
\delta:\mathfrak{t}\to \wedge^2\mathfrak{t},\,\, x \mapsto [x\otimes 1+1\otimes x,r];
\end{equation} 
here, as in Theorem \ref{classify}, $r:=\omega^{-1}\in \wedge^2\mathfrak{t}$ is the associated nondegenerate solution to the classical Yang-Baxter equation, and we can restate (\ref{cobracket}) as $\delta = \partial_1(r)$. By \cite[Definitions 3.4]{ES}, this means that  $(\mathfrak{t}, [\,,\,],r,\partial_1(r))$ is a \emph{triangular} Lie bialgebra.

Recall also from \cite[Proposition 2.2]{ES} that $\mathfrak{t}^*$ is a Lie bialgebra with bracket $\delta^*$ and cobracket $[\,,\,]^*$, and the map 
\begin{equation}\label{liealgselfdual}
F_r:\mathfrak{t}^*\xrightarrow{\cong}\mathfrak{t},\,\,\alpha \mapsto (\alpha\otimes {\rm id})(r),
\end{equation}
is an isomorphism of Lie bialgebras, where \cite[Proposition 2.1.4]{CP} shows that the map is a homomorphism of Lie bialgebras, with bijectivity following from nondegeneracy combined with \cite[Proposition 3.2]{ES}. 
We consider first the group $\Gamma \cap T$ of finite dimensional simple representations of ${}_J\mathcal{O}(T)_J$. We show in (1) and (2) of the following result that $\Gamma \cap T$ is isomorphic to a particular closed abelian subgroup $F$ of $T$. Later, in Theorem \ref{findims}(3), we shall see that this isomorphism is in fact an equality.

\begin{theorem}\label{strfindimsF} 
Retain the above notation. The following hold:
\begin{enumerate}
\item
There is a group isomorphism $\Gamma \cap T \cong T/[T,T]$, so in particular $\Gamma \cap T$ is a unipotent abelian affine algebraic group. 
\item
$\Gamma \cap T$ is canonically isomorphic to a closed abelian subgroup $F$ of $T$.
\item 
$\mathrm{Lie}(F)= \mathrm{ker}(\delta) = \{x\in \mathfrak{t}\mid [x\otimes 1 +1\otimes x,r]=0\}$. 
\item
$F =\{t\in T \mid r^t = r\}=\{t \in T \mid J^t = J \}$.
\end{enumerate}
\end{theorem}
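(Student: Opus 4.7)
Since $T$ is the support of $J$, the restriction $J|_T$ is (gauge equivalent to) a minimal Hopf $2$-cocycle on $T$, so Theorem \ref{minLie} and Corollary \ref{teneqvrepcat} apply to ${}_J\mathcal{O}(T)_J$. The key observation driving the argument is that $\Gamma\cap T$ admits two compatible descriptions: by Theorem \ref{abel}(5), $\Gamma$ is canonically a closed subgroup of $G$, so $\Gamma\cap T$ is precisely the set of $t\in T$ for which $\mathrm{ev}_t:{}_J\mathcal{O}(T)_J\to\mathbb{C}$ is an algebra homomorphism; and it is the group of isomorphism classes of invertible objects of $\Rep({}_J\mathcal{O}(T)_J)$.

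I would first prove (4), which yields (2) along the way. For $t\in T$ and $a,b\in\mathcal{O}(T)$, formula (\ref{mult}) expands $(a\cdot_J b)(t)$ as $\sum J^{-1}(a_1,b_1)\,a_2(t)b_2(t)\,J(a_3,b_3)$. Writing $\phi_t\in(\mathcal{O}(T)^{\otimes 2})^{\ast}$ for the functional $a\otimes b\mapsto a(t)b(t)$, this identity shows that $\mathrm{ev}_t$ is multiplicative for $\cdot_J$ if and only if $J^{-1}\ast\phi_t\ast J=\phi_t$, equivalently $\phi_t\ast J=J\ast\phi_t$. Since $J^t=\phi_t\ast J\ast\phi_t^{-1}$ by Remarks \ref{equivrmk}(3), this is exactly the condition $J^t=J$. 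Defining $F:=\{t\in T\mid J^t=J\}$ therefore gives the literal equality $\Gamma\cap T=F$ as subsets of $T$, which proves (2) (the embedding being the identity inclusion) and the second equality of (4). The first equality $F=\{t\in T\mid r^t=r\}$ then follows from the compatibility $J^t=J(r^t)$ with $r^t=\Ad(t)(r)$ given in Remarks \ref{equivrmk}(3), together with the bijection in Theorem \ref{classify}.

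Part (3) is then immediate: from (4), $F$ is the $T$-stabilizer of $r\in\wedge^2\mathfrak{t}$ under the $\Ad$-action, so $\mathrm{Lie}(F)=\{x\in\mathfrak{t}\mid \mathrm{ad}(x)\cdot r=0\}$; and the induced action of $\mathrm{ad}(x)$ on $\wedge^2\mathfrak{t}\subset U(\mathfrak{t})^{\otimes 2}$ is given by the commutator $[x\otimes 1+1\otimes x,-]$, matching the definition of $\delta$ in (\ref{cobracket}). For (1), I would invoke Corollary \ref{teneqvrepcat}: the tensor equivalence $\Rep({}_J\mathcal{O}(T)_J)\simeq\Rep(\mathfrak{t})$ induces an isomorphism on the groups of isomorphism classes of invertible objects, and the latter group is the abelian additive group $(\mathfrak{t}/[\mathfrak{t},\mathfrak{t}])^{\ast}$, which, as a commutative unipotent algebraic group of dimension $\dim(T/[T,T])$, is isomorphic to $T/[T,T]$. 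This simultaneously proves (1) and supplies the abelianness asserted in (2).

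The main obstacle is the convolution-algebra calculation identifying $\cdot_J$-multiplicativity of $\mathrm{ev}_t$ with the relation $J^t=J$; once this identity in $(\mathcal{O}(T)^{\otimes 2})^{\ast}$ is in hand, the remaining steps are standard algebraic-group and tensor-categorical arguments.
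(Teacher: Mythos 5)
Your proposal is correct, but it takes a genuinely different route through parts (2)--(4). Part (1) is proved exactly as in the paper (via Corollary \ref{teneqvrepcat}). For the rest, the paper \emph{constructs} $F$ Lie-theoretically: using the triangular Lie bialgebra structure and the self-duality $F_r:\mathfrak{t}^*\cong\mathfrak{t}$ it embeds $(\mathfrak{t}/[\mathfrak{t},\mathfrak{t}])^*$ as an abelian subalgebra $\mathfrak{f}\subseteq\mathfrak{t}$, defines $F$ to be the corresponding closed subgroup, proves $\mathfrak{f}=\ker(\delta)$ by a dimension count ($\dim\ker(\delta)=\dim\ker([\,,\,]^*)$), and only then identifies $F$ with $\{t\in T\mid r^t=r\}$ through the subgroup--subalgebra correspondence; at this stage the paper claims only an abstract isomorphism $\Gamma\cap T\cong F$, deferring the equality to Theorem \ref{findims}(3). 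You instead define $F:=\{t\in T\mid J^t=J\}$ and obtain the on-the-nose equality $\Gamma\cap T=F$ at once from Theorem \ref{abel}(5) together with the convolution identity showing that $\mathrm{ev}_t$ is multiplicative for the twisted product iff $J^t=J$; that identity is precisely the paper's Lemma \ref{alghomIJ}(1),(2) (proved there independently of Theorem \ref{strfindimsF}, so there is no circularity), and your argument in effect absorbs part of Theorem \ref{findims}(1),(3) into the present proof. Your derivation of (3) by differentiating the stabilizer condition, using that the infinitesimal adjoint action on $\wedge^2\mathfrak{t}$ is $[x\otimes 1+1\otimes x,-]=\delta$, is a clean substitute for the paper's bialgebra dimension count; what the paper's route buys in exchange is a description of $\mathrm{Lie}(F)$ (and its dimension $\dim T/[T,T]$) purely in terms of the Lie bialgebra data, without invoking the evaluation criterion. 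One shared caveat: the implication $J^t=J\Rightarrow r^t=r$ requires injectivity of the specific assignment $r\mapsto J(r)$, whereas Theorem \ref{classify} is a bijection only on equivalence classes; the paper makes the same appeal to Remarks \ref{equivrmk}(3) at this point, so this is not a gap relative to the paper's own standard, but it is the one step in both arguments that rests on the quantization formula rather than on what is proved in the text.
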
 

\begin{proof}
(1) This follows from Corollary \ref{teneqvrepcat}.

(2) It is sufficient to show that ${\rm Lie}(T/[T,T]) = \mathfrak{t}/[\mathfrak{t},\mathfrak{t}]$ is isomorphic to a Lie subalgebra of $\mathfrak{t}$. To this end, using the discussion and notation at the start of this subsection, note that  
the canonical projection $\pi:\mathfrak{t}\twoheadrightarrow \mathfrak{t}/[\mathfrak{t},\mathfrak{t}]$ is a Lie bialgebra map by (\ref{cobracket}). Since $\mathfrak{t}/[\mathfrak{t},\mathfrak{t}]\xrightarrow{\cong} \left(\mathfrak{t}/[\mathfrak{t},\mathfrak{t}]\right)^*$ as Lie bialgebras, we obtain an injective map of Lie bialgebras 
$$\mathfrak{t}/[\mathfrak{t},\mathfrak{t}]\xrightarrow{\cong} \left(\mathfrak{t}/[\mathfrak{t},\mathfrak{t}]\right)^*\xrightarrow{\pi^*} \mathfrak{t}^*\xrightarrow{F_r} \mathfrak{t},$$ 
as desired. Denote the Lie subalgebra $F_r \circ \pi^*((\mathfrak{t}/[\mathfrak{t},\mathfrak{t}])^*)$ of $\mathfrak{t}$ by $\mathfrak{f}$. Thus, $\mathfrak{f}$ is an abelian Lie subalgebra of $\mathfrak{t}$ of dimension $\mathrm{dim}(T/[T,T])$, and (2) follows with $F$ being the closed abelian subgroup of $T$ with $\mathrm{Lie}(F) = \mathfrak{f}$.

(3) Since $\delta$ is a $1$-cocycle, $\mathrm{ker}(\delta)$ is a Lie subalgebra of $\mathfrak{t}$. By the proof of Theorem \ref{strfindimsF}, 
$\delta(F_r(\pi^*(\alpha)))=0$ for every $\alpha\in \left(\mathfrak{t}/[\mathfrak{t},\mathfrak{t}]\right)^*$. Namely, $r\in \left(\wedge^2 \mathfrak{t}\right)^{\mathfrak{f}}$ is invariant under the adjoint action of $\mathfrak{f}$ on $\wedge^2 \mathfrak{t}$. Thus, $\mathfrak{f}\subseteq \mathrm{ker}(\delta)$.

Now since $\mathfrak{t}^*\xrightarrow{\cong}\mathfrak{t}$ as Lie bialgebras, it follows that ${\rm dim}(\mathrm{ker}(\delta))={\rm dim}(\mathrm{ker}([\,,\,]^*)$. But clearly, $\mathrm{ker}([\,,\,]^*)=\left(\mathfrak{t}/[\mathfrak{t},\mathfrak{t}]\right)^*$. Hence,
by Theorem \ref{strfindimsF}, we have 
$${\rm dim}(\mathrm{ker}(\delta))={\rm dim}(\left(\mathfrak{t}/[\mathfrak{t},\mathfrak{t}]\right)^*)={\rm dim}(\mathfrak{f}),$$ which implies the statement.

(4) By the well known correspondence between closed subgroups of $T$ and Lie subalgebras of $\mathfrak{t}$, we have $F=\{t\in T \mid r^t = r\}$, since by (3), both closed subgroups have the same Lie algebra. Also, by Remarks \ref{equivrmk}(3), $J^t$ corresponds to $r^t$ for any $t\in T$. Thus, for any $t\in T$, $J^t=J$ if and only if $r^t=r$. This implies that $\{t\in T \mid r^t = r\}=\{t \in T \mid J^t = J \}$, as claimed. 
\end{proof}

\begin{remark}\label{FRemarks}
Let $\mathfrak{t}$ have basis $\{a,b,c,d\}$ with nonzero brackets $[a,b]=c$, $[c,b]=d$, and let $r:=a\wedge c + d\wedge b$ (see Example \ref{ex4} below). Then $\mathfrak{f}= {\rm sp}_{\mathbb{C}}\{c,d\}$ is an abelian ideal of $\mathfrak{t}$, which is strictly contained in the abelian ideal ${\rm sp}_{\mathbb{C}}\{a,c,d\}$. Thus, $\mathfrak{f}$ is not always a maximal abelian subalgebra of $\mathfrak{t}$. \qed
\end{remark}

By Theorem \ref{strata}, every simple ${}_J\mathcal{O}(G)_J$-module is a simple ${}_J\mathcal{O}(Z_g)_J$-module for a unique stratum $Z_g$, where for $g,h \in G$, 
${}_J\mathcal{O}(Z_g)_J = {}_J\mathcal{O}(Z_h)_J$ if and only if $TgT = ThT$. Thus, we can analyse the group $\Gamma$ of finite dimensional simple ${}_J\mathcal{O}(G)_J$-modules by considering the finite dimensional simple ${}_J\mathcal{O}(Z_g)_J$-modules for each $g \in G$.

Let $(\mathcal{O}(G)^*)_1^{\times}$ denote the group of invertible elements $\chi\in \mathcal{O}(G)^*$ with $\epsilon(\chi)=1$, and note that $G \subseteq (\mathcal{O}(G)^*)_1^{\times}$. 
Recall from Remark \ref{Jrems}(3) that $(\mathcal{O}(G)^*)_1^{\times}$ acts on the set of Hopf $2$-cocycles $J$ for $G$ via 
$$J^{\chi}:=(\chi\circ m)*J*\left(\chi^{-1}\otimes \chi^{-1}\right);\,\,\,\chi\in \left(\mathcal{O}(G)^*\right)_1^{\times},$$ 
where $m$ is the multiplication map. Let $[J]$ denote the orbit of $J$ under the action of the subgroup $(\mathcal{O}(T)^*)_1^{\times}$ of $(\mathcal{O}(G)^*)_1^{\times}$.

\begin{lemma}\label{alghomIJ}
Let $L$ and $J$ be Hopf $2$-cocycles for $G$, and let $\chi \in (\mathcal{O}(G)^*)_1^{\times}$. 
\begin{enumerate}
\item $\chi:{}_L \mathcal{O}(G)_J\to \mathbb{C}$ is an algebra homomorphism if and only if $L=J^{\chi}$. 
\item $\chi$ yields a representation of ${}_J\mathcal{O}(G)_J$ if and only if $J = J^{\chi}$.
\item Suppose that $L$ and $J$ both have support $T$. Then the algebra ${}_L\mathcal{O}(T)_J$ has $1$-dimensional representations if and only if $[L] = [J]$.
\end{enumerate}
\end{lemma}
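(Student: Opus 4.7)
Parts (2) and (3) both reduce to (1) with only minor additional work, so the essential content is in part (1). The plan is to recast everything as an identity in the convolution algebra $\bigl((\mathcal{O}(G) \otimes \mathcal{O}(G))^*, *\bigr)$, where $*$ is dual to the coproduct $\Delta \otimes \Delta$, and then read off the claim.

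For (1), applying $\chi$ to the twisted product
\begin{equation*}
{}_Lm_J(a\otimes b) = \sum L^{-1}(a_1, b_1)\, a_2 b_2\, J(a_3, b_3)
\end{equation*}
gives $\chi \circ {}_Lm_J = L^{-1} * (\chi \circ m) * J$ as functionals on $\mathcal{O}(G) \otimes \mathcal{O}(G)$, where $m$ denotes the original commutative multiplication on $\mathcal{O}(G)$. The requirement that $\chi$ be an algebra homomorphism says exactly $\chi \circ {}_Lm_J = \chi \otimes \chi$, i.e.
\begin{equation*}
L^{-1} * (\chi \circ m) * J = \chi \otimes \chi.
\end{equation*}
Meanwhile Remark \ref{Jrems}(3) reads $J^\chi = (\chi \circ m) * J * (\chi^{-1} \otimes \chi^{-1})$, which rearranges to $(\chi \circ m) * J = J^\chi * (\chi \otimes \chi)$. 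Substituting into the previous identity and cancelling $\chi \otimes \chi$ (a unit in the convolution algebra with inverse $\chi^{-1} \otimes \chi^{-1}$, since $\chi \in (\mathcal{O}(G)^*)^\times_1$) reduces the condition to $L = J^\chi$. Every step is reversible, giving the stated equivalence.

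Part (2) is then just the specialization $L = J$ of (1). For part (3), the ``if'' direction is immediate from (1): if $L = J^\chi$ for some $\chi \in (\mathcal{O}(T)^*)^\times_1$, then $\chi$ is a $1$-dimensional representation of ${}_L\mathcal{O}(T)_J$. Conversely, a $1$-dimensional representation is a linear functional $\chi: \mathcal{O}(T) \to \mathbb{C}$ with $\chi(1) = 1$; since $T$ is unipotent, $\mathcal{O}(T)$ is a connected coalgebra by Corollary \ref{IHOEcor}(3), so $\chi$ is automatically convolution-invertible in $\mathcal{O}(T)^*$ (write $\chi = \epsilon + \phi$ with $\phi$ vanishing on $\mathbb{C}$, note that $\phi^{*k}$ annihilates the $(k{-}1)$th step of the coradical filtration, and invert via the pointwise-finite series $\sum_{k\ge 0}(-1)^k \phi^{*k}$). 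Hence $\chi \in (\mathcal{O}(T)^*)^\times_1$, and applying (1) to this $\chi$ yields $L = J^\chi$, whence $[L] = [J]$.

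The computations in (1) are pure bookkeeping in a convolution algebra, and the two deductions in (2) and (3) are essentially a specialization and an invertibility lemma. The only step that merits a second glance is the automatic convolution-invertibility of unital functionals on $\mathcal{O}(T)$ used in the converse of (3); this is a standard fact for connected coalgebras, but worth stating explicitly since we are in an infinite-dimensional setting. No other obstacle is anticipated.
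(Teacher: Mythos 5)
Your proof is correct and follows essentially the same route as the paper: the same convolution-algebra identity $\chi\otimes\chi = L^{-1}*(\chi\circ m)*J \Leftrightarrow L = J^{\chi}$ for (1), specialization $L=J$ for (2), and the case $G=T$ of (1) for (3). Your only addition is the explicit check that a unital functional on the connected coalgebra $\mathcal{O}(T)$ is automatically convolution-invertible, a detail the paper leaves implicit in its one-line proof of (3), and your argument for it is correct.
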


\begin{proof} 
(1) Let $\cdot$ denote the multiplication in ${}_L\mathcal{O}(G)_J$, as in (\ref{mult0}). Then a linear map $\chi:{}_L\mathcal{O}(G)_J\to \mathbb{C}$ is an algebra homomorphism if and only if we have
\begin{eqnarray*}
\lefteqn{(\chi\otimes \chi)(x\otimes y)=\chi(x)\chi(y)=\chi(x\cdot y)}\\
& = & L^{-1}(x_1\otimes y_1)\chi(x_2y_2)J(x_3\otimes y_3)=L^{-1}(x_1\otimes y_1)(\chi\circ m)(x_2\otimes y_2)J(x_3\otimes y_3)\\
& = & \left(L^{-1}*(\chi\circ m)*J\right)(x\otimes y)
\end{eqnarray*}
for all $x,y \in {}_L\mathcal{O}(G)_J$. But this is equivalent to $\chi\otimes \chi=L^{-1}*(\chi\circ m)*J$, that is to 
$$L=(\chi\circ m)*J*(\chi^{-1}\otimes \chi^{-1})=J^{\chi},$$ as claimed.

(2) This follows immediately from (1).

(3) Apply the special case $G = T$ of (1). 
\end{proof}

\begin{notation}\label{groups} 
Viewing $J$ as a Hopf $2$-cocycle for $T$, define the following closed subgroups of $N = N_G(T)$, where as above, $[J]$ denotes the orbit of $J$ under the action of $(\mathcal{O}(T)^*)_1^{\times}$:
\begin{equation}\label{smallgroup}
C_0:=\{ g \in N \mid J^g = J \}\,\,\,{\rm and}\,\,\, 
C :=\{ g \in N \mid [J^g] = [J] \}.
\end{equation}
\end{notation}

\begin{remarks}\label{grouprems} 
(1) Clearly, $C_0 \subseteq C$ is a closed subgroup, and $T \subseteq C$ is a closed normal subgroup (since $C \subseteq N = N_G(T)$ and, by definition, $[J^t] = [J]$ for $t\in T$).

(2) $\Gamma$ is the group of grouplike elements of $\left({}_J\mathcal{O}(G)_J\right)^{\circ}$, so is a subgroup of $\left(\mathcal{O}(G)^*\right)_1^{\times}$. More precisely, by Theorem \ref{abel} and Lemma \ref{alghomIJ}(2),
\begin{equation}\label{Gammagrplels}
\Gamma =\{\chi\in (\mathcal{O}(G)^*)_1^{\times} \mid J^{\chi}=J\} \subseteq G.
\end{equation}

(3) In view of (\ref{Gammagrplels}, $Z(G) \subseteq \Gamma$. \qed
\end{remarks}

We can now state and prove our main result on the group $\Gamma$ of 1-dimensional representations of ${}_J\mathcal{O}(G)_J$.

\begin{theorem}\label{findims} 
Retain the notation used throughout this section, so $G$ is an affine algebraic unipotent group, $J$ is a Hopf $2$-cocycle for $G$ with support $T$, $N = N_G(T)$, $C_0 \subseteq C$ are the subgroups of $N$ defined at (\ref{smallgroup}), and $F$ is the closed subgroup of $T$ defined at Theorem \ref{strfindimsF}(3),(4). 
\begin{enumerate}
\item $\Gamma = C_0,$ and $C_0 = \{g \in G \mid J^g = J\}$.
\item $C = C_0T.$
\item $F=C_0 \cap T$ is a closed abelian subgroup of $T$, normal in $C_0$, with 
 $$ \mathrm{dim}(F)= \mathrm{dim}(T/[T,T]).$$
\item 
$C_0/F \cong C/T$, so that there is an exact sequence of algebraic groups
$$ 1 \rightarrow F \rightarrow \Gamma \rightarrow C/T \rightarrow 1.$$
\item 
$\mathrm{dim}(\Gamma)=\mathrm{dim}(C) - \mathrm{dim}([T,T]).$
\item 
The algebra ${}_J\mathcal{O}(Z_g)_J$ has nonzero finite dimensional representations if and only if $g\in C$. The distinct strata admitting nonzero finite dimensional modules are $\{{}_J\mathcal{O}(Z_g)_J \mid g \in \mathcal{C}\},$ where $\mathcal{C}$ is a set of coset representatives for $F=C_0 \cap T$ in $C_0$, or equivalently for $T$ in $C$.
\item 
For any $g \in \mathcal{C}$, the set of finite dimensional simple ${}_J\mathcal{O}(Z_g)_J$-modules is parametrized by $gF=g(C_0 \cap T)$. Specifically, the corresponding functionals have kernels the maximal ideals $\{\mathfrak{m}_{gx} \mid x\in C_0 \cap T \}$ of $\mathcal{O}(G)$.
\end{enumerate}
\end{theorem}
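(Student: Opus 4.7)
The plan is to work through the seven parts in order, with (1) and (2) as the structural backbone and the rest as assembly.

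For part (1), Theorem \ref{abel}(5) identifies $\Gamma$ with a closed subgroup of $G$, and Lemma \ref{alghomIJ}(2) says its points are exactly $\{g \in G : J^g = J\}$. It then suffices to observe that $J^g$ has support $gTg^{-1}$, so $J^g = J$ forces $gTg^{-1} = T$, i.e., $g \in N$. Hence $\Gamma = C_0 = \{g \in G : J^g = J\}$.

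Part (2) is the main obstacle. The inclusion $C_0 T \subseteq C$ is clear: $C_0 \subseteq C$ by definition, and $T \subseteq C$ since $J^t \in [J]$ for $t \in T$. For the converse, suppose $g \in C$ with $J^g = J^\chi$ for some $\chi \in (\mathcal{O}(T)^*)_1^\times$. A routine convolution calculation gives $(J^a)^b = J^{ba}$, from which $J^{\chi^{-1}g} = J$, so $\chi^{-1}g$ lies in the stabilizer of $J$ in $(\mathcal{O}(G)^*)_1^\times$. The task reduces to factoring $\chi = t\phi$ with $t \in T$ and $\phi$ in the stabilizer; equivalently, showing the $(\mathcal{O}(T)^*)_1^\times$-orbit of $J$ coincides with its $T$-orbit. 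I would approach this via the Etingof--Kazhdan formula (\ref{univforJ}) together with the Cartier--Gabriel--Kostant decomposition $\mathcal{O}(T)^\circ \cong U(\mathfrak{t}) \# \mathbb{C}[T]$: the grouplike part of $\chi$ in $\mathcal{O}(T)^\circ$ contributes a $T$-action on $r$, while the remaining $U(\mathfrak{t})$-part stabilizes $J$ thanks to the universal nature of $J_f(r,\hbar)$. Establishing that absorption is the technical crux I anticipate.

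Part (3) follows from Theorem \ref{strfindimsF}(1),(2),(4), since (1) gives $F = C_0 \cap T = \{t \in T : J^t = J\}$. For normality of $F$ in $C_0$, I extend Remarks \ref{equivrmk}(3) to arbitrary $c \in N$ to obtain $J^c = J(\mathrm{Ad}(c)(r))$; then $c \in C_0$ forces $\mathrm{Ad}(c)(r) = r$, whence for $t \in F$, $\mathrm{Ad}(ctc^{-1})(r) = \mathrm{Ad}(c)\mathrm{Ad}(t)\mathrm{Ad}(c^{-1})(r) = r$, so $ctc^{-1} \in F$.

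For the remaining parts I assemble the pieces. The composition $C_0 \hookrightarrow C \twoheadrightarrow C/T$ has kernel $C_0 \cap T = F$ and is surjective by (2), yielding the exact sequence in (4); combining with $\dim F = \dim(T/[T,T])$ gives (5). For (6), simple finite-dimensional ${}_J\mathcal{O}(Z_g)_J$-modules are characters of ${}_J\mathcal{O}(G)_J$ vanishing on $\mathcal{I}(Z_g)$, i.e., points of $\Gamma \cap Z_g = C_0 \cap TgT$; this is nonempty iff $g \in C_0 T = C$. For $g \in N$, $Z_g = gT$, and for $c, c' \in C_0$, $TcT = Tc'T$ iff $c^{-1}c' \in C_0 \cap T = F$, so the distinct strata correspond bijectively to $C_0/F \cong C/T$. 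For (7), fix $g \in \mathcal{C} \subseteq C_0 \subseteq N$; then $C_0 \cap Z_g = C_0 \cap gT = g(g^{-1}C_0 \cap T) = g(C_0 \cap T) = gF$, since $g \in C_0$ implies $g^{-1}C_0 = C_0$, and the identification of the kernels with the maximal ideals $\mathfrak{m}_{gx}$ of $\mathcal{O}(G)$ is Theorem \ref{abel}(4).
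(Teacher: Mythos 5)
Most of your argument — parts (1), (3), (4), (5), (7), and the computation in (6) that a stratum ${}_J\mathcal{O}(Z_g)_J$ carries a character exactly when $\Gamma \cap TgT \neq \emptyset$, i.e. when $g \in C_0T$ — runs essentially parallel to the paper's proof and is fine. The genuine gap is exactly where you flag it: the inclusion $C \subseteq C_0T$ in (2), on which your (6) also leans (you write "$g\in C_0T = C$" using the unproved (2)). Moreover, the route you sketch would not close it. First, your reformulation "show the $(\mathcal{O}(T)^*)_1^{\times}$-orbit of $J$ coincides with its $T$-orbit" is strictly stronger than what is needed and is in fact false: by Lemma \ref{alghomIJ}(2) and Theorem \ref{minLie}, the stabiliser of $J$ in $(\mathcal{O}(T)^*)_1^{\times}$ is the set of algebra characters of ${}_J\mathcal{O}(T)_J \cong (U(\mathfrak{t})^J)^{\rm cop}$, a finite-dimensional family, so $T\cdot\mathrm{Stab}(J)$ cannot exhaust the infinite-dimensional group $(\mathcal{O}(T)^*)_1^{\times}$; hence the gauge orbit $[J]$ is much larger than $\{J^t \mid t \in T\}$. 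Second, the proposed mechanism (splitting $\chi$ into a grouplike part and a $U(\mathfrak{t})$-part via Cartier--Gabriel--Kostant) does not apply: a general $\chi \in (\mathcal{O}(T)^*)_1^{\times}$ need not lie in the finite dual $\mathcal{O}(T)^{\circ}$, so it has no such decomposition, and in any case units coming from $U(\mathfrak{t})$ do not stabilise $J$.

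What is actually needed, and what the paper uses, is only the statement about the special points $J^g$, $g \in N$, of the orbit: if $[J^g]=[J]$ then $J^g = J^{t_0}$ \emph{exactly}, for some $t_0 \in T$. The paper gets this from the classification machinery: by Theorem \ref{classify} (applied to $T$) and Remarks \ref{equivrmk}(2),(3) (extended from $T$ to $N$), one has $J^x = J(r^x)$ for $x \in N$, and gauge classes of minimal cocycles for $T$ correspond to $T$-conjugacy classes of nondegenerate $r \in \wedge^2\mathfrak{t}$; so $[J(r^g)]=[J(r)]$ forces $r^g = r^{t_0}$ and hence $J^g = J^{t_0}$, giving $gt_0^{-1} \in C_0$. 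Alternatively, one can avoid this and argue representation-theoretically, using two results you never invoke: by Lemma \ref{stratIIfinish}(2), ${}_J\mathcal{O}(Z_g)_J \cong {}_{J^g}\mathcal{O}(T)_J$ for $g\in N$, and by Lemma \ref{alghomIJ}(3) the latter has a character precisely when $[J^g]=[J]$, i.e. when $g \in C$; feeding this into your own computation $\Gamma\cap Z_g \neq \emptyset \Leftrightarrow g \in C_0T$ yields $C \subseteq C_0T$ and the "$g\in C$" criterion in (6) simultaneously. Without one of these bridges from the gauge-theoretic definition of $C$ to either exact $T$-conjugacy of $J^g$ or to characters of the stratum, parts (2) and (6) remain unproved.
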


\begin{proof} 
(1) Let $\chi \in \Gamma$. By (\ref{Gammagrplels}), $\chi$ is a grouplike element of $\left({}_J\mathcal{O}(G)_J\right)^{\circ}$, with 
\begin{equation}\label{preprevious}
\chi \in G \textit{  and  } J=J^{\chi}=(\chi\circ m)*J*(\chi^{-1}\otimes \chi^{-1}).
\end{equation}
It follows that 
$R^J=(\chi\otimes \chi)*R^J*(\chi^{-1}\otimes \chi^{-1})\in \left(\mathcal{O}(T)\otimes \mathcal{O}(T)\right)^*$. Note that $\left({}_J\mathcal{O}(T)_J,R^J\right)$ is the minimal cotriangular quotient Hopf algebra of $\left({}_J\mathcal{O}(G)_J,R^J\right)$ by Lemma \ref{radical} (quoted from \cite[Section 2.2]{G1}), and that since $J^{\chi}=J$, the map
$${}_J\mathcal{O}(G)_J\to {}_J\mathcal{O}(G)_J,\,\,x\mapsto 
\sum \chi(x_1)x_2\chi^{-1}(x_3),$$
is a Hopf algebra automorphism by Lemma \ref{alghomIJ}. Therefore, it follows that 
\begin{equation}\label{addon}
(\chi\otimes \chi)*\left(\mathcal{O}(T)\otimes \mathcal{O}(T)\right)^* *(\chi^{-1}\otimes \chi^{-1})=\left(\mathcal{O}(T)\otimes \mathcal{O}(T)\right)^*
\end{equation}
inside the algebra $\left(\mathcal{O}(G)\otimes \mathcal{O}(G)\right)^*$. Hence, $\chi \in N = N_G(T)$ and so $\Gamma \subseteq C_0$ by definition of the latter group. Conversely, if $g \in C_0$ then $g \in \Gamma$ by Lemma \ref{alghomIJ}(2).

The second equality in (1) is also proved in the argument given above.

(2), (6) Suppose that $g \in G$ is such that the stratum ${}_J\mathcal{O}(Z_g)_J$ admits a $1$-dimensional simple module $\chi$. Then, by (1),  $\chi \in C_0 \subseteq N$. Hence, by definition of $Z_g$, $\chi = gt$ for some element $t \in T$. Thus $g = \chi t^{-1} \in C_0 T$. 

Conversely, suppose $g \in C$, so that $[J^g] = [J]$ by definition of $C$. That is, there exists $t_0 \in T$ such that $J^g = J^{t_0}$. Hence
$$ gt_0^{-1} \in \{ x \in N \mid J^x = J \} = C_0.$$
Therefore $C = C_0 T$ and $\chi := gt_0^{-1}$ is a 1-dimensional representation of ${}_J\mathcal{O}(Z_g)_J$. 

The final sentence of (6) is a consequence of the first together with Theorem \ref{strata}, since the latter guarantees that every finite dimensional simple ${}_J\mathcal{O}(G)_J$-module factors through ${}_J\mathcal{O}(Z_g)_J$ for a unique $g\in \mathcal{C}$.

(3) That $F = C_0 \cap T$ is immediate from Theorem \ref{strfindimsF}(4) and the definition of $C_0$ at (\ref{smallgroup}). The dimension of $F$ was also determined at Theorem \ref{smallgroup}. Since $C_0 \subseteq N = N_G(T)$ it is clear that $C_0 \cap T$ is normal in $C_0$.

(4) The isomorphism holds because $C_0 \cap T = F \triangleleft C_0$ by (1), and $C_0 T = C$ by (2). The exact sequence follows at once, since $C_0 = \Gamma$ by (1).

(5) This is a straightforward consequence of (3) and (4).

(7) Let $g \in C_0$. There is an algebra isomorphism from ${}_J\mathcal{O}(T)_J = {}_J\mathcal{O}(Z_1)_J$ to ${}_J\mathcal{O}(Z_g)_J$ given by the automorphism $\tau^{\ell}_g$ of ${}_J\mathcal{O}(G)_J$, as shown by Lemma \ref{stratIIfinish}. Clearly, this map carries the group $F=C_0 \cap T$ of 1-dimensional representations of ${}_J\mathcal{O}(T)_J$ to the set $gF=g(C_0 \cap T)$ of all such representations of ${}_J\mathcal{O}(Z_g)_J$.
\end{proof}

\section{Special cases and examples}\label{examples} 

\subsection{Abelian support}\label{abelian} Let $G$ be a unipotent affine algebraic group, and $J$ a Hopf $2$-cocycle for $G$. When is ${}_J\mathcal{O}(G)_J$ commutative? Notice that by Theorem \ref{nIHOE}, this is the case if and only if all the derivations appearing in the presentation of ${}_J\mathcal{O}(G)_J$ as an IHOE are trivial, and hence holds  if and only if ${}_J\mathcal{O}(G)_J = \mathcal{O}(G)$ via the identity map. By the definition (\ref{mult}) of the multiplication in ${}_J\mathcal{O}(G)_J$, this happens if and only if 
\begin{equation}\label{commute} m \ast J = J \ast m,
\end{equation}
where $m$ denotes the multiplication in $\mathcal{O}(G)$. By definition, see \cite[$\S$2.2]{EG4}, (\ref{commute}) holds if and only if $J$ belongs to the set $Z^2_{\mathrm{inv}}(G)$ of \emph{invariant} Hopf $2$-cocycles. Invariant Hopf $2$-cocycles were defined and studied in \cite{S2}; see also \cite{BC} where the adjective \emph{lazy} is used for them.  In \cite[Theorem 4.2]{EG4}, it is shown (for \emph{any} affine algebraic group over $\mathbb{C}$) that if $J \in  Z^2_{\mathrm{inv}}(G)$ then the support of $J$ is a closed normal abelian subgroup of $G$. The outcome is the following result:

\begin{proposition}\label{nochange} Let $G$ be an affine algebraic group, and $J$ a Hopf $2$-cocycle for $G$. Then the following are equivalent:
\begin{enumerate}
\item ${}_J\mathcal{O}(G)_J$ is commutative.
\item ${}_J\mathcal{O}(G)_J \cong \mathcal{O}(G)$ as Hopf algebras.
\item ${}_J\mathcal{O}(G)_J = \mathcal{O}(G)$; that is, the identity map is an isomorphism of Hopf algebras.
\item $J \in Z^2_{\mathrm{inv}}(G)$.
\item The support of $J$ is an abelian normal subgroup of $G$.
\end{enumerate}
\end{proposition}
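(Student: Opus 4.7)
The plan is to assemble the five equivalences from a short chain of elementary implications and three separate substantive arguments. The easy implications are (3) $\Rightarrow$ (2), which is tautological, and (2) $\Rightarrow$ (1), which is immediate since $\mathcal{O}(G)$ is commutative. The equivalence (3) $\Leftrightarrow$ (4) is essentially a matter of unpacking the definition: writing the twisted multiplication (\ref{mult}) convolution-theoretically as $m_J = J^{-1} \ast m \ast J$ in $\mathrm{Hom}(\mathcal{O}(G) \otimes \mathcal{O}(G), \mathcal{O}(G))$, the identity map is an algebra isomorphism ${}_J\mathcal{O}(G)_J \to \mathcal{O}(G)$ iff $m \ast J = J \ast m$, which is exactly the defining condition for $J \in Z^2_{\mathrm{inv}}(G)$ in Remark \ref{Jrems}(4); the coalgebra and antipode structures are automatically preserved since they are unchanged by the twist.

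The main structural step is (1) $\Rightarrow$ (3), for which I would invoke Corollary \ref{nIHOE} to realise ${}_J\mathcal{O}(G)_J$ as an iterated Hopf Ore extension of derivation type, with generators $X_1, \ldots, X_n$ taken from the chain (\ref{poly}) of $\mathcal{O}(G)$. Commutativity of ${}_J\mathcal{O}(G)_J$ forces every commutator $[X_i, X_j]$ to vanish, and the Ore extension description in Theorem \ref{Ore}(5) (with the explicit commutator formula of Theorem \ref{Ore}(4) confirming that vanishing commutators correspond to vanishing derivations) then forces each derivation $\partial_i$ to be zero. Consequently every step of the chain (\ref{poly2}) reduces to a polynomial extension, so ${}_J\mathcal{O}(G)_J = \mathbb{C}[X_1, \ldots, X_n] = \mathcal{O}(G)$ as algebras by the basis observation in Remarks \ref{compwg2}(1), whence the identity map is the required Hopf isomorphism.

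For the equivalence (4) $\Leftrightarrow$ (5), the direction (4) $\Rightarrow$ (5) is quoted directly from \cite[Theorem 4.2]{EG4}. For the converse, the key observation is that abelianity of $T$ makes $\mathcal{O}(T)$ cocommutative, on which every Hopf $2$-cocycle is automatically invariant by the cocommutativity calculation
\begin{equation*}
(m \ast \widehat{J})(a \otimes b) = \sum a_{(1)} b_{(1)} \widehat{J}(a_{(2)}, b_{(2)}) = \sum a_{(2)} b_{(2)} \widehat{J}(a_{(1)}, b_{(1)}) = (\widehat{J} \ast m)(a \otimes b).
\end{equation*}
I would then use the normality of $T$ in $G$, which forces $T \backslash G / T = G/T$ and so places $\mathcal{O}(G/T)$ in the centre of ${}_J\mathcal{O}(G)_J$ via Proposition \ref{centre}, together with the crossed product decomposition of Theorem \ref{crossed}, to lift the invariance of $\widehat{J}$ on $\mathcal{O}(T)$ back to invariance of $J$ on $\mathcal{O}(G)$.

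The main obstacle will be precisely this lifting step in (5) $\Rightarrow$ (4). Invariance is a property of $J$ on $\mathcal{O}(G)$, but the cocommutativity argument only yields invariance of the minimal representative $\widehat{J}$ on $\mathcal{O}(T)$, and gauge equivalence does not preserve $Z^2_{\mathrm{inv}}$ in general (Remark \ref{Jrems}(4) only says it normalises it). The resolution must exploit that the gauge-equivalence transformation produced by Proposition \ref{factor} is supported on $T$, combined with the normality of $T$ in $G$, to show that the twisting cocycle $\sigma$ in Theorem \ref{crossed} is trivial so that ${}_J\mathcal{O}(G)_J$ collapses to the plain tensor product $\mathcal{O}(G/T) \otimes {}_J\mathcal{O}(T)_J$ of commutative algebras. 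This places us back at (1), and the cycle of equivalences closes via the earlier chain (1) $\Rightarrow$ (3) $\Leftrightarrow$ (4).
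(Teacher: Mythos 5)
Your treatment of the equivalence of (1)--(4) and of the implication (4) $\Rightarrow$ (5) is essentially the paper's own: the identification $m_J=J^{-1}\ast m\ast J$, the reformulation of invariance as $m\ast J=J\ast m$, the IHOE argument that commutativity forces the derivations $\partial_i$ to vanish so that the identity map is the isomorphism, and the citation of \cite[Theorem 4.2]{EG4} are exactly the content of the paragraph preceding Proposition \ref{nochange}. The genuine gap is in (5) $\Rightarrow$ (4), which the paper settles by quoting \cite[Theorem 6.1]{EG4} (a result valid for an arbitrary affine algebraic group over $\mathbb{C}$), and which your proposal does not prove.

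Your cocommutativity computation shows only that every Hopf $2$-cocycle on $\mathcal{O}(T)$ with $T$ abelian is invariant, so that ${}_J\mathcal{O}(T)_J=\mathcal{O}(T)$; it says nothing yet about $J$ on $\mathcal{O}(G)$. To deduce commutativity of ${}_J\mathcal{O}(G)_J$ from Theorem \ref{crossed} you must control the cocycle $\sigma$, not just the action: centrality of $\mathcal{O}(G/T)=\mathcal{O}(T\backslash G/T)$ in ${}_J\mathcal{O}(G)_J$ (Proposition \ref{centre}, using normality of $T$) does force the action to be trivial, but a crossed product of two commutative algebras with trivial action and a non-symmetric cocycle is noncommutative --- for instance, with trivial action $\mathbb{C}\#_{\sigma}\mathcal{O}(T)$ is a one-sided twist of $\mathcal{O}(T)$, which by Theorem \ref{Weyl}(2) can be a Weyl algebra. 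You assert that the ``resolution must exploit'' the normality of $T$ to show that $\sigma$ is trivial, but you give no argument, and the remark following Theorem \ref{crossed} records that the authors themselves do not know whether $\sigma$ can be non-trivial; so this is precisely the missing substance of the implication rather than a detail to be filled in. There is also a scope problem: Theorem \ref{crossed} and Proposition \ref{centre} are established only for unipotent $G$, whereas Proposition \ref{nochange} is stated for an arbitrary affine algebraic group, which is presumably why the paper routes (5) $\Rightarrow$ (4) through \cite[Theorem 6.1]{EG4} rather than through its own crossed-product machinery. As written, therefore, (5) $\Rightarrow$ (4) (equivalently (5) $\Rightarrow$ (1)) remains unproved in your proposal.
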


\begin{proof} The equivalence of (1)-(4), and the proof of $(4)\Rightarrow (5)$ are explained before the proposition. That $(5)\Rightarrow (4)$ follows from \cite[Theorem 6.1 and the first three lines of its proof]{EG4}.
\end{proof}

For the following corollary recall from Proposition \ref{factor} and Definition \ref{supp} that the support of the Hopf $2$-cocycle $J$ of $G$ is only defined up to conjugacy in $G$.

\begin{corollary}\label{abeliansupp} 
Let $G$ be an affine algebraic unipotent group, $J$ a Hopf $2$-cocycle for $G$ with abelian support $T$, and $N:=N_G(T)\subseteq G$ the normaliser of $T$.
\begin{enumerate}
\item 
We have $ \langle [ {}_J\mathcal{O}(G)_J, {}_J\mathcal{O}(G)_J] \rangle =  \mathcal{O}(G/N)^+{}_J\mathcal{O}(G)_J$, 
and $${}_J\mathcal{O}(G)_J/\langle [ {}_J\mathcal{O}(G)_J, {}_J\mathcal{O}(G)_J] \rangle = \mathcal{O}(N).$$
\item 
The group $\Gamma$ of finite dimensional simple ${}_J\mathcal{O}(G)_J$-modules is $N$. 
\end{enumerate} 
\end{corollary}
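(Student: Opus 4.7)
The plan is to apply Proposition \ref{nochange} to the normaliser $N=N_G(T)$ and combine the outcome with Theorem \ref{caseIIstruc}(1), Theorem \ref{findims}(1) and Theorem \ref{abel}(5).

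First, I would observe that $T$ is automatically normal in $N$, and that the Etingof--Kazhdan realisation (\ref{univforJ}) shows $J$ factors through $\mathcal{O}(T)^{\otimes 2}$ via a nondegenerate element of $U(\mathfrak{t})^{\otimes 2}[[\hbar]]$, so that the support of the restricted cocycle $J_{|\mathcal{O}(N)}$ remains $T$. Hence $T$ is an abelian normal subgroup of $N$ equal to the support of $J_{|\mathcal{O}(N)}$, and Proposition \ref{nochange} (the implication $(5)\Rightarrow(3)$) yields ${}_J\mathcal{O}(N)_J=\mathcal{O}(N)$ as Hopf algebras, in particular a commutative algebra. Theorem \ref{caseIIstruc}(1) then gives
\[
{}_J\mathcal{O}(G)_J\,\big/\,\mathcal{O}(G/N)^+\,{}_J\mathcal{O}(G)_J \;\cong\; {}_J\mathcal{O}(N)_J \;=\; \mathcal{O}(N),
\]
from which the commutator ideal $I:=\langle[{}_J\mathcal{O}(G)_J,{}_J\mathcal{O}(G)_J]\rangle$ satisfies $I\subseteq \mathcal{O}(G/N)^+\,{}_J\mathcal{O}(G)_J$; this is one half of (1).

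Next I would prove (2). Every point $n\in N$ gives a character of $\mathcal{O}(N)={}_J\mathcal{O}(N)_J$ by evaluation at $n$, which lifts along the quotient map above to a character of ${}_J\mathcal{O}(G)_J$, so $N\subseteq \Gamma$. The reverse inclusion $\Gamma\subseteq N$ is immediate from Theorem \ref{findims}(1), which in general gives $\Gamma=C_0\subseteq N$. Hence $\Gamma=N$.

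Finally, to complete (1), I would invoke Theorem \ref{abel}(5), which identifies $\Gamma=N$ as the closed subgroup of $G$ cut out by $I$. Thus every $f\in\mathcal{O}(G/N)^+$ vanishes on $N$ and so lies in $I$; combined with the vector-space identification $\mathcal{O}(G/N)^+\,{}_J\mathcal{O}(G)_J=\mathcal{O}(G/N)^+\,\mathcal{O}(G)$ from Theorem \ref{caseIIstruc}(1), this yields $\mathcal{O}(G/N)^+\,{}_J\mathcal{O}(G)_J\subseteq I$, giving equality, and the quotient isomorphism is the one already derived in the first paragraph. The main obstacle I anticipate is the verification in the first paragraph that $J_{|\mathcal{O}(N)}$ still has support $T$: this is read off from the Etingof--Kazhdan formula (\ref{univforJ}) and the fact that $r\in\wedge^{2}\mathfrak{t}$ remains nondegenerate, but it is essential, since it is what enables Proposition \ref{nochange} to convert the abelianness of $T$ into commutativity of ${}_J\mathcal{O}(N)_J$.
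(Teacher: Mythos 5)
Your proposal is correct, and its first half coincides with the paper's argument: the inclusion $\langle[{}_J\mathcal{O}(G)_J,{}_J\mathcal{O}(G)_J]\rangle \subseteq \mathcal{O}(G/N)^+\,{}_J\mathcal{O}(G)_J$ is obtained exactly as in the paper, from Theorem \ref{caseIIstruc}(1) together with Proposition \ref{nochange} applied to ${}_J\mathcal{O}(N)_J$ (your explicit check that the restricted cocycle still has support $T$ is a point the paper leaves implicit, and your justification is at the same level of rigour as the paper's). Where you genuinely diverge is the reverse inclusion: the paper argues by contradiction, using that the abelianisation is commutative and reduced, the commutative Nullstellensatz, and the stratification results (Theorem \ref{strata} together with Theorem \ref{findims}(6)) to show that a hypothetical $1$-dimensional module not killed by $\mathcal{O}(G/N)^+\,{}_J\mathcal{O}(G)_J$ would nevertheless be killed by some $\mathcal{I}(Z_g)$ with $g \in C \subseteq N$, a contradiction; part (2) is then read off from part (1). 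You instead prove (2) first --- $N \subseteq \Gamma$ by pulling back evaluation characters of $\mathcal{O}(N) = {}_J\mathcal{O}(N)_J$ along the quotient map, and $\Gamma = C_0 \subseteq N$ by Theorem \ref{findims}(1) --- and then convert this into the ideal-theoretic statement via Theorem \ref{abel}(5), which identifies $I$ with the (radical) vanishing ideal of $\Gamma = N$ in $\mathcal{O}(G)$, so that $\mathcal{O}(G/N)^+ \subseteq I$ and hence $\mathcal{O}(G/N)^+\,{}_J\mathcal{O}(G)_J \subseteq I$, giving equality. Both routes are valid and of comparable length; yours trades the Nullstellensatz/stratification argument for the abelianisation machinery of Theorem \ref{abel}(5) and the explicit description $\Gamma = C_0$, which makes the group-theoretic content of the abelian-support case ($C_0 = C = N$) more transparent, while the paper's route keeps (2) as a purely formal consequence of (1).
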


\begin{proof}
(1) By Theorem \ref{caseIIstruc}(1), the right hand side of the displayed equality is a Hopf ideal of ${}_J\mathcal{O}(G)_J$, and the factor of ${}_J\mathcal{O}(G)_J$ by this ideal is isomorphic as a Hopf algebra to ${}_J\mathcal{O}(N)_J$. In particular, this factor is commutative and equal to $\mathcal{O}(N)$, by Proposition \ref{nochange} applied to it. It follows that 
\begin{equation}\label{star}
\langle [ {}_J\mathcal{O}(G)_J, {}_J\mathcal{O}(G)_J] \rangle \subseteq \mathcal{O}(G/N)^+{}_J\mathcal{O}(G)_J.
\end{equation} 

Suppose now that the inclusion in (\ref{star}) is strict. Recall that 
$\langle [{}_J\mathcal{O}(G)_J, {}_J\mathcal{O}(G)_J] \rangle $ is always a Hopf ideal (in any Hopf algebra), so the factor by it, being commutative and in characteristic 0, is reduced  
by \cite[\S 11.4]{W}. Hence, the strict inclusion in (\ref{star}), combined with the commutative Nullstellensatz, shows that there is a $1$-dimensional ${}_J\mathcal{O}(G)_J$-module $V$ which is not killed by $\mathcal{O}(G/N)^+{}_J\mathcal{O}(G)_J$. But by Theorem \ref{strata}, $V$ factors through ${}_J\mathcal{O}(Z_g)_J$ for a unique $g \in G$, and by Theorem \ref{findims}(5), $g \in C \subseteq N$, so that $\mathcal{O}(G/N)^+{}_J\mathcal{O}(G)_J \subset \mathcal{I}(Z_g)$, with $\mathcal{I}(Z_g)V = 0$. This is a contradiction, and hence the both parts of (1) are proved.

(2) This is immediate from (1). 
\end{proof}

\subsection{Examples}\label{examples2} In the following examples $E_{ij}$ will denote the $n \times n$ matrix with $1$ in the $(i,j)^{\mathrm{th}}$ entry and zeros elsewhere; $n$ will be clear from the context. The first example is trivial since we already know from Proposition \ref{Ore}(7), or from Proposition \ref{nochange}, that when $G$ is abelian there are \emph{no} non-trivial twists of $\mathcal{O}(G)$. Nevertheless the calculation here will be used in some later examples. 

\begin{example}(\cite[\S 5]{G1}, \cite[\S 6]{G2})\label{ex1} 
Let $G = (\mathbb{C},+)^2$, so that $\mathcal{O}(G) = \mathbb{C}[X,V]$ is a polynomial Hopf algebra. Let $\mathfrak{g} = \mathrm{Lie}(G) = \mathbb{C}\frac{\partial}{\partial{X}} + \mathbb{C}\frac{\partial}{\partial{V}}$, and take $ r:=\frac{\partial}{\partial{X}} \wedge \frac{\partial}{\partial{V}}$. Then $J:=  (\epsilon \otimes \epsilon)\circ e^{r/2}\in \left(\mathcal{O}(G) \otimes \mathcal{O}(G)\right)^*$ is a minimal Hopf $2$-cocycle for $G$,  
\begin{equation}\label{find}
\begin{split} 
& J(X,X) = J(V,V) = 0,\, J(X, V)= J^{-1}(V,X) = 1/2,\\
& \hspace{2cm} J^{-1}(X,V)=J(V,X)=- 1/2,
\end{split} 
\end{equation}
and ${}_J\mathcal{O}(G)_J = \mathcal{O}(G)$. \qed
\end{example}

\begin{example}(\cite[Example 6.1]{G2})\label{ex2} 
Let $G$ be the Heisenberg group 
$$ G =\{g(x,v,y) := 1 + xE_{12} + vE_{13} + yE_{23} \mid x,v,y \in \mathbb{C} \} \subseteq {\rm U}(3, \mathbb{C}). $$
We will employ here, and in later examples, the following obvious notation: for $g = g(x,v,y)\in G$, define $X \in\mathcal{O}(G)$ by $X(g) = x$, and analogously for $V$ and $Y$, so $\mathcal{O}(G)  =  \mathbb{C}[X,V,Y]$ with $X$ and $Y$ primitive and $\Delta (V) =  V \otimes 1 + 1 \otimes V + X \otimes Y$. 

Take for $T$ the normal abelian subgroup $ \{1 + xE_{12} + vE_{13} \mid x,v \in \mathbb{C} \}$ of $G$. Thus, $\mathrm{Lie}(G) = \mathfrak{g} =  \mathbb{C}a + \mathbb{C}b + \mathbb{C}c$, where $a := \frac{\partial}{\partial{X}},$ $ b := \frac{\partial}{\partial{Y}}$ and $c := \frac{\partial}{\partial{V}}$, so $[a,b] = c$. So, $\mathfrak{t} := \mathrm{Lie}(T)$ is the abelian Lie subalgebra of $\mathfrak{g}$ spanned by $a$ and $c$. As in Example \ref{ex1}, $ r  :=  a\wedge c\in \wedge^2\mathfrak{t}$ is a non-degenerate $\mathfrak{g}$-invariant solution to the CYBE.  Hence $J:=(\epsilon \otimes \epsilon) \circ e^{r/2}$ is a minimal Hopf $2$-cocycle for $T$ and, by pullback along the surjection from $\mathcal{O}(G)$ onto $\mathcal{O}(T)$, a (non-minimal) Hopf $2$-cocycle for $G$. 

By Proposition \ref{nochange}, ${}_J\mathcal{O}(G)_J = \mathcal{O}(G)$. Of course it is a straightforward exercise to check this directly using Proposition \ref{Ore}. \qed
\end{example}

\begin{example}(\cite[Example 6.2]{G2})\label{ex3} 
Let $G$ be the $4$-dimensional unipotent group
$$ G = \{1 + xE_{12} + vE_{13} + wE_{14} + y(E_{23}+E_{34}) + (y^2/2)E_{24} \mid x,v,w,y \in \mathbb{C} \} \subseteq {\rm U}(4,\mathbb{C}).$$
With similar notation to Example \ref{ex2}, $\mathcal{O}(G)  = \mathbb{C}[X,V,W,Y],$ with 
$$ \Delta (V)  =  V \otimes 1 + 1 \otimes V + X \otimes Y, \; \Delta (W)  = W \otimes 1 + 1 \otimes W + V \otimes Y + X \otimes (Y^2/2),$$
and $X$, $Y$ primitive. Set $\mathfrak{m} := \mathcal{O}(G)^+$ and $ a := \frac{\partial}{\partial X} , \, b :=\frac{\partial}{\partial Y} , \, c := \frac{\partial}{\partial V}, \, d := \frac{\partial}{\partial W}$ in $(\mathfrak{m}/\mathfrak{m}^2)^{\ast}$, so that $\mathfrak{g} := \mathrm{Lie}(G)$ has basis $\{a,b,c,d\}$ with non-zero brackets  $[a,b] = c$ and $[c,b] = d$.

Choose as support $T:= \{1 + xE_{12} + vE_{13} \mid x,v \in \mathbb{C}\},$ so that $T \cong (\mathbb{C},+)^2$ and $\mathcal{O}(T) = \mathbb{C}[X,V]$. Choose $r := a\wedge c \in \wedge^2 \mathfrak{t}$, with $J := (\epsilon \otimes \epsilon)\circ e^{r/2}$ as in Example \ref{ex1}. Thus, as Hopf algebras, $_J\mathcal{O}(T)_J = \mathcal{O}(T).$
Pull $J$ back to $\mathcal{O}(G)$ along the map sending $Y$ and $W$ to $0$. As in (\ref{find}),
$$ J(X,V) = J^{-1}(V,X) = 1/2,\,\, J(V,X) = J^{-1}(X,V) = -1/2,$$
and it is clear from the definitions that the value taken by $J$ and $J^{-1}$ on all other generator pairs is 0. It is easy to check that $[X,Y] = [X,V] = [Y,V] = 0$
in ${}_J\mathcal{O}(G)_J$, as the coproducts of $X$, $Y$ and $V$ are as in Example \ref{ex2}. Similarly, a simple check shows that $[Y,W] = 0$ in ${}_J\mathcal{O}(G)_J$. 

Consider now $[W,X]$ in ${}_J\mathcal{O}(G)_J$. We find using Theorem \ref{Ore}(3) that 
$$W \cdot X = WX + \frac{1}{2} Y\,\,\,\,{\rm and}\,\,\,\, X \cdot W =  XW - \frac{1}{2}Y,$$ hence $ [W,X] = Y$. Similarly, we have
$$W \cdot V = WV + \frac{1}{4} Y^2\,\,\,\,{\rm and}\,\,\,\, V \cdot W = VW - \frac{1}{4} Y^2,$$ hence $ [W,V] = \frac{1}{2} Y^2 $ in ${}_J\mathcal{O}(G)_J$. Thus, we have
\begin{equation}\label{ex3present}{}_J \mathcal{O}(G)_J=\mathbb{C}\langle X,V,W,Y \mid [W,X]=Y,\,[W,V] = Y^2/2, \textit{all other brackets $0$}\rangle.
\end{equation}

Since $T$ is abelian and  
$$ N := N_G(T) = \{ g(x,v,w,0)\mid x,v,w \in \mathbb{C} \} \lhd G, $$
with $G/N \cong (\mathbb{C},+)$ and $\mathcal{O}(G/N) = \mathbb{C}[Y]$, Corollary \ref{abeliansupp} implies (with the obvious notation) that
$$ {}_J\mathcal{O}(G)_J/[\langle {}_J\mathcal{O}(G)_J,{}_J\mathcal{O}(G)_J \rangle] = \mathbb{C}[\overline{X},\overline{V},\overline{W}] \cong \mathcal{O}((\mathbb{C},+)^3),$$
with $\langle [{}_J\mathcal{O}(G)_J,{}_J\mathcal{O}(G)_J ]\rangle = Y{}_J\mathcal{O}(G)_J$. All this can be checked directly using (\ref{ex3present}). And we can easily confirm that, as predicted by Corollary \ref{abeliansupp},
$$ \Gamma = N = C = C_0.$$

It is easy to check that the only central primitive elements are the scalar multiples of $Y$. More precisely, a straightforward calculation shows that 
$$ \textit{the centre  of }{}_J\mathcal{O}(G)_J \textit{ is }\mathbb{C}[Y] = \mathcal{O}(G\backslash T/G).$$

Consider next the stratification of the representation theory of ${}_J \mathcal{O}(G)_J$ by the double cosets $Z_g :=TgT$ of $T$ in $G$, using  the notation introduced in $\S$\ref{repstrat}. 
\medskip

\noindent{\bf Case I:} $g = g(x_0,v_0,w_0,0) \in N$. Here,  
$$ Z_g=TgT=Tg = \{g(x,v,w_0,0) \mid x,v \in \mathbb{C} \}. $$
Therefore the defining ideal of $Z_g$ in $\mathcal{O}(G)$ is $ \mathcal{I}(Z_g) = \langle Y, W - w_0 \rangle$. By Proposition \ref{homo}, $\mathcal{I}(Z_g)$ is also an ideal of ${}_J \mathcal{O}(G)_J$ (as is clear), and 
$${}_J \mathcal{O}(G)_J/\mathcal{I}(Z_g):= {}_J \mathcal{O}(Z_g)_J\cong \mathbb{C}[X,V] \cong  U(\mathfrak{t}), $$
as predicted by Theorem \ref{caseIIstruc}.
\medskip

\noindent{\bf Case II:} $g = g(x_0,v_0, w_0,y_0) \in G, \, y_0 \neq 0$. Here,
$$ Z_g = TgT = \{g(x,v,w,y_0)\mid x,v,w \in \mathbb{C} \}.$$
Hence $ \mathcal{I}(Z_g)=\langle Y - y_0 \rangle$, so that
$${}_J \mathcal{O}(Z_g)_J= \mathbb{C}[\overline{V} - \frac{1}{2}y_0\overline{X}] \otimes \mathbb{C}\langle \overline{W}, \overline{X} \rangle\cong \mathbb{C}[t] \otimes A_1(\mathbb{C}).\quad\quad\quad\quad \quad\quad\quad\quad\quad\quad\qed$$
\end{example}

\begin{example}(\cite[Example 6.3]{G2})\label{ex4} Let $G$ be as in Example \ref{ex3}, keep the same notation for $\mathcal{O}(G)$ and for $\mathfrak{g}$, but now take 
$$ r:= a \wedge c + d \wedge b .$$ 
Then $r$ is a non-degenerate solution of the CYBE in $\wedge^2 \mathfrak{g}$, so it corresponds to a minimal Hopf $2$-cocycle $J$ for $G$. Similar calculations to Example \ref{ex3} yield the following formulae for $J$:
\begin{align*} J(X,V) &= J^{-1}(V,X) = J(W,Y) = J^{-1}(Y,W) = 1/2,\\
J(V,X) &= J^{-1}(X,V) = J(Y,W) = J^{-1}(W,Y) = -1/2,
\end{align*}
with $J$ taking the value 0 on all other pairs of generators. Calculating as before to obtain relations for $_J \mathcal{O}(G)_J$, we find that ${}_J \mathcal{O}(G)_J = \mathbb{C}\langle X,V,W,Y \rangle$ with relations $[W,X] = Y$, $[W,V] = Y^2/2 + X$, with all other pairs of generators commuting. Replacing $X$ by $X' := X + Y^2/2$ gives 
\begin{equation}\label{ex4display} {}_J \mathcal{O}(G)_J = \mathbb{C}\langle X',V,W,Y \mid [W,X'] = Y,\, [W,V] = X', \textit{other brackets }0 \rangle.
\end{equation}  
Comparing the above relations  with the ones for $\mathfrak{g}$ at the start of Example \ref{ex3}, we see that as an  algebra
$$ {}_J \mathcal{O}(G)_J \cong U(\mathfrak{g}),$$
as predicted by Theorem \ref{minLie}.

From (\ref{ex4display}) we easily show that 
$$\langle [{}_J\mathcal{O}(G)_J, {}_J\mathcal{O}(G)_J]\rangle = {}_J\mathcal{O}(G)_J Y + {}_J\mathcal{O}(G)_J X, $$
so that 
$$ {}_J\mathcal{O}(G)_J/\langle [{}_J\mathcal{O}(G)_J, {}_J\mathcal{O}(G)_J] \rangle =  \mathbb{C}[\overline{V}, \overline{W}]. $$
That is, the group of finite dimensional simple ${}_J\mathcal{O}(G)_J$-modules is
\begin{equation}\label{findimex4} \Gamma = \{ I + vE_{13} + wE_{14} \mid v,w \in \mathbb{C}\}.
\end{equation}
One can check (for example using Theorem \ref{strfindimsF}(3) that $\Gamma=F = C_0 \cap T = C_0$. \qed
\end{example}

\begin{example}(\cite[Example 6.4]{G2})\label{ex5}  
Let $G={\rm U}(4,\mathbb{C})$. 
We have $\mathrm{dim}(G) = 6$, and 
$$\mathcal{O}(G) = \mathbb{C}[F_{12},F_{23},F_{34}, F_{13}, F_{24}, F_{14}],$$ where $F_{ij}(A) = a_{ij}$ ($A = (a_{k\ell}) \in G$). So the first three generators are primitive, and
\begin{align*} \Delta (F_{13}) = F_{13} \otimes 1 + &1 \otimes F_{13} + F_{12} \otimes F_{23},\,\,\Delta(F_{24}) =  F_{24} \otimes 1 + 1 \otimes F_{24} + F_{23} \otimes F_{34},\\
\Delta(F_{14}) &= F_{14} \otimes 1 + 1 \otimes F_{14} + F_{13} \otimes F_{34} + F_{12} \otimes F_{24}.
\end{align*}
Take as support the subgroup $T:=\{ xE_{12} + uE_{34} \mid x,u \in \mathbb{C} \}  \cong (\mathbb{C},+)^2 \subseteq G$, so $\mathcal{O}(T) = \mathbb{C}[F_{12}, F_{34}]$. Write $a := \partial/\partial F_{12}, \, c := \partial/\partial F_{34}$, and define $r:= a \wedge c$ and $J:=(\epsilon \otimes \epsilon)\circ e^{r/2}$. Pull $J$ back through the obvious Hopf epimorphism $\pi$ from $\mathcal{O}(G)$ to $\mathcal{O}(T)$, to define a Hopf $2$-cocycle on $G$. 
\medskip

\noindent{\bf Algebra structure:} Just as in Example \ref{ex1}, 
$$ J(F_{12},F_{34})= J^{-1}(F_{34},F_{12})=1/2,\, \,J^{-1}(F_{12},F_{34}) =J(F_{34},F_{12})=-1/2,$$
with the value of $J$ being $0$ on all other generator pairs. From this one calculates that the commutator relations between the generators of ${}_J\mathcal{O}(G)_J$ are as follows:
\begin{equation}\label{twice}
[F_{12},F_{24}] = F_{23}, \; [F_{12},F_{14}] = F_{13},\;[F_{34},F_{13}] = F_{23},\; [F_{34},F_{14}] = F_{24},
\end{equation}
with the other brackets being $0$. It follows from Theorem \ref{Ore} that
$$ {}_J\mathcal{O}(G)_J \textit{ is the } \mathbb{C}\textit{-algebra generated by the }F_{ij} \textit{ with relations } (\ref{twice}).$$
In particular, defining $\mathfrak{n} := \sum_{i,j}\mathbb{C}F_{ij}$, a $6$-dimensional nilpotent Lie subalgebra of ${}_J\mathcal{O}(G)_J$, we see that
$$ \textit{ as an algebra, }{}_J\mathcal{O}(G)_J \cong U(\mathfrak{n}), \textit{ the enveloping algebra of a nilpotent Lie algebra.}$$
\medskip
\noindent {\bf The group $\Gamma$ of finite dimensional simple modules:} Since the support $T$ of $J$ is abelian, Corollary \ref{abeliansupp} tells us that
\begin{equation}\label{normal} \Gamma = N_G(T) = \{I + xE_{12} + uE_{34} + zE_{14} \mid x,u,z \in \mathbb{C}\} \cong (\mathbb{C},+)^3. 
\end{equation}
We can easily check this by direct computation. Namely, from (\ref{twice}), we have  
$$ \langle [{}_J\mathcal{O}(G)_J,{}_J\mathcal{O}(G)_J]\rangle = \langle F_{23}, F_{13}, F_{24} \rangle,$$ 
where the ideal in $\mathcal{O}(G)$, or in ${}_J\mathcal{O}(G)_J$, generated by the three elements above is the same vector subspace, namely the defining Hopf ideal of $N_G(T)$ in $\mathcal{O}(G)$.
\medskip

\noindent{\bf Crossed product decomposition (as in Theorem \ref{crossed}):} By Lemma \ref{radical}, the kernel $I$ of $\pi$ is a Hopf ideal  both of $\mathcal{O}(G)$ and of $_J\mathcal{O}(G)_J$, and is generated (as a right or left ideal in both cases) by the elements $F_{23}$, $F_{13}$, $F_{24}$, and  $F_{14}$. It follows that the  same vector subspace $\mathcal{O}(G/T)$ of $\mathcal{O}(G)$ constitutes the left coinvariants of $\pi$ in both algebras. That is,
 $$\mathcal{O}(G/T) = {}^{\mathrm{co} \pi}\mathcal{O}(G)= {}^{\mathrm{co} \pi}({}_J\mathcal{O}(G)_J).$$
It is easy to calculate that the functions
\begin{equation}\label{enough} 
F_{23},\, F_{13}, \,Y := F_{24} - F_{23}F_{34}, \, V := F_{14} - F_{13}F_{34}
\end{equation}
are constant on the left cosets of $T$ in $G$, and so are contained in $\mathcal{O}(G/T)$. On the other hand these $4$ elements generate a left coideal subalgebra $I$ of $\mathcal{O}(G)$, and also generate $I$ as an ideal of $\mathcal{O}(G)$ and as a left or right ideal of ${}_J\mathcal{O}(G)_J$, so it follows from the bijection between left coideal subalgebras and Hopf ideals given by Masuoka's theorem for connected Hopf algebras (see \cite[Proposition 2.3, Theorem 3.2]{BG}) that $\mathcal{O}(G/T)$ in $\mathcal{O}(G)$, and ${}_J\mathcal{O}(G/T)_J$ in ${}_J\mathcal{O}(G)_J$ are both generated as subalgebras by the elements listed in (\ref{enough}).

The non-zero commutators of the generators (\ref{enough}) of ${}_J\mathcal{O}(G/T)_J$ are 
\begin{equation}\label{once} 
[Y,F_{13}] = F_{23}^2, \; [F_{13},V] = F_{23}F_{13}, \; [Y,V] =F_{23}Y.
\end{equation}
The algebra $U$ generated by the elements (\ref{enough}) \emph{modulo} the relations (\ref{once}) is a \emph{PBW-extension} of its commutative polynomial subalgebra $A := \mathbb{C}[F_{23},F_{13}]$, in the terminology of \cite[Definition, page 163]{KL}. Hence, by Matczuk's theorem \cite[Theorem A]{Mat} and \cite[Theorem 12.3.1]{KL}, we have
\begin{equation}\label{stun} \mathrm{GKdim}(U)= \mathrm{GKdim}(A) + 2  = 4.
\end{equation}
\noindent By construction of $U$, ${}_J\mathcal{O}(G/T)_J$ is a quotient of $U$. We claim that
\begin{equation}\label{cost} U = {}_J\mathcal{O}(G/T)_J.
\end{equation} 
If ${}_J\mathcal{O}(G/T)_J$ were a proper factor of $U$, then $\mathrm{GKdim}({}_J\mathcal{O}(G/T)_J) < 4$ by (\ref{stun}) and \cite[Proposition 3.15]{KL}. But in that case we would immediately arrive at a contradiction  from the smash product decomposition 
\begin{equation}\label{block} {}_J\mathcal{O}(G)_J \cong {}_J\mathcal{O}(G/T)_J \# U(\mathfrak{t}) = \mathbb{C}\langle F_{23},F_{13}, V,Y\rangle \# \mathbb{C}[F_{12}, F_{34}]
\end{equation}
of Theorem \ref{crossed}, via the equalities and inequalities 
$$ 6 = \mathrm{GKdim}({}_J\mathcal{O}(G)_J) =\mathrm{GKdim}({}_J\mathcal{O}(G/T)_J) + 2 <  4 + 2,$$
where the first equality follows from Corollary \ref{IHOEcor}(1) and the second from a second application of Matczuk's theorem, this time with $A = {}_J\mathcal{O}(G/T)_J$. This proves (\ref{cost}).

We note that the crossed product decomposition (\ref{block}) guaranteed by Theorem \ref{crossed} is in fact a non-trivial smash product: it is a smash product because the generators $F_{12}$ and $F_{34}$ of $\mathfrak{t}$ commute in ${}_J\mathcal{O}(G)_J$, while the action of $U(\mathfrak{t})$ on ${}_J\mathcal{O}(G/T)_J$ is non-trivial in view of the non-zero commutators
$$ [F_{12},V] = F_{13}, \, [F_{12}, Y] = F_{23}, \, [F_{34}, F_{13}] = F_{23},\, [F_{34},V] = Y.$$
\medskip

\noindent {\bf The strata ${}_J\mathcal{O}(Z_g)_J$:} Recall that for $g \in G$, the algebra $\mathcal{O}(Z_g)$ denotes the functions constant on the double coset $TgT$. Let
\begin{equation}\label{choice} 
g := g(x,y,z,a,b,u) = I + xE_{12} + yE_{13} + zE_{14} + aE_{23} + bE_{24} + uE_{34} 
\end{equation}
be a generic element of $G$.
\medskip

\noindent {\bf Case I:} $g \notin N_G(T)$. For {\bf Case I} we require by (\ref{normal}) that not all of $y, a$ and $b$ are $0$. One finds by routine calculations that there are two types of double coset with $g(x,y,z,a,b,u) \notin N_G(T)$, depending on whether or not $a= 0$.
\medskip

\noindent {\bf Case I(1):} $a= 0$. Let $g$ be as in (\ref{choice}), but with $a= 0$ and either $y$ or $b$ nonzero. One finds
$$ TgT = I + \mathbb{C}E_{12} +yE_{13} + \mathbb{C}E_{14} + bE_{24} + \mathbb{C}E_{34}.$$
Thus the defining equations for the double coset $TgT$ are
$$ F_{23} = 0,\, F_{13} = y,\, F_{24} = b.$$
Hence, abusing notation, we see that 
$$ {}_J\mathcal{O}(Z_g)_J  =  \mathbb{C}\langle F_{12}, F_{14}, F_{34} \rangle,$$
with defining relations
$$[F_{12},F_{14}] = y, \, [F_{34}, F_{14}] = b, \, [F_{12}, F_{34}] = 0.$$
Thus, $bF_{12} - yF_{34}$ is central in ${}_J\mathcal{O}(Z_g)_J$, and so
\begin{equation}\label{mixed} 
{}_J\mathcal{O}(Z_g)_J \cong \mathbb{C}[t] \otimes A_1 (\mathbb{C}).
\end{equation}

\noindent {\bf Case I(2):} $a\neq 0$. We can take as a double coset representative in this case 
$$ g(a):= I + aE_{23} $$
for $a\in \mathbb{C}^{\times}$. A calculation yields that the defining relations for the closed set $Tg(a) T$ in $G$ are 
$$ F_{23} - a, \,  F_{13}F_{24} - a F_{14}. $$
Thus, on the one hand, $Tg(a_0)T$ is a copy of affine $4$-space; secondly, according to Proposition \ref{homo} the right ideal of ${}_J\mathcal{O}(G)_J$ generated by the above two elements should be a two-sided ideal, namely $\mathcal{I}(Z_{g(a)})$. Well, from (\ref{twice}) we see that $F_{23} - a$ is central, and then an easy calculation shows that $F_{13}F_{24} - aF_{14}$ is central \emph{modulo} $\langle F_{23} - a\rangle$, so that  $\mathcal{I}(Z_{g(a)})$ is indeed a polycentral two-sided ideal of ${}_J\mathcal{O}(G)_J$.

To determine the structure of ${}_J\mathcal{O}(Z_{g(a)})_J$ consider first the algebra 
$$ R_{a}:={}_J\mathcal{O}(G)_J/\langle F_{23} - a \rangle. $$
Routine calculations which are left to the reader show that
$$R_{a}=\mathbb{C}\langle \overline{F_{12}},\overline{F_{24}} \rangle \otimes \mathbb{C}\langle \overline{F_{13}},\overline{F_{34}} \rangle \otimes \mathbb{C}[\overline{F_{13}F_{24} - a F_{14}}]\cong A_1(\mathbb{C}) \otimes A_1(\mathbb{C}) \otimes \mathbb{C}[t].$$
Finally, 
$${}_J\mathcal{O}(Z_{g(a)})_J\cong R_{a}/\langle \overline{F_{13}F_{24} - aF_{14}}\rangle, $$
so that, for all $a \in \mathbb{C}^{\times}$, we have
$${}_J\mathcal{O}(Z_{g(a)})_J \cong  A_2(\mathbb{C}). $$
As one easily checks, $ T \cap T^{g(a)}=\{1\}$ so that we are in {\bf Case 1} of $\S$\ref{stratstruc}, with the structure of ${}_J\mathcal{O}(Z_{g(a)})_J$ is as predicted there, namely it is $A_{\mathrm{dim}(T)}(\mathbb{C})$.
\medskip

\noindent {\bf Case II:} $g \in N_G(T)$.  In view of (\ref{normal}) we can take for $z \in \mathbb{C}$, $ g(z):= I + z E_{14}$ as a coset representative. So 
$$Tg(z)T =  Tg(z) =  T + z E_{14},$$
and it follows that $ \mathcal{I}(Tg(z_0)T)$  is the right ideal generated (both in $\mathcal{O}(G)$ and in ${}_J\mathcal{O}(G)_J$) by  $F_{23}, F_{13}, F_{24}$, and $F_{14} - z $, this being a two-sided ideal of both algebras by Proposition \ref{homo}. Thus,
\begin{equation}\label{shard} 
{}_J\mathcal{O}(Z_{g(z)})_J = {}_J\mathcal{O}(G)_J/\mathcal{I}(Tg(z)) = \mathbb{C}[ \overline{F_{12}}, \overline{F_{34}}], 
\end{equation}
a (commutative) polynomial algebra in $2$ variables, isomorphic to $U(\mathfrak{t})$ as predicted in Case II of $\S$\ref{stratstruc}, given the centrality of $g(b)$ in $G$. \qed
\end{example}

\begin{example}(\cite[Example 6.5]{G2})\label{ex6} 
Let $G={\rm U}(4,\mathbb{C})$ be as in Example \ref{ex5}, and retain the notation $\mathcal{O}(G) = \mathbb{C}[F_{12}, F_{13}, F_{14}, F_{23}, F_{24}, F_{34}]$ from there. But now we take $T$ to be the $4$-dimensional subgroup which was denoted by $G$ in Examples \ref{ex3} and \ref{ex4}, retaining the notation $\mathcal{O}(T) = \mathbb{C}[X,Y,V,W]$ and $\mathfrak{t} = \mathbb{C}a +\mathbb{C}b +\mathbb{C}c + \mathbb{C}d$ from these two examples. Take 
$$ r:= a\wedge c + b \wedge d \in\wedge^2 \mathfrak{t} $$
as in Example \ref{ex4}, so $J$ is a minimal Hopf $2$-cocycle for $T$, with values on the generator pairs of $\mathcal{O}(T)$ as displayed at the beginning of Example \ref{ex4}. The inclusion of $T$ in $G$ yields the Hopf algebra surjection $\pi: \mathcal{O}(G) \twoheadrightarrow \mathcal{O}(T)$ with
$$ F_{12}\mapsto X,\; F_{13}\mapsto V, \;  F_{14}\mapsto W,\; F_{23},F_{34}\mapsto Y,\; F_{24}\mapsto Y^2/2.$$

\noindent{\bf Algebra structure:} Pulling $J$ back along $\pi$ produces a non-minimal Hopf $2$-cocycle for $G$ whose non-zero values on generator pairs are
\begin{align*} 
&J(F_{12},F_{13}) = J^{-1}(F_{13},F_{12}) = J(F_{14},F_{23})= J^{-1}(F_{23},F_{14}) = \\
&J(F_{14},F_{34}) = J^{-1}(F_{34},F_{14}) = 1/2;\\
&J(F_{13},F_{12})  = J^{-1}(F_{12},F_{13})  = J(F_{23},F_{14})= J^{-1}(F_{14},F_{23})  =\\ &J(F_{34},J_{14})  = J^{-1}(F_{14},F_{34})  =  - 1/2.
\end{align*}

\noindent From Theorem \ref{Ore}(3),(4) we calculate that ${}{}_J\mathcal{O}(G)_J$ is an IHOE on the generators $\{F_{ij} \mid 1 \leq i \leq 4, i < j \leq 4 \}$, whose non-zero commutators are 
\begin{equation}\label{full} 
[F_{14}, F_{12}] =  F_{34}, \;  [F_{14}, F_{13}]= F_{12} + F_{23}F_{24} - F_{24}, \;  [F_{14}, F_{24}] = F_{23} - F_{34}. 
\end{equation}
Thus, ${}_J\mathcal{O}(G)_J$ can be presented as a Hopf Ore extension of derivation type, namely
$$ {}_J\mathcal{O}(G)_J\cong\mathbb{C}[F_{12},F_{13},F_{23},F_{24}, F_{34}][F_{14}; \partial].$$
Here, the coefficient algebra $\mathbb{C}[F_{12},F_{13},F_{23},F_{24}, F_{34}]$ is the coordinate ring of the factor $G/Z$, where $Z = \{I + wE_{14}\mid w \in \mathbb{C}\}$ is the center of $G$, and the definition of the derivation $\partial$ is immediate from the relations displayed above.
\medskip

\noindent {\bf Crossed product decomposition:} Routine calculations confirm that the elements $F_{34} - F_{23}$ and $2F_{24} - F_{23}^2$ are in $\mathcal{O}(G/T)$. It is easy to check that they generate a left coideal subalgebra of $\mathcal{O}(G)$, and together generate the radical $I_{R^J} = \mathrm{ker}(\pi)$. So, applying Masuoka's bijection as in Example \ref{ex5}, we find that 
$$ {}_J\mathcal{O}(G/T)_J=\mathcal{O}(G/T)=\mathbb{C}[F_{34} - F_{23}, 2F_{24} - F_{23}^2]. $$
Either by direct calculation or by using the antipode, we obtain similarly 
$${}_J\mathcal{O}(T\backslash G)_J =\mathcal{O}(T\backslash G) = \mathbb{C}[F_{34} - F_{23}, 2F_{23}F_{34} - 2F_{24} - F_{23}^2].$$
By Theorem \ref{crossed}, using the notation $\overline{X}$ for images of elements $X$ of ${}_J\mathcal{O}(G)_J$ under $\pi$, we obtain
\begin{align*} 
{}_J\mathcal{O}(G)_J  &\cong  {}_J\mathcal{O}(G/T)_J \#_{\sigma} {}_J\mathcal{O}(T)_J\\
&\cong  \mathbb{C}[F_{34} - F_{23}, 2F_{24} - F_{23}^2] \#_{\sigma} \mathbb{C}\langle \overline{F_{12}}, \overline{F_{13}}, \overline{F_{23}},\overline{F_{14}}\rangle.
\end{align*}
We have already seen in Example \ref{ex4} that ${}_J\mathcal{O}(T)_J \cong U(\mathfrak{t})$, as required by Theorem \ref{minLie}.
\medskip

\noindent {\bf Finite dimensional simple modules:} From the defining relations (\ref{full}) of ${}_J\mathcal{O}(G)_J$ it is immediate that its commutator ideal $\langle [{}_J\mathcal{O}(G)_J, {}_J\mathcal{O}(G)_J]\rangle$ contains the elements $A:= \{F_{34}, F_{23}, F_{12} - F_{24}\}$. Let $I$ be the ideal of ${}_J\mathcal{O}(G)_J$ generated by $A$, and observe that the elements of $A$ in the given order form a polycentral set of generators for $I$ (in the terminology of $\S$\ref{crossedconseq}). This implies that $A$ generates the same ideal $I$ in ${}_J\mathcal{O}(G)_J$ and in $\mathcal{O}(G)$, necessarily a Hopf ideal, with
$${}_J\mathcal{O}(G)_J/I = \mathcal{O}(G)/I =  \mathbb{C}[\overline{F_{12}},\overline{F_{13}}, \overline{F_{14}}],$$ 
where the first equality (of Hopf algebras) is seen to hold by checking that the commutators of all the generators are $0$ in ${}_J\mathcal{O}(G)_J/I$.  Thus, $I = \langle [{}_J\mathcal{O}(G)_J, {}_J\mathcal{O}(G)_J]\rangle$, and so, reading off from the generating set $A$, the group $\Gamma$ of $1$-dimensional ${}_J\mathcal{O}(G)_J$-modules is
$$ \Gamma := \{I +  x(E_{12} + E_{24}) +  vE_{13} + wE_{14}\mid x, v,w \in \mathbb{C}\} \cong (\mathbb{C},+)^3.$$
We check that $\Gamma = C_0$ and $\Gamma \cap T = \{I  + vE_{13} + wE_{14}\mid  v,w \in \mathbb{C}\} = F$, and 
$$ N :=  N_G(T) = \{ T + zE_{24}\mid  z \in \mathbb{C} \}, $$
so that $C = \Gamma T = N$. 
\medskip

\noindent {\bf The strata ${}_J\mathcal{O}(Z_g)_J$:} There are two cases to consider.

\noindent{\bf Case I:} $g \in N:=N_G(T)$. We may choose as a set of coset representatives for $N/T$ the elements 
$$ \{\gamma(x) := I + x(E_{12} + E_{24})\mid x \in \mathbb{C} \} \subset 
C_0 = \Gamma.$$
Since $\gamma(x) \in \Gamma$, the winding automorphism $\tau^{\ell}_{\gamma(x)}: \alpha \mapsto \sum \alpha_1\gamma(x)(\alpha_2)$, $\alpha \in \mathcal{O}(G)$, is an algebra automorphism of ${}_J\mathcal{O}(G)_J$, as well as of $\mathcal{O}(G)$, and maps the ideal $\mathcal{I}(\gamma(0)) = \langle F_{34} - F_{23}, 2F_{24} - F_{23}^2 \rangle$ to $\mathcal{I}(\gamma(x))$, hence showing that, as algebras,
$${}_J\mathcal{O}(Z_{\gamma(x)})_J \cong {}_J\mathcal{O}(Z_{\gamma(0)})_J= {}_J\mathcal{O}(T)_J \cong U(\mathfrak{t}).$$

\noindent{\bf Case II:} $g \notin N_G(T)$. Consider the elements
$$ \{\beta(u) := I + uE_{34}\mid  u \in \mathbb{C}^{\times} \} \subset G \setminus N.$$
One calculates that $T\beta(u)T  = N\beta(u)$, so that, together with the Case I cosets $T\gamma(x)$, these yield all the double cosets of $T$ in $G$. Since the defining ideal of $N$ in $\mathcal{O}(G)$ is $\langle F_{34} - F_{23} \rangle$, we see that 
$$\mathcal{I}(Z_{\beta(u)}) 
= 
\langle F_{34} - F_{23} - u \rangle. $$
The generator $F_{34} - F_{23} - u$ is central in ${}_J\mathcal{O}(G)_J$, so that 
$${}_J\mathcal{O}(Z_{\beta(u)})_J = {}_J\mathcal{O}(G)_J/(F_{34} - F_{23} - u){}_J\mathcal{O}(G)_J =  \mathbb{C}\langle \overline{F_{12}}, \overline{F_{13}}, \overline{F_{34}}, \overline{F_{24}}, \overline{F_{14}}\rangle.$$
Define new generators for ${}_J\mathcal{O}(Z_{\beta(u)})_J$ as follows:
$$ 
P := \overline{F_{24}} - \overline{F_{34}}(\overline{F_{34}} - u),\,\, Q:= \overline{F_{14}},\,\,
Z := \overline{F_{34}}, \,\, X :=  -\overline{F_{12}},$$ 
$$Y :=  \overline{F_{13}} - (P - \overline{F_{12}})P + \frac{1}{2}(\overline{F_{34}} + 1)P^2.$$
One can now easily check that
$$ {}_J\mathcal{O}(Z_{\beta(u)})_J =  \mathbb{C}[X,Y,Z] \otimes \mathbb{C}\langle P,Q\rangle \cong \mathbb{C}[X,Y,Z] \otimes A_1(\mathbb{C}).\quad\quad\quad\quad\quad\quad\quad\quad\quad \qed$$
\end{example}

\section{Acknowledgements} 

We thank Pavel Etingof for helpful correspondence. The work of K.B. was supported by Leverhulme Emeritus Fellowship EM 2017-081. The work of S.G. was supported by Simons Foundation Award 963288.

\end{document}